\newtheorem{theorem}{Theorem}[section]
\newtheorem{lemma}[theorem]{Lemma}
\newtheorem{pro}[theorem]{Proposition}
\newtheorem{corol}[theorem]{Corollary}
\theoremstyle{definition}
\newtheorem{definition}[theorem]{Definition}
\theoremstyle{remark}
\newtheorem{remark}[theorem]{Remark}
\numberwithin{equation}{section}
\newcommand{\R}{\mathbb{R}}
\begin{document}
\title{Maximizing the spreading speed of KPP fronts in two-dimensional stratified media }
\author{Xing Liang \thanks{Department of Mathematics, University of Science and Technology
of China, xliang@ustc.edu.cn} \\
 \and Xiaotao Lin\thanks{Graduate School
 of Mathematical Sciences, University of Tokyo, linxt@ms.u-tokyo.ac.jp} \\
  \and Hiroshi Matano\thanks{Graduate School
 of Mathematical Sciences, University of Tokyo, matano@ms.u-tokyo.ac.jp} \\}

\date{}
\maketitle
\tableofcontents
\begin{abstract}
We consider the equation $u_t=u_{xx}+u_{yy}+b(x)f(u)+g(u)$,
$(x,y)\in\mathbb R^2$ with monostable nonliearity,
 where $b(x)$ is a nonnegative measure on $\mathbb R$ that is
 periodic in $x.$ In the case where $b(x)$ is a smooth periodic
 function, it is known that, for each $\theta\in[0,2\pi)$, there exists a \lq\lq{}planar'' travelling wave in the direction
 $\theta$ -- more precisely a pulsating travelling wave that propagates in the direction $(\cos\theta,\sin\theta)$ -- with
 average speed $c$ if and only if $c\geq c^*(\theta,b),$ where $c^*(\theta,b)$ is a certain
 positive number depending on $b.$ This constant $c^*(\theta,b)$ is called the \lq\lq minimal
 speed". Moreover, the quantity
 $w(\theta;{b})=\min_{|\theta-\phi|<\frac{\pi}{2}}c^*(\phi;{b})/\cos(\theta-\phi)$
 is called the spreading speed in direction $\theta$ in the
 following sense:
any solution $u(x,y,t)$ with nonnegative compactly supported initial
data $u_0(x,y)\not\equiv 0$ satisfies
\[
\ \lim_{t\to\infty}u(x,y,t)=0,\hbox{   on }\{x\cos\theta+y\sin\theta
>ct\}\,\,\,\hbox{  if }c>w(\theta;b),
\]
\[
\ \lim_{t\to\infty}u(x,y,t)=1,\hbox{   on
 }\{x\cos\theta+y\sin\theta <ct\}\,\,\,\hbox{  if }c<w(\theta;b).
\] This theory can be extended by showing the
 existence of the minimal speed $c^*(\theta,b)$ for any nonnegative
 measure $b$ with period $L.$ We then study the question of maximizing $c^*(\theta,b)$
 under the constraint $\int_{[0,L)}b(x)dx=\alpha L,$ where $\alpha$ is an arbitrarily given positive constant.
 We prove that
 the maximum is attained by periodically arrayed Dirac's delta
 functions $h(x)=\alpha L\sum_{k\in\mathbb Z}\delta(x+kL)$ for any direction $\theta$. Based on these results, for the case that $b=h$ we
also show the monotonicity of the spreading speedsin $\theta$ and
study the asymptotic shape of spreading fronts for large $L$ and
small $L$ . Finally, we show that for general 2-dimensional periodic
equation $u_t=u_{xx}+u_{yy}+b(x,y)f(u)+g(u)$, $(x,y)\in\mathbb R^2$,
the similar conclusions do not hold.
\end{abstract}

{\section{Introduction}}\label{Introduction} Travelling waves
describe a wide class of phenomena in combustion physics, chemical
kinetics, biology and other natural sciences. {}From the physical
point of view, travelling waves usually describe transition
processes. Transition from one equilibrium to another is a typical
case, although more complicated situations may arise. Since the
classical paper by Kolmogorov, Petrovsky and Piskunov in 1937,
travelling waves have been intensively studied. For example, the
monograph \cite{s2} provides a comprehensive discussion on this
subject.

{}From the ecological point of view, travelling waves typically
describe the expansion of the territory of a certain species,
including, in particular, the invasion of alien species in a given
habitat. Models for biological invasions in spatially periodic
environments were first introduced by Shigesada {\it{et al}}. in
dimensions 1 and 2 (see \cite{s4,s5,s6}). More precisely, they
considered spatially segmented habitats where favorable and less
favorable (or even unfavorable) zones appear alternately and
analyzed how the pattern and scale of spatial fragmentation affect
the speed of invasions. In their study, the spatial fragmentation
was typically represented by step functions which take two different
values periodically. Mathematically, their analysis was partly
unrigorous as it relied on formal asymptotics of the travelling wave
far away from the front. The works on more general spatially
periodic travelling waves in high dimensional case also can be found
in the survey paper of Xin \cite{xin} and the reference therein.

Berestycki, Hamel \cite{BH} and Berestycki, Hamel, Roques \cite{s1}
extended and mathematically deepened the work of Shigesada \emph{et
al}. significantly, by dealing with much more general equations of
the form $u_t=\nabla \cdot ((A(x)\nabla u))+f(x,u)$ in $\mathbb R^n$
with rather general smooth periodic coefficients and by developing
various mathematical techniques to study the effect of environmental
fragmentation rigourously.

Among other things, they proved that, under certain assumptions on
the coefficients, there exists $c^*>0$ such that the equation has a
pulsating travelling wave if and only if $c\geq c^*.$ Furthermore,
they showed that the minimal speed $c^*$ is characterized by the
following formula:
\[
\ c^*=\min\{c>0\,|\, \exists \lambda>0 \hbox{ such that
}\mu(c,\lambda)=0 \},
\]
where $\mu(c,\lambda)$ is the principal eigenvalue of a certain
elliptic operator associated with the linearization  of the
travelling wave far away from the front.

By using a totally different approach Weinberger \cite{s8} also
proved the existence of the minimal speed $c^*$ of pulsating
travelling waves in a more abstract framework. His method relies on
the theory of monotone operators and is a generalization of his
earlier work \cite{Wein1982} to spatially periodic media.

In Weinberger \cite{s8} and H. Berestycki, F. Hamel, N, Nadirashvili
\cite{BHN}, another important concept-spreading speed- is concerned.
Here the \lq\lq spreading speed'' roughly means the asymptotic speed
of an expanding front that starts from a compactly supported initial
data.

In \cite{llm}, we considered 1-dimensional problem
\[ \
u_t=u_{xx}+\bar{b}(x)u(1-u)
\]
and showed that the minimal speed $c^*(\bar{b})$ exists in the sense
that the travelling wave solutions exist for any nonnegative
$L$-periodic measure $\bar{b}$ and any speed $c\geq c^*(\bar{b})$,
where $L$ is a positive constant. $c^*(\bar{b})$ is also the
spreading speed. Moreover, the maximum of $c^*(\bar{b})$ is attained
by
\begin{equation}\label{h} \ h(x):=\alpha L\sum_{k \in\mathbb Z}
\delta\big(x-(k+\frac{1}{2})L\big),
\end{equation} where $\delta(x)$ is the Dirac's delta function.

In this paper, regarding to the original model of \cite{s6} we
consider the following KPP equation in two-dimensional stratified
media:
\begin{equation}\label{FirstEquation1}
u_t=u_{xx}+u_{yy}+{\bar{b}}(x)f(u)+g(u) \quad\ \ (x,y\in\mathbb R,)
\end{equation}and the Cauchy problem
\begin{equation}\label{FirstEquation}
\left\{\begin{array}{ll} u_t=u_{xx}+u_{yy}+{\bar{b}}(x)f(u)+g(u)
\quad\ \ &(x,y\in\mathbb R,\;t>0),\vspace{5pt}\\
u(x,y,0)=u_0(x,y)\quad\ \ &(x,y\in\mathbb R),
\end{array}\right.
\end{equation}
where $\bar{b}$ is either a smooth function or a measure satisfying
$\bar{b}(x)\geq 0$ and $\bar{b}(x+L)\equiv \bar{b}(x),\,x\in\mathbb
R$, for some constant $L>0$.

 Moreover, we suppose that
$f$ and $g$ satisfy one of the following two cases:
\begin{enumerate}
\item [{\rm(F1)}.] $g=0$, $f\in C^1(\mathbb R^+)$, $f(0)=f(1)=0$, $f(u)>0\,\,(0<u<1)$,
$f(u)<0\,\, (u>1)$, $f'(0)>0$, $f(u)/u$ is decreasing in $u>0$.
\item [{\rm(F2)}.] $f(u)=u$, $g(u)=-ug_1(u)$ and $g_1\in C^1(\mathbb R^+)$, $g_1(0)=0$, $g_1'(u)>0\,\,(u>0)$, $g_1(u)\to\infty$
(as $u\to\infty$).
\end{enumerate}
Therefore the equation \eqref{FirstEquation1} can be written as
\[
\ u_t=u_{xx}+u_{yy}+\bar{b}(x)f(u)
\] for case {\rm(F1)}, and
\[
\ u_t=u_{xx}+u_{yy}+u(\bar{b}(x)-g_1(u))
\] for case {\rm(F2)}. Typical examples include the following:
for {\rm(F1)}, $u_t=u_{xx}+u_{yy}+\bar{b}(x)u(1-u)$; for {\rm(F2)},
$u_t=u_{xx}+u_{yy}+u(\bar{b}(x)-u)$.

Applying the results of Berestycki, Hamel \cite{BH}, Berestycki,
Hamel, Roques \cite{s1} and Weinberger \cite{s8} it is easy to show
that when $\bar{b}$ is a smooth function, the minimal speed of the
travelling wave solution of \eqref{FirstEquation1} in direction
$\theta$ would be
\[
\ c^*(\theta, \bar{b})=\min \{c>0\,|\,\exists \lambda>0\hbox{ such
that } \mu(\lambda,\theta,\bar{b})=\lambda^2-\lambda c\},
\]
where $\mu(\lambda,\theta,\bar{b})$ is a quantity such that
$\exists\, \psi(x)>0$ satisfying
\[
\,\,\,\,-\psi_{xx}+2\lambda
\cos\theta\psi_x-\bar{b}(x)f'(0)\psi=\mu(\lambda,\theta,\bar{b})\psi
\] with $ \psi(x)\equiv\psi(x+L)$. Moreover, in direction
$\theta$ the spreading speed $w(\theta,\bar b)$ exists in the
following sense: any solution $u(x,y,t)$ with nonnegative compactly
supported initial data $u_0(x,y)\not\equiv 0$ satisfies
\[
\ \lim_{t\to\infty}u(x,y,t)=0,\hbox{   on }\{x\cos\theta+y\sin\theta
>ct\}\,\,\,\hbox{  if }c>w(\theta;b),
\]
\[
\ \lim_{t\to\infty}u(x,y,t)=P(x),\hbox{   on
 }\{x\cos\theta+y\sin\theta <ct\}\,\,\,\hbox{  if }c<w(\theta;b),
\] where $P$ is the unique positive steady state of
\eqref{FirstEquation1} which is $L$-periodic in $x$. Moreover,
$w(\theta,\bar
b)=\min_{|\theta-\phi|<\frac{\pi}{2}}c^*(\phi;\bar{b})/\cos(\theta-\phi).$

In this paper, we also consider the problem of maximizing the
minimal speed and the spreading speed. We will show in Theorem
\ref{cstarbequalcestarb2D} that the minimal speed of travelling wave
for equation \eqref{FirstEquation1} exists if $\bar{b}(x)$ is a
measure. Moreover, the spreading speed exists and can be expressed
as in \eqref{eq:sprspeed} (Theorem \ref{Themforspreadingspeed2D}).
We denote the minimal speed in direction $\theta$ by
$c^*(\theta;\bar{b})$ and denote the spreading speed in direction
$\theta$ by $w(\theta;\bar{b})$. We also show the monotonicity of
the spreading speed in the direction $\theta$ in Theorem
\ref{Th:monotone-theta}. Then we will show in Theorem
\ref{MinimalPlanarSpeed} that the maximum of
$\{c^*(\theta;\bar{b})\}$ and $\{w(\theta;\bar{b})\}$ is attained by
$\bar{b}=h$. Based on these results, for the case that $b=h$ we also
study the asymptotic shape of spreading fronts for large $L$ and
small $L$. We will show in Theorem \ref{Thm:Lto0}
 that, for small $L$, the asymptotic shape of the front
is a circle and show in Theorem \ref{thm:speedforlargel} that, for
large $L$, the asymptotic shape of the front is a parabola. At the
end of this paper, we consider a more general case that $b$ is
periodically dependent on $x$ and $y$. Then a sequence of $b_n$
exists such that the corresponding minimal speed $c^*(\theta;b_n)$
is convergent to $+\infty$ as $n\to\infty$.

This paper is organized as follows. In Section
\ref{Notationandmainresults}, we introduce the basic notations and
state the main results. In Section \ref{Eigenvalueproblem}, we
state properties of linear eigenvalue problem which will be used
in Section \ref{Proofofresults}. In Section
\ref{1Dimensionalcase}, we recall our results in \cite{llm} on the
1 dimensional problem and extend the results to more general
equations of the form
\begin{equation}\label{1-dim}
u_t=u_{xx}+\bar{b}(x)f(u)+g(u).
\end{equation} for both of the cases (F1) and (F2). In Section \ref{section:wellposedness}, we prove the
well-posedness of equation \eqref{FirstEquation} in uniform
topology. In Section \ref{Localuniformtopolyandsemiflow}, we
consider the the well-posedness of equation \eqref{FirstEquation} in
local uniform topology  and the theory of semiflow generated by the
solution of \eqref{FirstEquation} for the cases of (F1) and (F2).
These are used in Section \ref{Proofofresults}. In Section
\ref{Proofofresults}, we prove the main results. In Section
\ref{General2Dimensionalcase}, we consider the general 2-dimensional
case.

\section{Notation and main results}\label{Notationandmainresults}
\subsection{Basic notation}\label{subsection:notation}
In what follows we suppose that constants $L>0$ and $\alpha>0.$ Let
$\Lambda(\alpha)$ be the set defined by
\[
\ \Lambda(\alpha):=\{b \in C^1(\mathbb R)\,|\, b(x)\geq
0,b(x)=b(x+L) \,~and~ \int_{[0,L)} b(x)dx=\alpha L \}.
\]
%constants $L,L_2>0$ and $\alpha>0.$ Let $\Lambda_{x,y}(\alpha)$ be
%the set defined by
%\[
%\begin{split}
%\ \Lambda_{x,y}(\alpha):=\{b(x,y) \in C^1(\mathbb R^2)\,|\,
%b(x,y)\geq 0,\,b(x,y)=b(x+L,y+L_2) \,\\~and \int_{[0,L)}
%\int_{[0,L_2)}b(x,y)dxdy=\alpha LL_2 \}
%\end{split}
%\] and let $\Lambda(\alpha)$ be the set defined by
%\[
%\begin{split}
%\ \Lambda_{x}(\alpha):=\{b(x) \in C^1(\mathbb R)\,|\, b(x)\geq
%0,\,b(x)=b(x+L) \,~and \int_{[0,L)} b(x)dx=\alpha L \}.
%\end{split}
%\]
\begin{definition}\label{DefofLamdaBarAlpha}
$\overline{\Lambda}(\alpha)$ is defined to be the sequential closure
of $\Lambda(\alpha)$ in the space of distribution on $\mathbb{R}.$
More precisely, $\bar{b}\in\overline{\Lambda}(\alpha)$ if and only
if there exists a sequence $\{b_n\}_{n=1}^{\infty}$ in
$\Lambda(\alpha)$ such that
\begin{equation}\label{DefinitionOfLamdabarAlpha}
\
\int_{\mathbb{R}}\bar{b}(x)\eta(x)dx=\lim_{n\to\infty}\int_{\mathbb{R}}b_n(x)\eta(x)dx
\end{equation}
for any test function $\eta\in C_0^\infty(\mathbb{R}),$ where the
left-hand side of \eqref{DefinitionOfLamdabarAlpha} is a formal
integration representing the dual product $\langle \bar{b},\eta
\rangle.$ %In this sense, we denote that $b_n\to\bar{b}$ in the weak
%$^*$ sense.
\end{definition}
%\begin{lemma}\label{lemma:A2D}
%Let $\{b_n\}\subset\Lambda(\alpha)$ be a sequence converging to some
%$\bar{b}\in\overline{\Lambda}(\alpha)$ in the weak$^*$ sense. Let
%$\eta_1(x,y)$ and $\eta_2(x,y)$ be two continuous functions on
%$\mathbb{R}^2$ satisfying
%\begin{equation}
%\sum_{j=-\infty}^{\infty}\int_{0}^{L_2}\big(\sum_{k=-\infty}^{\infty}\big(\max_{0\leq
%x \leq L,0\leq y\leq L_2}|\eta_2(x+kL,y+jL_2)|\max_{0\leq x \leq
%L}|\eta_1(x+kL,y+jL_2)|\big)\big)dy<\infty.
%\end{equation}
%Then $\eta_1(x,y)\eta_2(x,y)$ is $\bar{b}$-integrable on
%$\mathbb{R}^2$ and the following hold:
%\begin{equation}\label{bn-to-b-R}
%\lim_{n\to\infty}\int_{\mathbb{R}^2}b_n(x)\eta_1(x,y)\eta_2(x,y)dxdy=\int_{\mathbb{R}^2}\bar{b}(x)
%\eta_1(x,y)\eta_2(x,y)dxdy,
%\end{equation}
%\begin{equation}\label{sum-eta}
%\begin{split}
%&\int_{\mathbb{R}^2}\bar{b}(x)|\eta_1(x,y)\eta_2(x,y)|dxdy \\ &\leq
%\alpha L
%\sum_{j=-\infty}^{\infty}\int_{0}^{L_2}\big(\sum_{k=-\infty}^{\infty}\big(\max_{0\leq
%x \leq L,0\leq y\leq L_2}|\eta_2(x+kL,y+jL_2)|\max_{0\leq x \leq
%L}|\eta_1(x+kL,y+jL_2)|\big)\big)dy.
%\end{split}
%\end{equation}
%\end{lemma}Since each $b_n$ is positive, $\bar{b}$ is a nonnegative
Since each $b_n$ is nonnegative, $\bar{b}$ is a nonnegative
distribution. Consequently, $\bar{b}$ is a Radon measure on
$\mathbb{R}.$ Therefore \eqref{DefinitionOfLamdabarAlpha} holds
for every $\eta\in C_0(\mathbb{R}).$ We say that $b_n\to\bar{b}$
in the weak$^*$ sense if \eqref{DefinitionOfLamdabarAlpha} holds.

In what follows, we will not distinguish the measure $\bar{b}$ and
its density function $\bar{b}(x),$ as long as there is no fear of
confusion. Thus we will often use expression as in the left-hand
side of \eqref{DefinitionOfLamdabarAlpha}.

We also note that, since $b_n(x)$ is $L$-periodic, $\bar{b}(x)$ is
also $L$-periodic in the following sense:
\begin{equation}\label{L-periodicity}
\ \int_\mathbb{R}\bar{b}(x)
\eta(x+L)dx=\int_{\mathbb{R}}\bar{b}(x)\eta(x)dx\,\,\, \hbox{ for
}\eta\in C_0(\mathbb{R}).
\end{equation}

The next Lemma \ref{lemma:A} and Lemma \ref{cor:A} are the same as
 Lemma 2.2 and Corollary 2.3 in \cite{llm}. We state them here
without proof.
\begin{lemma}\label{lemma:A}
Let $\{b_n\}\subset\Lambda(\alpha)$ be a sequence converging to some
$\bar{b}\in\overline{\Lambda}(\alpha)$ in the weak$^*$ sense. Let
$\eta(x)$ be a continuous function on $\mathbb{R}$ satisfying
\begin{equation}
\sum_{k=-\infty}^{\infty}\max_{0\leq x \leq L}|\eta(x+kL)|<\infty.
\end{equation}
Then $\eta$ is $\bar{b}$-integrable on $\mathbb{R}$ and the
following conclusion holds:
\begin{equation}\label{bn-to-b-R}
\lim_{n\to\infty}\int_{\mathbb{R}}b_n(x)\eta(x)dx=\int_{\mathbb{R}}\bar{b}(x)
\eta(x)dx,
\end{equation}
\begin{equation}\label{sum-eta}
\int_{\mathbb{R}}\bar{b}(x)|\eta(x)|dx \leq \alpha L
\sum_{k=-\infty}^{\infty}\max_{0\leq x \leq L}|\eta(x+kL)|.
\end{equation}
\end{lemma}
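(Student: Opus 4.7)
The plan is to exploit the $L$-periodicity of both $b_n$ and $\bar b$ to fold integrals over $\mathbb{R}$ into integrals over a single period $[0,L)$, where the problem reduces to weak$^*$ testing against a bounded continuous function on a compact interval. Define the periodization
\[
\tilde\eta(x):=\sum_{k\in\mathbb{Z}}\eta(x+kL).
\]
The Weierstrass M-test with majorants $M_k:=\max_{[0,L]}|\eta(\cdot+kL)|$ gives uniform convergence of the series on $[0,L]$, so $\tilde\eta$ is continuous, $L$-periodic, and $\|\tilde\eta\|_\infty\le M:=\sum_k M_k$.

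Next, for each $b_n$, $L$-periodicity together with absolute convergence (controlled by $\|b_n\|_\infty M$ per period) justifies the folded identity
\[
\int_{\mathbb{R}}b_n(x)\eta(x)\,dx=\int_{[0,L)}b_n(x)\tilde\eta(x)\,dx.
\]
For $\bar b$ the analogue requires more care. I would first establish $\bar b([0,L))=\alpha L$ by sandwiching the indicator $\mathbf 1_{[0,L)}$ between smooth cutoffs, invoking the weak$^*$ hypothesis for these cutoffs, and using the periodicity identity \eqref{L-periodicity}. For compactly supported $\eta$, writing $\tilde\eta$ as a finite sum of translates and applying \eqref{L-periodicity} term by term yields
\[
\int_{\mathbb{R}}\bar b(x)\eta(x)\,dx=\int_{[0,L)}\bar b(x)\tilde\eta(x)\,dx.
\]
For a general $\eta$ satisfying the summability hypothesis I would truncate $\eta$ by $\eta_N:=\eta\cdot\chi_{[-NL,NL]}$, split into positive and negative parts, and pass to the limit by monotone convergence against the positive Radon measure $\bar b$. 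This simultaneously establishes the $\bar b$-integrability of $\eta$ and produces the bound \eqref{sum-eta}, since $\int_{[0,L)}\bar b\,|\tilde\eta|\le M\cdot\bar b([0,L))=\alpha L\,M$.

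Finally, \eqref{bn-to-b-R} reduces, via the two folded identities, to proving
\[
\int_{[0,L)}b_n\tilde\eta\,dx\longrightarrow\int_{[0,L)}\bar b\,\tilde\eta\,dx.
\]
Given $\varepsilon>0$, choose $N$ so that the partial periodization $\tilde\eta_N:=\sum_{|k|\le N}\eta(\cdot+kL)$ satisfies $\|\tilde\eta-\tilde\eta_N\|_\infty<\varepsilon$, then multiply $\tilde\eta_N$ by a fixed $C_0^\infty$ cutoff that equals $1$ on $[0,L]$ and is supported in $(-L,2L)$. The resulting function is a legitimate $C_0^\infty$ test function, so the weak$^*$ hypothesis applies directly to it, and the uniform mass bound $\alpha L$ per period for all $b_n$ and for $\bar b$ controls the tail error induced by the truncation. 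The main obstacle is exactly this last step: the weak$^*$ hypothesis is given only for $C_0^\infty$ test functions, whereas $\tilde\eta$ is globally defined, merely bounded and continuous, and non-smooth; the periodicity of the underlying measures is what rescues us, by letting a compactly supported smooth surrogate on a single period do the job.
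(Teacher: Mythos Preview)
The paper does not actually prove this lemma; it states explicitly that the result is the same as Lemma~2.2 in \cite{llm} and is quoted without proof. So there is no in-paper argument to compare against, and your task was effectively to supply one from scratch.

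Your periodization-and-folding strategy is the natural one and is correct in outline. Two small imprecisions are worth flagging. First, the surrogate you build at the end, namely $\tilde\eta_N$ multiplied by a $C_0^\infty$ cutoff, is only in $C_0(\mathbb{R})$, not $C_0^\infty(\mathbb{R})$, because $\eta$ is merely continuous; so you cannot invoke the weak$^*$ hypothesis on it directly as stated. The fix is either to mollify once more, or to observe (as the paper does just after Definition~\ref{DefofLamdaBarAlpha}) that a nonnegative distributional limit of the $b_n$ is automatically a Radon measure, and that the uniform mass bound $\int_{[0,L)}b_n=\alpha L$ then upgrades the convergence from $C_0^\infty$ test functions to $C_0$ test functions by density. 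Second, your truncation $\eta_N=\eta\cdot\chi_{[-NL,NL]}$ is discontinuous at $\pm NL$, so the folded identity you derived for compactly supported \emph{continuous} $\eta$ does not apply to it; replace the indicator by a continuous cutoff, or bypass the truncation entirely by bounding $\int_{\mathbb{R}}|\eta|\,d\bar b=\sum_k\int_{[kL,(k+1)L)}|\eta|\,d\bar b\le\alpha L\sum_k M_k$ once you know $\bar b([0,L))=\alpha L$.

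With these two touch-ups the argument is complete. Your identification of the main obstacle (that $\tilde\eta$ is not a legitimate test function and periodicity is what rescues the argument) is exactly right.
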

\begin{lemma}\label{cor:A} Let $\{b_n\}$, $\bar{b}$ and $\eta$ be as
in Lemma \ref{lemma:A} and let $\{f_n\}$ be a sequence of uniformly
bounded continuous functions on $\R$ converging to $f$ locally
uniformly on $\R$. Then
\[
\ \int_\R\bar{b}(x)f(x)\eta(x)dx=\lim_{n\to\infty}\int_\R
b_n(x)f_n(x)\eta(x)dx.
\]
\end{lemma}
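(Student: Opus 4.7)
The plan is to split the difference into a "weak$^*$" part that uses Lemma \ref{lemma:A} directly and a "local uniform" part that uses the convergence $f_n\to f$ on compact sets, plus a tail estimate based on the summability hypothesis on $\eta$. Concretely, I would write
\[
\int_\R b_n f_n\eta\,dx-\int_\R \bar b\, f\eta\,dx=\int_\R b_n(f_n-f)\eta\,dx+\Bigl(\int_\R b_n\, f\eta\,dx-\int_\R \bar b\, f\eta\,dx\Bigr)
\]
and handle the two terms separately.

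First I would observe that, since $f_n\to f$ locally uniformly with $\{f_n\}$ uniformly bounded, the limit $f$ is continuous and bounded by some $M>0$. Then the product $f\eta$ is continuous and
\[
\sum_{k=-\infty}^{\infty}\max_{0\leq x\leq L}|f(x+kL)\eta(x+kL)|\leq M\sum_{k=-\infty}^{\infty}\max_{0\leq x\leq L}|\eta(x+kL)|<\infty,
\]
so Lemma \ref{lemma:A} applied to $f\eta$ in place of $\eta$ yields that the second bracketed term tends to $0$ as $n\to\infty$.

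For the first term I would use the uniform-in-$n$ estimate following from periodicity and $\int_{[0,L)}b_n\,dx=\alpha L$: for any continuous $\phi\geq 0$,
\[
\int_\R b_n(x)\phi(x)\,dx=\sum_{k\in\mathbb Z}\int_{kL}^{(k+1)L}b_n\phi\,dx\leq \alpha L\sum_{k\in\mathbb Z}\max_{0\leq x\leq L}\phi(x+kL).
\]
Given $\varepsilon>0$, the summability of $\eta$ over shifts lets me choose $N$ so large that $\alpha L\sum_{|k|>N}\max_{[0,L]}|\eta(\cdot+kL)|<\varepsilon/(4M)$ (using $|f_n-f|\leq 2M$ on the tail $|x|\geq NL$). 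On the compact set $[-NL,NL]$, local uniform convergence gives $\sup_{|x|\leq NL}|f_n(x)-f(x)|\to 0$, and combined with the same periodic mass estimate this contributes at most $\sup_{|x|\leq NL}|f_n-f|\cdot \alpha L\sum_{|k|\leq N}\max_{[0,L]}|\eta(\cdot+kL)|$, which is $<\varepsilon/2$ for large $n$. Adding the two bounds gives $\bigl|\int b_n(f_n-f)\eta\,dx\bigr|<\varepsilon$ eventually, completing the proof.

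The main (minor) obstacle is the tail control: one needs the uniform-in-$n$ bound supplied by the periodic mass normalization combined with the summability hypothesis on $\eta$ to justify truncation. Once that is in place, everything else is a standard split-and-estimate argument, with the "weak$^*$" half delegated to Lemma \ref{lemma:A} and the "strong" half to the local uniform convergence of $f_n$.
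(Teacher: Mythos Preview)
Your argument is correct. The decomposition
\[
\int_\R b_n f_n\eta-\int_\R \bar b\,f\eta
=\int_\R b_n(f_n-f)\eta+\Bigl(\int_\R b_n\,f\eta-\int_\R \bar b\,f\eta\Bigr)
\]
together with Lemma~\ref{lemma:A} for the second term and the uniform periodic mass bound
$\int_\R b_n|\phi|\le\alpha L\sum_{k}\max_{[0,L]}|\phi(\cdot+kL)|$ for the first term is exactly the right approach; the tail truncation via the summability hypothesis on $\eta$ is what makes the local-uniform convergence of $f_n$ sufficient. One cosmetic point: the intervals $[kL,(k+1)L]$ with $|k|\le N$ cover $[-NL,(N+1)L]$ rather than $[-NL,NL]$, so the supremum of $|f_n-f|$ should be taken over this slightly larger compact set, but this changes nothing.

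As for comparison with the paper: the paper does not actually prove Lemma~\ref{cor:A}; it is stated without proof as a restatement of Corollary~2.3 in \cite{llm}. Your proof is the natural one and is presumably what is done in \cite{llm}.
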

Now we consider the equation\begin{equation}\label{SecondEquation2}
 u_t=u_{xx}+u_{yy}+\bar{b}(x)f(u)+g(u)
\quad\ \ (x,y\in\mathbb R,\;t>0)
\end{equation} and the corresponding Cauchy problem
\begin{equation}\label{SecondEquationcauchy}
\left\{\begin{array}{ll} u_t=u_{xx}+u_{yy}+\bar{b}(x)f(u)+g(u)
\quad\ \ &(x,y\in\mathbb R,\;t>0),\vspace{5pt}\\
u(x,y,0)=u_0(x,y)\quad\ \ &(x,y\in\mathbb R).
\end{array}\right.
\end{equation} Here $f(u)$ and $g(u)$ are locally Lipschitz
continuous functions.
\begin{definition}\label{def:wsolg}
Let $I\subset \mathbb R$ be any open interval. A continuous function
$u(x,y,t):\mathbb R^2\times I\to \mathbb R^2$ is called a
{\textbf{weak solution}} of \eqref{SecondEquationcauchy} for $t\in
I$ (or a {solution} in the weak sense) if for any $\eta(x,y,t)\in
C_0^\infty(\mathbb R^2\times I),$
\[
\ -\int_I\int_{\mathbb R}\int_{\mathbb R}u
\eta_t\,dxdydt=\int_I\int_{\mathbb
R^2}\big(u\eta_{xx}+u\eta_{yy}+\bar{b}(x)f(u)\eta+g(u)\eta\big)\,
dxdydt.
\]
\end{definition}
\begin{definition}\label{def:msolg}
A continuous function $u(x,y,t):\mathbb R^2\times(0,T)\to \mathbb R$
for some constant $T\in (0,\infty]$ is called a {\textbf{mild
solution}} of \eqref{SecondEquationcauchy} if
\[
\ \lim_{t\searrow 0}u(x,y,t)=u_0(x,y)\,\hbox{ for any }x,\,y \in
\mathbb R
\]
and if it can be written as
\[
\begin{split}
 u(x,y,t)=&\int_{\mathbb
R^2}G(x-x',y-y',t)u_0(x',y')dx'dy'  \\
&+\int_0^t\int_{\mathbb
R^2}G(x-x',y-y',t-s)\big(\bar{b}(x')f(u(x',y',s))+g(u(x',y',s))\big)\,dx'dy'ds,
\end{split}
\]
where
\begin{equation}\label{heatkernel}
G(x,y,t):=\frac{1}{{4\pi t}}\exp\big(-\frac{x^2+y^2}{4t}\big).
\end{equation}
\end{definition}
As we will show in Section \ref{section:wellposedness}, a local mild
solution of \eqref{SecondEquationcauchy} exists uniquely for any
$\bar{b}\in\overline{\Lambda}(\alpha)$ and $u_0\in C(\mathbb
R^2)\cap L^\infty(\mathbb R^2)$ with $u_0\geq 0$. Furthermore, any
mild solution is a weak solution.

It is easily seen that, in the case of $T=\infty$, if $u(x,y,t)$ is
a mild solution of \eqref{SecondEquationcauchy}, then for any
constant $\tau\geq 0,$ $u(x,y,t+\tau)$ is a mild solution of
\eqref{SecondEquationcauchy} with initial data $u(x,y,\tau)$.
% (see Remark \ref{RemarkForMildSolution}).

We call a function $u(x,y,t)$ on $\mathbb{R}^3$ a \textbf{mild
(entire) solution} of \eqref{SecondEquation2} if, for any
$\tau\in\mathbb{R},$ $u(x,y,t+\tau)$ is mild solution of
\eqref{SecondEquationcauchy} with initial data
$u_0(x,y)=u(x,y,\tau).$

 In this paper,
we study the minimal speed of travelling waves and the spreading
speed for monostable type nonlinearities. More precisely, we
consider the following two cases:
\begin{enumerate}
\item [{\rm(F1)}.] $g=0$, $f\in C^1(\mathbb R^+)$, $f(0)=f(1)=0$, $f(u)>0\,\,(0<u<1)$,
$f(u)<0\,\, (u>1)$, $f'(0)>0$, $f(u)/u$ is decreasing in $u>0$.
\item [{\rm(F2)}.] $f(u)=u$, $g(u)=-ug_1(u)$ and $g_1\in C^1(\mathbb R^+)$, $g_1(0)=0$, $g_1'(u)>0\,\,(u>0)$, $g_1(u)\to\infty$
(as $u\to\infty$).
\end{enumerate}
The equation \eqref{SecondEquation2} can then be written as
\[
\ u_t=u_{xx}+u_{yy}+\bar{b}(x)f(u)
\] for case {\rm(F1)}, and
\[
\ u_t=u_{xx}+u_{yy}+u(\bar{b}(x)-g_1(u))
\] for case {\rm(F2)}. Typical examples include the following:
for {\rm(F1)}, $u_t=u_{xx}+u_{yy}+\bar{b}(x)u(1-u)$; for {\rm(F2)},
$u_t=u_{xx}+u_{yy}+u(\bar{b}(x)-u)$.

For both of the cases {\rm(F1)} and {\rm(F2)} with $\bar{b}$
replaced by a smooth $b$, it is known that (see \cite{s8} etc.)
any travelling wave $u$ in the direction $\theta$ has the
following asymptotic expression in the \lq\lq leading edge",
namely the area where $u\approx 0$:
\begin{equation}\label{AsymptoticExpression}
u(x,y,t)\sim e^{-\lambda(x\cos\theta+y\sin\theta-ct)}\psi(x),
\end{equation} where $\psi(x+L)\equiv \psi(x)>0,$ and $\lambda>0$
is some constant. Substituting \eqref{AsymptoticExpression} into
equation (\ref{SecondEquationcauchy}), we obtain the identity
\begin{equation}\label{IdentityForB}
-\psi''(x)+2\lambda\cos\theta\psi'(x)-b(x)f'(0)\psi(x)=(\lambda^2-\lambda
c )\psi(x).
\end{equation}
%We will establish the well-posedness of \eqref{SecondEquation} in
%Section 4.
\begin{definition}\label{definition for L} For $\bar{b}\in\overline{\Lambda}(\alpha),$ $\lambda\in\R,$ we define
an (unbounded) operator $-L_{\lambda,\theta,\bar{b}}$ on the Banach
space $\{\psi\in C(\mathbb{R})\,|\,\psi(x)=\psi(x+L)\}$ with
$\|\psi\|=\max_{x\in \mathbb{R}}|\psi(x)|$ as follows:
\begin{equation}\label{operator-L}
-L_{\lambda,\theta,\bar{b}}\psi(x)=-\psi''(x)+2\lambda\cos\theta\psi'(x)-\bar{b}(x)f'(0)\psi(x).
\end{equation} Here the derivatives are understood in the \lq\lq weak sense" by
which we mean that $-L_{\lambda,\theta,\bar{b}}\psi=g$ if and only
if, for any $\varphi\in C_0^{\infty}(\mathbb{R}),$
\[
\ \int_\mathbb{R}(-\varphi''-2\lambda\cos
\theta\varphi'-\bar{b}f'(0)\varphi)\,\psi\,
dx=\int_{\mathbb{R}}\varphi g \,dx.
\]
%We know that $D(-L_{\lambda,\bar{b}})\neq\emptyset$ since $0\in
%D(-L_{\lambda,\bar{b}}).$
\end{definition}
% where $\mu(c,\lambda,b)$ is the principal eigenvalue of the
%operator
%\begin{equation}\label{EigenvalueProblem}
%-L_{\lambda,c,b}\psi=-\psi''(x)+2\lambda\psi'(x)-(\lambda^2-\lambda
%c +b(x))\psi(x)
%\end{equation} under the periodicity condition $\psi(x+L)\equiv\psi(x)$.
%
\begin{definition}\label{definitionofprincipaleigenvalue}
Given $\bar{b}\in\overline{\Lambda}(\alpha)$, $\lambda\in\R$, and
$\theta\in [0,2\pi)$, we call $\mu(\lambda,\theta,\bar{b})$ the
{\textbf{principal eigenvalue}} of the operator
$-L_{\lambda,\theta,\bar{b}}$ in \eqref{operator-L} if there exists
a positive continuous function $\psi$ with $\psi(x)\equiv\psi(x+L)$
satisfying
\begin{equation}\label{identityofmulambdabbar}
-\psi''(x)+2\lambda\cos\theta\psi'(x)-\bar{b}(x)f'(0)\psi(x)=\mu(\lambda,\theta,\bar{b})\psi
\end{equation} in the weak sense. Here $\psi$ is called the
\textbf{principal eigenfunction}.
\end{definition}
It is clear that the following identity holds for any $\lambda\in\R$
and $\theta\in[0,2\pi)$:
\begin{equation}\label{eigenvalueidentity}
\mu(\lambda,\theta,\bar{b})=\mu_0(\lambda\cos\theta,\bar{b}),
\end{equation}
where $\mu_0(\lambda,\bar{b})$ denotes the principal eigenvalue of
the operator
\[
-L_{\lambda,0,\bar{b}}\psi=-\psi''+2\lambda\psi'-\bar{b}(x)f'(0)\psi.
\] Various properties of $\mu_0(\lambda,\bar{b})$ have been
established in \cite{llm} to study the 1 dimensional problem
\eqref{1-dim}. Combining \eqref{eigenvalueidentity} and Proposition
2.13 in \cite{llm}, one immediately obtains the following:
\begin{pro}\label{principaleigenvalueisunique}
For any $\bar{b}\in\overline{\Lambda}(\alpha)$, $\theta\in[0,2\pi)$
and $\lambda\in\R,$ the principal eigenvalue
$\mu(\lambda,\theta,\bar{b})$ exists. It is unique and simple, and
belongs to $\mathbb{R}$.
\end{pro}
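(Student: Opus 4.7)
The plan is to reduce the two-dimensional eigenvalue problem to the one-dimensional problem already treated in \cite{llm} via the obvious substitution $\tilde\lambda := \lambda\cos\theta$, and then invoke Proposition~2.13 of that paper directly.

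First, I would establish the identity \eqref{eigenvalueidentity}. The operator $-L_{\lambda,\theta,\bar{b}}$ depends on the pair $(\lambda,\theta)$ only through the coefficient $2\lambda\cos\theta$ in front of $\psi'$; the second-derivative and zeroth-order (measure) terms are unchanged. Setting $\tilde\lambda:=\lambda\cos\theta$, the weak eigenvalue equation
\[
-\psi''(x)+2\lambda\cos\theta\,\psi'(x)-\bar{b}(x)f'(0)\psi(x)=\mu\,\psi(x)
\]
is literally the same identity (tested against any $\varphi\in C_0^\infty(\mathbb{R})$) as the defining equation for $\mu_0(\tilde\lambda,\bar{b})$ associated with $-L_{\tilde\lambda,0,\bar{b}}$. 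Hence a positive $L$-periodic $\psi$ solves the former in the weak sense with eigenvalue $\mu$ if and only if it solves the latter with the same $\mu$; this gives $\mu(\lambda,\theta,\bar{b})=\mu_0(\lambda\cos\theta,\bar{b})$ whenever either side is defined.

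Second, I would apply Proposition~2.13 of \cite{llm}, which asserts exactly that, for every $\bar{b}\in\overline{\Lambda}(\alpha)$ and every $\tilde\lambda\in\mathbb{R}$, the principal eigenvalue $\mu_0(\tilde\lambda,\bar{b})$ of the one-dimensional operator $-L_{\tilde\lambda,0,\bar{b}}$ exists, is real, unique, and simple, with a strictly positive $L$-periodic eigenfunction. Specialising to $\tilde\lambda=\lambda\cos\theta$ and combining with the identity above, all four properties transfer verbatim to $\mu(\lambda,\theta,\bar{b})$, and the positive periodic eigenfunction of the 1D problem serves equally as the principal eigenfunction of $-L_{\lambda,\theta,\bar{b}}$.

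There is essentially no obstacle: the proposition is a direct corollary of the 1D theory through the scalar substitution $\tilde\lambda=\lambda\cos\theta$. The only point requiring a moment of care is that the weak formulation in Definition~\ref{definition for L} -- which is the only reasonable formulation when $\bar{b}$ is merely a measure in $\overline{\Lambda}(\alpha)$ -- matches the weak formulation used in \cite{llm}; this is immediate since both pair $\bar{b}\psi$ against the same class of smooth compactly supported test functions, and the first-order term $2\tilde\lambda\psi'$ is handled identically by integration against $\varphi'$.
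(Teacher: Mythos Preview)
Your proposal is correct and matches the paper's own argument exactly: the paper simply states that the proposition follows by combining the identity \eqref{eigenvalueidentity} with Proposition~2.13 in \cite{llm}, which is precisely the reduction $\tilde\lambda=\lambda\cos\theta$ you carry out.
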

%Here we introduce the following operator which will be used later
%\[
%\ -L_{\lambda,\theta,\bar{b}}\psi:=-\psi''+2\lambda
%\cos\theta\psi'-\bar{b}\psi.
%\] Furthermore, $\mu(\lambda,\theta,\bar{b})$ denotes the principal eigenvalue of the above operator.
\begin{remark}\label{rem:f'(0)=1}
For notational simplicity, we will assume $f'(0)=1$ in Section
\ref{Eigenvalueproblem} and later sections. We can do it without
loss of generality since otherwise we can simply replace $f(u)$ by
$f(u)/f'(0)$, $\bar{b}(x)$ by $f'(0)\bar{b}(x)$ and $\alpha$ by
$f'(0)\alpha$.
\end{remark}
In order to study the travelling wave solutions, a positive periodic
bounded steady-state of \eqref{SecondEquationcauchy} is
indispensable. We call a $t$-independent mild solution $P(x,y)$ the
{\bf steady-state} of \eqref{SecondEquationcauchy} or we say
$P(x,y)$ satisfies $u_{xx}+u_{yy}+\bar{b}(x)f(u)+g(u)=0$ in the mild
sense. Moreover, we say a function $\phi(x,y)$ is $L$-periodic
provided $\phi(x,y)\equiv \phi(x+L,0)$. In this paper, we will show
that for any $\bar{b}\in\overline{\Lambda}(\alpha)$, for both of
cases {\rm(F1)} and {\rm(F2)}, the $L$-periodic positive
steady-state mentioned above exists. For case {\rm(F1)}, $P\equiv
1$. For case {\rm(F2)}, see Lemma \ref{thm:monostability}.
\begin{definition}\label{def:speedforbx}
A mild entire solution of \eqref{SecondEquation2} is called a
travelling wave in the direction $\theta$ with average speed $c$ if
it can be written as
\[
\ u(x,y,t)=\varphi(x,x\cos\theta+y\sin\theta-ct)
\] where $\varphi$ satisfies $\varphi(x+L,s)\equiv\varphi(x,s)$
and
\[
\ \lim_{s\to\infty}\varphi(x,s)=0,\lim_{s\to
-\infty}\varphi(x,s)=P(x),
\]where $P$ is the $L$-periodic positive
steady-state.
\end{definition}
\begin{definition} Given $\bar{b}\in\overline{\Lambda}(\alpha)$,
we define the minimal speed $c^*(\theta;\bar{b})$ and a quantity
$c_e^*(\theta;\bar{b})$ which is related to the minimal speed by
\[\left\{
    \begin{array}{ll}
      c^*(\theta;\bar{b}):=\inf\{c>0\,|\,\hbox{Travelling wave with speed $c$
in direction $\theta$ exists}\}, \\
      c_e^*(\theta;\bar{b}):=\inf\{c>0\,|\,\exists\,\lambda>0,\,\hbox{such
that }\mu(\lambda,\theta,{\bar{b}})=\lambda^2-\lambda c\}.
    \end{array}
  \right.
\
\]
\end{definition}
%It is easy to see that $c^*_e(\theta,\bar{b})$ can also be
%represented by
%\begin{equation}\label{PropositionForSpeedintheta}
%\
%c^*_e(\theta,\bar{b})=\min_{\lambda>0}\frac{-\mu(\lambda,\theta,\bar{b})+\lambda^2}{\lambda}.
%\end{equation}
\begin{definition}
Given $\bar{b}\in\overline{\Lambda}(\alpha)$, a quantity
$w(\theta;\bar{b})$ is called the spreading speed in direction
$\theta$ if solution $u(x,y,t)$ of \eqref{SecondEquationcauchy} with
nonnegative compactly supported initial data $u_0(x,y)\not\equiv 0$
satisfies
\[
\ \lim_{t\to\infty}u(x,y,t)=0,\hbox{   on }\{x\cos\theta+y\sin\theta
>ct\}\,\,\,\hbox{  if }c>w(\theta;\bar{b}),
\]
\[
\ \lim_{t\to\infty}u(x,y,t)=P(x),\hbox{   on
 }\{x\cos\theta+y\sin\theta <ct\}\,\,\,\hbox{  if }c<w(\theta;b)
\]
\end{definition} where $P$ is the $L$-periodic positive
steady-state. We will use Weinberger \cite{s8} to show that
$c^*(\theta;\bar{b})=c^*_e(\theta;\bar{b})$ and travelling wave with
speed $c$ exists for $c>c^*(\theta;\bar{b})$. Moreover
$w(\theta;\bar{b})$ exists for
$\bar{b}\in\overline{\Lambda}(\alpha)$ and
$w(\theta;\bar{b})=\min_{|\theta-\phi|<\frac{\pi}{2}}c^*(\phi;\bar{b})/\cos(\theta-\phi)$.

\subsection{Main results}\label{subsection:MainResults}

In the following theorems
\ref{cstarbequalcestarb2D}--\ref{thm:speedforlargel}, we assume that
either {\rm(F1)} or {\rm(F2)} holds:

\begin{theorem}[Minimal speed]\label{cstarbequalcestarb2D}
For any $\bar{b}\in\overline{\Lambda}(\alpha)$, $\theta\in[0,2\pi)$,
it holds that $c^*(\theta;\bar{b})>0,$ and that a travelling wave in
the direction $\theta$ with speed $c$ exists if and only if $c\geq
c^*(\theta,\bar{b})$. Furthermore,
\[
c^*(\theta;\bar{b})=c_e^*(\theta;\bar{b}).
\]
\end{theorem}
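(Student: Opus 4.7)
The strategy is to approximate a general $\bar b \in \overline{\Lambda}(\alpha)$ by smooth $b_n \in \Lambda(\alpha)$ with $b_n \rightharpoonup \bar b$ in the weak-* sense (which exists by Definition \ref{DefofLamdaBarAlpha}) and then pass to the limit. For each smooth $b_n$, the results of Berestycki-Hamel-Roques \cite{s1} and Weinberger \cite{s8} already give both $c^*(\theta; b_n) = c^*_e(\theta; b_n) > 0$ and the existence of pulsating traveling waves in direction $\theta$ for every $c \geq c^*(\theta; b_n)$. The task is to upgrade these facts to the measure case.

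A key preliminary is the continuity of the principal eigenvalue under weak-* convergence, $\mu(\lambda, \theta, b_n) \to \mu(\lambda, \theta, \bar b)$ for each fixed $\lambda$, a property to be established in Section \ref{Eigenvalueproblem}. Combined with the blow-up $\mu(\lambda, \theta, \bar b) \to +\infty$ as $\lambda \to \infty$ (a consequence of Lemma \ref{lemma:A}), this yields $c^*_e(\theta; b_n) \to c^*_e(\theta; \bar b) > 0$, since the critical pair $(\lambda^*, c^*_e)$ is pinned down by the tangency conditions $\mu = \lambda^2 - \lambda c$ and $\partial_\lambda \mu = 2\lambda - c$, which deform continuously with the coefficient.

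For the existence of a traveling wave at any $c > c^*_e(\theta; \bar b)$, choose $n$ large enough that $c > c^*(\theta; b_n)$, let $u_n(x, y, t) = \varphi_n(x, x \cos\theta + y \sin\theta - ct)$ be a corresponding smooth-coefficient traveling wave, and normalize the shift in the $s$-variable so that, say, $\max_x \varphi_n(x, 0) = 1/2$. The uniform $L^\infty$ bound and parabolic interior regularity in the mild-solution formulation provide a locally convergent subsequence $u_n \to u_\infty$. Lemma \ref{cor:A} is exactly what is needed to identify $b_n f(u_n) \to \bar b f(u_\infty)$ in the appropriate weak sense, so $u_\infty$ satisfies the mild formulation of \eqref{SecondEquationcauchy} with coefficient $\bar b$. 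The traveling wave structure and $L$-periodicity pass to the limit automatically. The correct boundary behavior $\varphi_\infty(x, +\infty) = 0$ and $\varphi_\infty(x, -\infty) = P(x)$ follows from the uniform exponential tail $\varphi_n(x, s) \leq C e^{-\lambda s} \psi_n(x)$ supplied by the principal eigenfunction of $-L_{\lambda, \theta, b_n}$, together with the monostability of the equation. The critical case $c = c^*(\theta; \bar b)$ is then recovered by a further limit $c_k \searrow c^*(\theta; \bar b)$ using the same compactness machinery.

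The converse direction (no wave exists for $c < c^*_e$) is the standard KPP linearization: any traveling wave with speed $c$ must decay exponentially in the leading edge, $\varphi(x, s) \sim e^{-\lambda s} \psi(x)$ as $s \to +\infty$, forcing $\mu(\lambda, \theta, \bar b) = \lambda^2 - \lambda c$ for some $\lambda > 0$ and hence $c \geq c^*_e$. The main technical obstacle is the compactness-and-limit passage above: because $\bar b$ is only a Radon measure, the nonlinear term $\bar b f(u)$ cannot be controlled pointwise and the weak-* convergence furnished by Lemma \ref{cor:A} is essential for identifying it in the limit. A second delicate point is preventing the limit profile from degenerating to the trivial solutions $0$ or $P$; this is resolved by the normalization together with two-sided exponential barriers constructed from $\psi$ and from the steady state $P$.
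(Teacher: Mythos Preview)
Your approach---approximate $\bar b$ by smooth $b_n$, take smooth traveling waves, extract a limit---is genuinely different from the paper's. The paper does not construct waves for $\bar b$ as limits of waves for $b_n$. Instead, Sections \ref{section:wellposedness}--\ref{Localuniformtopolyandsemiflow} build the solution semiflow $Q_t$ \emph{directly} for the measure coefficient $\bar b$, verify Weinberger's hypotheses (i)--(v) for $Q_t$, and then invoke Weinberger's abstract theory \cite{s8} on $Q_t$ itself. This yields existence for $c\geq c^*$ and nonexistence for $c<c^*$ in one stroke; the identification $c^*=c^*_e$ then comes from the linear sandwich $\Phi_t^\epsilon \leq Q_t \leq \Phi_t$ (Lemma \ref{QtLeqPhit}) and Weinberger's Theorems 2.4--2.5, with no smooth approximation of waves needed. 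What your route buys is avoiding the semiflow machinery in local uniform topology; what it costs is a separate and more delicate argument for each direction.

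There are two concrete problems in your sketch. First, a factual slip: $\mu(\lambda,\theta,\bar b)$ does \emph{not} blow up as $\lambda\to\infty$; by Lemma \ref{lem:boundednessofmu} it stays in $[-\alpha-\alpha^2L^2,-\alpha]$ for all $\lambda$. What you actually need---that the minimizing $\lambda$ in $c^*_e=\min_{\lambda>0}(\lambda^2-\mu)/\lambda$ lies in a compact set uniformly in $n$---follows from this two-sided bound, not from blow-up. Second, and more seriously, the nonexistence direction is a real gap. You assert that any wave with speed $c$ must satisfy $\varphi(x,s)\sim e^{-\lambda s}\psi(x)$ as $s\to+\infty$, forcing $\mu(\lambda)=\lambda^2-\lambda c$. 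Establishing these precise exponential asymptotics is already nontrivial for smooth $b$ and is not available off the shelf when $\bar b$ is a measure; it would require an ODE-type analysis at $+\infty$ that the mild-solution framework does not immediately supply. The paper sidesteps this entirely: Weinberger's theory gives nonexistence for $c<c^*$ abstractly, and the upper bound $Q_t\leq\Phi_t$ then forces $c^*\leq c^*_e$ without ever analyzing the tail of an individual wave. If you want to repair your argument, the cleanest fix is to use the same linear comparison: for $c<c^*_e$ one has $\mu(\lambda)<\lambda^2-\lambda c$ for every $\lambda>0$, so each $e^{-\lambda s}\psi_\lambda(x)$ is a strict subsolution of the linearized wave equation, and a sliding/comparison argument (not an asymptotic one) rules out a bounded positive wave.
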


\begin{corol}\label{cr:directional-symmetry}
For any $\bar{b}\in\overline{\Lambda}(\alpha)$,
\[
c^*(-\theta;\bar{b})=c^*(\theta;\bar{b}), \quad\ \
c^*(\theta+\pi;\bar{b})=c^*(\theta;\bar{b}),
\]
where the value of $\theta$ is understood in the ${\rm mod}~2\pi$
sense.
\end{corol}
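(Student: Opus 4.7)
The plan is to deduce both symmetries from the characterization $c^{*}(\theta;\bar{b})=c_{e}^{*}(\theta;\bar{b})$ provided by Theorem \ref{cstarbequalcestarb2D}, together with the identity \eqref{eigenvalueidentity} and a self-adjointness-up-to-reflection property of the operator $-L_{\lambda,\theta,\bar{b}}$. Since
\[
c_{e}^{*}(\theta;\bar{b})=\inf\{c>0\,|\,\exists\lambda>0,\ \mu(\lambda,\theta,\bar{b})=\lambda^{2}-\lambda c\},
\]
it suffices to show $\mu(\lambda,-\theta,\bar{b})=\mu(\lambda,\theta,\bar{b})$ and $\mu(\lambda,\theta+\pi,\bar{b})=\mu(\lambda,\theta,\bar{b})$ for all $\lambda>0$.

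The first equality is immediate. By \eqref{eigenvalueidentity}, $\mu(\lambda,\theta,\bar{b})=\mu_{0}(\lambda\cos\theta,\bar{b})$, and since $\cos(-\theta)=\cos\theta$, the two eigenvalues literally coincide. Plugging this into the definition of $c_{e}^{*}$ gives $c^{*}(-\theta;\bar{b})=c^{*}(\theta;\bar{b})$.

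The second equality requires the parity $\mu_{0}(\sigma,\bar{b})=\mu_{0}(-\sigma,\bar{b})$ (with $\sigma=\lambda\cos\theta$), because $\cos(\theta+\pi)=-\cos\theta$. To establish this, I would argue that the operator $-L_{\sigma,0,\bar{b}}\psi=-\psi''+2\sigma\psi'-\bar{b}f'(0)\psi$ and its formal adjoint $-\psi''-2\sigma\psi'-\bar{b}f'(0)\psi=-L_{-\sigma,0,\bar{b}}\psi$, both acting on $L$-periodic functions (with the convective term reversed by integration by parts since there are no boundary contributions on the periodic torus), share the same principal eigenvalue. This is the standard fact that an operator and its adjoint have the same spectrum; combined with the simplicity and reality of the principal eigenvalue (Proposition \ref{principaleigenvalueisunique}), one gets equality of the principal eigenvalues themselves. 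Hence $\mu(\lambda,\theta+\pi,\bar{b})=\mu_{0}(-\lambda\cos\theta,\bar{b})=\mu_{0}(\lambda\cos\theta,\bar{b})=\mu(\lambda,\theta,\bar{b})$, and the $c_{e}^{*}$ characterizations coincide.

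The main obstacle is justifying the adjoint computation when $\bar{b}$ is only a measure, since the weak formulation in Definition \ref{definition for L} must be used carefully. The duality pairing $\int\psi\,\bar{b}\,\varphi\,dx$ is symmetric in $\psi$ and $\varphi$ (both continuous, periodic), so the multiplicative part is truly self-adjoint in the measure sense. The second-order and first-order parts only involve smooth test functions, so standard integration by parts applies. If one prefers, a safer route is to approximate $\bar{b}$ by $b_{n}\in\Lambda(\alpha)$ with $b_{n}\to\bar{b}$ in the weak$^{*}$ sense, apply the parity $\mu_{0}(\sigma,b_{n})=\mu_{0}(-\sigma,b_{n})$ for smooth $b_{n}$, and invoke the continuity of the principal eigenvalue under this convergence (established in Section \ref{Eigenvalueproblem}) to pass to the limit.
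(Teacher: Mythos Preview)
Your proof is correct and follows essentially the same approach as the paper: the paper also observes that $-L_{\lambda,\theta,\bar{b}}=-L_{\lambda,-\theta,\bar{b}}$ (since $\cos(-\theta)=\cos\theta$) and that $-L_{\lambda,\theta,\bar{b}}$ and $-L_{\lambda,\theta+\pi,\bar{b}}$ are adjoint, giving $\mu(\lambda,\theta,\bar{b})=\mu(\lambda,-\theta,\bar{b})=\mu(\lambda,\theta+\pi,\bar{b})$, and then concludes via the formula $c^*(\theta;\bar{b})=\min_{\lambda>0}(-\mu(\lambda,\theta,\bar{b})+\lambda^2)/\lambda$. Your additional care about the measure case (and the approximation alternative via Proposition~\ref{pro:convergenceofeigenvalue}) is a welcome elaboration, but the underlying argument is the same.
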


\begin{theorem}[Continuity]\label{Convergenceofcstarbn}
For any $\bar{b}\in\overline{\Lambda}(\alpha)$, $\theta\in[0,2\pi)$,
if $\{b_n\}$ is a sequence in $ \Lambda(\alpha)$ converging to
$\bar{b}$ in the weak$^*$ sense, then
\[
c^*(\theta;\bar{b})=\lim_{n\to\infty}c^*(\theta;b_n).
\]
Furthermore, %for any $\bar{b} \in \overline{\Lambda}(\alpha)$,
\begin{equation}\label{boundednessinequality}
2\sqrt{\tilde{\alpha}}\leq c^*(\theta,\bar{b})\leq
2\sqrt{\tilde{\alpha}+\tilde{\alpha}^2L^2},
\end{equation} where $\tilde{\alpha}=f'(0)\alpha$.
%Moreover, if $b\not\equiv \alpha,$ then $2\sqrt{\alpha}<c^*(b).$
\end{theorem}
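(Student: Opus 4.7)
The plan is to use the identification $c^*(\theta;\bar{b}) = c_e^*(\theta;\bar{b})$ from Theorem \ref{cstarbequalcestarb2D}, rewritten in the equivalent form $c^*(\theta;\bar{b}) = \inf_{\lambda>0}\bigl[\lambda - \mu(\lambda,\theta,\bar{b})/\lambda\bigr]$, together with the identity $\mu(\lambda,\theta,\bar{b}) = \mu_0(\lambda\cos\theta,\bar{b})$ from \eqref{eigenvalueidentity}, to reduce the whole statement to two analytic properties of the 1D principal eigenvalue $\mu_0$: (a) continuity of $\bar{b}\mapsto\mu_0(\lambda,\bar{b})$ under weak* convergence (with $\lambda$ fixed), and (b) an a priori two-sided control of $-\mu_0(\lambda,\bar{b})$ in terms of $\alpha$ and $L$. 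Both (a) and (b) are already available, or easily adaptable, from our earlier 1D paper \cite{llm}, so I would quote/re-derive them as preliminary lemmas.

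Granted these, I would first establish the bounds \eqref{boundednessinequality}. The key observation for the lower estimate is that $\bar{b}\mapsto\mu(\lambda,\theta,\bar{b})$ is concave: after the gauge substitution $\psi = e^{\lambda\cos\theta\,x}\chi$, the eigenvalue problem becomes self-adjoint with potential $-\bar{b}$ and Floquet boundary conditions, so the Rayleigh min-max expresses $\mu$ as an infimum of linear functionals of $\bar{b}$. Combined with the translation-invariance $\mu(\lambda,\theta,\bar{b}(\cdot+s)) = \mu(\lambda,\theta,\bar{b})$ and the fact that $\frac{1}{L}\int_0^L \bar{b}(\cdot+s)\,ds = \alpha$, Jensen's inequality yields
\[
\mu(\lambda,\theta,\bar{b})\;\leq\;\mu(\lambda,\theta,\alpha)\;=\;-\tilde\alpha,
\]
the last equality following from $\psi\equiv 1$ being the principal eigenfunction for the constant potential. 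Substituting into the infimum formula and applying AM-GM gives $c^*(\theta,\bar{b})\geq 2\sqrt{\tilde\alpha}$, uniformly in $\theta$. The matching upper bound would be obtained by adapting the explicit test-function construction of \cite{llm}: one builds a periodic $\psi>0$ (piecewise exponential, with jumps tuned to the support of $\bar{b}$), applies the Collatz--Wielandt estimate $\mu(\lambda,\theta,\bar{b}) \geq \min_x (-L_{\lambda,\theta,\bar{b}}\psi)(x)/\psi(x)$, and optimises the resulting quantity in $\lambda$.

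With \eqref{boundednessinequality} in hand, the continuity assertion $c^*(\theta;b_n)\to c^*(\theta;\bar{b})$ follows by a standard semicontinuity argument. For upper semicontinuity I would pick $\lambda^\ast$ almost achieving the infimum for $\bar{b}$ and observe that $c^*(\theta;b_n)\leq\lambda^\ast-\mu(\lambda^\ast,\theta,b_n)/\lambda^\ast$, whose limit lies within $\varepsilon$ of $c^*(\theta;\bar{b})$ by (a). For lower semicontinuity, the bounds \eqref{boundednessinequality} applied to each $b_n$ force any $\lambda_n$ at which the infimum is (nearly) attained to stay in a compact sub-interval of $(0,\infty)$ independent of $n$: outside this sub-interval the quantity $\lambda - \mu(\lambda,\theta,b_n)/\lambda$ exceeds $2\sqrt{\tilde\alpha+\tilde\alpha^2L^2}$ by (b) and the trivial estimate $\lambda - \mu/\lambda \geq \lambda$. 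Extracting a convergent subsequence $\lambda_n\to\lambda_\infty>0$ and invoking (a) once more completes the argument. The main obstacle is the upper bound in \eqref{boundednessinequality}: porting the 1D test-function construction of \cite{llm} to the present operator $-L_{\lambda,\theta,\bar{b}}$, whose drift coefficient is $2\lambda\cos\theta$ rather than $2\lambda$, requires some case analysis---especially at $\cos\theta = 0$, where the problem collapses to $\mu_0(0,\bar{b})$ and must be estimated directly---and the resulting bound must be shown to be $\theta$-independent, matching the form stated in \eqref{boundednessinequality}.
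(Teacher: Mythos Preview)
Your approach is correct and essentially the same as the paper's: both rely on the formula $c^*(\theta;\bar{b})=\min_{\lambda>0}(\lambda^2-\mu(\lambda,\theta,\bar{b}))/\lambda$, the weak$^*$ continuity of the eigenvalue (Proposition~\ref{pro:convergenceofeigenvalue}), and the two-sided bound $-\alpha\geq\mu\geq-\alpha-\alpha^2L^2$ (Lemma~\ref{lem:boundednessofmu}), the latter two quoted directly from \cite{llm} via the identity~\eqref{eigenvalueidentity}. Your concern about the upper bound is overstated: since $\mu(\lambda,\theta,\bar{b})=\mu_0(\lambda\cos\theta,\bar{b})$ and the 1D bound of \cite{llm} already holds for \emph{all} real arguments (including $0$), no separate test-function construction or case analysis in $\theta$ is required---the $\theta$-independent bound \eqref{boundednessinequality} drops out immediately.
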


\begin{theorem}[Spreading speed]\label{Themforspreadingspeed2D}
For any $\bar{b}\in\overline{\Lambda}(\alpha)$, $\theta\in[0,2\pi)$,
the spreading speed in the direction $\theta$, denoted by
$w(\theta;\bar{b})$, exists and
\begin{equation}\label{eq:sprspeed}
\ w(\theta;\bar{b})=\min_{|\theta-\phi|<\frac{\pi}{2}}
c^*(\phi;\bar{b})/\cos(\theta-\phi).
\end{equation}
\end{theorem}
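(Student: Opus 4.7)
The strategy is to apply Weinberger's abstract spreading-speed theorem \cite{s8} to the semiflow generated by \eqref{SecondEquationcauchy} and then to identify the resulting formula using the linearized characterization of the minimal speed already established in Theorem \ref{cstarbequalcestarb2D}.

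Let $Q_t$ denote the solution semiflow on the cone $\mathcal{C}=\{u\in C(\R^2):0\le u\le P\}$, where $P\equiv 1$ in case (F1) and $P$ is the $L$-periodic positive steady state given by Lemma \ref{thm:monostability} in case (F2). From the well-posedness in uniform topology (Section \ref{section:wellposedness}) and the local-uniform semiflow theory (Section \ref{Localuniformtopolyandsemiflow}), $Q_t$ is order-preserving, invariant under $L$-translations in $x$ and under arbitrary translations in $y$, continuous in the locally uniform topology, and compact on bounded sets (via the mild-solution representation and the smoothing properties of the heat kernel $G$ in \eqref{heatkernel}). Under either (F1) or (F2), $Q_tu_0\to P$ as $t\to\infty$ for any $u_0\in\mathcal{C}$ positive on an open set. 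These are exactly Weinberger's standing hypotheses, adapted to periodicity in one of the two spatial variables.

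For each fixed direction $\theta$ I would apply Weinberger's theorem (after the rotation making $\theta$ the principal spreading direction) to obtain the existence of $w(\theta;\bar b)$ in the sense of the theorem, together with its characterization as an infimum over exponential rates,
\[
w(\theta;\bar b)=\inf_{\lambda>0}\frac{\lambda^2-\mu_0(\lambda\cos\theta,\bar b)}{\lambda},
\]
using the identity \eqref{eigenvalueidentity} applied to exponentially weighted eigenfunctions. Rearranging this infimum exactly as in the proof of $c^*(\theta;\bar b)=c_e^*(\theta;\bar b)$ identifies it with $\min_{|\theta-\phi|<\pi/2}c^*(\phi;\bar b)/\cos(\theta-\phi)$: each such $\phi$ produces a plane pulsating wave in direction $\phi$ at minimal speed $c^*(\phi;\bar b)$, which acts as a supersolution in direction $\theta$ with apparent speed $c^*(\phi;\bar b)/\cos(\theta-\phi)$, and the minimum over $\phi$ gives the tightest bound realized by the semiflow; the matching lower bound comes from compactly supported truncations of this plane wave, which are subsolutions of the KPP-type equation for sufficiently small amplitude.

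The main obstacle is that Weinberger's framework is written for smooth coefficients, so applying it directly requires verifying his hypotheses in the measure setting. I would handle this by approximation: pick $b_n\in\Lambda(\alpha)$ with $b_n\to\bar b$ in the weak$^*$ sense, so that Weinberger's theorem and the displayed formula hold for each $b_n$. Then, by Theorem \ref{Convergenceofcstarbn} and the uniform bound \eqref{boundednessinequality}, $c^*(\phi;b_n)\to c^*(\phi;\bar b)$ for each $\phi$, and the continuity of $\mu_0(\lambda\cos\phi,\bar b)$ in $\phi$ upgrades this to enough regularity in $\phi$ to commute the limit with $\min_\phi$. Finally, the weak$^*$ stability of mild solutions of \eqref{SecondEquationcauchy} established in Section \ref{section:wellposedness}, together with the monotonicity of $Q_t$, allows a diagonal argument that transfers both the upper and lower spreading estimates from $b_n$ to $\bar b$, completing the identification of $w(\theta;\bar b)$.
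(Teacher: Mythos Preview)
Your overall strategy---verify Weinberger's hypotheses for the semiflow $Q_t$ and read off the formula for $w(\theta;\bar b)$ from his abstract theory---is exactly the paper's approach. But two points diverge from the paper and one of them is a genuine gap.

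First, the displayed formula
\[
w(\theta;\bar b)=\inf_{\lambda>0}\frac{\lambda^2-\mu_0(\lambda\cos\theta,\bar b)}{\lambda}
\]
is the formula for $c^*(\theta;\bar b)$, not for $w(\theta;\bar b)$; these quantities differ in general (indeed Theorem~\ref{Th:monotone-theta} treats them as distinct). In Weinberger's framework the spreading speed in direction $\theta$ is built from the planar wave speeds $c^*(\phi;\bar b)$ over \emph{all} $\phi$ with $|\theta-\phi|<\pi/2$, and the formula \eqref{eq:sprspeed} is the output of his theorem, not something you obtain by ``rearranging'' a single-$\lambda$ infimum. Your heuristic about super/subsolutions from plane waves is the right intuition, but it is not a substitute for invoking Weinberger's result directly.

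Second, the approximation detour through smooth $b_n$ is unnecessary and, as you outline it, incomplete. Weinberger's hypotheses are stated for abstract order-preserving recursions, not for PDEs with smooth coefficients; Section~\ref{Localuniformtopolyandsemiflow} already verifies properties (i)--(v) for $Q_t$ and (I)--(III) for the linearized operators $\Phi_t$, $\Phi_t^\epsilon$ directly in the measure-valued setting. The paper therefore applies Weinberger's Theorems~2.4 and~2.5 to $\bar b$ itself. The crucial ingredient you do not mention is the linear sandwich $\Phi_t^\epsilon\le Q_t\le \Phi_t$ of Lemma~\ref{QtLeqPhit}: the upper bound gives the ``$\le$'' half of the spreading-speed identification via Theorem~2.5 of \cite{s8}, and the lower bound (with the $(1-\epsilon)\bar b$ eigenvalue converging to the $\bar b$ eigenvalue as $\epsilon\to 0$) gives the ``$\ge$'' half via Theorem~2.4. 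Your approximation argument would instead need to transfer the spreading estimates for $b_n$ to $\bar b$ at the level of nonlinear solutions, and the sketch you give (``diagonal argument'' and ``weak$^*$ stability of mild solutions'') does not establish the uniform-in-$t$ control needed for that transfer.
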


\begin{theorem}[Monotonicity in $\theta$]\label{Th:monotone-theta}
For any nonconstant $\bar{b}\in\overline{\Lambda}(\alpha)$, both
$c^*(\theta,\bar{b})$ and $w(\theta;\bar{b})$ are strictly monotone
increasing in $\theta\in[0,\pi/2]$; hence srtictly monotone
decreasing in $\theta\in[-\pi/2,0]$.
\end{theorem}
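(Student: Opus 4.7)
The plan is to reduce everything to the dispersion formula from Theorem \ref{cstarbequalcestarb2D} and \eqref{eigenvalueidentity}, namely
\[
c^{*}(\theta;\bar{b})=\inf_{\lambda>0}G(\lambda,\theta),\qquad G(\lambda,\theta):=\lambda-\frac{\mu_{0}(\lambda\cos\theta,\bar{b})}{\lambda}.
\]

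For the monotonicity of $c^{*}$, I would rely on two properties of the 1-D principal periodic eigenvalue $\mu_{0}(\cdot,\bar{b})$, to be established in Section \ref{Eigenvalueproblem} along the lines of \cite{llm}: evenness $\mu_{0}(\nu,\bar{b})=\mu_{0}(-\nu,\bar{b})$, which follows because $L_{-\nu,0,\bar{b}}$ is the formal adjoint of $L_{\nu,0,\bar{b}}$ and hence shares its principal periodic eigenvalue; and strict convexity of $\nu\mapsto\mu_{0}(\nu,\bar{b})$ whenever $\bar{b}$ is nonconstant. Together they force $\mu_{0}'(\nu,\bar{b})>0$ for $\nu>0$, so for each fixed $\lambda>0$ the map $\theta\mapsto G(\lambda,\theta)$ is strictly increasing on $(0,\pi/2]$. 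Since $\mu_{0}(0,\bar{b})<0$ gives $G\to+\infty$ as $\lambda\to0^{+}$ and the lower bound $c^{*}(\theta)\ge2\sqrt{\tilde{\alpha}}>0$ from Theorem \ref{Convergenceofcstarbn} together with the growth of $\mu_{0}$ rules out a minimizing sequence escaping to $+\infty$, a minimizer $\lambda^{*}(\theta)\in(0,\infty)$ exists. Setting $\lambda^{*}_{2}=\lambda^{*}(\theta_{2})$, for $0\le\theta_{1}<\theta_{2}\le\pi/2$,
\[
c^{*}(\theta_{1})\le G(\lambda^{*}_{2},\theta_{1})<G(\lambda^{*}_{2},\theta_{2})=c^{*}(\theta_{2}).
\]

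For the monotonicity of $w$, I use Theorem \ref{Themforspreadingspeed2D}:
\[
w(\theta;\bar{b})=\min_{|\phi-\theta|<\pi/2}\frac{c^{*}(\phi;\bar{b})}{\cos(\phi-\theta)}.
\]
Via the symmetries $c^{*}(-\phi)=c^{*}(\phi)=c^{*}(\pi-\phi)$ from Corollary \ref{cr:directional-symmetry}, a short reflection calculation shows that replacing any $\phi\notin[0,\pi/2]$ by $|\phi|$ or $\pi-\phi$ does not increase the ratio, so the inner minimum is attained at some $\phi^{*}(\theta)\in[0,\pi/2]$. Fix $0\le\theta_{1}<\theta_{2}\le\pi/2$, set $\phi_{2}^{*}=\phi^{*}(\theta_{2})$, and split into two cases. \textbf{Case A} ($\phi_{2}^{*}\ge\theta_{2}-\theta_{1}$): the test angle $\phi_{1}:=\phi_{2}^{*}-(\theta_{2}-\theta_{1})\in[0,\pi/2]$ satisfies $\cos(\phi_{1}-\theta_{1})=\cos(\phi_{2}^{*}-\theta_{2})$ and, by the strict monotonicity of $c^{*}$ on $[0,\pi/2]$, $c^{*}(\phi_{1})<c^{*}(\phi_{2}^{*})$, so $w(\theta_{1})\le c^{*}(\phi_{1})/\cos(\phi_{1}-\theta_{1})<w(\theta_{2})$. \textbf{Case B} ($\phi_{2}^{*}<\theta_{2}-\theta_{1}$): take $\phi_{1}=0$; since $\theta_{2}-\phi_{2}^{*}>\theta_{1}$ with both angles in $[0,\pi/2)$, $\cos(\theta_{2}-\phi_{2}^{*})<\cos\theta_{1}$, and combined with $c^{*}(0)\le c^{*}(\phi_{2}^{*})$ (strict unless $\phi_{2}^{*}=0$, in which case the cosine inequality alone suffices), this gives $w(\theta_{1})\le c^{*}(0)/\cos\theta_{1}<c^{*}(\phi_{2}^{*})/\cos(\phi_{2}^{*}-\theta_{2})=w(\theta_{2})$.

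The main obstacle is establishing the strict convexity of $\mu_{0}(\cdot,\bar{b})$ for nonconstant $\bar{b}$ in the measure-valued setting; once that is in place, the rest of the argument is essentially combinatorial, and the Case A / Case B dichotomy sidesteps any regularity or envelope-theorem analysis of $c^{*}$.
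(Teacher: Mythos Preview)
Your argument is correct, but it diverges from the paper's at the key step of showing that $\mu_0(\nu,\bar b)$ is strictly increasing in $|\nu|$ when $\bar b$ is nonconstant. The paper establishes this via Proposition~\ref{pr:Nadin}, a variational formula of Nadin type,
\[
\mu(\lambda,\theta,\bar b)=\min_{\eta\in H^1_{per}}\left\{\frac{\int_I(\eta')^2-\bar b\,\eta^2}{\int_I\eta^2}+\lambda^2\cos^2\theta\left(1-\frac{L^2}{\int_I\eta^2\,\int_I\eta^{-2}}\right)\right\},
\]
and then observes that the coefficient of $\lambda^2\cos^2\theta$ is strictly positive at the minimizer whenever $\bar b$ is nonconstant (by the Euler--Lagrange equation and the strict Schwarz inequality). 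This makes the monotonicity in $|\cos\theta|$ immediate and, crucially, the proof of the formula is carried out directly for $\bar b\in\overline\Lambda(\alpha)$ without approximation. Your route through evenness plus strict convexity of $\nu\mapsto\mu_0(\nu,\bar b)$ is equally valid but, as you note yourself, leaves the strict convexity in the measure-valued setting as a genuine task; neither Section~\ref{Eigenvalueproblem} nor \cite{llm} supplies it, so you would need an additional argument (e.g.\ a second-variation computation in the smooth case combined with the convergence in Proposition~\ref{pro:convergenceofeigenvalue}, being careful that strictness survives the limit). The Nadin formula buys you a self-contained, one-shot proof at the cost of a nontrivial auxiliary proposition; your approach is lighter once convexity is in hand, and has the virtue of making the underlying structure (convex dispersion relation) explicit. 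For the spreading speed $w$, the paper simply cites \eqref{eq:sprspeed} without further detail, whereas your Case~A/Case~B reflection argument actually fills in what is needed; that part of your proof is more complete than the paper's.
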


The next result deals with the problem of maximizing the spreading
speed among all $\bar{b}\in\overline{\Lambda}(\alpha)$ for a given
$\alpha>0$. Here $h$ denotes the measure on $\R^2$ given by
\eqref{h}. It represents mass concentration on the parallel lines
$x=(k+\frac12)L,\;k\in{\mathbb Z}.$

\begin{theorem}[Optimal coefficient]\label{MinimalPlanarSpeed}
For any $\theta\in [0,2\pi)$,
\[
c^*(\theta;h)>c^*(\theta;b)\quad \hbox{for any}\ \;
{b\in\Lambda(\alpha)},
\]
\[
c^*(\theta;h)=\max_{\bar{b}\in\overline{\Lambda}(\alpha)}
c^*(\theta;\bar{b})=\sup_{b\in\Lambda(\alpha)}c^*(\theta;b).
\]
Consequently, the spreading speed of $h$ satisfies
\[
w(\theta;h)=\max_{\bar{b}\in\overline{\Lambda}(\alpha)}
w(\theta;\bar{b})=\sup_{b\in\Lambda(\alpha)}w(\theta;b)\ \ \hbox{for
any}\ \;\theta\in[0,2\pi).
\]
\end{theorem}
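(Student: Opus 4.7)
The plan is to reduce the optimization to the one-dimensional problem already solved in \cite{llm} and extended in Section~\ref{1Dimensionalcase}. Using Theorem~\ref{cstarbequalcestarb2D} together with identity \eqref{eigenvalueidentity}, one has
\[
c^*(\theta;\bar{b})=c_e^*(\theta;\bar{b})=\inf_{\lambda>0}\Big(\lambda-\frac{\mu_0(\lambda\cos\theta,\bar{b})}{\lambda}\Big),
\]
so $\theta$ enters only through the scalar argument $\nu=\lambda\cos\theta$ of the one-dimensional principal eigenvalue $\mu_0(\nu,\bar{b})$. Hence the theorem will follow from the 1-D comparison
\[
\mu_0(\nu,h)\le\mu_0(\nu,\bar{b})\qquad \bigl(\nu\in\mathbb{R},\ \bar{b}\in\overline{\Lambda}(\alpha)\bigr),
\]
which will be established, with strict inequality whenever $\bar{b}\in\Lambda(\alpha)$, as the substantive content of the 1-D optimization carried out in Section~\ref{1Dimensionalcase}. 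This is the direct analogue of the main result of \cite{llm} and rests on a variational characterization of $\mu_0$ (after removing the drift by the conjugation $\psi=e^{\nu x}\phi$, which yields a self-adjoint Rayleigh quotient with Bloch-type condition depending linearly on $\bar{b}$), so that concentration of mass lowers the quotient.

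Once this eigenvalue inequality is in place, the pointwise estimate passes through $\inf_\lambda$ and gives $c^*(\theta;h)\ge c^*(\theta;\bar{b})$ for every $\bar{b}\in\overline{\Lambda}(\alpha)$; strict inequality at the minimizer $\lambda^*=\lambda^*(\theta;b)>0$ upgrades this to $c^*(\theta;h)>c^*(\theta;b)$ for every smooth $b\in\Lambda(\alpha)$. The identity $c^*(\theta;h)=\sup_{b\in\Lambda(\alpha)}c^*(\theta;b)$ is obtained by choosing a sequence of mollifications $b_n\to h$ in the weak$^*$ sense and applying Theorem~\ref{Convergenceofcstarbn} to get $c^*(\theta;b_n)\to c^*(\theta;h)$; the maximum over $\overline{\Lambda}(\alpha)$ is then attained at $h\in\overline{\Lambda}(\alpha)$. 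The corresponding statement for the spreading speed is immediate from the representation $w(\theta;\bar{b})=\min_{|\theta-\phi|<\pi/2}c^*(\phi;\bar{b})/\cos(\theta-\phi)$ of Theorem~\ref{Themforspreadingspeed2D}, since the inequality $c^*(\phi;h)\ge c^*(\phi;\bar{b})$ holds uniformly in $\phi$.

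The main obstacle I expect is the \emph{strictness} of the 1-D eigenvalue comparison. The non-strict version is a soft consequence of weak$^*$-approximation together with continuity of $\mu_0$; however, proving $\mu_0(\nu,h)<\mu_0(\nu,b)$ at the relevant $\lambda^*$ for every smooth $b\ne h$ with $\int_{[0,L)} b\,dx=\alpha L$ requires a sharper rearrangement-type argument that exploits the specific $\delta$-concentration of $h$ and rules out the possibility that a smooth peak could replicate the optimal value. Handling the degenerate direction $\theta=\pi/2$ (where $\cos\theta=0$) separately will also need some care, since then only the value $\mu_0(0,\bar b)$ is relevant and the strictness must be deduced at $\nu=0$.
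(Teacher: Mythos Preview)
Your proposal is correct and follows essentially the same route as the paper: reduce to the one-dimensional eigenvalue via \eqref{eigenvalueidentity}, invoke the strict inequality $\mu_0(\nu,h)<\mu_0(\nu,b)$ for all $\nu\in\mathbb{R}$ and $b\in\Lambda(\alpha)$ (this is exactly Lemma~\ref{lem:eigenvalueofhwiththeta}, whose proof simply cites Lemma~4.12 of \cite{llm}), pass through the formula \eqref{eq:c*minmu+lamda^2/lamda}, use Theorem~\ref{Convergenceofcstarbn} for the sup/max identity, and then read off the spreading-speed statement from \eqref{eq:sprspeed}.

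Your two anticipated obstacles are not genuine. The strict comparison $\mu_0(\nu,h)<\mu_0(\nu,b)$ for smooth $b$ is not something you need to re-prove here: it is already established in \cite{llm} (and recorded in this paper as Lemma~\ref{lem:eigenvalueofhwiththeta}), and it holds for \emph{every} $\nu\in\mathbb{R}$, including $\nu=0$. Hence the case $\theta=\pi/2$ needs no separate treatment: there $c^*(\pi/2;\bar b)=2\sqrt{-\mu_0(0,\bar b)}$, and the strict inequality at $\nu=0$ gives $c^*(\pi/2;h)>c^*(\pi/2;b)$ directly. Your sketch of a variational/rearrangement argument for the 1-D strictness is thus unnecessary detail for the present theorem, though it accurately reflects how that lemma is proved in \cite{llm}.
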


The following theorems are concerned with the asymptotics of the
speeds for large and small $L$.  In order to emphasize the
dependence of the optimal speeds $c^*(\theta;h)$ and $w^*(\theta;h)$
on $L$, we write:
\[
c^*(\theta;\alpha,L):=c^*(\theta;h_{\alpha,L}),\quad\
w(\theta;\alpha,L):=w(\theta;h_{\alpha,L}),
\]
where $h_{\alpha,L}$ denotes the measure defined in \eqref{h}.

\begin{theorem}\label{Thm:Lto0}
For any $\theta\in [0,2\pi)$ and $\alpha>0$,
\[
\ \lim_{L\to0}c^*(\theta;\alpha,L)=2\sqrt{\tilde{\alpha}}, \quad\ \
\lim_{L\to0}w(\theta;\alpha,L)=2\sqrt{\tilde{\alpha}},
\] where $\tilde{\alpha}=f'(0)\alpha$.
\end{theorem}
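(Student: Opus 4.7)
My plan is to read Theorem \ref{Thm:Lto0} as a direct corollary of the a priori bound \eqref{boundednessinequality} in Theorem \ref{Convergenceofcstarbn} together with the spreading-speed formula \eqref{eq:sprspeed} in Theorem \ref{Themforspreadingspeed2D}. The key point is that the upper bound in \eqref{boundednessinequality} depends only on $\alpha$ and $L$, and collapses onto the lower bound as $L\to 0$. I would carry out the argument in two short steps, one for each of the two limits claimed.

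First, I would substitute $\bar b=h_{\alpha,L}$ into \eqref{boundednessinequality}. Since the bound
\[
2\sqrt{\tilde{\alpha}}\ \leq\ c^{*}(\theta;\bar b)\ \leq\ 2\sqrt{\tilde{\alpha}+\tilde{\alpha}^{2}L^{2}}
\]
holds for every admissible $\bar b$ and uniformly in $\theta\in[0,2\pi)$, letting $L\to 0$ immediately sandwiches $c^{*}(\theta;\alpha,L)$ between two quantities that share the common limit $2\sqrt{\tilde{\alpha}}$. This proves the first assertion.

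Second, I would transfer this to $w$ via the identity $w(\theta;\bar b)=\min_{|\theta-\phi|<\pi/2}c^{*}(\phi;\bar b)/\cos(\theta-\phi)$. For the lower bound, the same universal estimate gives $c^{*}(\phi;h_{\alpha,L})\geq 2\sqrt{\tilde{\alpha}}$ for every $\phi$, and since $\min_{|\theta-\phi|<\pi/2}1/\cos(\theta-\phi)=1$ is attained at $\phi=\theta$, we get $w(\theta;\alpha,L)\geq 2\sqrt{\tilde{\alpha}}$. For the matching upper bound, I would simply test the minimum at the single value $\phi=\theta$, obtaining
\[
w(\theta;\alpha,L)\ \leq\ \frac{c^{*}(\theta;\alpha,L)}{\cos 0}\ \leq\ 2\sqrt{\tilde{\alpha}+\tilde{\alpha}^{2}L^{2}}.
\]
Squeezing once more gives $w(\theta;\alpha,L)\to 2\sqrt{\tilde{\alpha}}$ as $L\to 0$.

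The statement therefore involves no serious obstacle of its own; all of the substantive analysis is packaged into the period-dependent upper estimate \eqref{boundednessinequality}. The only thing one needs to be careful about is that the constants appearing in \eqref{boundednessinequality} are genuinely independent of $\theta$ and of the particular $\bar b\in\overline{\Lambda}(\alpha)$, so that the sandwich argument passes through uniformly and can legitimately be combined with the directional minimization in \eqref{eq:sprspeed}.
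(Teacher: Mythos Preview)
Your proposal is correct and follows essentially the same approach as the paper: both proofs sandwich $c^*(\theta;h_{\alpha,L})$ using the two-sided bound \eqref{boundednessinequality} and then pass to $w$ via the directional minimization formula \eqref{eq:sprspeed}. The paper's own argument is in fact terser than yours---it invokes the bound for smooth $b\in\Lambda(\alpha)$ and then observes that $c^*(\theta;h)$ is a limit of such values, whereas you apply \eqref{boundednessinequality} directly to $\bar b=h_{\alpha,L}$---but since that inequality is stated for all $\bar b\in\overline\Lambda(\alpha)$, your shortcut is perfectly valid, and your treatment of the $w$ limit is actually more explicit than the paper's.
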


\begin{theorem}\label{thm:speedforlargel}
For any $\theta\in [0,2\pi)$ and $\alpha>0$,
\[
\ \lim_{L\to\infty}\frac{c^*(\theta;\alpha,L)}{L}=\left\{%
\begin{array}{ll}\hbox{\small$\displaystyle
  \frac{\tilde{\alpha}}{2\cos\theta}$}\,\,\,\, &
  \hbox{ if }\cos^2\theta\geq\frac{1}{2}; \vspace{6pt}\\
    \tilde{\alpha}\sin\theta\,\,\,\, &
  \hbox{ if }\cos^2\theta\leq\frac{1}{2}. \\
\end{array}%
\right.
\]
Therefore
\begin{equation}\label{parabolas}
\lim_{L\to\infty}\frac{w(\theta;\alpha,L)}{L}=\frac{\tilde{\alpha}}
{1+|\cos\theta|},\quad\hbox{where}\ \ \tilde{\alpha}=f'(0)\alpha.
\end{equation}
\end{theorem}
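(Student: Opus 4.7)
The plan is to exploit the explicit structure of the Dirac comb $h_{\alpha,L}$ to derive a closed-form dispersion relation for $\mu(\lambda,\theta,h_{\alpha,L})$, carry out an asymptotic analysis of this relation as $L\to\infty$, and then convert the resulting asymptotic for $c^*$ into the spreading speed asymptotic via Theorem \ref{Themforspreadingspeed2D}. Throughout I work under the normalization $f'(0)=1$ from Remark \ref{rem:f'(0)=1}, so $\tilde\alpha=\alpha$; the general case follows by rescaling.

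First, using $\mu(\lambda,\theta,h)=\mu_0(\lambda\cos\theta,h)$, I reduce to the one-dimensional problem on a single period with a point mass at $x=L/2$. Setting $\gamma:=\lambda\cos\theta$ and substituting $\psi(x)=e^{\gamma x}\phi(x)$ turns the eigenvalue equation into $\phi''=(\gamma^2-\mu_0)\phi=:\omega^2\phi$ on each side of the delta. Writing $\phi$ as a combination of $\cosh(\omega(x-L/2))$ and $\sinh(\omega(x-L/2))$ on the two half-cells, the jump condition $\psi'(L/2^+)-\psi'(L/2^-)=-\alpha L\psi(L/2)$ together with the Floquet identities $\phi(0)=e^{\gamma L}\phi(L)$, $\phi'(0)=e^{\gamma L}\phi'(L)$ coming from the $L$-periodicity of $\psi$ yields, after elimination of the four coefficients, the explicit dispersion relation
\begin{equation}\label{dispersion}
\cosh(\omega L)-\cosh(\gamma L)=\frac{\alpha L\sinh(\omega L)}{2\omega}.
\end{equation}
By Theorem \ref{cstarbequalcestarb2D}, $c^*(\theta;h)=\inf_{\lambda>0}[\lambda\sin^2\theta+\omega(\lambda)^2/\lambda]$, where $\omega(\lambda)>0$ is the positive solution of \eqref{dispersion} with $\gamma=\lambda\cos\theta$.

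I then analyze \eqref{dispersion} in two large-$L$ regimes. In \emph{Regime A} ($\omega L\gg|\gamma|L$), the approximations $\cosh(\omega L),\sinh(\omega L)\sim e^{\omega L}/2$ collapse \eqref{dispersion} to $\omega\to\alpha L/2$, giving $c(\lambda)=\lambda\sin^2\theta+\alpha^2L^2/(4\lambda)$, valid for $\lambda<\alpha L/(2|\cos\theta|)$. In \emph{Regime B} ($\omega$ close to $|\gamma|$), writing $\omega=\gamma+\delta$ and expanding yields $e^{\delta L}=(1-\alpha L/(2\gamma))^{-1}$, which requires $\lambda>\alpha L/(2|\cos\theta|)$ and produces $c(\lambda)\to\lambda$ at leading order. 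The unconstrained minimum of $\lambda\sin^2\theta+\alpha^2L^2/(4\lambda)$ is $\alpha L\sin\theta$, attained at $\lambda^*=\alpha L/(2\sin\theta)$; this is admissible in Regime A precisely when $\sin\theta>|\cos\theta|$, giving $c^*/L\to\alpha\sin\theta$ when $\cos^2\theta<1/2$. Otherwise the minimum is forced to the boundary $\lambda=\alpha L/(2\cos\theta)$, where Regimes A and B match and both yield $c^*/L\to\alpha/(2\cos\theta)$. This establishes the $c^*$ part of the theorem.

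Finally, I substitute the piecewise limit of $c^*(\phi;\alpha,L)/L$ into \eqref{eq:sprspeed}. On $\phi\in[0,\pi/4]$ one minimizes $\alpha/[2\cos\phi\cos(\theta-\phi)]$; via the identity $2\cos\phi\cos(\theta-\phi)=\cos(\theta-2\phi)+\cos\theta$ the minimum is attained at $\phi=\theta/2$ with value $\alpha/(1+\cos\theta)$, while on $[\pi/4,\pi/2]$ the derivative $\tfrac{d}{d\phi}[\sin\phi/\cos(\theta-\phi)]=\cos\theta/\cos^2(\theta-\phi)>0$ forces the minimum to $\phi=\pi/4$ with the larger value $\alpha/(\sin\theta+\cos\theta)$. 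Corollary \ref{cr:directional-symmetry} extends this to all $\theta$, giving $w/L\to\alpha/(1+|\cos\theta|)$. The main technical obstacle will be making the asymptotic analysis of \eqref{dispersion} rigorous and uniform across the transition $\lambda\sim\alpha L/(2|\cos\theta|)$, and ruling out extreme $\lambda$ in the infimum defining $c^*$; convexity of $\lambda\mapsto c(\lambda)$ combined with the uniform bounds in \eqref{boundednessinequality} should localize the minimizer and upgrade the formal asymptotics to a genuine limit.
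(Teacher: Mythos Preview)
Your proposal is correct and follows essentially the same route as the paper: derive an explicit dispersion relation for the Dirac comb (your hyperbolic form \eqref{dispersion} is algebraically equivalent to the paper's exponential form), extract the large-$L$ asymptotic $\omega\sim\alpha L/2$ (equivalently $\mu\sim -\alpha^2L^2/4+\lambda^2\cos^2\theta$), minimize $(-\mu+\lambda^2)/\lambda$ to obtain the piecewise limit for $c^*$, and then apply \eqref{eq:sprspeed}. Your regime-A/B decomposition and your explicit minimization for $w$ are in fact more carefully spelled out than the paper's corresponding steps, but the underlying strategy is the same.
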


\begin{remark}\label{rm:Lto0}
Note that the limit value $2\sqrt{\tilde{\alpha}}$ in Theorem
\ref{Thm:Lto0} coincides with the speed for the homogeneous problem
where $b\equiv\alpha$.  Intuitively this is no surprise, since
$h_{\alpha,L}\to\alpha$ as $L\to 0$ in the weak$^*$ sense.
\end{remark}

\begin{remark}\label{rm:parabola}
Since the position of the spreading front at time $t$ is
approximated by $w(\theta;h)t$, the formula \eqref{parabolas}
implies that the asymptotic shape of the spreading front is roughly
expressed by a pair of parabolic caps when $L$ is very large .  Note that this parabolic shape appears
regardless of the choice of nonlinearity $f$. The speed in the
longitudinal direction is precisely twice as large as that in the
transverse direction.
\end{remark}

\section{Analysis of the eigenvalue problem}\label{Eigenvalueproblem}
As we have mentioned in Remark \ref{rem:f'(0)=1}, in what follows we
will always assume $f'(0)=1$ for notational simplicity, therefore
the eigenvalue problem \eqref{identityofmulambdabbar} can be written
as follows:
%\[
%\ -\psi''(x)+2\lambda\cos\theta\psi'(x)-
%b(x)\psi(x)=\mu(\lambda,\theta,b)\psi(x)
%\]
\[
\ -\psi''(x)+2\lambda\cos\theta\psi'(x)-
\bar{b}(x)\psi(x)=\mu(\lambda,\theta,\bar{b})\psi(x).
\]
The following proposition and lemmas are the immediate extensions of
Proposition 4.7 and Corollary 4.8 and Lemma 4.4 in \cite{llm}, and
they follow immediately from these results and
\eqref{eigenvalueidentity}.
\begin{pro}\label{pro:convergenceofeigenvalue}
Let $b_n$ be a sequence in $\Lambda(\alpha)$ converging to some
$\bar{b}$ in the weak$^*$ sense, and let $\lambda_n\to
\lambda\in\mathbb{R}, \theta_n\to \theta \in [0,2\pi].$ Then
\[
\mu(\lambda_n,\theta_n,b_n)\to\mu(\lambda,\theta,\bar{b})\,\,\,\,\,\,\hbox{
as }n\to\infty.
\]
\end{pro}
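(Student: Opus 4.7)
My plan is to reduce the two-dimensional statement to the one-dimensional convergence result from \cite{llm} via the identity \eqref{eigenvalueidentity}. Recall that for every $\bar b \in \overline{\Lambda}(\alpha)$, $\lambda \in \mathbb R$ and $\theta \in [0,2\pi)$ one has
\[
\mu(\lambda,\theta,\bar b) = \mu_0(\lambda\cos\theta,\bar b),
\]
where $\mu_0$ is the principal eigenvalue of the one-dimensional operator $-L_{\lambda,0,\bar b}$. Hence it suffices to establish that
\[
\mu_0(\lambda_n\cos\theta_n,b_n) \longrightarrow \mu_0(\lambda\cos\theta,\bar b).
\]

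The first step is therefore trivial: since $\lambda_n \to \lambda$ and $\theta_n \to \theta$ with $\cos$ continuous, the scalar parameters converge, $\lambda_n\cos\theta_n \to \lambda\cos\theta$ in $\mathbb R$. Next I would invoke Proposition 4.7 of \cite{llm} (together with Corollary 4.8 there), which asserts precisely that $\mu_0(\mu,b_n) \to \mu_0(\mu,\bar b)$ whenever $b_n \to \bar b$ in the weak$^*$ sense and the spectral parameter varies continuously in $\mathbb R$. Combining these two observations yields the claimed convergence.

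The only genuine subtlety is that the cited 1-D convergence statement must be read as joint continuity in the pair $(\mu,\bar b)$, not merely separate continuity in each argument. If \cite{llm} only records separate continuity, one would upgrade it by a standard diagonal argument: fix $n$ large enough so that $b_n$ is close to $\bar b$ in weak$^*$, use the (continuous) dependence of $\mu_0(\cdot,\bar b)$ on its first argument at $\lambda\cos\theta$, and then use the weak$^*$ continuity in the coefficient to conclude. Because \eqref{eigenvalueidentity} reduces everything to the 1-D setting already handled in our earlier paper, no new eigenvalue analysis is required here; the proposition is essentially a transcription of the 1-D result. I do not foresee any serious obstacle beyond verifying that the joint limit in $(\lambda_n,\theta_n,b_n)$ is covered by the 1-D statement, which is where I would focus the verification.
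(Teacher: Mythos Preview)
Your proposal is correct and matches the paper's approach exactly: the paper states that this proposition follows immediately from Proposition~4.7 and Corollary~4.8 of \cite{llm} together with the identity \eqref{eigenvalueidentity}, which is precisely the reduction you carry out. Your remark about needing joint continuity in the pair (parameter, coefficient) is apt, and indeed Proposition~4.7 of \cite{llm} is stated for a sequence $\lambda_n\to\lambda$ and $b_n\to\bar b$ simultaneously, so no additional diagonal argument is required.
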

\begin{lemma}\label{lem:boundednessofmu}
For any $\lambda\in \mathbb{R},\theta\in[0,2\pi)$ and
$b\in\Lambda(\alpha)$, it holds that
\begin{equation}\label{estimateformu}
-\alpha\geq \mu(\lambda,\theta,b)\geq -\alpha-\alpha^2L^2.
\end{equation}
\end{lemma}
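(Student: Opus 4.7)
The plan is to reduce the two-dimensional statement to the one-dimensional eigenvalue bound already established in \cite{llm}. The key ingredient is the identity \eqref{eigenvalueidentity},
\[
\mu(\lambda,\theta,b)=\mu_0(\lambda\cos\theta,b),
\]
which shows that $\mu(\lambda,\theta,b)$ depends on $(\lambda,\theta)$ only through the real parameter $\tilde\lambda:=\lambda\cos\theta$. It therefore suffices to verify
\[
-\alpha\ \ge\ \mu_0(\tilde\lambda,b)\ \ge\ -\alpha-\alpha^2L^2
\]
uniformly in $\tilde\lambda\in\R$ and $b\in\Lambda(\alpha)$, and this is precisely Lemma 4.4 of \cite{llm}, which I would simply invoke.

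For transparency I sketch the underlying one-dimensional argument. Let $\psi>0$ be the $L$-periodic principal eigenfunction of $-\psi''+2\tilde\lambda\psi'-b\psi=\mu_0\psi$, and set $w:=\log\psi$, which is smooth and $L$-periodic. A direct substitution $\psi=e^w$ produces the Riccati-type identity
\[
w''+(w')^2-2\tilde\lambda w'+b+\mu_0=0.
\]
Integrating over $[0,L]$, the $w''$ and $w'$ terms vanish by periodicity and $\int_0^L b\,dx=\alpha L$, yielding the clean formula
\[
\mu_0=-\alpha-\frac{1}{L}\int_0^L(w')^2\,dx,
\]
from which the upper bound $\mu_0\le-\alpha$ is immediate from the non-negativity of the integral.

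The lower bound is the main obstacle: it is equivalent to the $L^2$-estimate $\int_0^L(w')^2\,dx\le\alpha^2L^3$. My plan would be to work from the first-order Riccati equation $v'=-v^2+2\tilde\lambda v-b-\mu_0$ for $v:=w'$, exploit the mean-zero property $\int_0^L v\,dx=0$ (which forces $v$ to have a zero $x_0\in[0,L]$), and integrate this ODE starting from $x_0$ to produce a pointwise bound on $|v|$ in terms of $\alpha L$ and $|\mu_0|$. Feeding this pointwise bound back into the formula displayed above yields a self-consistent inequality that, after rearrangement, gives exactly $\mu_0\ge-\alpha-\alpha^2L^2$. Carrying through this self-consistency step is the technical heart of Lemma 4.4 in \cite{llm}, and I would refer the reader there for the details.
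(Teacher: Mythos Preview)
Your proposal is correct and matches the paper's own treatment exactly: the paper states that this lemma follows immediately from the identity \eqref{eigenvalueidentity} together with Lemma~4.4 in \cite{llm}, which is precisely your reduction. Your additional sketch of the Riccati argument behind Lemma~4.4 is accurate and goes beyond what the paper writes, but it does not constitute a different approach.
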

\begin{lemma}\label{LinearPsi}
There exists a constant $F>0$ such that for any $\lambda\in
\mathbb{R}$, $\theta\in[0,2\pi)$, $b\in\Lambda(\alpha),$ the
principal eigenfunction $\psi$
 of the operator $-L_{\lambda,\theta,b}$ satisfies
\[ \frac{\max\psi}{\min\psi}\leq F. \]
\end{lemma}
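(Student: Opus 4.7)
The plan is to reduce the two-parameter eigenvalue problem to the one-parameter problem already treated in \cite{llm}. By Definition \ref{definition for L} the eigenvalue equation reads
\[
-\psi''(x)+2(\lambda\cos\theta)\psi'(x)-b(x)\psi(x)=\mu(\lambda,\theta,b)\psi(x),
\]
which is exactly the eigenvalue equation for the 1-D operator $-L_{\lambda',0,b}$ with $\lambda':=\lambda\cos\theta$, and by \eqref{eigenvalueidentity} the eigenvalue is $\mu_0(\lambda',b)$. So the principal eigenfunction $\psi$ of $-L_{\lambda,\theta,b}$ coincides with that of $-L_{\lambda',0,b}$, and since $\lambda'$ ranges over all of $\mathbb{R}$ as $(\lambda,\theta)$ does, the desired uniform bound is nothing but Lemma 4.4 of \cite{llm}.

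For the reader's convenience, I would sketch the heart of the argument. Since $b\in\Lambda(\alpha)$ is $C^1$, standard elliptic regularity gives $\psi\in C^2$ and $\psi>0$, so one may set $\phi:=\log\psi$, which is $L$-periodic. A direct computation converts the eigenvalue equation into the Riccati-type identity
\[
\phi''(x)+(\phi'(x))^2-2\lambda\cos\theta\,\phi'(x)+b(x)+\mu(\lambda,\theta,b)=0.
\]
Integrating over $[0,L]$ kills the $\phi''$ term and the $\phi'$ term by periodicity, and uses $\int_0^L b\,dx=\alpha L$, yielding
\[
\int_0^L(\phi'(x))^2\,dx=-\bigl(\mu(\lambda,\theta,b)+\alpha\bigr)L.
\]
Lemma \ref{lem:boundednessofmu} bounds the right-hand side by $\alpha^2 L^3$, and then Cauchy--Schwarz gives $|\phi(x)-\phi(y)|\leq\sqrt{L}\cdot\alpha L^{3/2}=\alpha L^2$ for all $x,y\in[0,L]$. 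Exponentiating yields
\[
\frac{\max\psi}{\min\psi}\leq e^{\alpha L^2}=:F,
\]
and this $F$ depends only on the fixed constants $\alpha$ and $L$, not on $\lambda,\theta$ or $b\in\Lambda(\alpha)$.

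There is no serious obstacle here: once one sees that \eqref{eigenvalueidentity} allows the parameter $\lambda\cos\theta$ to be treated as a single real parameter, the lemma is reduced to the 1-D case, and the $\log$-substitution together with the $L^2$-bound on $\phi'$ coming from Lemma \ref{lem:boundednessofmu} does the job. The only small point worth noting is that the lemma is stated for smooth $b\in\Lambda(\alpha)$ (not for measures in $\overline{\Lambda}(\alpha)$), so the Riccati calculation is fully classical and no distributional argument or approximation is required at this stage.
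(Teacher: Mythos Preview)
Your proof is correct and follows exactly the paper's approach: the paper states that this lemma is an immediate extension of Lemma~4.4 in \cite{llm} via the identity \eqref{eigenvalueidentity}, which is precisely your reduction $\lambda'=\lambda\cos\theta$. Your added sketch of the Riccati/$\log\psi$ argument is accurate and reproduces the mechanism behind the cited 1-D result.
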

The next lemma is also easy to derive:
\begin{lemma}\label{lem:eigenvalueofhwiththeta}
Let $h$ be as in \eqref{h}. Then
\[
\left\{
  \begin{array}{ll}
    \mu(\lambda,\theta,h)<\mu(\lambda,\theta,b),\,\, & \forall\,b\in\Lambda(\alpha) \\
    \mu(\lambda,\theta,h)\leq\mu(\lambda,\theta,\bar{b}),\,\, & \forall\,
\bar{b}\in\overline{\Lambda}(\alpha).
  \end{array}
\right.
\]
\end{lemma}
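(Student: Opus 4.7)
The plan is to reduce the lemma to its one-dimensional counterpart already established in \cite{llm}. The identity \eqref{eigenvalueidentity}, namely $\mu(\lambda,\theta,\bar b)=\mu_0(\lambda\cos\theta,\bar b)$, shows that the dependence of $\mu(\lambda,\theta,\cdot)$ on $\bar b$ is governed entirely by the one-dimensional principal eigenvalue $\mu_0(\Lambda,\cdot)$, with $\Lambda:=\lambda\cos\theta$ treated as a fixed real parameter. Hence both claimed inequalities are exactly the corresponding statements for $\mu_0$ proved in \cite{llm}, and strictly speaking nothing new is required beyond invoking \eqref{eigenvalueidentity}.

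For the reader's convenience I would nevertheless sketch the argument in two stages: first the strict inequality for $b\in\Lambda(\alpha)$ by a variational argument, then the weak inequality for $\bar b\in\overline{\Lambda}(\alpha)$ by weak$^*$ approximation. Under the substitution $\psi=e^{\Lambda x}\phi$, the eigenvalue problem for $-L_{\lambda,\theta,\bar b}$ with periodic boundary conditions becomes a self-adjoint Schr\"odinger-type problem for $\phi$ subject to the quasi-periodic condition $\phi(x+L)=e^{-\Lambda L}\phi(x)$, yielding the Rayleigh characterization
\[
\mu(\lambda,\theta,\bar b)-\Lambda^2 \;=\; \inf_{\phi}\frac{\int_0^L |\phi'|^2\,dx-\int_0^L \bar b\,|\phi|^2\,dx}{\int_0^L |\phi|^2\,dx}.
\]
Inserting the minimizer $\phi_b$ of the $b$-problem as a trial function in the $h$-problem, and using $\int_0^L b\,dx=\alpha L$ together with the concentration of $h$ at $x=L/2$ in one period, the desired strict inequality $\mu(\lambda,\theta,h)<\mu(\lambda,\theta,b)$ reduces to the pointwise bound $|\phi_b(L/2)|^2>(\alpha L)^{-1}\int_0^L b(x)|\phi_b(x)|^2\,dx$. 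This follows from the strict unimodality of $\phi_b$, whose maximum is attained at $x=L/2$, as established in the 1D analysis of \cite{llm}.

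For the general case $\bar b\in\overline{\Lambda}(\alpha)$, I would take a sequence $\{b_n\}\subset\Lambda(\alpha)$ with $b_n\to\bar b$ in the weak$^*$ sense and pass to the limit in $\mu(\lambda,\theta,h)<\mu(\lambda,\theta,b_n)$ via Proposition \ref{pro:convergenceofeigenvalue}; this yields the weak inequality $\mu(\lambda,\theta,h)\leq\mu(\lambda,\theta,\bar b)$. Strictness may genuinely be lost here because $\bar b$ could coincide with a translate of $h$. The main obstacle, were one to reprove everything from scratch rather than quote \cite{llm}, would be establishing the strict unimodality / location-of-maximum property of $\phi_b$ in the quasi-periodic setting; this is precisely the delicate point handled in the one-dimensional analysis, and via \eqref{eigenvalueidentity} it transfers verbatim to the present two-dimensional setting.
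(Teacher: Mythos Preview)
Your core argument---reducing to the one-dimensional inequality $\mu_0(\Lambda,h)<\mu_0(\Lambda,b)$ via the identity \eqref{eigenvalueidentity} and then passing to the weak$^*$ closure by Proposition~\ref{pro:convergenceofeigenvalue}---is exactly what the paper does, and it is correct and complete as stated in your first and third paragraphs.

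The optional sketch in your second paragraph, however, contains a real error. You assert that for an \emph{arbitrary} $b\in\Lambda(\alpha)$ the principal (quasi-periodic) eigenfunction $\phi_b$ is unimodal with its maximum at $x=L/2$. There is no reason this should hold: the location of the peak of $\phi_b$ is governed by the profile of $b$, and the point $L/2$ is distinguished only by $h$, not by $b$. (Even after exploiting translation invariance, unimodality of $\phi_b$ on a period is not automatic for a multi-bump $b$, and in the quasi-periodic setting with $\Lambda\ne 0$ the function $\phi_b=e^{-\Lambda x}\psi_b$ is not even periodic.) Hence the pointwise bound $|\phi_b(L/2)|^2>(\alpha L)^{-1}\int_0^L b\,|\phi_b|^2\,dx$ does not follow from the reason you give, and the Rayleigh-quotient comparison, as written, does not close. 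Since the lemma is already fully proved by citing \cite{llm} through \eqref{eigenvalueidentity}, the cleanest fix is to delete the sketch.
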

\begin{proof}
One can find the inequality $\mu_0(\lambda,h)<\mu_0(\lambda,b)$ in
the proof of Lemma 4.12 in \cite{llm}. The first inequality of this
lemma follows immediately from the above inequality and the identity
\eqref{eigenvalueidentity}. Then second inequality then follows by
combining the first inequality with Proposition
\ref{pro:convergenceofeigenvalue}.
\end{proof}
\section{Minimal speed and spreading speed for the 1 dimensional
case}\label{1Dimensionalcase} In \cite{llm}, we treated the equation
$u_t=u_{xx}+\bar{b}(x)u(1-u)$, where
$\bar{b}(x)\in\overline{\Lambda}(\alpha)$. In this section, we
recall some results for this 1 dimensional case, which we have
proven in \cite{llm}, and we extend those results to the equation
\begin{equation}\label{eq:general1Dequation}
\ u_t=u_{xx}+\bar{b}(x)f(u)+g(u),\,\,x\in\mathbb R.
\end{equation} Without loss of generality, we may suppose that $f'(0)=1$
in case {\rm(F1)}, then the cases {\rm(F1)} and {\rm(F2)} have the
same linearization with the steady-state where $u\equiv 0$.

Note that \eqref{eq:general1Dequation} is a special case of
\eqref{SecondEquation2} since any solution of
\eqref{eq:general1Dequation} can be regarded as a $y$-independent
solution of \eqref{SecondEquation2}. Thus Definitions
\ref{def:wsol}, \ref{def:msol} and \ref{Definitionoftravellingwave}
below are special cases of Definitions \ref{def:wsolg},
\ref{def:msolg} and
\ref{def:speedforbx}. %with $\theta=0$. (see Remark
%\ref{rem:weakmildsol})

\begin{definition}\label{def:wsol}
Let $I\subset \mathbb R$ be any open interval. A continuous function
$u(x,t):\mathbb R\times I\to \mathbb R$ is called a {\textbf{weak
solution}} of \eqref{eq:general1Dequation} for $t\in I$ (or a
{solution} in the weak sense) if for any $\eta(x,t)\in
C_0^\infty(\mathbb R\times I),$
\[
\ -\int_I\int_{\mathbb R}u \eta_t\,dxdt=\int_I\int_{\mathbb
R}\big(u\eta_{xx}+\big(\bar{b}(x)f(u)+g(u)\big)\eta\big)\, dxdt.
\]
\end{definition}
We next consider the following Cauchy problem:
\begin{equation}\label{eq:general1Dequationcauchy}
\left\{\begin{array}{ll} u_t=u_{xx}+\bar{b}(x)f(u)+g(u)
\quad\ \ &(x\in\mathbb R,\;t>0),\vspace{5pt}\\
u(x,0)=u_0(x)\geq 0\quad\ \ &(x\in\mathbb R),
\end{array}\right.
\end{equation}
where $u_0\in C(\mathbb R)\cap L^\infty(\mathbb R).$
\begin{definition}\label{def:msol}
A continuous function $u(x,t):\mathbb R\times(0,T)\to \mathbb R$,
where $T\in(0,\infty]$ is a constant, is called a {\textbf{mild
solution}} of \eqref{eq:general1Dequationcauchy} if
\[
\ \lim_{t\searrow 0}u(x,t)=u_0(x)\,\hbox{ for any }x \in \mathbb R
\]
and if it can be written as
\[
\begin{split}
 u(x,t)=&\int_{\mathbb
R}G(x-y,t)u_0(y)dy  \\ &+\int_0^t\int_{\mathbb
R}G(x-y,t-s)\big(\bar{b}(y)f(u(y,s))+g(u(y,s))\big)\,dyds,
\end{split}
\]
where
\begin{equation}\label{heatkernel1D}
G(x,t):=\frac{1}{\sqrt{4\pi t}}\exp\big(-\frac{x^2}{4t}\big).
\end{equation}
\end{definition}
We have the following proposition for the well-posedness of
\eqref{SecondEquationcauchy}:
\begin{pro}\label{wlposedness} Suppose that $f,g$ are local
Lipschitz continuous. Then for any
$\bar{b}\in\overline{\Lambda}(\alpha)$ and any given nonnegative
initial data $u_0\in C(\mathbb R)\cap L^\infty(\mathbb R),$ there
exists a constant $T\in(0,+\infty]$, such that the problem
\eqref{eq:general1Dequationcauchy} has a unique mild solution
$u(x,t)$ for $t\in(0,T)$. This mild solution is also a weak
solution.
\end{pro}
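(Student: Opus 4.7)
I would set up the problem as a Banach fixed point in the mild formulation. Let $M:=2\|u_0\|_\infty+1$ and, for $T>0$ to be chosen, let
\[
X_T:=\{u\in C(\mathbb R\times[0,T])\,:\,\|u\|_T:=\sup_{\mathbb R\times[0,T]}|u|\leq M\},
\]
equipped with the sup norm. Define
\[
\Phi[u](x,t)=\int_{\mathbb R} G(x-y,t)u_0(y)\,dy+\int_0^t\!\!\int_{\mathbb R} G(x-y,t-s)\bigl(\bar{b}(y)f(u(y,s))+g(u(y,s))\bigr)\,dy\,ds.
\]
The plan is to show that for $T$ small enough, $\Phi$ maps $X_T$ into itself and is a contraction, giving a unique mild solution by the Banach fixed-point theorem. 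Since $f,g$ are locally Lipschitz and $|u|\leq M$, both $f$ and $g$ are bounded by some constant $K$ with Lipschitz constant $L_f,L_g$ on $[-M,M]$; the $g$-contribution and the initial heat-semigroup term are handled as in the classical scalar theory.

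The decisive step, and the main obstacle, is controlling the convolution against the measure $\bar{b}$, namely
\[
I(x,t):=\int_0^t\!\!\int_{\mathbb R} G(x-y,t-s)\bar{b}(y)\,dy\,ds.
\]
For fixed $x$ and $\tau=t-s>0$, apply Lemma \ref{lemma:A} with $\eta(y)=G(x-y,\tau)$. Since $G$ is a Gaussian, $\sum_{k\in\mathbb Z}\max_{0\leq y\leq L}G(x-y-kL,\tau)$ is finite and admits a bound of the form $C_L(1+\tau^{-1/2})$ uniform in $x$; hence
\[
\int_{\mathbb R} G(x-y,\tau)\bar{b}(y)\,dy\leq \alpha L\, C_L\bigl(1+\tau^{-1/2}\bigr).
\]
Integrating in $s$ over $[0,t]$ yields $I(x,t)\leq C'(t+\sqrt{t})$, which tends to $0$ as $t\to 0$ uniformly in $x$. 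This is exactly what is needed so that $\Phi[u]$ is continuous (in fact, the measure-source term is continuous in $(x,t)$ for $t>0$ by dominated convergence, and approaches $0$ as $t\searrow 0$) and stays within the ball of radius $M$; it also ensures that $\Phi[u]-\Phi[v]$ is bounded by $(L_fC'(T+\sqrt T)+L_gT)\|u-v\|_T$, giving strict contraction for small $T$.

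Uniqueness on any common interval follows by the same estimate applied to the difference of two mild solutions, via a Gronwall-type argument where the singular factor $\tau^{-1/2}$ is absorbed by the integrable Abel kernel. Finally, to show that the mild solution is also a weak solution in the sense of Definition \ref{def:wsol}, approximate $\bar{b}$ by smooth $b_n\in\Lambda(\alpha)$ with $b_n\to\bar{b}$ in the weak$^*$ sense. For each $b_n$ the classical theory gives a smooth solution $u_n$ which satisfies both the mild and the weak formulation with $b_n$; the uniform estimates above together with Lemma \ref{cor:A} allow one to pass to the limit in both formulations (in the mild formula the convergence of the source is obtained from Lemma \ref{cor:A} applied pointwise in $(x,t)$, and the uniqueness just proved identifies the limit with $u$). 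In the weak formulation, each term converges directly: $\int b_n(x)f(u_n)\eta\to \int \bar{b}(x)f(u)\eta$ by Lemma \ref{cor:A}, while the remaining terms converge by dominated convergence. This yields the weak identity for $u$ and completes the proof.
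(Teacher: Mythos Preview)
Your argument is correct and takes a genuinely different route from the paper. The paper does not prove this 1D proposition directly; it declares it a special case of the 2D Theorem~\ref{wlposedness}, whose proof proceeds by \emph{approximation and compactness}: take $b_n\in\Lambda(\alpha)$ with $b_n\to\bar b$, use the equicontinuity Lemma~\ref{Equicontinuity} and Arzel\`a--Ascoli to extract a locally uniform limit $u$ of the classical solutions $u_n$, then verify that $u$ is both a weak and a mild solution, and finally prove uniqueness by the same singular-Gronwall estimate you invoke. You instead obtain existence and uniqueness in one stroke via a Banach fixed point in the mild formulation, and only bring in the smooth approximants afterward to upgrade ``mild'' to ``weak''. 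Both routes rest on the same key ingredient, the bound $\int_\R G(x-y,\tau)\,\bar b(dy)\le C\alpha(1+L\tau^{-1/2})$ coming from Lemma~\ref{lemma:A} (the paper packages this as Lemma~\ref{lm:estimateforlemmainsection6} in the 2D setting). Your approach is more self-contained for existence; the paper's approach has the side benefit of delivering the continuity $u(\cdot;b_n)\to u(\cdot;\bar b)$ (Corollary~\ref{corol:contdeonini}) as an automatic byproduct.

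One point deserves a sentence more of justification: in your last paragraph you assert $u_n\to u$ in order to pass to the limit in the weak identity, but the mechanism is left implicit. You can close this cleanly without equicontinuity by subtracting the mild formulas for $u_n$ and $u$, splitting $b_n f(u_n)-\bar b f(u)=b_n\bigl(f(u_n)-f(u)\bigr)+(b_n-\bar b)f(u)$, bounding the first piece by $L_f\cdot C'(t+\sqrt t)\,\|u_n-u\|_t$ and showing the second piece is $o(1)$ uniformly via Lemma~\ref{cor:A} and your kernel estimate; your singular-Gronwall inequality then forces $\|u_n-u\|_T\to 0$. This is exactly the $Z_n$-estimate the paper carries out in its proof of Theorem~\ref{wlposedness}.
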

Furthermore, we will prove the proposition of the existence of the
positive steady-state. Since we will prove it for 2-dimensional case
in Lemma \ref{thm:monostability}, we just state it here without
proof.
%that for case {\rm(F1)} or {\rm(F2)}, there exists
%a positive periodic function $P(x)$ satisfying
%\begin{equation}\label{steadystateforbbar}
%\ P_{xx}+\bar{b}(x)f(P)+g(P)=0
%\end{equation} in the mild sense.
%Here we omit the proof since we will prove it for 2-dimensional case.
%The following two propositions are useful in the study of travelling
%wave solutions. Since we will prove them for 2-dimensional case,
%here we also omit the proofs.
%\begin{pro}
%For both of the cases {\rm(F1)} and {\rm(F2)}, for any $b\in\Lambda(\alpha)$, the
%solution of equation \eqref{eq:general1Dequationcauchy} with
%$\bar{b}$ replaced by $b$ exists for $t>0$ and is uniformly bounded.
%\end{pro}
\begin{pro}\label{pro:steadystate}
For both of the cases {\rm(F1)} and {\rm(F2)} and any $\bar b\in\bar
\Lambda(\alpha)$ and $u_0\in C(\R)\cap L^\infty(\R)$, the mild
solution of \ref{eq:general1Dequationcauchy} with the initial data
$u_0$ exists for $t\in (0,\infty)$. Moreover, there exists a
positive $L-$periodic steady state $P(x)$ in the mild sense. The
solution $u(x,t,u_0)$ of the problem
\eqref{eq:general1Dequationcauchy} with initial data $u_0$ with
$u_0(x)=u_0(x+L)$,$u_0(x)\geq 0$ and $u_0\not\equiv 0$, satisfies
\[
\ \lim_{t\to\infty}u(x,t,u_0)=P(x)
\] uniformly for $x\in \R$.
\end{pro}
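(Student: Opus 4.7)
The plan is to reduce to the smooth case $b\in\Lambda(\alpha)$ by approximation, establish the three assertions there using classical sub- and supersolution arguments together with the KPP concavity, and then pass to the weak$^*$ limit. Throughout I treat the problem as a semiflow on the circle $\mathbb R/L\mathbb Z$ once the data is periodic; for general bounded $u_0$ the global existence and a priori bounds will follow from comparison with periodic constants.

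For a fixed $b\in\Lambda(\alpha)$, global existence on $(0,\infty)$ follows from a constant supersolution: in (F1), $M:=\max(1,\|u_0\|_\infty)$ works because $f(M)\le0$; in (F2), any $M\ge\max(g_1^{-1}(\|b\|_\infty),\|u_0\|_\infty)$ gives $M(b-g_1(M))\le0$. The comparison principle yields $0\le u(x,t)\le M$ for all $t>0$. To build the $L$-periodic steady state I consider the two trajectories $v_\pm(x,t)$ starting from the constants $\varepsilon$ (small) and $M$ (large); the pointwise sign of $b\,f(\varepsilon)+g(\varepsilon)$ (respectively $b\,f(M)+g(M)$) is constant since $b$ is smooth and the data are small/large enough, so by the order-preserving property of the semiflow $v_-$ is monotone increasing and $v_+$ monotone decreasing in $t$; parabolic regularity upgrades the limits to classical $L$-periodic steady states $P_-\le P_+$. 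Strict monotonicity of $H(x,u)/u$ in $u$ --- where $H=bf$ in (F1) and $H=u(b-g_1(u))$ in (F2), both with $H/u$ strictly decreasing --- forces $P_-=P_+=:P_b$ via a standard sliding argument on $\theta^*:=\sup\{\theta\in(0,1]\mid\theta P_+\le P_-\}$. Convergence from a nontrivial $L$-periodic $u_0\ge0$ then follows by sandwiching: the strong maximum principle on the circle gives $0<\underline c\le u(\cdot,1,u_0)\le\bar c$, and the monotone trajectories from the constants $\underline c$ and $\bar c$ both converge to $P_b$, forcing $u(\cdot,t,u_0)\to P_b$ uniformly.

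Now I pass to $\bar b\in\overline{\Lambda}(\alpha)$. Choose $b_n\in\Lambda(\alpha)$ with $b_n\to\bar b$ weak$^*$. By Lemma~\ref{lem:boundednessofmu} one has $\mu(0,0,b_n)\le-\alpha<0$ uniformly, and Lemma~\ref{LinearPsi} controls the oscillation of the associated principal eigenfunctions $\psi_n$, so a small constant multiple of $\psi_n$ is a subsolution, supplying a uniform positive lower bound on $P_n$. The constant upper bound from the previous step, together with the mild-solution integral representation, provides equicontinuity on compact sets, and Lemma~\ref{cor:A} lets me pass to the limit: a subsequence of $P_n$ converges uniformly to a positive $L$-periodic $P$ that is a mild steady state for $\bar b$. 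Uniqueness of $P$ in the mild class, proved by the same concavity argument formulated via approximation, identifies the full sequence. Global existence of $u$ and the convergence $u(\cdot,t,u_0)\to P$ are then obtained by comparing the smooth-coefficient solutions $u_n$ with $u$ and invoking Lemma~\ref{cor:A} in the integral form.

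The principal obstacle is that for a singular $\bar b$, such as an array of Dirac masses, neither the classical maximum principle nor parabolic Harnack apply directly, and the naive (F2) a priori bound $M_n\le g_1^{-1}(\|b_n\|_\infty)$ diverges along the approximating sequence. The remedy is to perform every comparison and monotone iteration at the level of smooth $b_n$, where classical theory is available, and to transfer the resulting estimates via the weak$^*$ convergence tools (Lemmas~\ref{lemma:A} and~\ref{cor:A}, and Proposition~\ref{pro:convergenceofeigenvalue}); the uniform positivity required to pass to the limit is supplied precisely by the eigenvalue bound of Lemma~\ref{lem:boundednessofmu}, which survives weak$^*$ convergence.
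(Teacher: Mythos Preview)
Your overall plan---approximate $\bar b$ by smooth $b_n\in\Lambda(\alpha)$, build $P_n$ by monotone iteration between a sub- and a supersolution, prove uniqueness via the KPP sliding argument, then pass to the weak$^*$ limit using Lemmas~\ref{lemma:A}--\ref{cor:A}---is precisely the route the paper follows (it defers the 1D proof to the 2D arguments of Sections~\ref{section:wellposedness}--\ref{Localuniformtopolyandsemiflow}). However, there is a real gap in your treatment of case {\rm(F2)} at the smooth-coefficient level.

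You assert that for small $\varepsilon>0$ the constant $\varepsilon$ is a subsolution because ``the pointwise sign of $b\,f(\varepsilon)+g(\varepsilon)$ is constant since $b$ is smooth''. In {\rm(F2)} this expression is $\varepsilon(b(x)-g_1(\varepsilon))$, and the definition of $\Lambda(\alpha)$ only requires $b\ge0$ with $\int_0^L b=\alpha L$; nothing prevents $b$ from vanishing on part of its period. Wherever $b(x)=0$ one has $\varepsilon(b(x)-g_1(\varepsilon))=-\varepsilon g_1(\varepsilon)<0$, so the constant is \emph{not} a subsolution, the trajectory $v_-$ need not be monotone, and your construction of $P_-$ collapses. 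The correct barrier---which you in fact invoke later for the uniform lower bound---is $\rho\psi_b$ with $\psi_b$ the principal eigenfunction of $-L_{0,0,b}$: then $(\rho\psi_b)_{xx}+\rho\psi_b(b-g_1(\rho\psi_b))=\rho\psi_b(-\mu(0,0,b)-g_1(\rho\psi_b))>0$ for small $\rho$ since $-\mu(0,0,b)\ge\alpha$ uniformly (Lemma~\ref{lem:boundednessofmu}); this is exactly Lemma~\ref{lm:boundednessofpositivesteadystate}. The same device fixes your upper-bound problem: your constant supersolution $M=g_1^{-1}(\|b_n\|_\infty)$ is valid for each $n$ but (as you yourself note) diverges along the sequence, so it cannot give the uniform bound on $P_n$ needed for compactness. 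The paper instead uses $\rho\psi_b$ with $\rho$ large, which works uniformly because $\min\psi_b\ge 1/F$ (Lemma~\ref{LinearPsi}) and $-\mu(0,0,b)\le\alpha+\alpha^2L^2$. Once both barriers are taken as eigenfunction multiples, the rest of your outline goes through essentially as in the paper.
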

We call a function $u(x,t)$ on $\mathbb{R}\times\mathbb{R}$ a
\textbf{mild solution} of \eqref{eq:general1Dequation} for
$t\in\mathbb{R}$ if, for any $\tau\in\mathbb{R},$ $u(x,t+\tau)$ is
mild solution of \eqref{eq:general1Dequationcauchy} with initial
data $u_0(x)=u(x,\tau).$
\begin{definition}\label{Definitionoftravellingwave}
A mild solution $u(x,t)$ of \eqref{eq:general1Dequation} for
$t\in\mathbb R$ is called a {\textbf{travelling wave}} (in the
\textbf{positive} direction) if
\[
\
u(x,t)=\varphi(x,x-ct),\,\,\varphi(x+L,s)=\varphi(x,s)\,\,\,\,\,\,\,\,\,\,\,\,\hbox{
for } (x,t)\in\mathbb R\times\mathbb R,
\]
\[ \ \lim_{s  \to -\infty}\varphi(x,s)-P(x)=0,~~\lim_{s
\to +\infty}\varphi(x,s)=0 \quad \hbox{locally uniformly in }\,
x\in\mathbb R.
\]It is called a \textbf{travelling wave} (in the
\textbf{negative} direction) if
\[
\
u(x,t)=\varphi(x,x+ct),\,\,\varphi(x+L,s)=\varphi(x,s)\,\,\,\,\,\,\,\,\,\,\,\,\hbox{
for } (x,t)\in\mathbb R\times\mathbb R,
\]
\[ \ \lim_{s  \to -\infty}\varphi(x,s)=0,~~\lim_{s
\to +\infty}\varphi(x,s)-P(x)=0 \quad \hbox{locally uniformly in
}\,x\in\mathbb R.
\] The quantity $c$ is called the wave speed.
\end{definition}
%\begin{remark}\label{rem:weakmildsol}
%If we assume that $u(x,y,t)$ in Definition \ref{def:speedforbx} is
%independent of $y$, and that $\theta=0$, then $u$ can be expressed
%as
%\[
%\ u(x,t)=\varphi(x,x-ct),\,\,\varphi(x+L,s)=\varphi(x,s).
%\] Hence \eqref{eq:travellingwave} holds by setting $T=L/c$.
%\end{remark}
The following theorem shows that  the minimal speed of travelling
wave in the positive direction and that in the negative direction
exist and are equal:
\begin{theorem}\label{cstarbequalcestarb}
For both of the cases {\rm(F1)} and {\rm(F2)} and any
$\bar{b}\in\overline{\Lambda}(\alpha),$ it holds that there is some
$c^*(\bar{b})>0$ such that a travelling wave of
\eqref{eq:general1Dequation} in the positive direction with speed
$c$ exists if and only if\, $c\geq c^*(\bar{b})$. The same holds for
travelling waves in the negative direction with speed $c$.
Furthermore,
\[
\
c^*(\bar{b})=\min_{\lambda>0}\frac{-\mu(\lambda,0,\bar{b})+\lambda^2}{\lambda},
\]
and consequently \[2\sqrt{\alpha}\leq c^*(\bar b)\leq
2\sqrt{\alpha+\alpha^2L^2}.\]
\end{theorem}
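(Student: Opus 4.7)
The plan mirrors the KPP case $f(u)=u(1-u)$ of \cite{llm}, with the modifications required to handle both (F1) and (F2). I proceed in three stages: (a) classical theory for smooth $b$, (b) passage to $\bar b$ by weak$^*$ approximation, (c) construction of travelling waves and matching nonexistence. Note that both (F1) and (F2) share the same linearization $u_t=u_{xx}+b(x)u$ at $u=0$, so the principal eigenvalue $\mu(\lambda,0,b)$ plays the same role in either case.

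For smooth $b\in\Lambda(\alpha)$ the existence of $c^*(b)>0$ together with the formula
\[
c^*(b)=\min_{\lambda>0}\frac{\lambda^2-\mu(\lambda,0,b)}{\lambda}
\]
is known: for (F1) this is the Berestycki--Hamel--Roques theory \cite{s1}, while for (F2) it follows from Weinberger's abstract monotone-semiflow framework \cite{s8}, whose hypotheses reduce here to the existence of the positive $L$-periodic steady state $P$ (Proposition \ref{pro:steadystate}) and to the KPP-type monotonicity of $u\mapsto(\bar b(x)f(u)+g(u))/u$, which is strictly decreasing for $u>0$ in both cases. Combining the formula with Lemma \ref{lem:boundednessofmu} and the AM--GM inequality $\lambda+a/\lambda\ge 2\sqrt a$ immediately gives the two-sided bound $2\sqrt\alpha\le c^*(b)\le 2\sqrt{\alpha+\alpha^2L^2}$ for smooth $b$.

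Next, I would promote the formula to $\bar b\in\overline\Lambda(\alpha)$ by picking $\{b_n\}\subset\Lambda(\alpha)$ with $b_n\to\bar b$ weak$^*$ (Definition \ref{DefofLamdaBarAlpha}). Proposition \ref{pro:convergenceofeigenvalue} gives $\mu(\lambda,0,b_n)\to\mu(\lambda,0,\bar b)$ locally uniformly in $\lambda$, while Lemma \ref{lem:boundednessofmu} gives the uniform bracket $-\mu(\lambda,0,b_n)\in[\alpha,\alpha+\alpha^2L^2]$. Together these imply that $F_n(\lambda):=(\lambda^2-\mu(\lambda,0,b_n))/\lambda$ is uniformly coercive on $(0,\infty)$ and converges locally uniformly to $F(\lambda):=(\lambda^2-\mu(\lambda,0,\bar b))/\lambda$, so the candidate minimal speed $c^*_e(\bar b):=\min F$ is well-defined, satisfies the same bounds, and equals $\lim_n c^*(b_n)$. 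To build an actual travelling wave for $\bar b$ at any speed $c\ge c^*_e(\bar b)$, I select $c_n\ge c^*(b_n)$ with $c_n\to c$, take the corresponding smooth waves $u_n(x,t)=\varphi_n(x,x-c_nt)$, normalize them by (for instance) $\max_{[0,L]}\varphi_n(\cdot,0)=\tau$ for some fixed $\tau\in(0,\min P)$, and extract a locally uniformly convergent subsequence. The mild-solution integral representation combined with the uniform bound $0\le u_n\le\max P$ produces the required compactness, while Lemma \ref{cor:A} is the key tool that lets me pass to the limit in the nonlinear term $b_n(x)f(u_n)\to\bar b(x)f(u)$ inside the heat-kernel convolution. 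Monotonicity of $\varphi_n$ in $s$, a standard consequence of sliding comparisons for KPP-type nonlinearities in the smooth setting, survives the limit; uniform Harnack estimates together with the attractivity in Proposition \ref{pro:steadystate} then force the correct asymptotics $\varphi\to P$ as $s\to-\infty$ and $\varphi\to 0$ as $s\to+\infty$.

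Non-existence for $c<c^*_e(\bar b)$ follows from a leading-edge linearization: if a travelling wave existed, a Harnack-type analysis of the decaying tail would force the existence of some $\lambda>0$ and a positive $L$-periodic $\psi$ with $-\psi''+2\lambda\psi'-\bar b\psi=(\lambda^2-\lambda c)\psi$, i.e.\ $\mu(\lambda,0,\bar b)=\lambda^2-\lambda c$, which is impossible for $c<c^*_e(\bar b)$. The negative-direction statement reduces to the positive-direction one via the identity $\mu(\lambda,0,\bar b)=\mu(-\lambda,0,\bar b)$, which holds because the periodic operator $-\partial_x^2+2\lambda\partial_x-\bar b$ and its formal adjoint $-\partial_x^2-2\lambda\partial_x-\bar b$ share the same principal eigenvalue. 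I expect the main technical obstacle to be the simultaneous passage to the limit in the nonlinear term $b_n f(u_n)\to\bar b f(u)$ and the control of the profile tails at $s=\pm\infty$; the weak$^*$ machinery from \cite{llm} (encapsulated here in Lemmas \ref{lemma:A} and \ref{cor:A}) together with the attractivity of the unique positive periodic steady state (Proposition \ref{pro:steadystate}) are the inputs that make this step go through.
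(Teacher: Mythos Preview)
Your approach is a sound alternative, but it differs from the paper's. The paper does not pass to the limit in the travelling-wave profiles; instead it views the 1D problem as the $\theta=0$ case of the 2D problem and applies Weinberger's abstract monotone-semiflow theory \emph{directly} to the measure-valued equation. Concretely, Sections \ref{section:wellposedness}--\ref{Localuniformtopolyandsemiflow} build the nonlinear semiflow $Q_t$ on $\mathcal C_P$ with the local-uniform topology for $\bar b\in\overline\Lambda(\alpha)$, verify Weinberger's hypotheses (i)--(v), and then invoke \cite[Theorem~2.6]{s8} to obtain existence for $c\ge c^*(\bar b)$ and nonexistence for $c<c^*(\bar b)$ in one stroke; the identity $c^*(\bar b)=c^*_e(\bar b)$ is then obtained by sandwiching $Q_t$ between the linearized flows $\Phi_t^\epsilon\le Q_t\le\Phi_t$ (Lemma \ref{QtLeqPhit}) and applying \cite[Theorems~2.4,~2.5]{s8}. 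Your route trades the verification of Weinberger's hypotheses for the measure-valued semiflow for a direct compactness/limit argument on the profiles $\varphi_n$, which is perfectly workable (and closer in spirit to the construction in \cite{llm}); what it buys is that you never need continuity of $Q_t$ in the local-uniform topology for measure $\bar b$. The weakest link in your sketch is the nonexistence step: the ``Harnack-type analysis of the decaying tail'' forcing $\mu(\lambda,0,\bar b)=\lambda^2-\lambda c$ is delicate when $\bar b$ is a measure, whereas the paper gets nonexistence for free from Weinberger's abstract theorem. If you keep your route, a cleaner nonexistence argument is to compare with the linear flow $\Phi_t$ (which you already have for measure $\bar b$) and use the spreading-speed upper bound, rather than extracting an exact exponential rate from the tail.
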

We call $c^*(\bar b)$ the {\bf minimal wave speed}. Moreover,
$c^*(\bar b)$ is also the {\bf spreading speed} in the following
sense:\begin{theorem}[Spreading speed]\label{Themforspreadingspeed}
 For both of the cases {\rm(F1)} and {\rm(F2)} and any $\bar{b}\in\overline{\Lambda}(\alpha),$
given a nonnegative initial data $u_0\not\equiv 0$ with compact
support, the mild solution $u(x,t,u_0)$ of
\eqref{eq:general1Dequation} satisfies that
\begin{enumerate}
\item[{\rm(i)}] $\lim\limits_{t\to \infty}u(x,t,u_0)=0  $\,\,
uniformly in $\{|x|>ct\}$ \,\,\,\,if \,\,$c> c^{*}(\bar{b})$,
\item[{\rm(ii)}] $\lim\limits_{t\to \infty}u(x,t,u_0)-P(x)=0  $\,\,
uniformly in $\{|x|<ct\}$ \,\,\,\,if\,\, $0<c< c^{*}(\bar{b})$.
\end{enumerate}
\end{theorem}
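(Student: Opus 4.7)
The plan is to prove parts (i) and (ii) separately, exploiting the characterization $c^*(\bar b)=\min_{\lambda>0}(\lambda^2-\mu(\lambda,0,\bar b))/\lambda$ from Theorem~\ref{cstarbequalcestarb}, and to handle the measure-valued $\bar b\in\overline{\Lambda}(\alpha)$ by smooth approximation $b_n\to\bar b$ throughout.

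For part (i), I would build exponential supersolutions on the two sides. Fix $c_0\in(c^*(\bar b),c)$ and choose $\lambda>0$ such that $\mu(\lambda,0,\bar b)+\lambda c_0-\lambda^2>0$. Let $\psi>0$ be the associated principal eigenfunction from Definition~\ref{definitionofprincipaleigenvalue}. A direct computation using the eigenvalue identity shows
\[
\bigl(\partial_t-\partial_{xx}-\bar b(x)\bigr)\bigl[M\psi(x)e^{-\lambda(x-c_0t)}\bigr]=(\mu-\lambda^2+\lambda c_0)\,M\psi(x)e^{-\lambda(x-c_0t)}\ge 0,
\]
so this is a supersolution of the linearized equation on the right; a symmetric construction (or $x\mapsto-x$ applied to the reflected coefficient $\bar b(-\cdot)\in\overline{\Lambda}(\alpha)$) supplies a supersolution on the left. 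Under (F1), $f(u)/u$ decreasing together with $f'(0)=1$ gives $f(u)\le u$; under (F2), $f(u)=u$ and $g\le 0$. Either way $\bar b(x)f(u)+g(u)\le\bar b(x)u$, so the linear supersolution is a supersolution of \eqref{eq:general1Dequation}. Since $u_0$ is compactly supported while $\bar u(\cdot,0)$ is exponentially large at $\pm\infty$, $M$ can be chosen so that $\bar u(\cdot,0)\ge u_0$; comparison (in the mild sense, valid for each smooth $b_n$ and extended by the approximation) yields $u\le\bar u$. On $|x|>ct$ we have $|x|-c_0t\ge(c-c_0)t\to\infty$, so $\bar u\to 0$ uniformly there.

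For part (ii), I would combine a hair-trigger argument with a compactly supported subsolution that propagates at any prescribed speed $c_1<c^*(\bar b)$. First, by comparing $u_0$ below with a small periodic bump and invoking Proposition~\ref{pro:steadystate}, at some large time $T$ the solution satisfies $u(\cdot,T,u_0)\ge\eta$ on a prescribed interval $[-R,R]$ for some $\eta>0$. Second, for any $c_1<c^*(\bar b)$ and any $\lambda$ nearly attaining the minimum in the formula for $c^*$, a classical construction (cf.~\cite{BH,s8,llm}) produces a compactly supported subsolution of the linearized equation that translates at speed $c_1$; this is built from the principal Dirichlet eigenfunction on $[-R_0,R_0]$, whose eigenvalue converges to $\mu(\lambda,0,\bar b)$ as $R_0\to\infty$. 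The KPP structure yields $\bar b(x)f(u)+g(u)\ge(1-\varepsilon)\bar b(x)u$ for $0\le u\le\delta$, so while the subsolution remains below $\delta$ it is a subsolution of the full equation. Comparison from time $T$ then gives $u(x,t+T)\ge\eta'>0$ uniformly on $|x|<c_1t$ for large $t$, and a further application of Proposition~\ref{pro:steadystate} (which states that any positive lower bound is eventually lifted to $P(x)$) upgrades this to $u(x,t)-P(x)\to 0$ uniformly on $|x|<c_1t$. Since $c_1<c^*(\bar b)$ was arbitrary, (ii) follows.

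The main obstacle is the subsolution construction in (ii) for measure-valued $\bar b$, because the classical argument presupposes smooth coefficients. I would bypass this by approximation: for $b_n\in\Lambda(\alpha)$ with $b_n\to\bar b$ weakly$^*$, Proposition~\ref{pro:convergenceofeigenvalue} provides $c^*(b_n)\to c^*(\bar b)$, the subsolution argument applies cleanly to each smooth $b_n$, and Proposition~\ref{wlposedness} (stability of mild solutions under the approximation) permits passage to the limit. The extension from the specific case $f(u)=u(1-u)$ treated in \cite{llm} to general (F1) and to (F2) is then routine, since both cases share the identical linearization at $u=0$ (ensuring the same $c^*$) and the same saturation mechanism at the positive steady state $P$ supplied by Proposition~\ref{pro:steadystate}.
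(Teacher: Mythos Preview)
Your proposal is correct in outline but takes a genuinely different route from the paper. The paper does not give a standalone proof of this 1D theorem: it remarks that any solution of \eqref{eq:general1Dequation} is a $y$-independent solution of \eqref{SecondEquation2}, and then invokes the 2D spreading-speed result (Theorem~\ref{Themforspreadingspeed2D}). That 2D result, in turn, is obtained not by super/subsolutions but by verifying that the time-$t$ map $Q_t$ and the linear sandwiches $\Phi_t$, $\Phi_t^\varepsilon$ of Section~\ref{Localuniformtopolyandsemiflow} satisfy the abstract hypotheses of Weinberger~\cite{s8} (his Theorems~2.4 and~2.5), and then reading off both the existence of the spreading speed and its identification with $c_e^*$ from that framework. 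Your approach is the classical PDE route---explicit exponential supersolutions from the eigenfunction $\psi$ for (i), and compactly supported travelling subsolutions built from a Dirichlet eigenfunction for (ii). Both are valid; the semiflow route handles (F1), (F2), and measure-valued $\bar b$ uniformly once the well-posedness and monostability lemmas are in place, while your route is more elementary and self-contained.

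One caution on the approximation step in (ii): knowing the spreading result for each smooth $b_n$ together with $u_n\to u$ locally uniformly in $(x,t)$ does \emph{not} immediately yield the spreading result for $\bar b$, because the region $\{|x|<c_1 t\}$ is unbounded and the limits $n\to\infty$ and $t\to\infty$ do not commute a priori. To close this you should either (a) construct the compactly supported subsolution directly for $\bar b$---the Dirichlet principal eigenvalue on $[-R_0,R_0]$ with a measure potential is well defined and converges to $\mu(\lambda,0,\bar b)$ as $R_0\to\infty$, so the standard construction goes through---or (b) argue that the subsolutions for $b_n$ can be chosen with height and support uniform in $n$ (which follows from the convergence of eigenvalues in Proposition~\ref{pro:convergenceofeigenvalue}), so that for each fixed $t$ the lower bound survives the limit $n\to\infty$. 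Either fix is short, but the phrase ``permits passage to the limit'' hides this order-of-limits issue.
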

\begin{theorem}[Optimal coefficient]\label{cestarhlargerthancestarb}
Let $h$ be as in \eqref{h}. Then
\[
\
c^*(h)=\sup_{b\in\Lambda(\alpha)}c^*(b)=\max_{\bar{b}\in\overline{\Lambda}(\alpha)}c^*(\bar{b}).
\] Furthermore,
\[
\ c^*(h)>c^*(b) \quad\hbox{ for any }\,b\in\Lambda(\alpha).
\]
\end{theorem}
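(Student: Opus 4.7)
The plan is to leverage the variational characterization of the minimal speed from Theorem~\ref{cstarbequalcestarb},
\[
c^*(\bar{b}) \;=\; \min_{\lambda>0} \frac{-\mu(\lambda,0,\bar{b}) + \lambda^2}{\lambda},
\]
together with the eigenvalue comparison in Lemma~\ref{lem:eigenvalueofhwiththeta}, which asserts $\mu(\lambda,0,h) \le \mu(\lambda,0,\bar b)$ for every $\bar b\in\overline{\Lambda}(\alpha)$ and $\mu(\lambda,0,h) < \mu(\lambda,0,b)$ for every $b\in\Lambda(\alpha)$. The comparison yields the pointwise inequality
\[
\frac{-\mu(\lambda,0,h) + \lambda^2}{\lambda} \;\ge\; \frac{-\mu(\lambda,0,\bar b) + \lambda^2}{\lambda} \qquad \text{for all } \lambda>0,
\]
which is strict whenever $\bar b = b\in\Lambda(\alpha)$.

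Taking $\min_{\lambda>0}$ of both sides immediately gives $c^*(h)\ge c^*(\bar b)$ for every $\bar b\in\overline{\Lambda}(\alpha)$; combined with $h\in\overline{\Lambda}(\alpha)$ this identifies $c^*(h)$ as the maximum. For the strict version $c^*(h)>c^*(b)$ with $b\in\Lambda(\alpha)$, I would first note that the cost $F_h(\lambda):=(-\mu(\lambda,0,h)+\lambda^2)/\lambda$ is continuous on $(0,\infty)$ and blows up at both endpoints: as $\lambda\to 0^+$ because $-\mu(\lambda,0,h)\ge\alpha>0$ (the uniform bound of Lemma~\ref{lem:boundednessofmu} passes to $h$ by Proposition~\ref{pro:convergenceofeigenvalue}), and as $\lambda\to\infty$ because $F_h(\lambda)\sim\lambda$. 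Hence $F_h$ attains its minimum at some $\lambda_h^*>0$, and
\[
c^*(b) \;\le\; \frac{-\mu(\lambda_h^*,0,b)+\lambda_h^{*2}}{\lambda_h^*} \;<\; \frac{-\mu(\lambda_h^*,0,h)+\lambda_h^{*2}}{\lambda_h^*} \;=\; c^*(h),
\]
where the strict middle inequality uses the strict eigenvalue comparison at the single point $\lambda_h^*$.

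To upgrade the maximum to the supremum over $\Lambda(\alpha)$, I would invoke Definition~\ref{DefofLamdaBarAlpha} to pick a sequence $\{b_n\}\subset\Lambda(\alpha)$ with $b_n\to h$ in the weak$^*$ sense. Proposition~\ref{pro:convergenceofeigenvalue} supplies pointwise convergence $\mu(\lambda,0,b_n)\to\mu(\lambda,0,h)$, hence $F_{b_n}(\lambda)\to F_h(\lambda)$ for each $\lambda>0$. The uniform bounds in Lemma~\ref{lem:boundednessofmu} confine the minimizers of $F_{b_n}$ (for all $n$) to one common compact subinterval $[\lambda_1,\lambda_2]\subset(0,\infty)$, so pointwise convergence can be promoted to convergence of the minima, yielding $c^*(b_n)\to c^*(h)$. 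Combined with the already-established $c^*(h)>c^*(b)$ for every $b\in\Lambda(\alpha)$, this gives $c^*(h)=\sup_{b\in\Lambda(\alpha)}c^*(b)$. The main technical obstacle is precisely this last compactness argument—checking that the minimizers of $F_{b_n}$ stay uniformly bounded away from $0$ and $\infty$—but once this is in hand the remaining work is simply bookkeeping over the variational formula.
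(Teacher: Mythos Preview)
Your proposal is correct and follows essentially the same route as the paper. The paper (which actually omits a direct proof of this 1D statement and defers to the 2D analogue, Theorem~\ref{MinimalPlanarSpeed}) combines the variational formula \eqref{eq:c*minmu+lamda^2/lamda} with the eigenvalue comparison of Lemma~\ref{lem:eigenvalueofhwiththeta} to get the strict inequality, and then invokes the continuity result (Theorem~\ref{Convergenceofcstarbn}, itself proved from Proposition~\ref{pro:convergenceofeigenvalue} and Lemma~\ref{lem:boundednessofmu}) to obtain the sup/max identification---exactly the ingredients you assemble, only you spell out the compactness-of-minimizers step that the paper leaves implicit.
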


In fact, in \cite{llm}, we have proven the above theorems for
equation $u_t=u_{xx}+\bar{b}u(1-u)$. We can also use the same idea
to prove them for both of the cases {\rm(F1)} and {\rm(F2)}.
Moreover, we can see that any mild solution $u(x,t)$ of
\eqref{eq:general1Dequationcauchy} is also the mild solution of
\eqref{SecondEquationcauchy} independent of  $y$, and any travelling
wave of {1-dimensional equation} is also the travelling wave of
{2-dimensional equation} in the direction $\theta=0$ or $\pi$.
Hence, the above theorems for 1-dimensional case are covered by our
general results for 2-dimensional case. Hence, we omit the proof.
%The
%following lemma is essential. Since we will prove it for
%2-dimensional case, here we just state the lemma without proof.
%\begin{lemma}
%There exists a constant $M>0$, such that for any
%$b\in\Lambda(\alpha)$, the steady state $P_b(x)$ of equation
%\eqref{eq:general1Dequation} satisfies
%\[
%\ \|P_b(x)\|_{L^\infty}\leq M.
%\]
%\end{lemma}
\section{Well-posedness for the 2-dimensional
case}\label{section:wellposedness} We recall the following Cauchy
problem \eqref{SecondEquationcauchy}:
\[
\left\{\begin{array}{ll} u_t=u_{xx}+u_{yy}+\bar{b}(x)f(u)+g(u)
\quad\ \ &((x,y)\in\mathbb R^2,\;t>0),\vspace{5pt}\\
u(x,y,0)=u_0(x,y)\geq 0\quad\ \ &((x,y)\in\mathbb R^2),
\end{array}\right.
\]
where $u_0\in C(\mathbb R^2)\cap L^\infty(\mathbb R^2)$ and $f$ and
$g$ are locally Lipschitz continuous functions of $u$.

We have the following theorems about the well-posedness of the mild
solution of \eqref{SecondEquationcauchy}:
\begin{theorem}\label{wlposedness} For any $\bar{b}\in\overline{\Lambda}(\alpha)$ and any given nonnegative
initial data $u_0\in C(\mathbb R^2)\cap L^\infty(\mathbb R^2),$
there exists a constant $T\in(0,+\infty]$, such that the problem
\eqref{SecondEquationcauchy} has a unique mild solution
$u(x,y,t,u_0,\bar{b})$ for $t\in(0,T)$. This mild solution is also a
weak solution. Furthermore, it is continuously dependent on the
initial data $u_0$. More precisely, if $u^0_n\to u_0$ in $L^\infty$
norm, then for any $t>0$, $u(x,y,t,u^0_n,\bar{b})\to
u(x,y,t,u_0,\bar{b})$ in $L^\infty$ norm.
\end{theorem}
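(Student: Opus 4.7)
The plan is to construct the mild solution via Banach's fixed-point theorem on a suitable complete metric space of bounded continuous functions, following the same strategy used in the 1-dimensional case in \cite{llm} but adapted to handle the extra $y$-variable. Fix $M > 2\|u_0\|_\infty$, let $K$ be a common Lipschitz constant and $C_M$ an upper bound for $|f|$ and $|g|$ on $[-M, M]$, and for $T > 0$ set
\[
Y_T = \{u \in C(\mathbb{R}^2 \times [0,T]) : \|u\|_{Y_T} := \sup_{0 \leq t \leq T} \|u(\cdot,\cdot,t)\|_\infty \leq M\}.
\]
Let $\mathcal{F}[u]$ denote the right-hand side of the integral formula in Definition \ref{def:msolg}. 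The main task is to show that, for $T$ small enough, $\mathcal{F}$ maps $Y_T$ into itself and is a strict contraction; then Banach's theorem produces the unique mild solution.

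The key technical obstacle, and the only place where the measure structure of $\bar{b}$ genuinely enters, is the estimate
\[
\int_{\mathbb{R}} \bar{b}(x') H(x - x', \tau)\, dx' \leq \Phi(\tau), \qquad H(x, \tau) := (4\pi\tau)^{-1/2} e^{-x^2/(4\tau)},
\]
where $\Phi(\tau)$ is integrable on $[0, T]$ with $A(T) := \int_0^T \Phi(\tau)\, d\tau \to 0$ as $T \to 0$. This follows from Lemma \ref{lemma:A} applied to $\eta(x') := H(x - x', \tau)$: its periodic supremum-sum $\sum_k \max_{x' \in [0, L]} H(x - x' - kL, \tau)$ is $O(\tau^{-1/2})$ for small $\tau$ and $O(1)$ for large $\tau$, hence integrable in $\tau$ near $0$. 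Since the 2-dimensional heat kernel factors as $G(x, y, \tau) = H(x, \tau) H(y, \tau)$ and $\bar{b}$ depends only on $x$, I would integrate $G$ against $f(u)$ in the $y'$-variable first (controlled by $\|f(u)\|_\infty$) and then apply the above bound in the remaining $x'$-integral. Combining with the Lipschitz control $|f(u_1) - f(u_2)| \leq K|u_1 - u_2|$ and the trivial bound $T \|u_1 - u_2\|_{Y_T}$ for the $g$-term yields
\[
\|\mathcal{F}[u_1] - \mathcal{F}[u_2]\|_{Y_T} \leq K(A(T) + T) \|u_1 - u_2\|_{Y_T}.
\]
Choosing $T$ so small that $K(A(T) + T) < 1/2$ and $C_M(A(T) + T) < M/2$ gives both the self-mapping property and the contraction estimate.

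Once the fixed point $u$ is obtained, the same contraction estimate applied to two initial data $u_0^n \to u_0$ gives $\|u^n - u\|_{Y_T} \leq 2\|u_0^n - u_0\|_\infty$, which is the claimed continuous dependence. Continuity of $u$ in $(x, y, t)$ uses standard heat-semigroup properties together with Lemma \ref{cor:A} to pass dominated convergence through the measure factor. Finally, verifying that the mild solution is also a weak solution in the sense of Definition \ref{def:wsolg} amounts to multiplying the Duhamel identity by a test function $\eta \in C_0^\infty(\mathbb{R}^2 \times I)$, applying Fubini and using that $G$ is the fundamental solution of the heat equation to transfer derivatives onto $\eta$ in the linear terms, and invoking Lemma \ref{cor:A} to justify interchanging $\bar{b}$ with the continuous factor. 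This last step is routine and I do not expect any new difficulty beyond that already encountered in the 1-dimensional argument of \cite{llm}.
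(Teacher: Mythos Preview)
Your proposal is correct, but the paper takes a different route. You construct the mild solution directly for the measure coefficient $\bar b$ via a Banach fixed-point argument, using the key bound $\int_{\R}\bar b(x')H(x-x',\tau)\,dx'\le C\alpha(1+L/\sqrt{\tau})$ (this is exactly Lemma~\ref{lm:estimateforlemmainsection6} in the paper) to get a contraction on short time intervals. The paper instead approximates $\bar b$ by smooth $b_n\in\Lambda(\alpha)$, solves the classical problems to obtain $u_n$, proves uniform equicontinuity of $\{u_n\}$ (Lemma~\ref{Equicontinuity}, again resting on Lemma~\ref{lm:estimateforlemmainsection6}), extracts a locally uniform limit by Arzel\`a--Ascoli, and then checks that the limit is a mild and weak solution; uniqueness and continuous dependence are obtained afterwards by a Gronwall-type inequality with kernel $1/\sqrt{t-s}$ (borrowed from \cite{s3}), which is morally your contraction estimate in disguise.

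Your approach is cleaner and more self-contained for the bare well-posedness statement. The paper's approximation approach, however, yields as a free by-product the convergence $u(\cdot,\cdot,\cdot,u_0,b_n)\to u(\cdot,\cdot,\cdot,u_0,\bar b)$ when $b_n\to\bar b$ weak$^*$ (stated as Corollary~\ref{corol:contdeonini}), which is used repeatedly later---for the comparison principle, the strong comparison principle, sublinearity, and the identification of the mild steady state. With your fixed-point construction you would have to prove that stability in $b$ separately; it is not hard (another application of the same integral estimate plus Lemma~\ref{cor:A}), but it is extra work that the paper's scheme absorbs automatically. One small technical point: your space $Y_T$ demands continuity on $\R^2\times[0,T]$, whereas the mild-solution definition only asks for pointwise recovery of $u_0$ as $t\searrow0$; since $u_0$ is merely in $C\cap L^\infty$ and need not be uniformly continuous, you should either relax $Y_T$ to allow $t\in(0,T]$ with the sup norm, or note that the contraction works equally well on that slightly larger space.
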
 To prove the above theorem for general $f(u)$ and
$g(u)$, the following lemma is necessary:
\begin{lemma}\label{Equicontinuity}
For any $u_0\in C(\mathbb R^2)\cap L^\infty(\mathbb R^2)$, there is
some constant $T\in(0,\infty)$ such that the solution
$u(x,y,t,u_0,b)$  of \eqref{SecondEquationcauchy} with $\bar{b}$
replaced by a smooth $b\in\Lambda(\alpha)$ with initial data $u_0$
exists for $t\in(0,T)$. Moreover, for any $\epsilon>0, \,M>0,$
$\{u(x,y,t,u_0,b)\}_{u_0\in C(\mathbb R^2),\|u_0\|_\infty\leq M,
b\in\Lambda(\alpha)} $ is uniformly equicontinuous in $(x,y,t)\in
\mathbb{R}^2\times[\epsilon,T]$.
\end{lemma}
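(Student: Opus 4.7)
The plan is to combine Duhamel's representation
\[
u(\cdot,\cdot,t) = G(\cdot,\cdot,t)*u_0 + \int_0^t G(\cdot,\cdot,t-s)*\bigl(bf(u(\cdot,\cdot,s))+g(u(\cdot,\cdot,s))\bigr)\,ds
\]
with kernel estimates that depend on $b\in\Lambda(\alpha)$ only through $\alpha$ and $L$, not through $\|b\|_\infty$. The key observation is that, since $b(x')$ is independent of $y'$, integrating $G$ in the variable $y'$ produces the one-dimensional Gauss kernel $G_1$, so every $b$-convolution reduces to $(G_1(\cdot,\tau)*b)(x)$. Using the $L$-periodicity of $b$ together with the theta-type function $\Theta(z,\tau):=\sum_{k\in\mathbb Z}G_1(z-kL,\tau)$, one obtains
\[
\|G_1(\cdot,\tau)*b\|_\infty = \Bigl\|\int_0^L \Theta(\cdot-x',\tau)b(x')\,dx'\Bigr\|_\infty \le \Bigl(\frac{1}{\sqrt{4\pi\tau}}+\frac{C}{L}\Bigr)\alpha L
\]
uniformly over all $b\in\Lambda(\alpha)$, and analogous bounds on the spatial derivatives hold whenever $\tau\ge\eta>0$.

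With this estimate, local existence of a mild solution on a common interval $[0,T]$ will follow from the Banach fixed point theorem applied to the Duhamel map on the closed ball of radius $2M$ in $C([0,T];L^\infty(\R^2))$, where $M=\|u_0\|_\infty$. Indeed, $\int_0^T\|G(\cdot,\cdot,T-s)*b\|_\infty\,ds=O(\sqrt T+T)$ uniformly in $b$, so the Lipschitz constant of the map can be made less than $1$ by taking $T$ small enough, with the smallness depending only on $M$, $\alpha$, $L$, and the Lipschitz constants of $f,g$ on $[-2M,2M]$. In particular, the same $T$ works for every $b\in\Lambda(\alpha)$ and every $u_0$ with $\|u_0\|_\infty\le M$.

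For the uniform equicontinuity on $\R^2\times[\epsilon,T]$, I would estimate the spatial and temporal H\"older modulus of $u$ uniformly in the family. The initial-data piece $G(\cdot,\cdot,t)*u_0$ is smooth with derivatives bounded by $CM/\sqrt\epsilon$. To control the Duhamel integral, I would split $[0,t]$ at $s=t-\eta$ for a small auxiliary $\eta$. For $s\in[0,t-\eta]$ the kernel is smooth and the gradient of its $b$-convolution is bounded by $\|\partial_z\Theta(\cdot,\tau)\|_\infty\le C(\eta)$ for $\tau\ge\eta$. For $s\in[t-\eta,t]$ I would invoke the interpolation inequality
\[
\|\Theta(\cdot+h,\tau)-\Theta(\cdot,\tau)\|_\infty \le C\,|h|^\beta\,\tau^{-(1+\beta)/2}\quad(0\le\beta\le 1),
\]
obtained by interpolating the trivial bound $\|\Theta(\cdot,\tau)\|_\infty=O(\tau^{-1/2})$ against the gradient bound $\|\partial_z\Theta(\cdot,\tau)\|_\infty=O(1/\tau)$. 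Choosing any $\beta\in(0,1)$ keeps $\tau^{-(1+\beta)/2}$ integrable near $\tau=0$ and yields a uniform spatial H\"older estimate $|u(x+h,y,t)-u(x,y,t)|\le C|h|^\beta$. A parallel but simpler argument for the $y$-variable (where the $y'$-integration against $G_1(\cdot,\tau)$ never interacts with $b$) together with a direct estimate on $G(\cdot,\cdot,\tau+h)-G(\cdot,\cdot,\tau)$ for time increments completes the proof.

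The main obstacle is that $b\in\Lambda(\alpha)$ is uniformly bounded only in $L^1_{\mathrm{per}}$ while its $L^\infty$ norm can be arbitrarily large, so any estimate that factors $\|b\|_\infty$ out of the convolution would immediately lose uniformity. The theta-function viewpoint, combined with the use of a H\"older exponent $\beta<1$ strictly below the derivative exponent, supplies the $b$-independent bounds required for both the common time of existence and the equicontinuity.
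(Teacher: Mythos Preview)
Your proposal is correct, and the crucial ingredient---the estimate
\[
\Bigl\|\int_{\R^2}G(\cdot-x',\cdot-y',\tau)\,b(x')\,\eta(x',y')\,dx'dy'\Bigr\|_\infty
\le C\alpha\Bigl(1+\frac{L}{\sqrt{\tau}}\Bigr)\|\eta\|_\infty,
\]
which you obtain via the theta-function $\Theta(z,\tau)=\sum_k G_1(z-kL,\tau)$---is exactly the paper's Lemma~\ref{lm:estimateforlemmainsection6}. The common existence time $T$ is then derived in the same way.

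Where you and the paper diverge is in the equicontinuity argument. You split the Duhamel integral at $s=t-\eta$, bound the far piece using $\|\partial_z\Theta(\cdot,\tau)\|_\infty$ for $\tau\ge\eta$, and handle the near-singular piece by the H\"older interpolation $\|\Theta(\cdot+h,\tau)-\Theta(\cdot,\tau)\|_\infty\le C|h|^\beta\tau^{-(1+\beta)/2}$ with $\beta<1$ to keep the $\tau$-integral finite. The paper instead splits off the singular piece $p(x,y,t)=\int_0^{t^*}G(\cdot,\cdot,s)*A(\cdot,\cdot,t-s)\,ds$, shows $\|p\|_\infty\to 0$ uniformly as $t^*\to 0$, and then observes that the remainder $w$ can be rewritten as the solution of $w_t=\Delta w+q$, $w|_{t=t^*}=0$, with the \emph{bounded} source $q=G(\cdot,\cdot,t^*)*A$; uniform equicontinuity of $w$ then follows from standard parabolic estimates without any explicit H\"older computation. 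Your route is more hands-on and yields an explicit modulus of continuity; the paper's route is shorter and outsources the regularity to classical theory once the source has been smoothed. Both avoid the trap of factoring out $\|b\|_\infty$, which is the whole point.
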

For the special cases {\rm(F1)} and {\rm(F2)}, we will prove that
the mild solution of equation \eqref{SecondEquationcauchy} for
$t\in(0,+\infty)$ exists. Precisely, the following theorem holds:
\begin{theorem}\label{wlposednessforF2} For both of the cases {\rm(F1)}
and {\rm(F2)} and $u_0\in C(\mathbb R^2)\cap L^\infty(\mathbb R^2)$,
the problem \eqref{SecondEquationcauchy} has a unique mild solution
$u(x,y,t,u_0,\bar{b})$ for any $t\in(0,+\infty)$ and it is also a
weak solution. Furthermore, it is continuously dependent on the
initial data as in Theorem \ref{wlposedness}.
\end{theorem}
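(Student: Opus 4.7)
The plan is to extend the local mild solution from Theorem \ref{wlposedness} to one defined for all $t\in(0,\infty)$ by establishing a priori $L^\infty$ bounds that rule out finite-time blow-up, and then to derive continuous dependence by a Gronwall estimate on the Duhamel formula.

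For case \textrm{(F1)}, since $g\equiv 0$ and $f(u)\le 0$ for $u\ge 1$, the constant $M:=\max(1,\|u_0\|_\infty)$ is a stationary supersolution of \eqref{SecondEquationcauchy}. I would approximate $\bar b$ by smooth $b_n\in\Lambda(\alpha)$ with $b_n\to\bar b$ in the weak$^*$ sense, apply the classical maximum principle to conclude $0\le u_n\le M$ throughout the existence interval, and iterate local existence to obtain global smooth-coefficient solutions $u_n$. Lemma \ref{Equicontinuity} then yields uniform equicontinuity on $\mathbb R^2\times[\varepsilon,T]$ for each $T>0$, so Arzel\`a--Ascoli extracts a locally uniformly convergent subsequence, and Lemma \ref{cor:A} lets one pass to the limit in the Duhamel integral to identify the limit as a mild (hence weak) solution of \eqref{SecondEquationcauchy}.

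For case \textrm{(F2)}, where the equation reads $u_t=\Delta u+u(\bar b(x)-g_1(u))$, constant supersolutions fail because $\bar b$ may carry atoms. Instead I would compare $u$ with the solution $v$ of the linearized problem $v_t=\Delta v+\bar b(x)v$ with the same initial data. Since $-u g_1(u)\le 0$ and both $\bar b$ and the 2D heat kernel are nonnegative, an iteration of the Duhamel formula yields $0\le u\le v$ in the mild sense. The linear problem is itself well-posed on any bounded time interval: iterating $v=e^{t\Delta}u_0+\int_0^t e^{(t-s)\Delta}(\bar b(x)v)\,ds$ together with Gronwall gives $\|v(\cdot,\cdot,t)\|_\infty\le C(T,\bar b,\|u_0\|_\infty)$ on $[0,T]$, because the 2D heat semigroup smooths the line-supported periodic measure $\bar b(x)\otimes dy$ into an $L^\infty$ function with a time-integrable singularity at $s=t$. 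Running the same argument on smooth approximations $b_n\to\bar b$ produces $u_n$ with bounds uniform in $n$ on $[0,T]$, so Lemma \ref{Equicontinuity} and Arzel\`a--Ascoli again deliver a mild solution in the limit.

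Once a global solution is in hand, continuous dependence follows from a standard Gronwall argument: for two initial data $u_0,u_0'$ with associated mild solutions $u,u'$ bounded on $[0,T]$, the Duhamel formula for $u-u'$ combined with the local Lipschitz property of $f,g$ on the common range of the solutions gives $\|u(\cdot,\cdot,t)-u'(\cdot,\cdot,t)\|_\infty\le e^{Kt}\|u_0-u_0'\|_\infty$. The main obstacle I expect is the a priori bound in case \textrm{(F2)} when $\bar b$ is genuinely measure-valued (such as $\bar b=h$): one cannot use pointwise bounds on $\bar b$, and must instead exploit the regularizing action of the 2D heat semigroup against line-supported periodic measures to obtain estimates that remain uniform under weak$^*$ approximation of $\bar b$ by smooth $b_n\in\Lambda(\alpha)$.
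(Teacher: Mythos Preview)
Your argument is correct and, for case (F1), matches the paper's approach exactly. For case (F2), however, you take a genuinely different route: you bound $u$ above by the solution $v$ of the linearized problem $v_t=\Delta v+\bar b(x)v$ and then control $v$ on each $[0,T]$ via a singular Gronwall inequality, exploiting the integrable $1/\sqrt{t-s}$ singularity of the heat semigroup acting on the line-supported periodic measure. The paper instead constructs, for each smooth $b\in\Lambda(\alpha)$, a \emph{time-independent} supersolution $\rho\psi_b$, where $\psi_b$ is the positive principal eigenfunction of $-L_{0,0,b}$ normalized by $\|\psi_b\|_\infty=1$ (Lemma~\ref{lm:boundednessofpositivesteadystate}); since $\min\psi_b\ge 1/F$ uniformly in $b$ (Lemma~\ref{LinearPsi}) and $\mu(0,0,b)$ lies in a fixed compact interval (Lemma~\ref{lem:boundednessofmu}), one choice of $\rho^*$ works for all $b\in\Lambda(\alpha)$, yielding the bound $|u|\le D'$ of Lemma~\ref{lemma:globalexistenceforf2}. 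Your approach is more elementary---it needs no spectral input---and suffices for Theorem~\ref{wlposednessforF2} itself; the paper's eigenfunction supersolution, by contrast, gives a bound independent of both $t$ and $b$, which is precisely what is needed downstream for equicontinuity on $\R^2\times[\varepsilon,\infty)$ (Lemma~\ref{Equicontinuityforf2}) and for the semiflow framework in Section~\ref{Localuniformtopolyandsemiflow}.
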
% For case {\rm(F1)}, it is obvious.

To prove the above theorem, we need the following lemmas on the
boundedness and equicontinuity of solutions of Cauchy problem
\eqref{SecondEquationcauchy} with $\bar{b}$ replaced by a smooth
function $b$:
\begin{lemma}\label{lemma:globalexistenceforf2}
For both of the cases {\rm(F1)} and {\rm(F2)} and $D>0$, there
exists a constant $D'>0$, such that for any $b\in\Lambda(\alpha)$,
$u_0\in C(\mathbb R^2)$ with $\|u_0\|_{L^\infty}\leq D$, the
solution $u(x,y,t,u_0,b)$ of equation \eqref{SecondEquationcauchy},
with $\bar{b}$ replaced by a smooth function $b$, with initial data
$u_0\geq 0$, exists for $t\in (0,\infty)$ satisfies \[
|u(x,y,t,u_0,b)|\leq D', \hbox{ for any }(x,y)\in\mathbb
R^2,\,\,t>0.
\]
\end{lemma}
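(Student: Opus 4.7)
I treat the two cases separately. For case (F1), the equation is $u_t=u_{xx}+u_{yy}+b(x)f(u)$; by the sign condition $f(u)\le 0$ for $u\ge 1$, the constant $V\equiv M:=\max(D,1)$ is a classical supersolution since $V_t-\Delta V-b(x)f(V)=-b(x)f(M)\ge 0$. Because $0\le u_0\le D\le M$, the standard comparison principle applied to the mild solution furnished by Theorem~\ref{wlposedness} yields $0\le u(x,y,t,u_0,b)\le M$ throughout the interval of existence, and this a priori bound extends the solution to all $t>0$. Thus $D':=\max(D,1)$ works uniformly in $b\in\Lambda(\alpha)$.

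Case (F2) is the substantive one, because $\|b\|_\infty$ is not controlled uniformly on $\Lambda(\alpha)$---smooth $b$'s can concentrate with arbitrarily high peaks---so no constant supersolution will do. Instead, I build an $x$-dependent supersolution from the principal eigenfunction of the linearisation at $u\equiv 0$. Let $\psi=\psi(x;b)>0$ be the $L$-periodic principal eigenfunction of $-L_{0,0,b}$ with eigenvalue $\mu_0:=\mu(0,0,b)$, so that $-\psi''-b\psi=\mu_0\psi$, and for $A>0$ set $V(x):=A\psi(x)$. A direct computation using (F2) gives
\[
V_t-\Delta V-V\bigl(b-g_1(V)\bigr)=-A\psi''-Ab\psi+A\psi\,g_1(A\psi)=A\psi\bigl(\mu_0+g_1(A\psi)\bigr),
\]
so $V$ is a supersolution precisely when $g_1(A\psi(x))\ge -\mu_0$ for every $x$, equivalently $A\min\psi\ge g_1^{-1}(-\mu_0)$ (using that $g_1$ is strictly increasing and $-\mu_0>0$). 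To dominate the initial datum we further require $V\ge D$. The choice $A:=\max\bigl(D,\,g_1^{-1}(-\mu_0)\bigr)/\min\psi$ meets both conditions, and comparison gives
\[
0\le u(x,y,t,u_0,b)\le A\max\psi=\frac{\max\psi}{\min\psi}\,\max\bigl(D,\,g_1^{-1}(-\mu_0)\bigr).
\]

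The decisive point, and the main obstacle, is uniformity in $b$. Here the two spectral inputs from Section~\ref{Eigenvalueproblem} do all the work: Lemma~\ref{lem:boundednessofmu} gives $-\mu_0\le \alpha+\alpha^2L^2$ uniformly in $b\in\Lambda(\alpha)$, and Lemma~\ref{LinearPsi} gives $\max\psi/\min\psi\le F$ uniformly. Substituting these into the above bound, one may take
\[
D':=F\cdot\max\bigl(D,\,g_1^{-1}(\alpha+\alpha^2 L^2)\bigr),
\]
which is well-defined since $g_1$ is continuous and strictly increasing with $g_1(u)\to+\infty$ as $u\to\infty$. Global existence on $(0,\infty)$ then follows by the standard continuation argument from Theorem~\ref{wlposedness}. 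Conceptually, a naive constant supersolution would need $g_1(M)\ge\|b\|_\infty$, which is unavailable; the trick is to build the supersolution from $A\psi$, thereby trading the uncontrollable pointwise quantity $\|b\|_\infty$ for the spectral quantity $-\mu_0$, which is controlled purely by the $L^1$ constraint $\int_0^L b=\alpha L$.
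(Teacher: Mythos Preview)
Your proof is correct and follows essentially the same route as the paper. The paper packages the supersolution construction $\rho\psi_b$ into a separate preliminary lemma (Lemma~\ref{lm:boundednessofpositivesteadystate}) and normalises $\psi_b$ by $\|\psi_b\|_{L^\infty}=1$, but the substance---using $A\psi$ with $A$ large as an $x$-dependent supersolution, and controlling $A$ uniformly in $b$ via the spectral bounds $-\mu_0\le\alpha+\alpha^2L^2$ (Lemma~\ref{lem:boundednessofmu}) and $\max\psi/\min\psi\le F$ (Lemma~\ref{LinearPsi})---is identical, and your final constant $D'=F\max\bigl(D,\,g_1^{-1}(\alpha+\alpha^2L^2)\bigr)$ matches the paper's choice $D'=\rho^*$.
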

\begin{lemma}\label{Equicontinuityforf2}
Let $u(x,y,t,u_0,b)$ be as in Lemma
\ref{lemma:globalexistenceforf2}. Then for both of the cases
{\rm(F1)} and {\rm(F2)} and any constants $\epsilon>0, \,M>0,$  it
holds that $\{u(x,y,t,u_0,b)\}_{u_0\in C(\mathbb
R^2),\|u_0\|{L^\infty}\leq M, b\in\Lambda(\alpha)}$ is uniformly
equicontinuous in $(x,y,t)\in \mathbb{R}^2\times[\epsilon,\infty)$.
\end{lemma}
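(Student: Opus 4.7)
The plan is to bootstrap the bounded-time equicontinuity supplied by Lemma \ref{Equicontinuity} into equicontinuity on the whole half-line $[\epsilon,\infty)$, by exploiting the uniform global $L^\infty$ bound from Lemma \ref{lemma:globalexistenceforf2} together with the time-translation invariance of the equation.

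First I would apply Lemma \ref{lemma:globalexistenceforf2} with the given $M$ to obtain a constant $D'=D'(M)>0$ such that
\[
\|u(\cdot,\cdot,t,u_0,b)\|_{L^\infty}\le D' \quad\text{for every } t>0,\ \|u_0\|_\infty\le M,\ b\in\Lambda(\alpha).
\]
Next I would feed this $D'$ back into Lemma \ref{Equicontinuity}, using $D'$ in place of $M$ and a small parameter $\epsilon_0\in(0,\epsilon/2]$ in place of $\epsilon$. This produces a local-existence time $T_0>\epsilon_0$ and, for every $\eta>0$, a number $\delta_0>0$ such that every mild solution $v$ of \eqref{SecondEquationcauchy} with $\|v(\cdot,\cdot,0)\|_\infty\le D'$ satisfies $|v(P_1,s_1)-v(P_2,s_2)|<\eta$ whenever $s_1,s_2\in[\epsilon_0,T_0]$ and $|P_1-P_2|+|s_1-s_2|<\delta_0$, uniformly in the initial data and in $b\in\Lambda(\alpha)$. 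I may shrink $\epsilon_0$ further so that $T_0-\epsilon_0\ge\epsilon_0$.

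The third step uses time translation. For any $t_0\ge 0$, $u_0$ and $b$, the function $v(x,y,s):=u(x,y,t_0+s,u_0,b)$ is itself a mild solution with initial datum $v_0:=u(\cdot,\cdot,t_0,u_0,b)\in C(\mathbb R^2)$ satisfying $\|v_0\|_\infty\le D'$ (by Step~1). Applying the previous step to $v$ yields
\[
|u(P_1,t_1,u_0,b)-u(P_2,t_2,u_0,b)|<\eta
\]
whenever $t_1,t_2\in[t_0+\epsilon_0,t_0+T_0]$ and $|P_1-P_2|+|t_1-t_2|<\delta_0$, with $\delta_0$ independent of $t_0$, $u_0$, $b$. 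Given any two points $(P_1,t_1),(P_2,t_2)$ with $t_1,t_2\ge\epsilon$ and $|t_1-t_2|<T_0-\epsilon_0$, I would choose $t_0:=\min(t_1,t_2)-\epsilon_0\ge\epsilon-\epsilon_0\ge 0$, so that both $t_1,t_2$ lie in $[t_0+\epsilon_0,t_0+T_0]$ and the estimate above applies. Setting $\delta:=\min(\delta_0,T_0-\epsilon_0)$ yields the desired uniform equicontinuity on $\mathbb R^2\times[\epsilon,\infty)$.

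The only non-trivial observation is that a modulus of continuity valid on the fixed window $[\epsilon_0,T_0]$ for arbitrary initial data of norm $\le D'$ automatically transfers, via time translation, to every shifted window $[t_0+\epsilon_0,t_0+T_0]$ with the \emph{same} modulus. The main potential obstacle---loss of control of $u$ as $t\to\infty$---is exactly what is ruled out by the a priori bound $D'$, so the argument is particular to cases (F1) and (F2) rather than to generic monostable nonlinearities. Because both ingredients are already in place, the present lemma is essentially a corollary of Lemmas \ref{lemma:globalexistenceforf2} and \ref{Equicontinuity}.
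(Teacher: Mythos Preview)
Your argument is correct. The paper's own proof is a one-liner: it notes that Lemma \ref{lemma:globalexistenceforf2} furnishes a uniform bound $D'$ for all $t>0$, and then says that ``using the same argument in Lemma \ref{Equicontinuity}'' the result follows. The intended meaning is that, once the nonlinear term $A(x,y,t)=b(x)f(u)+g(u)$ is bounded globally in time (not merely on $(0,T)$), the integral-splitting estimates in the proof of Lemma \ref{Equicontinuity} go through verbatim on any interval $[\epsilon,T']$ with a modulus independent of $T'$.

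Your route is slightly different: rather than re-running the estimates of Lemma \ref{Equicontinuity} on an unbounded time interval, you invoke that lemma as a black box on the fixed window $[\epsilon_0,T_0]$ (with the initial-data bound $D'$ in place of $M$) and then slide the window along the time axis via time translation, using the semigroup property of mild solutions. This is a legitimate and arguably cleaner packaging; it avoids having to revisit the heat-kernel estimates and makes transparent exactly where the global bound $D'$ enters. The paper's approach, by contrast, buys a slightly more direct argument at the cost of implicitly re-checking that the modulus obtained in the proof of Lemma \ref{Equicontinuity} does not deteriorate as $T'\to\infty$. Both rely on the same two ingredients, so the difference is one of presentation rather than substance.
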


\subsection{Proof of Lemmas \ref{Equicontinuity}, \ref{lemma:globalexistenceforf2} and
\ref{Equicontinuityforf2}}\label{subsection:lemmasforwlposedness}
The equicontinuity result in the Lemma \ref{Equicontinuity} is
standard, but what is important is that the estimate is uniform in
$b\in\Lambda(\alpha)$. For the convenience of the reader, we give an
outline of the proof. We begin with the following two lemmas:
\begin{lemma}\label{lm:estimateforlemmainsection6}
Let $G(x,y,t)$ be the heat kernel defined in \eqref{heatkernel}.
Then there exists a constant $C>0$ such that
\[
\ \sup_{x,y\in\R}\big|\int_{\R}\int_{\mathbb
R}G(x-x',y-y',\tau)\bar{b}(x')\eta(x',y')dx'dy'\big|\leq{C}\alpha\big(1+\frac{L}{\sqrt{\tau}}\big)\|\eta\|_{L^\infty}
\] for any $\tau>0$, $\bar{b}\in\overline{\Lambda}(\alpha)$ and $\eta\in{L}^\infty(\R^2)$.
\end{lemma}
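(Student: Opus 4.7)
The plan is to exploit the product structure of the 2D heat kernel and then reduce matters to a one-dimensional estimate involving the periodic measure $\bar b$.

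First I would write $G(x-x',y-y',\tau) = G_1(x-x',\tau)\,G_1(y-y',\tau)$, where
\[
G_1(z,\tau) := \frac{1}{\sqrt{4\pi\tau}}\exp\!\left(-\frac{z^2}{4\tau}\right)
\]
is the one-dimensional heat kernel. Since $\bar b$ depends only on $x'$, the $y'$-integral factors out, and using $\|\eta\|_{L^\infty}$ and $\int_{\mathbb R} G_1(y-y',\tau)\,dy' = 1$ reduces the problem to showing
\[
\sup_{x\in\mathbb R}\int_{\mathbb R} G_1(x-x',\tau)\,\bar b(x')\,dx' \;\le\; C\alpha\!\left(1+\frac{L}{\sqrt\tau}\right).
\]

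Next, for fixed $x$ I would view $\eta(x'):=G_1(x-x',\tau)$ as a nonnegative continuous test function with rapid decay, so Lemma \ref{lemma:A} (applied coordinatewise to $\bar b$) gives
\[
\int_{\mathbb R} G_1(x-x',\tau)\,\bar b(x')\,dx' \;\le\; \alpha L \sum_{k\in\mathbb Z} \max_{x'\in[0,L]} G_1(x-x'-kL,\tau).
\]
Let $k_0$ be the unique integer with $k_0 L\le x<(k_0+1)L$. Because $G_1(\cdot,\tau)$ is unimodal and even, for $k=k_0$ the max equals $(4\pi\tau)^{-1/2}$, for $k=k_0\pm 1$ it is also bounded by $(4\pi\tau)^{-1/2}$, and for $|k-k_0|=m\ge 2$ it is bounded by $G_1((m-1)L,\tau)$.

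The hard part, though really only routine, is bounding the tail sum $\sum_{j\ge 1}G_1(jL,\tau)$ in a way uniform in $\tau$. Since $G_1(\cdot,\tau)$ is decreasing on $[0,\infty)$, the inequality $L\,G_1(jL,\tau)\le \int_{(j-1)L}^{jL}G_1(y,\tau)\,dy$ yields
\[
\sum_{j\ge 1} G_1(jL,\tau) \;\le\; \frac{1}{L}\int_0^\infty G_1(y,\tau)\,dy \;=\; \frac{1}{2L}.
\]
Combining the three central terms and this tail bound gives
\[
\sum_{k\in\mathbb Z}\max_{x'\in[0,L]}G_1(x-x'-kL,\tau) \;\le\; \frac{3}{\sqrt{4\pi\tau}} + \frac{1}{L},
\]
and multiplying by $\alpha L$ produces exactly an estimate of the form $\alpha\bigl(1 + C L/\sqrt\tau\bigr)$, completing the proof after reinserting $\|\eta\|_{L^\infty}$. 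The main conceptual point is simply that periodicity plus the normalized mass $\alpha L$ per period reduces any heat-kernel-weighted integral against $\bar b$ to a 1D Riemann-sum estimate against the marginal Gaussian; the only mild obstacle is organizing the split between the few central periods (which contribute the $L/\sqrt\tau$ term) and the tail (which contributes the $O(\alpha)$ term).
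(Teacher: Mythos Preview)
Your proof is correct and follows essentially the same route as the paper: factor the 2D heat kernel, integrate out $y'$ against $\|\eta\|_{L^\infty}$, invoke Lemma~\ref{lemma:A} to bound the $x'$-integral by $\alpha L\sum_k\max_{[0,L]}G_1(\cdot-kL,\tau)$, and then control that sum using the unimodality of $G_1$ via an integral-comparison argument. The only cosmetic difference is the bookkeeping of the sum---the paper isolates two peak terms and bounds the rest by $\tfrac{1}{L}\int_{\R}G_1=\tfrac{1}{L}$ directly, whereas you isolate three central terms and handle the tail separately---but the resulting estimate and the underlying idea are identical.
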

\begin{proof}
Denote by $a(x,y)$ the integral on the left-hand side of the above
inequality. Note that $G(x,y,t)=G(x,t)G(y,t)$, where
\[
\ G(x,t):=\frac{1}{\sqrt{4\pi t}}\exp({-\frac{x^2}{4t}}).
\] Hence $a(x,y)$ can be written as
\[
\ a(x,y)=\int_{\R}G(x-x',\tau)\bar{b}(x')\int_{\mathbb
R}G(y-y',\tau)\eta(x',y')dy'dx'.
\]
By Lemma \ref{lemma:A}, we have
\[
\
|a(x,y)|\leq\alpha{L}\|\eta\|_{L^\infty}\sum_{k=-\infty}^\infty\max_{0\leq{x}\leq{L}}G(x-r-kL,\tau).
\]
Considering that $G(x,\tau)$ is monotone increasing in $x<0$ and
monotone decreasing in $x>0$, we easily find that
\begin{equation}\label{G-sum}
\begin{split}
\
\sum_{k=-\infty}^\infty\max_{0\leq{x}\leq{L}}G(x-r-kL,\tau)&\leq\frac{1}{L}\int_{\R}G(x-r,\tau)dx+2\max_{y\in\R}G(y,\tau)\\
&=\frac{1}{L}+\frac{1}{\sqrt{\pi\tau}}.
\end{split}
\end{equation} Therefore,
\[
\
|a(x,y)|\leq\alpha\|\eta\|_{L^\infty}\big(1+\frac{L}{\sqrt{\pi\tau}}\big).
\] The lemma is proven.
\end{proof}
In what follows, for simplicity, we denote
$b(x)f(u(x,y,t))+g(u(x,y,t))$ by $A(x,y,t)$.
\begin{proof}[Proof of Lemma \ref{Equicontinuity}]
First we prove the existence of $T$. Let $T_b$ be the maximum time
such that the solution $\|u(x,y,t,u_0,b)\|_{L^\infty}\leq 2M$, for
$t\in(0,T_b)$. We need to prove that we can choose a constant
$T'$, which is independent of the choice of $b$, such that
$T_b\geq T'$.

Since $u(x,y,t,u_0,b)$ can be written as
\[
\begin{split}
u(x,y,t,u_0,b)=&\int_{\R}\int_\R G(x-x',y-y',t)u_0(x',y')dx'dy'
\\&+\int_0^t\int_{\R}\int_{\R}G(x-x',y-y',s) A(x',y',t-s)dx'dy'ds.
\end{split}
\]
By Lemma \ref{lm:estimateforlemmainsection6}, we have
\[
\ \sup_{x,y\in\R}|u(x,y,t,u_0,b)|\leq{C}\alpha{K}(t+L\sqrt{t}),
\] where $C$ and $K$ are dependent on $M$ and independent of $b\in\Lambda(\alpha)$. Hence, there exists a
constant $T'>0$ such that $T_b\geq T'$.

We show the uniform equicontinuity of
$$\{u(x,y,t,u_0,b)\}_{u_0\in
C(\mathbb R^2),\|u_0\|_\infty\leq M, b\in\Lambda(\alpha)}$$ in
$(x,y,t)\in\R^2\times[\epsilon,T']$ for any fixed constant $M>0$.
 We still express $u(x,y,t,u_0,b)$ as
\[
\begin{split}
u(x,y,t,u_0,b)=&\int_{\R}\int_\R G(x-x',y-y',t)u_0(x',y')dx'dy'
\\&+\int_0^t\int_{\R}\int_{\R}G(x-x',y-y',s)A(x',y',t-s)dx'dy'ds.
\end{split}
\]
The uniform equicontinuity of the first term on the right-hand
side of the above equation is easily seen. In what follows we
prove the uniform equicontinuity of the second term, which we
denote by $\tilde{u}(x,y,t)$. We choose $t^*\in(0,\epsilon]$ and
split $\tilde{u}(x,y,t)$ as $\tilde{u}(x,y,t)=p(x,y,t)+w(x,y,t)$,
where
\[
\begin{split}
&p(x,y,t):=\int_0^{t^*}\int_{\R}\int_\R G(x-x',y-y',s)A(x',y',t-s)dx'dy'ds,\\
&w(x,y,t):=\int_{t^*}^t\int_{\R}\int_\R G(x-x',y-y',s)A(x',y',t-s)dx'dy'ds.\\
\end{split}
\] By Lemma \ref{lm:estimateforlemmainsection6}, we have
\[
\ \sup_{x,y\in\R}|p(x,y,t)|\leq{C}\alpha{K}(t^*+L\sqrt{t^*}).
\] Therefore,
\[
\ p(x,y,t)\to 0 \,\,\,\,\,\hbox{ as }  t^*\to 0 \,\,\hbox{
uniformly in } \R^2.
\] Next we express $w(x,y,t)$ as
\[
\ w(x,y,t)=\int_0^{t-t^*}\int_{\R}\int_\R
G(x-x',y-y',s)q(x',y',t-s-t^*)dx'dy'ds,
\] where
\[
\ q(x,y,s)=\int_{\R}\int_\R G(x-x',y-y',t^*)A(x',y',s)dx'dy'.
\] By Lemma \ref{lm:estimateforlemmainsection6}, $q(x,y,s)$ is
uniformly bounded as $u_0$ and $b$ vary satisfying $\|u_0\|\leq{M}$,
$b\in\Lambda(\alpha)$. In view of this, we can easily show the
uniform equicontinuity of $w(x,y,t)$ in
$(x,y,t)\in\R^2\times[t^*,T']$ either by direct calculations, or by
observing that $w$ is a solution of the problem
\[
\ \left\{
    \begin{array}{ll}
      w_t=w_{xx}+w_{yy}+q(x,y,t-t^*) & \hbox{ }(x,y\in\R,\,\,t\in[t^*,T']) \\
      w(x,y,t^*)=0, & \hbox{ }
    \end{array}
  \right.
\] and applying the standard parabolic estimates. Therefore $w(x,y,t)$ is uniformly equicontinuous in $\R^2\times[\epsilon,T']$ for any
fixed $t^*\in(0,\epsilon]$. Now we let $t^*\to 0$. Then $p(x,y)\to
0$ uniformly as shown above, hence $w(x,y,t)\to \tilde{u}(x,y,t)$
uniformly in $\R^2\times[\epsilon,T']$. Consequently,
$\tilde{u}(x,y,t)$ is uniform equicontinuous in
$\R^2\times[\epsilon,T']$, therefore the same is true of $u(x,y,t)$.
Letting $T$ be a constant such that $T<T'$, the lemma is proven.
\end{proof}

To prove the uniform boundedness of the mild solutions of
\eqref{SecondEquationcauchy}, we only need to consider the case
{\rm(F2)}. First, we state the equation under the condition
{\rm(F2)}:
\begin{equation}\label{eq:f2cauchy}
\left\{\begin{array}{ll} u_t=u_{xx}+u_{yy}+u(\bar{b}(x)-g(u))
\quad\ \ &((x,y)\in\mathbb R^2,\;t>0),\vspace{5pt}\\
u(x,y,0)=u_0(x,y)\geq 0\quad\ \ &((x,y)\in\mathbb R^2).
\end{array}\right.
\end{equation} We have the following lemma:
\begin{lemma}\label{lm:boundednessofpositivesteadystate}
For case {\rm(F2)}, there exist constants $\rho_1$ and $\rho_2$ with
$0<\rho_1\leq \rho_2$, such that for any ${b}\in{\Lambda}(\alpha)$,
 $\rho \psi_b$ is a subsolution of \eqref{eq:f2cauchy} for
 any $0<\rho<\rho_1$ and $\rho \psi_b$ is a supersolution of \eqref{eq:f2cauchy}
 for any $\rho>\rho_2$ where $\psi_b$ is the principal eigenfunction of the operator
$-L_{0,0,b}$ with $\|\psi_b\|_{L^\infty}=1$.
\end{lemma}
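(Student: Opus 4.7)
The plan is to substitute the candidate $u(x,y,t)=\rho\psi_b(x)$ directly into \eqref{eq:f2cauchy}, use the eigenvalue identity for $\psi_b$ to kill the $b(x)\psi_b$ term, and thereby reduce the sub/supersolution conditions to a single pointwise inequality involving $g_1(\rho\psi_b)$ and $-\mu(0,0,b)$. Uniform thresholds $\rho_1,\rho_2$ will then follow from the bounds on the principal eigenvalue in Lemma \ref{lem:boundednessofmu} together with the uniform ``Harnack'' ratio bound in Lemma \ref{LinearPsi}.

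More concretely, set $\mu := \mu(0,0,b)$ and note that $\psi_b$ satisfies $-\psi_b''-b(x)\psi_b=\mu\psi_b$ classically, since $b\in C^1$. Plugging $u=\rho\psi_b(x)$ into the left-hand side of \eqref{eq:f2cauchy} (viewed in the form $u_t-u_{xx}-u_{yy}-u(b(x)-g_1(u))$), all $b$-terms cancel and a direct computation gives
\begin{equation*}
u_t-u_{xx}-u_{yy}-u\bigl(b(x)-g_1(u)\bigr)=\rho\psi_b(x)\bigl(\mu+g_1(\rho\psi_b(x))\bigr).
\end{equation*}
Since $\psi_b>0$, the subsolution condition is equivalent to $g_1(\rho\psi_b(x))\leq -\mu$ for every $x$, and the supersolution condition is equivalent to $g_1(\rho\psi_b(x))\geq -\mu$ for every $x$. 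Because $g_1$ is strictly increasing with $g_1(0)=0$ and $g_1(s)\to\infty$ as $s\to\infty$, both inequalities can be enforced by bounding the range of $\rho\psi_b$ from above (for the subsolution) or from below (for the supersolution), uniformly in $b\in\Lambda(\alpha)$.

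For the subsolution side, I will use Lemma \ref{lem:boundednessofmu} to obtain $-\mu\geq\alpha$, and the normalization $\|\psi_b\|_{L^\infty}=1$ to get $\rho\psi_b(x)\leq\rho$. Defining $\rho_1$ by $g_1(\rho_1)=\alpha$, any $0<\rho<\rho_1$ gives $g_1(\rho\psi_b(x))\leq g_1(\rho)<\alpha\leq-\mu$, uniformly in $b$. For the supersolution side, the lower bound $-\mu\leq \alpha+\alpha^2L^2$ from Lemma \ref{lem:boundednessofmu} is paired with Lemma \ref{LinearPsi}, which supplies a constant $F>0$ (independent of $b$) with $\min\psi_b\geq 1/F$; so $\rho\psi_b(x)\geq\rho/F$. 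Choosing $\rho_2$ with $g_1(\rho_2/F)=\alpha+\alpha^2L^2$ (enlarging if necessary to ensure $\rho_2\geq\rho_1$, which is automatic since $F\geq 1$) yields $g_1(\rho\psi_b(x))\geq g_1(\rho/F)\geq-\mu$ for all $\rho>\rho_2$, again uniformly in $b$.

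The only non-routine ingredient is the uniform lower bound on $\min\psi_b$ coming from Lemma \ref{LinearPsi}; without the uniformity of $F$ over $b\in\Lambda(\alpha)$, the supersolution threshold $\rho_2$ would degenerate as $b$ becomes concentrated. Every other step is an elementary consequence of the monotonicity of $g_1$ and the eigenvalue identity. Since $b\in C^1(\mathbb R)$, $\psi_b$ is classically $C^3$, so the computation above is pointwise and the comparison principle applies in the classical sense; the resulting sub/supersolutions are $L$-periodic in $x$ and trivially independent of $y$ and $t$, as needed for later use with the positive steady state.
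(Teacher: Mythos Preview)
Your proof is correct and follows essentially the same approach as the paper: substitute $\rho\psi_b$ into the equation, use the eigenvalue identity $\psi_b''+b\psi_b=-\mu\psi_b$ to reduce the sub/supersolution condition to the sign of $-\mu-g_1(\rho\psi_b)$, and then make the thresholds $\rho_1,\rho_2$ uniform in $b$ via the eigenvalue bounds of Lemma~\ref{lem:boundednessofmu} and the ratio bound of Lemma~\ref{LinearPsi}. Your explicit choice $g_1(\rho_1)=\alpha$, $g_1(\rho_2/F)=\alpha+\alpha^2L^2$ is exactly what the paper's inequalities \eqref{subsolution}--\eqref{supsolution} yield implicitly.
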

\begin{proof}
%For case {\rm(F1)}, since $P_b(x)\equiv 1$, it is obvious that the
%assertion of this lemma is true. For case {\rm(F2)},
We first consider the linearized equation
\[
\ \psi_b''+b(x)\psi_b=-\mu(0,0,b)\psi_b.
\] It is known that there exists a positive periodic function $\psi_b$
satisfying the above equation. By Lemma \ref{LinearPsi}, we know
that $\min_{x\in\R}\psi_b\geq 1/F$ for any $b\in\Lambda(\alpha)$.

Let $\rho>0$ be a given arbitrary number. We consider the following
function:
$$(\rho\psi_b)_{xx}+\rho\psi_b(b(x)-g_1(\rho\psi_b)).$$ Since
$\min\psi_b\geq 1/F$ and
$-\alpha\geq\mu(0,0,b)\geq-\alpha-\alpha^2L^2$, it is not difficult
to see that there exist constants $\rho_1$ and $\rho_2$, which are
independent of the choice of $b$, such that
\begin{equation}\label{subsolution}
\begin{split}
(\rho\psi_b)_{xx}+\rho\psi_b(b(x)-g_1(\rho\psi_b))&=\rho\psi_b(-\mu(0,0,b)-g_1(\rho\psi_b))\\&\geq
\rho\psi_b(-\mu(0,0,b)-g_1(\rho))\\
&>0
\end{split}
\end{equation} for any $\rho<\rho_1$
and
\begin{equation}\label{supsolution}
\begin{split}
(\rho\psi_b)_{xx}+\rho\psi_b(b(x)-g_1(\rho\psi_b))&=\rho\psi_b(-\mu(0,0,b)-g_1(\rho\psi_b))\\&\leq
\rho\psi_b(-\mu(0,0,b)-g_1({\rho}/{F}))\\&<0
\end{split}
\end{equation} for any $\rho>\rho_2$. Hence we complete the proof.
%Hence, by classical comparison principle, one can show that
%$$\rho_1\psi_b\leq P_b(x)\leq\rho_2\psi_b.$$
%Since $|\rho_2\psi_b|\leq\rho_2$. By letting $M:=\rho_2$, we complete the
%proof.
\end{proof}

\begin{proof}[Proof of Lemma \ref{lemma:globalexistenceforf2}]
For case {\rm(F1)}, by the comparison principle, it is not difficult
to see that $u(x,y,t,u_0,b)\leq\max\{\|u_0\|_{L^\infty(\mathbb
R^2)},1\}$.

For case {\rm(F2)}, by the inequality \eqref{supsolution}, if
$\rho>\rho_2$, then $\rho\psi_b$ is a super-solution of equation
\eqref{eq:f2cauchy} with $\bar{b}$ replaced by $b$. Hence, for any
constant $D$, by Lemma \ref{LinearPsi} and the fact that
$\|\psi_b\|_{L^\infty}=1$, one can choose $\rho^*$ be large enough
such that $\rho^*/F>D$ and $\rho^*>\rho_2$, then by the comparison
principle, we have
\[
|u(x,y,t,u_0,b)|\leq \rho^*, \hbox{ for any }(x,y)\in\mathbb
R^2,\,\,t>0.
\] Since $\rho^*$ is independent of the choice of $b\in\Lambda(\alpha)$, by letting $D':=\rho^*$, we complete the proof.

\end{proof}

\begin{proof}[Proof of Lemma \ref{Equicontinuityforf2}]
Since we have shown in Lemma \ref{lemma:globalexistenceforf2} that
$\{u(x,y,t,u_0,b)\}$ is uniformly bounded for $b\in\Lambda(\alpha)$
and $t>0$. Using the same argument in Lemma \ref{Equicontinuity},
the proof of this lemma can be easily obtained.
\end{proof}

\subsection{Proof of well-posedness
theorems}\label{subsection:proofofwlposedness}
\begin{proof}[Proof of Theorem \ref{wlposedness}]  Let $b_n\in\Lambda(\alpha)$ satisfy
$b_n\to \bar{b}$ in the weak$^*$ sense. Then for any given initial
data $u_0(x),$ the problem
\begin{equation}\label{equationforun}
\left\{\begin{array}{ll}
(u_n)_t=(u_n)_{xx}+(u_n)_{yy}+b_n(x)f(u_n)+g(u_n)\\
 u_n(x,y,0)=u_0(x,y)
\end{array}\right.
\end{equation}
has a classical solution $u_n(x,y,t)$ for any $n\in \mathbb N$ which
is also a mild solution. By Lemma \ref{Equicontinuity}, there exists
a constant $T>0$ such that $\{u_n\} \,(n\in\mathbb N)$ are uniformly
bounded for $0<t\leq T$. Let $t_1<t_2<T$ be two positive numbers. By
Lemma \ref{Equicontinuity}, the family of solutions
${u_n}_{n=1}^{\infty}$ is uniformly equicontinuous in
 $(x,y,t)\in\mathbb
R^2\times [t_1,t_2]$. Here the modulus of equicontinuity may depend
on $t_1$ and $t_2$. Applying Arzela-Ascoli theorem, we can get a
subsequence, which we still denote by $\{u_n(x,y,t)\}$ that
converges uniformly in $(x,y,t)\in[-M,M]\times[-M,M]\times[t_1,t_2]$
for every $M>0$ and $0<t_1<t_2.$ The limit function
$u(x,y,t)=\lim_{n\to\infty}u_n(x,y,t)$ is defined for every
$(x,y,t)\in \mathbb R^2\times(0,T)$ and satisfies
\begin{equation}\label{equationwithnonsmoothb}
u_t=u_{xx}+u_{yy}+\bar{b}(x)f(u)+g(u)
\end{equation}
in the distribution sense.

Now, we prove that $u$ is also a mild solution. For simplicity, we
denote $b_n(x)f(u_n(x,y,t))+g(u_n(x,y,t))$ by $A_n(x,y,t)$ and
denote $\bar{b}(x)f(u(x,y,t))+g(u(x,y,t))$ by $\overline{A}(x,y,t)$.
To prove that $u$ is a mild solution, it is sufficient to prove
\[
\begin{split}
\ \int_0^t\int_{\mathbb R}\int_\R &G(x-x',y-y',t-s)\overline{A}(x',y',t)dx'dy'ds\\
=\lim_{n\to\infty}\int_0^t&\int_{\mathbb R}\int_\R
G(x-x',y-y',t-s)A_n(x',y',t)dx'dy'ds.
\end{split}
\]
We consider
\[
\begin{split} \int_0^t\int_{\mathbb R}\int_\R
&G(x-x',y-y',t-s)\big(b_n(x')f(u_n(x',y',s))
\\&+g(u_n(x',y',t))-\bar{b}(x')f(u(x',y',s))-g(u(x',y',s))\big)\,dx'dy'ds,
\end{split}
\]
which we denote by $Z_n(x,y,t).$ First we have

\begin{equation}\label{Toprovemildsol}
\begin{split}
|Z_n(x,y,t)|&\leq\Big|\int_0^t\int_{\mathbb R}\int_\R G(x-x',y-y',t-s)\big(b_n(x')-\bar{b}(x')\big)f(u(x',y',s))\,dx'dy'ds\Big|\\
+\Big|\int_0^t&\int_{\mathbb R}\int_\R
G(x-x',y-y',t-s)b_n(x')\big(f(u(x',y',s))-f(u_n(x',y',s))\big)\,dx'dy'ds\Big|\\
+\Big|\int_0^t&\int_{\mathbb R}\int_\R
G(x-x',y-y',t-s)\big(g(u(x',y',s))-g(u_n(x',y',s))\big)\,dx'dy'ds\Big|.
\end{split}
\end{equation}

It is not difficult to see that the third term of the above sum is
convergent to $0$ as $n\to\infty$. We consider the other two terms.
Since $u(x,y,t)$ is bounded, we have
\[
\begin{split}
&\Big|\int_{\mathbb
R} G(x-x',y-y',t-s)\big(b_n(x')-\bar{b}(x')\big)f(u(x',y',s))\,dx'\Big|\\
&\hspace{30pt}=G(y-y',t-s)\Big|\int_{\mathbb R}
G(x-x',t-s)\big(b_n(x')-\bar{b}(x')\big)f(u(x',y',s))\,dx'\Big|
\\&\hspace{30pt}\leq 2\alpha LG(y-y',t-s)\|f(u)\|_{L^\infty(\mathbb
R^2\times[0,t])}\frac{C_1}{\sqrt{t-s}}
\end{split}
\] for some constant $C_1>0.$  Since
\[
\ \int_0^t\int_{\mathbb
R}\frac{G(y-y',t-s)}{\sqrt{t-s}}dy'ds<+\infty,
\] by Lebesgue convergence theorem,
\[
\ \lim_{n\to\infty}\int_0^t\int_{\mathbb R}\int_\R
G(x-x',y-y',t-s)\big(b_n(x')-\bar{b}(x')\big)(f(u(x',y',s)))\,dx'dy'ds=0.
\]
Similarly,
\[
\ \int_{\mathbb R}\int_\R
G(x-x',y-y',t-s)b_n(z)\big(f(u(x',y',s))-f(u_n(x',y',s))\big)dx'dy'\leq\frac{C_2}{\sqrt{t-s}}
\] for some constant $C_2>0.$ Again, by Lebesgue convergence
theorem,
\[
\ \lim_{n\to\infty}\int_0^t\int_{\mathbb R}\int_\R
G(x-x',y-y',t-s)b_n(x')\big(f(u(x',y',s))-f(u_n(x',y',s))\big)\,dx'dy'ds=0.
\]
Hence
\[
\begin{split}
 u(x,y,t)=&\int_{\mathbb R}\int_\R G(x-x',y-y',t)u_0(x',y')dx'dy'\\+\int_0^t&\int_{\mathbb R}\int_\R G(x-x',y-y',t-s)\Big(\bar{b}(x')f\big(u(x',y',s)\big)+g(u(x',y',s))\Big)dx'dy'ds.
\end{split}
\]
Next we show that $u$ satisfies
\begin{equation}\label{initial condition}
\lim_{t\searrow 0}u(x,y,t)=u_0(x,y)\quad\hbox{ for any } (x,y)
\in\mathbb R^2.
\end{equation}

For any $x\in\mathbb R,$ the integral
\[
\ \int_{\mathbb R}\int_\R G(x-x',y-y',t)u_0(x',y')dx'dy' \to
u_0(x,y),\hbox{ as }t\to 0.
\]
Another integral can be estimated as follows
\[
\begin{split}
 &\Big|\int_0^t\int_{\mathbb R}\int_\R G(x-x',y-y',t-s)\bar{b}(x')f(u(x',y',s))\,dx'dy'ds\Big|\\
 \leq&\int_0^t\frac{D}{\sqrt{t-s}}ds,
\end{split}
\]where $D$ is a constant depending on $\|u(\cdot,\cdot,t)\|_{L^\infty}.$ Since $\|u(\cdot,\cdot,t)\|_{L^\infty}$ is
bounded, we have
\[
\ \int_0^t\int_{\mathbb R}\int_\R
G(x-x',y-y',t-s)\bar{b}(x')f(u(x',y',s))\,dx'dy'ds\to 0,\hbox{ as
}t\to 0.
\] Consequently \eqref{initial condition} holds.

Now we prove the continuous dependence on the initial data and the
uniqueness of the mild solution. Let $u_0,\tilde{u}_0\in C(\mathbb
R^2)\cap L^\infty(\mathbb R^2)$ be arbitrary and let $u,\tilde{u}$
be the corresponding mild solutions of \eqref{SecondEquationcauchy},
the existence of which has been proven above. Then
$\omega:=u-\tilde{u}$ satisfies
\begin{equation}\label{w-eq}
\left\{\begin{array}{ll}
\omega_t=\omega_{xx}+\omega_{yy}+m(x,y,t)\omega
\quad\ \ &((x,y)\in\mathbb R^2,\;t>0),\vspace{5pt}\\
\omega(x,y,0)=\omega_0(x,y)\quad\ \ &(x,y\in\mathbb R),
\end{array}\right.
\end{equation} in the weak sense
where
$$m:=\frac{\bar{b}(x)(f(u)-f(\tilde{u}))+g(u)-g(\tilde{u})}{u-\tilde{u}}$$ is a
measure-valued function of $y$ and $t$.% Moreover, by the local
%Lipschitz continuity of $f(u)$ and $g(u)$ and the boundedness of $u$
%and $\tilde{u}$.
 We can express $\omega$ as
\begin{equation}\label{integral1}
\begin{split}
&\omega(x,y,t)=\int_{\mathbb R}\int_\R G(x-x',y-y',t)\omega_0(x',y')dx'dy'\\
&\hspace{65pt}+\int_0^t\int_{\mathbb R}\int_\R
G(x-x',y-y',t-s)m(x',y',s)\omega(x',y',s)\,dx'dy'\,ds.
\end{split}
\end{equation}
Define
\[
\rho(t)=\Vert \omega(\cdot,\cdot,t)\Vert_{L^\infty}.
\]
By the local Lipschitz continuity of $f(u)$ and $g(u)$ and the
boundedness of $u$ and $\tilde{u}$, there exists a constant $M>0$
such that for any $\tau\leq T$,
$$\Big |\frac{f(u(x,y,\tau))-f(\tilde{u}(x,y,\tau))}{u(x,y,\tau)-\tilde{u}(x,y,\tau)}\Big |,\,\,\Big |\frac{g(u(x,y,\tau))-g(\tilde{u}(x,y,\tau))}{u(x,y,\tau)-\tilde{u}(x,y,\tau)}\Big |\leq M.$$
Then, by Lemma \ref{lm:estimateforlemmainsection6}, we have
\[
\begin{split}
\Big |\int_{\mathbb R}\int_\R
G(&x-x',y-y',t-s)m(x',y',s)\omega(x',y',s)\,dx'dy'\Big |\\&\leq
M(C\alpha(1+\frac{L}{\sqrt{t-s}})+1)\rho(s).
\end{split}
\]
Hence, there exists a constant $M'>0$
\begin{equation}\label{ineq}
\rho(t)\leq \rho(0)+M'\int_0^t \frac{\rho(s)\,ds}{\sqrt{4\pi
(t-s)}},
\end{equation}
where $M$ is a constant such that $\Vert m(\cdot,\cdot,t)\Vert\leq
M$ for $t\geq 0$. By Lemma 7.7 of Alfaro, Hilhorst, Matano
\cite{s3}, it follows that
\begin{equation}\label{est-rho}
\rho(t)\leq e^{M'^2t/4}\Big(1+\frac{M'}{\sqrt{4\pi}} \int_0^t
\frac{e^{-M'^2s/4}}{\sqrt{s}}\,ds\Big)\rho(0)
=O\Big(e^{M'^2t/4}\big(1+\sqrt{t}\,\big)\rho(0)\Big).
\end{equation}
Consequently
\[
\ \|u(\cdot,\cdot,t)-\tilde{u}(\cdot,\cdot,t)\|_{L^\infty}\leq
e^{M'^2t/4}\Big(1+\frac{M'}{\sqrt{4\pi}} \int_0^t
\frac{e^{-M'^2s/4}}{\sqrt{s}}\,ds\Big)\|u_0-\tilde{u}_0\|_{L^\infty}.
\]
This proves the continuous dependence on the initial data and the
uniqueness of the mild solution.
\end{proof}
\begin{proof}[Proof of Theorem \ref{wlposednessforF2}]
By Lemma \ref{lemma:globalexistenceforf2}, this theorem is an
immediate deduction of Theorem \ref{wlposedness}.
\end{proof}

In the proof of Theorem \ref{wlposedness}, notice the fact that the
mild solution $u(x,y,t)$ is unique. Hence, it is not difficult to
get the following convergence result. If $b_n\in\Lambda(\alpha)$
satisfies $b_n\to\bar{b}$ in the weak$^*$ sense, then $u_n(x,y,t)\to
u(x,y,t)$ uniformly in $x,y\in\R$ and locally uniformly in
$t\in(0,\infty)$ as $n\to\infty$. Consequently, the following
corollary can be obtained. Here, we omit the proof.
\begin{corol}\label{corol:contdeonini}
Let $u(x,y,t,u_0,b)$ be as in Lemma
\ref{lemma:globalexistenceforf2}. If $b_n\in\Lambda(\alpha)$
satisfies $b_n\to\bar{b}$ in the weak$^*$ sense and $u_n^0\in
C(\R^2)\cap L^\infty(\R^2)$ satisfies $u_n^0\to u_0$ in $L^\infty$
norm, then $u(x,y,t,u_n^0,b_n)\to u(x,y,t,u_n^0,\bar{b})$ as $n\to
+\infty$.
\end{corol}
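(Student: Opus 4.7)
The strategy is to reduce the combined two-parameter convergence to two one-parameter statements that are already available. The first one is continuity in $b$ with $u_0$ fixed, namely the unnumbered convergence assertion stated immediately before the corollary: if $b_n\to\bar{b}$ weakly$^*$, then $u(x,y,t,u_0,b_n)\to u(x,y,t,u_0,\bar{b})$ uniformly in $(x,y)\in\R^2$ and locally uniformly in $t\in(0,\infty)$. The second is continuity in the initial data with $b$ fixed, which is established in the second half of the proof of Theorem~\ref{wlposedness}. With these in hand I would split by the triangle inequality
\[
u(\cdot,\cdot,t,u_n^0,b_n)-u(\cdot,\cdot,t,u_0,\bar{b}) = \bigl[u(\cdot,\cdot,t,u_n^0,b_n)-u(\cdot,\cdot,t,u_0,b_n)\bigr] + \bigl[u(\cdot,\cdot,t,u_0,b_n)-u(\cdot,\cdot,t,u_0,\bar{b})\bigr],
\]
so it suffices to send each bracket to zero.

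The second bracket goes to zero in $L^\infty(\R^2)$ locally uniformly in $t$ by the first ingredient above. For the first bracket I would revisit the continuous-dependence estimate \eqref{est-rho} from the proof of Theorem~\ref{wlposedness} and verify that the constants it depends on are uniform in $b_n\in\Lambda(\alpha)$. Concretely, by Lemma~\ref{lemma:globalexistenceforf2} the solutions $u(\cdot,\cdot,t,u_n^0,b_n)$ and $u(\cdot,\cdot,t,u_0,b_n)$ are bounded by a constant $D'$ depending only on $\sup_n\|u_n^0\|_{L^\infty}$ and $\|u_0\|_{L^\infty}$, so the Lipschitz constant $M$ of $f$ and $g$ on the relevant ball may be chosen independently of $n$. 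The heat-kernel--convolution bound from Lemma~\ref{lm:estimateforlemmainsection6}, applied to the integrand defining $\omega$, contributes a factor depending only on $\alpha$ and $L$. Therefore the Gronwall-type inequality \eqref{est-rho} holds with an $n$-independent $M'$, which yields
\[
\|u(\cdot,\cdot,t,u_n^0,b_n)-u(\cdot,\cdot,t,u_0,b_n)\|_{L^\infty}\le C(t)\,\|u_n^0-u_0\|_{L^\infty},
\]
with $C(t)=e^{M'^{2}t/4}\bigl(1+\sqrt{t}\,\bigr)$ locally bounded in $t$. Hence the first bracket also tends to zero uniformly on bounded time intervals, and adding the two estimates gives the corollary.

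The only genuine subtlety is the uniformity of the continuous-dependence constants in the weakly$^*$ converging sequence $b_n$; this is the reason the proof is organised around Lemmas~\ref{lemma:globalexistenceforf2} and~\ref{lm:estimateforlemmainsection6}, whose estimates were explicitly designed to be $b$-uniform. Once this bookkeeping is in place, no new analytic step is needed: the two one-parameter limits combine by the triangle inequality, and uniqueness of the mild solution (Theorem~\ref{wlposednessforF2}) ensures that convergence occurs along the full sequence rather than just along subsequences.
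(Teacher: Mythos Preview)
Your argument is correct and is precisely the route the paper has in mind: the text immediately preceding the corollary says the result follows from combining (i) the convergence $u(\cdot,\cdot,t,u_0,b_n)\to u(\cdot,\cdot,t,u_0,\bar b)$ established in the proof of Theorem~\ref{wlposedness} via uniqueness, and (ii) the continuous dependence on initial data from the same theorem, and the paper then omits the details. Your triangle-inequality splitting together with the observation that the Gronwall constant $M'$ in \eqref{est-rho} is uniform in $b\in\Lambda(\alpha)$ (thanks to Lemmas~\ref{lemma:globalexistenceforf2} and~\ref{lm:estimateforlemmainsection6}) supplies exactly those details; note also that the limit should read $u(x,y,t,u_0,\bar b)$ rather than $u(x,y,t,u_n^0,\bar b)$, which is an evident typo in the stated corollary.
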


Since comparison principle holds for equation
\eqref{equationforun}, the following proposition is an immediate
result.
\begin{pro}\label{comparisonprinciple}
For any initial data $u_0,v_0\in C(\R^2)\cap L^\infty(\R^2)$ with
$u_0\geq v_0\geq 0$, if the solutions of
\eqref{SecondEquationcauchy} $u(x,y,t,u_0)$ and $u(x,y,t,v_0)$ exist
in $x,y\in\R,\,\,t\in(0,T)$, where $T>0$ is a constant, then we have
$u(x,y,t,u_0)\geq u(x,y,t,v_0)$ for $x,y\in\R$, $t\in(0,T)$.
\end{pro}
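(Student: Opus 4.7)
The plan is to prove the proposition by approximation, reducing to the classical parabolic comparison principle that already holds for the regularized problem \eqref{equationforun}.

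First, I would pick a sequence $\{b_n\}\subset\Lambda(\alpha)$ with $b_n\to\bar{b}$ in the weak$^*$ sense; such a sequence exists by Definition \ref{DefofLamdaBarAlpha}. For each $n$, equation \eqref{equationforun} is a parabolic equation with smooth bounded coefficient $b_n$ and locally Lipschitz nonlinearity $f,g$, so it admits classical solutions $u_n(x,y,t,u_0,b_n)$ and $u_n(x,y,t,v_0,b_n)$ on some common interval $(0,T_n)$. Since $u_0\geq v_0\geq 0$, the standard parabolic comparison principle (applied to the difference $u_n(\cdot,u_0)-u_n(\cdot,v_0)$, which satisfies a linear parabolic equation with bounded coefficient because the $u_n$'s are bounded on $\R^2\times[\epsilon,T_n-\epsilon]$ by Lemma \ref{Equicontinuity} or Lemma \ref{lemma:globalexistenceforf2}) yields
\[
u_n(x,y,t,u_0,b_n)\geq u_n(x,y,t,v_0,b_n)\quad \text{for }(x,y,t)\in\R^2\times(0,T_n).
\]

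Next, I would use the convergence result that underlies the construction of the mild solution in Theorem \ref{wlposedness} (and is explicitly recorded in Corollary \ref{corol:contdeonini} for cases (F1) and (F2)): along the weak$^*$ approximation $b_n\to\bar{b}$, one has $u_n(\cdot,\cdot,t,u_0,b_n)\to u(\cdot,\cdot,t,u_0,\bar{b})$ and likewise for $v_0$, uniformly on compact subsets of $\R^2\times(0,T)$. By hypothesis both mild solutions exist on the whole interval $(0,T)$, and by the uniqueness established in Theorem \ref{wlposedness} the limit is independent of the approximating sequence; thus no subsequence extraction is needed for the final conclusion. Passing to the limit $n\to\infty$ in the pointwise inequality above yields $u(x,y,t,u_0,\bar{b})\geq u(x,y,t,v_0,\bar{b})$ on $\R^2\times(0,T)$, which is the desired comparison.

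The main obstacle is making sure the approximating solutions $u_n$ are defined on an interval that exhausts $(0,T)$ as $n\to\infty$, since a priori their maximal intervals of existence $T_n$ could shrink. For cases (F1) and (F2) this issue is absent because Lemma \ref{lemma:globalexistenceforf2} provides a uniform $L^\infty$ bound depending only on $\|u_0\|_\infty$ (and similarly on $\|v_0\|_\infty$), so $u_n$ exists globally in $t$. For the general setting of Theorem \ref{wlposedness} one argues on any subinterval $[\tau,T-\tau]$: the bootstrap in the proof of Theorem \ref{wlposedness}, combined with the a priori $L^\infty$ bound on the mild solutions $u(\cdot,u_0,\bar{b})$ and $u(\cdot,v_0,\bar{b})$ (which hold by assumption on $(0,T)$), shows that $u_n$ stays bounded on this subinterval for all large $n$, hence $T_n>T-\tau$ eventually. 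Letting $\tau\to 0$ completes the argument.
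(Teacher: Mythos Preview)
Your proposal is correct and follows essentially the same approach as the paper: the paper simply states that ``since comparison principle holds for equation \eqref{equationforun}, the following proposition is an immediate result,'' i.e., it invokes the classical comparison principle for the smooth approximants $b_n$ and then (implicitly) passes to the limit via the convergence established in the proof of Theorem \ref{wlposedness}. Your write-up just makes this limiting step explicit and, in addition, carefully addresses the uniformity of the existence interval for the approximants, a point the paper leaves to the reader.
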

\section{Well-posedness in local uniform topology and the theory of
semiflow}\label{Localuniformtopolyandsemiflow}
\subsection{Well-posedness in local uniform
topology}\label{subsection:wlposednessinlocaluniformtopology} We
apply the general results of Weinberger \cite{s8}  to show that
$c^*(\theta,\bar{b})$ has the property asserted at the beginning of
Theorem \ref{cstarbequalcestarb2D}. For this purpose, we need to
consider the solution semiflow of \eqref{SecondEquation2} on the
space
$$BC(\mathbb{R}^2)=\{u\,|\,u\in C(\mathbb R^2)\cap L^\infty(\mathbb
R^2)\}$$ with respect to the local uniform topology, where the
convergence $u_n\to u$ means that $u_n(x,y)\to u(x,y)$ uniformly on
any bounded set. Equip $BC(\mathbb{R}^2)$ with the topology of
locally uniform convergence. It is easy to show that this linear
space $BC(\mathbb{R}^2)$ has the following properties:

For each $l,m\in\mathbb Z$ and $n\in\mathbb N,$ define
$$\|u\|^n_{l,m}=\sum\limits_{i,j\in \mathbb{Z}}\frac{\max\limits_{
(x,y)\in\Omega_{l+i,m+j}^n}|u(x,y)|}{2^{|i|+|j|}},$$ where
$\Omega_{i,j}^n:=[iL,(i+n)L]\times[j,(j+n)]$. Then we have
\begin{pro}\label{equivalent norm} For each $l,m\in\mathbb Z$ and $n\in\mathbb
N,$ $\|\cdot\|^n_{l,m}$ is a norm on $BC(\mathbb{R}^2)$ and it
defines a topology equivalent to the local uniform topology on any
subset $\{u\,|\,u\in BC(\mathbb{R}^2), \|u\|_{L^\infty}\leq M\} $
of $BC(\mathbb{R}^2)$ for any $M>0$.

Furthermore, for any $u\in BC(\mathbb R^2),$
$$2^{-|l|-|m|}\|u\|^n_{i,j}\leq \|u\|^{n}_{l+i,m+j} \leq
2^{|l|+|m|}\|u\|^{n}_{i,j},\,\,\,\forall i,j,l,m\in \mathbb{Z},n\in
\mathbb{N},$$ $$\|u\|^1_{l,m}\leq \|u\|^{n}_{l,m} \leq
\sum\limits_{l\leq i\leq l+n-1,m\leq j\leq m+n-1}\|u\|_{i,j}^1
,\,\,\,\forall l,m\in \mathbb{Z}, n\in \mathbb{N}.$$\end{pro}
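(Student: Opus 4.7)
The plan is to establish the four claims in sequence: (i) that each $\|\cdot\|^n_{l,m}$ is a norm; (ii) that it induces the same topology as the locally uniform one on every $L^\infty$-ball; (iii) the two-sided shift inequality; and (iv) the comparison between the $n=1$ and general-$n$ norms.

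For (i), finiteness follows from $\max_{\Omega^n_{l+i,m+j}}|u|\le \|u\|_{L^\infty}$ and $\sum_{i,j}2^{-|i|-|j|}<\infty$. Positive homogeneity and the triangle inequality are inherited from $\max_{\Omega^n_{l+i,m+j}}|\cdot|$ term by term; definiteness follows because the $\Omega^n_{l+i,m+j}$ cover $\R^2$, so $\|u\|^n_{l,m}=0$ forces $u\equiv 0$. For (ii), on $\{\|u\|_{L^\infty}\le M\}$ the tail beyond $|i|+|j|>N$ is uniformly bounded by $M\sum_{|i|+|j|>N}2^{-|i|-|j|}$, which can be made arbitrarily small. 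Hence $\|u_k-u\|^n_{l,m}\to 0$ is equivalent (via dominated convergence in the first direction, and by picking out each single term in the other) to $\max_{\Omega^n_{l+i,m+j}}|u_k-u|\to 0$ for every $i,j$; and since the family $\{\Omega^n_{l+i,m+j}\}$ exhausts $\R^2$, this is exactly the locally uniform topology on the ball.

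For the shift inequality in (iii), I will rewrite $\|u\|^n_{l+i,m+j}$ via the change of summation indices $p'=p+i,\ q'=q+j$, obtaining
\[
\|u\|^n_{l+i,m+j}=\sum_{p',q'}\frac{\max_{\Omega^n_{l+p',m+q'}}|u|}{2^{|p'-i|+|q'-j|}},
\]
then apply the two elementary bounds $|p'-i|\le |p'|+|i|$ and $|p'|\le |p'-i|+|i|$ (and analogously for $q'$). These immediately yield both
$2^{-|i|-|j|}\|u\|^n_{l,m}\le \|u\|^n_{l+i,m+j}\le 2^{|i|+|j|}\|u\|^n_{l,m}$ after relabeling.

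The main work is (iv), which is also the step that requires the most careful bookkeeping. The lower bound $\|u\|^1_{l,m}\le \|u\|^n_{l,m}$ is immediate from $\Omega^1_{l+i,m+j}\subset \Omega^n_{l+i,m+j}$. For the upper bound I use the decomposition
\[
\Omega^n_{l+i,m+j}=\bigcup_{p=0}^{n-1}\bigcup_{q=0}^{n-1}\Omega^1_{l+i+p,m+j+q},
\]
which gives $\max_{\Omega^n_{l+i,m+j}}|u|\le \sum_{p,q=0}^{n-1}\max_{\Omega^1_{l+i+p,m+j+q}}|u|$. Substituting into the definition of $\|u\|^n_{l,m}$, exchanging the finite $(p,q)$-sum with the $(i,j)$-sum, and then substituting $i''=i,\ j''=j$ after shifting $i'=i+p,\ j'=j+q$ collapses the inner series to $\|u\|^1_{l+p,m+q}$; summing over $0\le p,q\le n-1$ produces the claimed bound. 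The only subtlety here is that the substitution $i''=i'-p$ exactly cancels the shift in the denominator, which is why the right-hand side is the plain sum of $\|\cdot\|^1_{i,j}$ rather than a weighted sum; the argument from (iii) is not needed for this step, though it motivates why the bookkeeping closes up cleanly.
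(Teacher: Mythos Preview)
The paper states this proposition without proof (it is introduced with ``It is easy to show that\ldots''), so there is nothing to compare against. Your argument is correct in all four parts.

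Two minor remarks on presentation. In (iii) you have swapped the roles of $(l,m)$ and $(i,j)$ relative to the statement, but your ``after relabeling'' covers this. In (iv) the bookkeeping you describe is slightly over-elaborate: after exchanging the finite $(p,q)$-sum with the $(i,j)$-sum, the inner series is literally
\[
\sum_{i,j\in\mathbb Z}\frac{\max_{\Omega^1_{(l+p)+i,(m+q)+j}}|u|}{2^{|i|+|j|}}
=\|u\|^1_{l+p,m+q}
\]
by definition, with no substitution or cancellation needed --- the shift $(p,q)$ lives only in the base point of $\Omega^1$, not in the weight $2^{-|i|-|j|}$. Summing over $0\le p,q\le n-1$ and reindexing gives exactly $\sum_{l\le i\le l+n-1,\,m\le j\le m+n-1}\|u\|^1_{i,j}$.
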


In what follows, we always use $BC^+(\mathbb{R}^2)$ to denote
$\{\phi\in BC(\mathbb{R}^2)|\phi\geq 0\}$ and $\mathcal{C}_\beta$ to
denote the set $\{\phi\in BC^+(\mathbb{R}^2)| \phi(x,y)\leq
\beta(x,y), \forall x,y\in \mathbb R\}$ for any $\beta\in
BC^+(\mathbb{R}^2)$. We have the following proposition:
\begin{pro}For any $\beta\in BC^+(\mathbb{R}^2)$,
$\mathcal{C}_\beta$ is a bounded closed subset of $BC(\mathbb{R}^2)$
with respect to the norm $\|\cdot\|_{l,m}^n$ for any $l,m\in
\mathbb{Z}, n\in \mathbb{N}$. Moreover, $\mathcal{C}_\beta$ is
complete.
\end{pro}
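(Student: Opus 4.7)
The plan is to verify the three properties in turn by exploiting the pointwise bound $0\le\phi\le\beta$ defining $\mathcal{C}_\beta$ together with the explicit structure of the weighted norms.

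For boundedness, I would observe that for every $\phi\in\mathcal{C}_\beta$ and every square $\Omega_{l+i,m+j}^n$ one has $\max_{\Omega_{l+i,m+j}^n}|\phi|\le\|\beta\|_{L^\infty}$, so
\[
\|\phi\|_{l,m}^n\;\le\;\|\beta\|_{L^\infty}\sum_{i,j\in\mathbb{Z}}\frac{1}{2^{|i|+|j|}}\;=\;9\,\|\beta\|_{L^\infty},
\]
using $\sum_{i\in\mathbb{Z}}2^{-|i|}=3$. This gives a uniform bound independent of $\phi$.

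For closedness, take $\{\phi_k\}\subset\mathcal{C}_\beta$ with $\phi_k\to\phi$ in $\|\cdot\|_{l,m}^n$. By the norm-equivalence inequalities in Proposition \ref{equivalent norm}, convergence in $\|\cdot\|_{l,m}^n$ dominates $\max_{\Omega_{i,j}^1}|\phi_k-\phi|$ for every $(i,j)$, so $\phi_k\to\phi$ uniformly on every square $\Omega_{i,j}^1$, hence locally uniformly and pointwise on $\mathbb{R}^2$. In particular $\phi$ is continuous, and the pointwise bounds $0\le\phi_k\le\beta$ pass to the limit, so $0\le\phi\le\beta$. Since $\beta\in L^\infty$, also $\phi\in L^\infty$, whence $\phi\in BC(\mathbb{R}^2)$ and $\phi\in\mathcal{C}_\beta$.

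For completeness, let $\{\phi_k\}$ be Cauchy in $\mathcal{C}_\beta$ under $\|\cdot\|_{l,m}^n$. Again by norm-equivalence, $\{\phi_k\}$ is Cauchy in $C(\Omega_{i,j}^1)$ under the sup norm for every $(i,j)$, so it converges uniformly on each such square to a continuous function, and the local limits agree on overlaps, producing a continuous $\phi:\mathbb{R}^2\to\mathbb{R}$ with $0\le\phi\le\beta$ pointwise; in particular $\phi\in\mathcal{C}_\beta$. To check $\|\phi_k-\phi\|_{l,m}^n\to 0$, I would split the defining series at some large $N$: the tail with $|i|+|j|>N$ is bounded uniformly in $k$ by $2\|\beta\|_{L^\infty}\sum_{|i|+|j|>N}2^{-|i|-|j|}$, which is less than $\varepsilon/2$ for $N$ large, while the remaining finite sum tends to $0$ by uniform convergence of $\phi_k$ on the corresponding bounded region.

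The only mildly subtle point is that one should establish completeness of $\mathcal{C}_\beta$ directly rather than attempt to inherit it from $BC(\mathbb{R}^2)$, since a Cauchy sequence in $\|\cdot\|_{l,m}^n$ on $BC(\mathbb{R}^2)$ need not have an $L^\infty$ limit in general; the uniform domination by $\beta$ is precisely what rescues the limit into $\mathcal{C}_\beta$. Beyond this observation, every step is a routine application of the weighted structure of the norms and the bounded convergence implicit in the definition.
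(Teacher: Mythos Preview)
Your proof is correct. The paper states this proposition without proof, treating it as a routine verification; your argument supplies exactly the details one would expect to fill in, and each step is sound. Your remark at the end---that completeness of $\mathcal{C}_\beta$ must be checked directly because $(BC(\mathbb{R}^2),\|\cdot\|_{l,m}^n)$ is itself not complete, and that the uniform bound by $\beta$ is what keeps the limit inside $BC(\mathbb{R}^2)$---is precisely the one nontrivial observation here.
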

In the following proposition, we will show that the continuous
dependence on the initial data of the mild solution of
\eqref{SecondEquationcauchy} also holds with respect to the local
uniform topology:
\begin{pro}\label{lcontinuous}For any $M>0$, the mild solutions of
\eqref{SecondEquationcauchy} for both of the cases {\rm(F1)} and
{\rm(F2)} depend on the initial data continuously in the space
$\mathcal{C}_M$. Precisely, for any $\epsilon>0$ and $T>0$, there is
some $\eta>0$ such that for any two solutions $u,v$ of
\eqref{SecondEquationcauchy} with initial data $u_0,$ $v_0$, if
$\|u_0-v_0\|_{0,0}^1\leq \eta$, then
$\|u(\cdot,t)-v(\cdot,t)\|_{0,0}^1\leq \epsilon$ for $t\in [0,T]$.

\end{pro}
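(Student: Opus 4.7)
The plan is to reduce the claim to a singular Gronwall estimate on the quantity $\rho(t) := \|u(\cdot,\cdot,t)-v(\cdot,\cdot,t)\|^1_{0,0}$. By Lemma \ref{lemma:globalexistenceforf2}, both solutions stay in $\mathcal{C}_{D'}$ on $[0,T]$ for some $D'=D'(M)$, so $w:=u-v$ is bounded by $2D'$, making $\rho(t)$ automatically finite since $\sum_{k,l}2^{-|k|-|l|}<\infty$. Arguing exactly as in the proof of Theorem \ref{wlposedness}, $w$ satisfies in the mild sense
\[
w_t = w_{xx}+w_{yy}+m_1(x,y,t)\,w+\bar{b}(x)\,m_2(x,y,t)\,w,
\]
where $m_1,m_2$ are $L^\infty$-bounded by a constant depending only on $M$ and $T$, using local Lipschitz continuity of $f,g$ and the uniform bound $2D'$. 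It thus suffices to control $\rho(t)$ linearly by $\rho(0)$ on $[0,T]$.

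The core of the argument is to split $\R^2=\bigcup_{k,l\in\mathbb{Z}}\Omega^1_{k,l}$ in the mild representation of $w$ and exploit Gaussian kernel decay together with a weight-reshuffling trick. For $(x,y)\in\Omega^1_{i,j}$ and $(x',y')\in\Omega^1_{k,l}$ with $|k-i|$ or $|l-j|$ at least $2$,
\[
G(x-x',y-y',\tau)\le \frac{C}{\tau}\exp\Big(-c\,\frac{(|k-i|-1)^2 L^2+(|l-j|-1)^2}{\tau}\Big).
\]
Writing $M_{k,l}(s):=\max_{\Omega^1_{k,l}}|w(\cdot,\cdot,s)|$, this gives for the direct term $I_0(x,y,t):=\int_{\R^2} G(x-x',y-y',t)w_0\,dx'dy'$
\[
\max_{\Omega^1_{i,j}}|I_0(\cdot,\cdot,t)|\le \sum_{k,l}\gamma_{k-i,l-j}(t)M_{k,l}(0),
\]
with $\gamma_{p,q}(t)$ Gaussian in $(p,q)$, and analogously for the Duhamel term
\[
\max_{\Omega^1_{i,j}}|I_1(\cdot,\cdot,t)|\le \int_0^t\sum_{k,l}\tilde\gamma_{k-i,l-j}(t-s)M_{k,l}(s)\,ds,
\]
where $\tilde\gamma$ handles both the bounded $m_1$-contribution and the $\bar b m_2$-contribution, the latter by combining the box-wise Gaussian trapping above with Lemma \ref{lm:estimateforlemmainsection6} to obtain a kernel with Gaussian spatial decay at the cost of an integrable $(t-s)^{-1/2}$ factor. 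The weight-reshuffling trick is $|k-p|\ge |k|-|p|\Rightarrow 2^{-|k-p|}\le 2^{|p|-|k|}$, which after the substitution $(p,q)=(k-i,l-j)$ yields
\[
\sum_{i,j}\frac{1}{2^{|i|+|j|}}\sum_{k,l}\gamma_{k-i,l-j}(t)M_{k,l}(s)\le \Big(\sum_{p,q}2^{|p|+|q|}\gamma_{p,q}(t)\Big)\|w(\cdot,\cdot,s)\|^1_{0,0},
\]
and the bracketed series is uniformly bounded on $[0,T]$ because Gaussian spatial decay beats $2^{|p|+|q|}$ growth.

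Combining these estimates produces the integral inequality
\[
\rho(t)\le K_1(T)\rho(0)+\int_0^t K_2(t-s)\rho(s)\,ds,\qquad K_2(\tau)=O(1+\tau^{-1/2}),
\]
and a singular Gronwall inequality such as Lemma 7.7 of \cite{s3} (already invoked in the proof of Theorem \ref{wlposedness}) closes the estimate as $\rho(t)\le C(T)\rho(0)$. The main obstacle I anticipate is constructing the kernel $\tilde\gamma_{p,q}(\tau)$ for the $\bar{b}m_2$-term: since the measure $\bar b$ forbids any pointwise $L^\infty$ bound, one must carefully glue the box-localised Gaussian envelope (far from the diagonal) with the $\tau^{-1/2}$-type estimate of Lemma \ref{lm:estimateforlemmainsection6} (near the diagonal), and verify that the resulting kernel simultaneously has Gaussian spatial decay and an integrable time singularity. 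Once this kernel estimate is in hand, the weighted-sum reshuffling and the singular Gronwall closure are routine.
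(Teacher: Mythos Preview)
Your proposal is correct and follows essentially the same architecture as the paper: box decomposition of $\R^2$, the weight-reshuffling identity (which in the paper appears as the norm equivalence $\|u\|^{n}_{l+i,m+j}\le 2^{|l|+|m|}\|u\|^{n}_{i,j}$ from Proposition~\ref{equivalent norm}), and closure by the singular Gronwall lemma from \cite{s3}.

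The one place where the paper is sharper is precisely the obstacle you flag: constructing the kernel $\tilde\gamma_{p,q}(\tau)$ for the $\bar b\, m_2$-term. Rather than gluing a near-diagonal $\tau^{-1/2}$ estimate to a far-diagonal Gaussian envelope, the paper exploits the product structure $G(x,y,t)=G(x,t)G(y,t)$ and the fact that $\bar b$ depends only on $x$. This decouples the two directions: in $y$ one gets the harmless factor $\int_{j}^{j+1}G(w,t-s)\,dw$, while in $x$ the measure is absorbed by the one-dimensional bound $\alpha L\sum_i\max_{z\in[iL,(i+1)L]}G(z,t-s)$ already established in \cite{llm}. The latter sum simultaneously carries the Gaussian spatial decay in $i$ and the $\,(t-s)^{-1/2}$ singularity (from the $i=0$ term), so no gluing is needed. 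After reshuffling, the two one-dimensional sums $\sum_j 2^{|j|}\int_j^{j+1}G(w,t-s)\,dw$ and $\sum_i 2^{|i|}\max_{[iL,(i+1)L]}G(\cdot,t-s)$ are each bounded explicitly, and one arrives directly at $\rho(t)\le\epsilon+\int_0^t C(t-s)^{-1/2}\rho(s)\,ds$. Your route would also work, but the factorisation makes the kernel construction a non-issue.
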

\begin{proof}First, consider the case {\rm(F1)}.
Let $\eta=u-v$ and $\eta_0=u_0-v_0$, then
\begin{equation}\label{integral}
\begin{split}
&\eta(x,y,t)=\int_{\mathbb R^2} G(x-z,y-w,t)\eta_0(z,w)dzdw\\
&\hspace{20pt}+\int_0^t\int_{\mathbb
R^2}G(z,w,t-s)\bar{b}(x-z)m(x-z,y-w,s)\eta(x-z,y-w,s)\,dzdw\,ds,
\end{split}
\end{equation}
where $m(x,y,s)=\int_0^1f'(\tau u(x,y,s)+(1-\tau)v(x,y,s))d\tau$. It
is easy to see that $\{|m(x,y,s)|\}_{x,y\in \mathbb{R},s\in
[0,+\infty)}$ is bounded. Without loss of the generality, suppose
that $|m(x,y,s)|\leq 1$.

 Let
$$I_1(t,x,y)=\int_{\mathbb R^2} G(x-z,y-w,t)\eta_0(z,w)dzdw$$ and
$$I_2(t,x,y)=\int_0^t\int_{\mathbb
R^2}G(z,w,t-s)\bar{b}(x-z)m(x-z,y-w,s)\eta(x-z,y-w,s)\,dz\,dw\,ds.$$
First, we consider $I_1$. It is easy to show that for any
$\epsilon>0$ and $T>0$ there is some $\zeta>0$, such that if
$\|\eta_0\|_{0,0}^1=\|I_1(0,\cdot,\cdot)\|_{0,0}^1<\zeta$, then
$\|I_1(t,\cdot,\cdot)\|_{0,0}^1<\epsilon$ for any $t\in [0,T]$. Then
we consider $I_2$.

We know that $G(x,y,t)=G(x,t)G(y,t)$ with
\[
G(x,t):=\frac{1}{\sqrt{4\pi t}}\exp\big(-\frac{x^2}{4t}\big).
\]
We claim that the following inequality holds:
\[ \begin{split}&\Big|\int_{\mathbb
R^2}G(z,w,t-s)\bar{b}(x-z)m(x-z,y-w,s)\eta(x-z,y-w,s)\,dzdw\,\Big|
\\
\leq &\alpha L\sum\limits_{i,j\in
\mathbb{Z}}\int_{j}^{j+1}G(w,t-s)dw\times\max\limits_{z\in
[iL,(i+1)L]}G(z,t-s)\times\max\limits_{(z,w)\in
\Omega_{i,j}^1}|\eta(x-z,y-w,s)|.\end{split}\] In fact, for any
$b\in \Lambda(\alpha)$, this inequality is obvious. For $\bar{b}\in
\bar{\Lambda}(\alpha)$, let $b_n\in \Lambda(\alpha)$ with $b_n\to
\bar b$, as in the proof of Lemma 2.2 in \cite{llm}, we can prove
this claim.

Hence, we have\[\begin{split}&\max\limits_{(x,y)\in
\Omega_{k,k'}^1}|I_2(t,x,y)|\\
\leq & \alpha L\int_0^t\sum\limits_{i,j\in
\mathbb{Z}}\int_{j}^{j+1}G(w,t-s)dw\times\max\limits_{z\in
[iL,(i+1)L]}G(z,t-s)\times\max\limits_{(z,w)\in
\Omega_{k-i-1,k'-j-1}^2}|\eta(z,w,s)|ds
\end{split}\]
This implies that \[\begin{split}& \|I_2(t,\cdot,\cdot)\|_{0,0}^1 \\
\leq & \alpha L\int_0^t\Big(\sum\limits_{i,j\in
\mathbb{Z}}\int_{j}^{j+1}G(w,t-s)dw\times\max\limits_{z\in
[iL,(i+1)L]}G(z,t-s)\times\\ & \ \ \ \ \ \ \ \sum\limits_{k,k'\in
\mathbb{Z}}2^{-|k|-|k'|}\max\limits_{(z,w)\in
\Omega_{k-i-1,k'-j-1}^2}|\eta(z,w,s)|\Big)ds\\
 =& \alpha L\int_0^t\Big(\sum\limits_{i,j\in \mathbb{Z}}\int_{j}^{j+1}G(w,t-s)dw\times\max\limits_{z\in
[iL,(i+1)L]}G(z,t-s)\times\|\eta(\cdot,s)\|_{-i-1,-j-1}^2\Big)ds\\
\leq & \alpha L\int_0^t\Big(\sum\limits_{i,j\in
\mathbb{Z}}\int_{j}^{j+1}G(w,t-s)dw\times\max\limits_{z\in
[iL,(i+1)L]}G(z,t-s)\times
2^{|i|+|j|+6}\|\eta(\cdot,s)\|_{0,0}^1\Big)ds \\
=& 64\alpha L \|\eta(\cdot,s)\|_{0,0}^1 \Big(\sum\limits_{i\in
\mathbb{Z}}2^{|i|}\max\limits_{z\in
[iL,(i+1)L]}G(z,t-s)\Big)\Big(\sum\limits_{j\in
\mathbb{Z}}2^{|j|}\int_{j}^{j+1}G(w,t-s)dw\Big).\end{split}
\]
Since $2^{|j|}\leq e^x+e^{-x}$ for $x\in[j,j+1]$,
$$\sum\limits_{j\in
\mathbb{Z}}2^{|j|}\int_{j}^{j+1}G(w,t-s)dw<\int_{\mathbb{R}}(e^w+e^{-w})G(w,t-s)dw=2e^{t-s},$$
and
$$\sum\limits_{i\in \mathbb{Z}}2^{|i|}\max\limits_{z\in
[iL,(i+1)L]}G(z,t)\leq 4(e^{(t-s)(\log 2)^2}+\frac{e^{(t-s)(\log
2)^2}}{\sqrt{\pi(t)}})$$ (see the proof of Proposition 5.3 in
\cite{llm}) for $t\in (0,T]$, we have
$$\|\eta(\cdot,t)\|_{0,0}^1\leq \epsilon
+\int_0^t\frac{C}{\sqrt{t-s}}\big\|\eta(\cdot,s)\big\|_{0,0}^1ds$$
for some constant $C$ provided $\|\eta_0\|_{0,0}^1\leq \eta$. It
follows that
\[
\|\eta(\cdot,\cdot,t)\|_{0,0}^1\leq
e^{M^2t/4}\Big(1+\frac{M}{\sqrt{4\pi}} \int_0^t
\frac{e^{-M^2s/4}}{\sqrt{s}}\,ds\Big)\epsilon
=O\Big(e^{M^2t/4}\big(1+\sqrt{t}\,\big)\epsilon\Big)
\]
from Lemma 7.7 of Alfaro, Hilhorst, Matano \cite{s3}. Now consider
the case {\rm(F2)}. From Lemma \ref{lemma:globalexistenceforf2}, we
know that for any $M>0$, there is some $M'>0$ such that
$u(\cdot,t)\in \mathcal{C}_{M'}$ provided $u_0\in \mathcal{C}_M$.
The rest part of proof is same to that for case {\rm(F1)}. We
complete the proof.
\end{proof}
\subsection{Steady-states of the equation}
In this subsection, we prove the existence and uniqueness of the
positive steady-state of \eqref{SecondEquationcauchy}. More
precisely, we consider the equation
\begin{equation}\label{eq:steadystate}
P_{xx}+\bar{b}(x)f(P)+g(P)=0 \hbox{    in }\mathbb{R}
\end{equation}
in the mild sense under the periodicity condition $P(x\pm L)\equiv
P(x)$.
\begin{pro}
Let {\rm(F1)} hold. Then $P\equiv 1$ is the unique $L$-periodic
positive steady-state of equation \eqref{eq:steadystate}.
\end{pro}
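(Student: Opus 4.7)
My plan has two parts. First I would note that $P\equiv 1$ is trivially a steady-state: under (F1) one has $g\equiv 0$ and $f(1)=0$, so both sides of the equation vanish. The content of the proposition is therefore the uniqueness assertion. Given any $L$-periodic positive mild solution $P$ (which by the paper's definition of ``$L$-periodic'' depends only on $x$), I would prove $P\le 1$ and $P\ge 1$ separately, by analysing the open sets $\{P>1\}$ and $\{P<1\}$ and using the sign pattern of $f$ from (F1) together with $L$-periodicity.

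For the upper bound, recall that any mild solution is also a weak solution, so the steady-state equation $P''=-\bar b(x)f(P)$ holds in the distributional sense; since $\bar b$ is a Radon measure and $f(P)$ is continuous, $P''$ is a signed Radon measure. On the open set $U:=\{x\in\mathbb R:P(x)>1\}$, (F1) gives $f(P)<0$ while $\bar b\ge 0$, so $P''\ge 0$ as a measure on $U$. Together with the continuity of $P$, this implies $P$ is convex on each connected component of $U$. The set $U$ is open and $L$-periodic, hence either empty, all of $\mathbb R$, or a periodic union of bounded open intervals. If $U=\mathbb R$, then $P$ is a convex $L$-periodic function, hence constant, say $P\equiv c>1$; the equation then becomes $\bar b(x)f(c)\equiv 0$, which (since $\int_{[0,L)}\bar b=\alpha L>0$) forces $f(c)=0$, contradicting $f(c)<0$. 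If $U$ has a bounded component $(a,b)$, continuity gives $P(a)=P(b)=1$, and convexity on $[a,b]$ implies $P\le 1$ on $[a,b]$, contradicting $P>1$ on $(a,b)$. Either way $U=\emptyset$, so $P\le 1$ everywhere.

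The lower bound $P\ge 1$ follows by running the same argument with signs reversed on $V:=\{x:P(x)<1\}$: there $f(P)>0$, so $P''\le 0$ and $P$ is \emph{concave} on each component of $V$; the analogous dichotomy (concave periodic is constant, and concave on $[a,b]$ with boundary values $1$ forces $P\ge 1$ on $[a,b]$) yields $V=\emptyset$. Combining the two inequalities gives $P\equiv 1$.

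The one technical point that I expect to need genuine care is the implication ``$P''\ge 0$ as a Radon measure on an open set $U$ together with continuity of $P$ implies convexity of $P$ on $U$.'' Because $\bar b$ is only a measure, $P$ need not be $C^2$, so the classical pointwise criterion is unavailable. I would handle this by mollification: on any compactly contained subinterval of $U$, set $P_\varepsilon:=\rho_\varepsilon*P$ for a smooth approximate identity $\rho_\varepsilon$, so that $(P_\varepsilon)''=\rho_\varepsilon*P''\ge 0$ pointwise; hence $P_\varepsilon$ is smooth and convex, and passing to the uniform limit $P_\varepsilon\to P$ preserves convexity. This is the only place where the measure-valued nature of $\bar b$ really intrudes.
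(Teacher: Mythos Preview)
Your proof is correct and complete; the mollification step you flag is routine, and your case analysis on the structure of the periodic open sets $U$ and $V$ is clean.

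However, your route differs genuinely from the paper's. The paper argues \emph{parabolically}: given another periodic positive steady-state $P'$, it considers the time-dependent mild solution with initial datum $P'$ and compares it with the constant $\max P'$ (if $P'\ge 1$, $P'\not\equiv 1$) or with $\min P'$ (if $\min P'<1$), using the comparison principle for mild solutions established earlier in the paper; the solution must move strictly away from that constant, contradicting the steady-state property. Your argument is purely \emph{elliptic}: you read off the sign of $P''$ from the equation and exploit convexity/concavity together with periodicity. Your approach is more self-contained and avoids invoking the parabolic well-posedness and comparison machinery, at the mild cost of the mollification step to handle the measure coefficient; the paper's approach leverages tools it has already built, so the proof there is shorter in context.
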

\begin{proof}
It is obvious that $P\equiv 1$ is a positive mild solution. The only
thing we need to do is to prove the uniqueness. We suppose that
there is another positive steady-state $P'>0$. If $P'\geq 1$,
$P'\not\equiv 1$, let $u(x,t,u_0)$ be the mild solution of
\eqref{SecondEquationcauchy} with initial data $u_0= P'$. Then by
comparison principle, $u(x,t,u_0)<\max_{x\in\R}{P'}$ for $t>0$. It
is a contradiction with the fact that $P'$ is a steady-state.
Similarly, if $\min_{x\in\R}P'(x)< 1$, let $u(x,t,u_0)$ be the mild
solution of \eqref{SecondEquationcauchy} with initial data $u_0=
P'$. Then by comparison principle, $u(x,t,u_0)>\min_{x\in\R}{P'}$
for $t>0$. It is a contradiction with the fact that $P'$ is a
steady-state. We complete the proof.
\end{proof}
\begin{pro}
Let {\rm(F2)} hold. Then there exists a positive $L$-periodic mild
solution of \eqref{eq:steadystate}. Furthermore, such a solution is
unique.
\end{pro}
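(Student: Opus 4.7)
The plan is to prove existence via approximation from the smooth case and uniqueness via a sliding comparison argument exploiting the strict monotonicity of $g_1$.

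\textbf{Existence.} Choose a sequence $\{b_n\}\subset\Lambda(\alpha)$ with $b_n\to\bar b$ in the weak$^*$ sense. Under (F2) the steady equation with smooth coefficient $b_n$ reads
\[
P_n''+b_n(x)P_n-P_ng_1(P_n)=0,\qquad P_n(x+L)=P_n(x),
\]
and Lemma \ref{lm:boundednessofpositivesteadystate} shows that $\rho\psi_{b_n}$ is a subsolution for small $\rho$ and a supersolution for large $\rho$. Monotone iteration between these barriers therefore yields a positive classical solution $P_n$ with $\rho_1\psi_{b_n}\leq P_n\leq\rho_2\psi_{b_n}$; combined with Lemma \ref{LinearPsi} this gives the uniform strictly positive bound $\rho_1/F\leq P_n\leq \rho_2$ independent of $n$. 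Integrating the ODE on $[0,L]$ and using $\int_0^L b_n=\alpha L$ shows that $P_n'$ is uniformly bounded, so $\{P_n\}$ is equicontinuous; extract a uniformly convergent subsequence $P_n\to P$, which is $L$-periodic with the same two-sided bound. To identify $P$ as a mild solution of \eqref{eq:steadystate}, pass to the limit in the identity
\[
P_n(x)=\int_{\R}G(x-y,t)P_n(y)\,dy+\int_0^t\!\int_{\R}G(x-y,t-s)\bigl[b_n(y)P_n(y)-P_n(y)g_1(P_n(y))\bigr]dy\,ds,
\]
valid for each $t>0$. The convergence of the term containing $b_nP_n$ against the continuous kernel $G(x-\cdot,t-s)$ is provided by Lemma \ref{cor:A}, and the $g_1$-term is handled by dominated convergence using the uniform bounds on $P_n$.

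\textbf{Uniqueness.} Let $P_1,P_2$ be two positive $L$-periodic mild solutions; both are bounded above and away from $0$. Define
\[
\lambda^*:=\inf\{\lambda>0\,:\,\lambda P_2(x)\geq P_1(x)\ \text{for all}\ x\in\R\},
\]
which is finite and strictly positive. By $L$-periodicity and compactness, $\lambda^*P_2\geq P_1$ is attained at some $x_0\in[0,L]$. Suppose for contradiction $\lambda^*>1$. Then, since $g_1$ is strictly increasing,
\[
(\lambda^*P_2)''+\bar b(\lambda^*P_2)-(\lambda^*P_2)g_1(\lambda^*P_2)=\lambda^*P_2\bigl(g_1(P_2)-g_1(\lambda^*P_2)\bigr)<0
\]
uniformly on $\R$, so $\lambda^*P_2$ is a strict mild supersolution. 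Let $u(x,t)$ be the mild solution of \eqref{eq:f2cauchy} (viewed in the $y$-independent setting) with initial datum $\lambda^*P_2$. Proposition \ref{comparisonprinciple} gives $u(x,t)\geq P_1(x)$, while the strict supersolution property combined with the strict positivity of the heat kernel forces $u(x,t)<\lambda^*P_2(x)$ for every $t>0$. Evaluating at $(x_0,t)$ with any $t>0$ gives
\[
P_1(x_0)\leq u(x_0,t)<\lambda^*P_2(x_0)=P_1(x_0),
\]
a contradiction. Hence $\lambda^*\leq 1$, i.e., $P_2\geq P_1$; reversing the roles of $P_1$ and $P_2$ yields $P_1=P_2$.

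\textbf{Main obstacle.} The delicate step is justifying the strict inequality $u(x,t)<\lambda^*P_2(x)$ in the uniqueness argument, since $\bar b$ is only a measure and classical strong maximum principles do not apply directly. The key is to write the difference $u-\lambda^*P_2$ through the mild integral formula as a convolution of the Gauss kernel $G$ with a source that is nonpositive and nontrivial (the strict supersolution defect), and use the strict positivity of $G$ to propagate strict negativity everywhere; this is the version of the strong maximum principle adapted to the mild-solution framework used here.
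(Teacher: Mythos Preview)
Your proof is correct. The existence argument is essentially the paper's: approximate $\bar b$ by smooth $b_n$, get uniform two-sided bounds on the resulting steady states $P_n$ from Lemma \ref{lm:boundednessofpositivesteadystate}, extract a uniformly convergent subsequence, and pass to the limit in the mild formula via Lemma \ref{cor:A}. (The paper obtains equicontinuity through an $H^1(0,L)$ bound and Sobolev embedding rather than your direct $L^\infty$ bound on $P_n'$, but these are equivalent here.)

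Your uniqueness argument, however, takes a genuinely different route. The paper argues purely at the stationary level: assuming $\lambda_1<1$, it sets $W=\lambda_1\tilde P$ and $V=\bar P-W\ge 0$ with a touching point $V(x_0)=0$, then shows directly from the equations that $V_{xx}\le -\eta<0$ in the distributional sense near $x_0$, so $V$ is strictly concave there and cannot have an interior minimum---a contradiction. This is an elliptic strong-maximum-principle argument adapted to measure coefficients. You instead run the parabolic flow from the strict supersolution $\lambda^*P_2$ and use the positivity of the heat kernel to force a strict drop, then catch the contradiction at the touching point. Your approach is closer in spirit to the paper's later arguments (Propositions \ref{SCP} and \ref{Sublinearity}) and avoids manipulating distributional second derivatives; the paper's approach is shorter and self-contained at the ODE level. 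Both handle the measure coefficient $\bar b$ correctly, and your ``main obstacle'' paragraph accurately identifies the one step that needs care and gives the right resolution: the defect $R=\lambda^*P_2\bigl(g_1(\lambda^*P_2)-g_1(P_2)\bigr)$ is uniformly positive, so comparison with the solution of $\tilde v_t=\tilde v_{xx}-M\tilde v+R$, $\tilde v(\cdot,0)=0$ yields $\lambda^*P_2-u\ge\tilde v>0$ for $t>0$.
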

\begin{proof}
Consider an approximating sequence $b_n\to \bar{b}$, where
$b_n\in\Lambda(\alpha)$ $(n=1,2,3,...)$ and the convergence is in
the weak$^*$ sense (see Definition \ref{DefofLamdaBarAlpha}). For
each $b_n$, by Lemma \ref{lm:boundednessofpositivesteadystate} and
classical results of parabolic equations, since a pair of
sub-solution and super-solution can be found, a positive periodic
steady-state $P_n$ exists. Furthermore, there exist constants
$\delta_\alpha>0$ and $M_\alpha>0$, such that
$$\delta_\alpha<\|{P}_n\|_{L^\infty}<M_\alpha.$$ Since
$\{\|P_n\|_{L^\infty}\}$ are uniformly bounded, it is easy to see
$\{\|P_n\|_{H^1(0,L)}\}$ are uniformly bounded. By embedding
theorem, $H^1(0,L)\to C[0,L]$ is a compact embedding, so $\{P_n\}$
is a family of uniformly equicontinuous functions. Hence, there
exists a strongly convergent subsequence which we still denote by
$P_n$ such that there exists a function $\bar{P}$ satisfying that
$P_n\to\bar{P}$ in $L^2(0,L)$ and $L^\infty$ topology. Furthermore,
$\bar{P}$ satisfies
\[
\ \bar{P}_{xx}+\bar{P}(\bar{b}(x)-g_1(\bar{P}))=0
\] in the weak sense. Now we show that $\bar{P}$ is a mild solution.
We recall that every function $P_n$ can be written as
\[
\begin{split}
 P_n(x)=&\int_{\R}G(x-x')P_n(x')dx'\\&+\int_0^t\int_{\R}G(x-x',t-s)\Big(P_n(x')\big(b(x')-g_1(P_n(x'))\big)\Big)dx'ds.
\end{split}
\]  Recall that $P_n$ is convergent to $\bar{P}$ uniformly in
$L^\infty$ topology. By Lemma \ref{cor:A}, it is not difficult to
get
\[
\begin{split}
 \bar{P}(x)=&\int_{\R}G(x-x')\bar{P}(x')dx'\\&+\int_0^t\int_{\R}G(x-x',t-s)\Big(\bar{P}(x')\big(b(x')-g_1(\bar{P}(x'))\big)\Big)dx'ds.
\end{split}
\] Next we show that $\bar{P}$ is unique. Let $\tilde{P}>0$ be another
steady-state. Let $$\lambda_1:=\sup\{\lambda\in\R\,|\,\lambda
\tilde{P}<\bar{P}\}.$$  We claim that $\lambda_1\geq 1$.  If
$\lambda_1<1$, let $W:=\lambda_1\tilde{P}$. By a direct computation,
since $g_1'(u)>0$ and $W/\lambda_1>W$, it is easy to get
\[
\ W_{xx}+W(\bar{b}(x)-g_1(W))=W(g_1(\frac{W}{\lambda_1})-g_1(W))>0
\] in the distribution sense. Let $V:=\bar{P}-W$, then $V\geq 0$ and there exists a $x_0\in \R$ such that $V(x_0)=0$.
Furthermore,
\[
 V_{xx}+\bar{b}(x)V\leq -W(g_1(\frac{W}{\lambda_1})-g_1(W))-(\bar{P}g_1(\bar{P})-Wg_1(W)).
\] Since $\lambda_1<1$ and $g'_1>0$, there exist $\delta>0$ and $\eta>0$, such that $V_{xx}+\bar{b}(x)V\leq -\eta<0$ in $(x_0-\delta,x_0+\delta)$.
Furthermore $\bar{b}(x)V\geq 0$ in the distribution sense. We get
$V_{xx}\leq -\eta$ in $(x_0-\delta,x_0+\delta)$ in the distribution
sense. This means that $V(x)$ is strictly concave in the interval
$(x_0-\delta,x_0+\delta)$. Therefore $V(x)$ cannot attain a local
minimum in this interval, contradicting the fact that $V\geq 0$,
$V(x_0)=0$. This contradiction shows that $\lambda_1\geq 1$.
Consequently,
\[
\ \tilde{P}\leq\lambda_1\tilde{P}\leq\bar{P}.
\] Similarly, argument shows $\bar{P}\leq \tilde{P}$, hence $\tilde{P}=\bar{P}$.
\end{proof}
\subsection{Semiflow of the mild solutions and existence of minimal wave
speeds}\label{subsection:existencefominimalspeed} For both of the
cases {\rm(F1)} and {\rm(F2)}, we define an operator
$Q:BC^+(\mathbb{R}^2)\times \mathbb R^+\rightarrow
BC^+(\mathbb{R}^2)$ by
\[
\ Q_t(u_0)(x,y)=Q(u_0,t)(x,y)=u(x,y,t,u_0),
\] and $u=u(x,y,t,u_0)$ is the mild solutions of
\eqref{SecondEquationcauchy} for the case {\rm(F1)} or {\rm(F2)}.
The following proposition shows that $Q$ is a semiflow on the space
$BC^+(\mathbb{R}^2)$ with respect to the local uniform topology, in
other words, with respect to the norm $\|\cdot\|_{0,0}^1$:
\begin{pro}\label{QIsSemiFlow}
$Q$ is a semiflow on $\mathcal {C}_P$ in the following sense:
\begin{enumerate}
\item[{\rm(a)}] $Q_0(u_0)=u_0$,
\item[{\rm(b)}] $Q_{t_1+t_2}=Q_{t_1}\circ Q_{t_2}$,
\item[{\rm(c)}] $Q:(u_0,t)\to Q_t(u_0)$ is continuous in $(u_0,t)$.
\end{enumerate}
Moreover, $Q$ has the following properties:\begin{enumerate}

\item[{\rm(d)}] For each $T>0$ and $M>0$, the family of maps $\{Q_t\}:\mathcal{C}_M\to BC^+(\mathbb{R}^2),\,0\leq t\leq
T$ is equicontinuous.
\item[{\rm(e)}] For any $t,M>0,$
$\{Q_t(u_0)\,|\,u_0\in \mathcal{C}_M\}$ is precompact in
$BC^+(\mathbb{R}^2)$ with respect to the local uniform topology.
\end{enumerate}
\end{pro}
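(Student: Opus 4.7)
The plan is to read off all five items from the well-posedness results already established, plus the comparison principle needed for the invariance of $\mathcal{C}_P$ under $Q_t$. First, since $P$ is itself a time-independent mild solution and Proposition \ref{comparisonprinciple} holds, any $u_0\in\mathcal{C}_P$ satisfies $Q_t(u_0)\le Q_t(P)=P$, so $Q$ indeed maps $\mathcal{C}_P$ into itself. Item (a) is built into Definition \ref{def:msolg}: the mild solution satisfies $\lim_{t\searrow 0}u(x,y,t)=u_0(x,y)$. Item (b) is the semigroup property: given $u_0$, set $v(x,y,t):=u(x,y,t+t_2,u_0)$; substituting into the mild formulation and using the semigroup property of the heat kernel, $v$ is a mild solution with initial datum $Q_{t_2}(u_0)$, so by uniqueness (Theorem \ref{wlposednessforF2}) $v(\cdot,\cdot,t_1)=Q_{t_1}(Q_{t_2}(u_0))$, which is exactly (b).

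For item (c), take a sequence $(u_0^n,t_n)\to(u_0,t)$ in $\mathcal{C}_P\times\R^+$ and split
\[
\|Q_{t_n}(u_0^n)-Q_t(u_0)\|_{0,0}^1
\le \|Q_{t_n}(u_0^n)-Q_{t_n}(u_0)\|_{0,0}^1+\|Q_{t_n}(u_0)-Q_t(u_0)\|_{0,0}^1.
\]
The first term tends to zero by Proposition \ref{lcontinuous} applied with $T:=\sup_n t_n+1$, since the threshold $\eta$ there depends only on $T$, $M$, and $\epsilon$. The second term tends to zero because, directly from the integral formula defining a mild solution, $t\mapsto Q_t(u_0)$ is continuous in the $L^\infty$ norm and hence in $\|\cdot\|_{0,0}^1$. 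Item (d) is essentially Proposition \ref{lcontinuous} again: inspecting its proof, the constant $\eta=\eta(\epsilon,T,M)$ is uniform in $t\in[0,T]$ and in $u_0,v_0\in\mathcal{C}_M$, which is exactly the definition of equicontinuity of the family $\{Q_t\}_{0\le t\le T}$.

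For item (e), fix $M,t>0$. By Lemma \ref{lemma:globalexistenceforf2} the family $\{Q_t(u_0)\}_{u_0\in\mathcal{C}_M}$ is uniformly bounded on $\R^2$. By Lemma \ref{Equicontinuityforf2} it is also uniformly equicontinuous in $(x,y)$ at time $t$, at least when $\bar{b}$ is a smooth $b\in\Lambda(\alpha)$. To upgrade to $\bar{b}\in\overline{\Lambda}(\alpha)$, pick $b_n\in\Lambda(\alpha)$ with $b_n\to\bar{b}$ in the weak$^*$ sense, observe that the moduli of continuity produced in the proof of Lemma \ref{Equicontinuity} depend on $b$ only through $\alpha$ and $L$ (since Lemma \ref{lm:estimateforlemmainsection6} is already formulated for measures), and pass to the limit using Corollary \ref{corol:contdeonini}. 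Arzel\`a--Ascoli applied on each square $[-K,K]^2$ together with a standard diagonal extraction then yields a subsequence converging locally uniformly on $\R^2$, i.e., in the topology defined by $\|\cdot\|_{0,0}^1$, proving the claimed precompactness. The main technical obstacle is precisely this extension of the equicontinuity estimate from smooth densities to general measures, but the structure of Lemmas \ref{lm:estimateforlemmainsection6}--\ref{Equicontinuity} is designed so that the bounds survive the weak$^*$ passage to the limit.
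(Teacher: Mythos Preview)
Your argument is correct and is precisely the kind of verification the paper has in mind: the paper itself omits the proof, saying only that it is ``essentially the same as that of Proposition 5.4 in \cite{llm}''. Your use of Proposition~\ref{comparisonprinciple} for the invariance of $\mathcal{C}_P$, the uniqueness in Theorem~\ref{wlposednessforF2} for the semigroup identity, Proposition~\ref{lcontinuous} for (c) and (d), and the equicontinuity Lemmas~\ref{Equicontinuity}--\ref{Equicontinuityforf2} together with Arzel\`a--Ascoli for (e) are exactly the ingredients that underlie the cited result, so there is nothing substantively different to compare.
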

The proof of Proposition \ref{QIsSemiFlow} is essentially the same
as that of Proposition 5.4 in \cite{llm}, so we omit the proof.
\begin{lemma}[Monostability for {\rm (F1)}]\label{monostable} Let {\rm{\rm(F1)}}
hold. Then $u\equiv 1$ and $u\equiv 0$ are both steady-states of
\eqref{SecondEquation2} in the mild sense. Furthermore, for
continuous initial data $u_0$ with $0\leq u_0\leq 1,$ $u_0\not\equiv
0$ and $u_0(x,y)\equiv u_0(x+L,0)$, we have $u(x,y,t,u_0)\to 1$
uniformly in $x,y\in\R$ as $t\to\infty.$
\end{lemma}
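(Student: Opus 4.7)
The plan is to reduce the question to the one-dimensional setting already handled in Proposition~\ref{pro:steadystate} by exploiting the $y$-independence forced on $u_0$ by the hypothesis.

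First I would verify that $u\equiv 0$ and $u\equiv 1$ are mild steady states of \eqref{SecondEquation2}. Substituting $u\equiv 0$ into the formula of Definition~\ref{def:msolg} is immediate since every integrand vanishes. For $u\equiv 1$, hypothesis (F1) gives $g\equiv 0$ and $f(1)=0$, so the Duhamel integral vanishes, while $\int_{\R^2}G(x-x',y-y',t)\,dx'dy'=1$ produces the identity $1=1$.

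Second, I would reduce the Cauchy problem to one space dimension. The condition $u_0(x,y)\equiv u_0(x+L,0)$, which is the paper's notion of $L$-periodicity for a two-variable function, forces $u_0$ to be independent of $y$ and $L$-periodic in $x$. Let $v(x,t)$ denote the mild solution of the 1D Cauchy problem \eqref{eq:general1Dequationcauchy} with initial datum $u_0(\cdot)$; its global existence is provided by Proposition~\ref{pro:steadystate}. Because the 2D heat kernel factors as $G(x,y,t)=G_1(x,t)\,G_1(y,t)$ with $\int_{\R}G_1(y-y',t)\,dy'=1$, the function $V(x,y,t):=v(x,t)$ satisfies the 2D mild-solution identity of Definition~\ref{def:msolg} precisely because $v$ satisfies its 1D counterpart. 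Uniqueness in Theorem~\ref{wlposedness} then forces $u(x,y,t,u_0)=v(x,t)$ for every $(x,y,t)\in\R^2\times(0,\infty)$.

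Finally, I would invoke the 1D convergence result. The datum $u_0$ is continuous, nonnegative, not identically zero, and $L$-periodic, so Proposition~\ref{pro:steadystate} yields $v(x,t)\to P(x)$ uniformly in $x\in\R$, where $P$ is the unique positive $L$-periodic mild steady state of the 1D equation. The proposition immediately preceding the present lemma identifies $P\equiv 1$ for case (F1). Combined with the $y$-independence from the previous step, this gives $u(x,y,t,u_0)\to 1$ uniformly in $(x,y)\in\R^2$. No genuine obstacle arises; the bound $u_0\leq 1$ enters only to keep the solution in $[0,1]$ through Proposition~\ref{comparisonprinciple} and is not used in the convergence itself.
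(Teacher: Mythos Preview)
Your proof is mathematically correct but takes a different route from the paper's. The paper does not reduce to the one-dimensional problem; instead it argues directly that any constant $c\in(0,1)$ is a subsolution of \eqref{SecondEquation2} under {\rm(F1)} (since $\bar b\geq 0$ and $f(c)>0$), and then pushes the solution up from below using comparison. This is the content of Lemma~5.5 in \cite{llm}, whose extension to general $f$ the paper declares immediate and therefore omits.

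Your reduction to 1D via the factorisation of the heat kernel and uniqueness is clean, but be aware of a logical wrinkle within the paper's own organisation: Proposition~\ref{pro:steadystate} is not proved independently in Section~\ref{1Dimensionalcase}; the paper explicitly defers its proof to the 2D case and elsewhere says the 1D theorems are ``covered by our general results for 2-dimensional case''. Thus, citing Proposition~\ref{pro:steadystate} to establish the present lemma is formally circular inside this paper. The circularity is only organisational, not mathematical, because the 1D {\rm(F1)} result is established in \cite{llm}; if you want your argument to stand on its own here, you should cite \cite{llm} directly rather than Proposition~\ref{pro:steadystate}. The paper's constant-subsolution approach avoids this issue entirely and is self-contained.
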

The above lemma is a slightly extended version of Lemma 5.5 in
\cite{llm}, in which the same result is proved for the case
$f(u)=u(1-u)$. The idea of the proof is to use the fact that any
constant function lying between $0$ and $1$ is a subsolution.
Since the proof for a general $f(u)$ is the same, we omit the
proof of the above lemma. On the other hand, the proof of
monostability for the case {\rm(F2)} is a little different. In
this case, the constant function is no longer a subsolution,
therefore the same argument does not work. In order to prove the
monostability  for the case {\rm (F2)}, we will use the strong
comparison principle and the
 sublinearity of $Q$, which we state below.
\begin{pro}[Strong comparison principle]\label{SCP}
Let {\rm(F2)} hold. Then for any initial data $u_0,v_0\in
BC^+(\mathbb{R}^2)$ with $u_0\geq v_0$ and $u_0\not\equiv v_0$,
$u(x,y,t,u_0)>u(x,y,t,v_0)$ for any $t>0,x,y\in \R$.
\end{pro}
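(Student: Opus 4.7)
The plan is to reduce to a linear problem with a nonnegative zeroth--order coefficient, apply the Duhamel representation together with strict positivity of the heat kernel, and then pass to the limit along a smooth approximation of $\bar b$.

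First I would set $w(x,y,t):=u(x,y,t,u_0)-u(x,y,t,v_0)$, which is nonnegative by the comparison principle (Proposition \ref{comparisonprinciple}). Choose a sequence $b_n\in\Lambda(\alpha)$ with $b_n\to\bar b$ in the weak$^*$ sense, and let $u_n,v_n$ denote the corresponding classical (mild) solutions with initial data $u_0,v_0$. By the convergence remark preceding Corollary \ref{corol:contdeonini}, $u_n\to u$ and $v_n\to v$ uniformly in $(x,y)\in\mathbb R^2$ and locally uniformly in $t>0$, so it suffices to establish a pointwise lower bound on $w_n:=u_n-v_n\ge 0$ that is uniform in $n$ and does not vanish.

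For smooth $b_n$, the classical solution $w_n$ satisfies
\[
\partial_t w_n=\Delta w_n+\bigl(b_n(x)-m_n(x,y,t)\bigr)w_n,
\]
where, writing $h(z):=zg_1(z)$ for case {\rm(F2)},
\[
m_n(x,y,t):=\int_0^1 h'\bigl(v_n+\tau(u_n-v_n)\bigr)\,d\tau\;\ge 0.
\]
By Lemma \ref{lemma:globalexistenceforf2}, the solutions $u_n,v_n$ stay in a common bounded interval $[0,D']$ independent of $n$, so $m_n$ is bounded from above by a constant $M:=\max_{z\in[0,D']}(g_1(z)+zg_1'(z))$, again independent of $n$.

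Introduce $\tilde w_n:=e^{Mt}w_n\ge 0$, which satisfies
\[
\partial_t\tilde w_n=\Delta\tilde w_n+\bigl(b_n(x)+M-m_n(x,y,t)\bigr)\tilde w_n,
\]
whose zeroth--order coefficient is now nonnegative since $b_n\ge0$ and $m_n\le M$. By Duhamel's formula,
\[
\tilde w_n(x,y,t)=\bigl(G(t)*w_0\bigr)(x,y)+\int_0^t G(t-s)*\bigl[(b_n+M-m_n)\tilde w_n(s)\bigr](x,y)\,ds,
\]
and the integrand is pointwise nonnegative, so $\tilde w_n(x,y,t)\ge (G(t)*w_0)(x,y)$. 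Because $w_0=u_0-v_0\ge 0$, $w_0\not\equiv 0$, and $G(\cdot,\cdot,t)>0$ everywhere for $t>0$, the heat kernel representation gives $(G(t)*w_0)(x,y)>0$ for every $(x,y)\in\mathbb R^2$ and every $t>0$. Consequently
\[
w_n(x,y,t)\ge e^{-Mt}(G(t)*w_0)(x,y)>0 \quad\text{for all }n.
\]
Letting $n\to\infty$ and using the convergence $w_n\to w$ preserves the inequality, yielding $u(x,y,t,u_0)-u(x,y,t,v_0)\ge e^{-Mt}(G(t)*w_0)(x,y)>0$, which is the desired strict inequality.

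The only genuinely delicate step is the uniform (in $n$) bound on $m_n$ and the fact that $b_n+M-m_n\ge 0$; this is what allows Duhamel's formula to produce the lower bound $\tilde w_n\ge G(t)*w_0$ rather than merely a Gronwall-type estimate. This uniformity rests on Lemma \ref{lemma:globalexistenceforf2} and on the assumption $g_1'>0$ in (F2), which forces $h'\ge 0$ and hence $m_n\ge 0$.
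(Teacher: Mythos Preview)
Your proof is correct and follows essentially the same approach as the paper: approximate $\bar b$ by smooth $b_n$, bound the linearized zeroth-order coefficient using the uniform bound from Lemma~\ref{lemma:globalexistenceforf2}, and compare $w_n$ from below to $e^{-Ct}(G(t)*w_0)$, which is strictly positive by positivity of the heat kernel, then pass to the limit. The paper phrases the comparison as $w_n\ge v$ where $v$ solves $v_t=\Delta v-(g_1(M)+M_1)v$ with the same initial data, whereas you use the equivalent Duhamel/exponential-shift formulation; the content is the same.
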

\begin{proof}
Choose $b_n\in\Lambda(\alpha)$ with $b_n\to \bar b$ in the weak$^*$
sense. Let $u_n(x,y,t,u_0)$ be the solution of
\[
\ (u_n)_t=(u_n)_{xx}+(u_n)_{yy}+u_n(b_n(x)-g_1(u_n))\,\,
\,\,\,(x,y\in\R,\,\,t>0)
\] with initial data $u_0$. Then there is
some $M>0$ such that $$0\leq u_n(x,y,t,u_0), u_n(x,y,t,v_0)\leq M,$$
 for any $t>0$, $x,y\in \R$ by Lemma
\ref{lemma:globalexistenceforf2}. Let
$w(x,y,t)=u(x,y,t,u_0)-u(x,y,t,v_0)$ and
$w_n(x,y,t)=u_n(x,y,t,u_0)-u_n(x,y,t,v_0)$. By the comparison
principle, $w_n(x,y,t)\geq 0$. Consequently, we have $w_n(x,y,t)\geq
v(x,y,t)>0$ for any $x,y\in \R$, $t>0$, where $v(x,y,t)$ is the
solution of
\[
\left\{\begin{array}{ll} v_t=v_{xx}+v_{yy}-(g_1(M)+M_1)v
\quad\ \ &(x,y\in\mathbb R,\;t>0),\vspace{5pt}\\
v(x,y,0)=u_0(x,y)-v_0(x,y)\geq 0\quad\ \ &(x,y\in\mathbb R),
\end{array}\right.
\] with $M_1:=\max_{\xi\in[0,M]}|g'_1(\xi)|$. In view of this and
the fact that $\lim_{n\to\infty}w_n=w$ (see Subsection
\ref{subsection:proofofwlposedness}), we obtain $w(x,y,t)\geq
v(x,t)>0$ for any $x,y\in \R, t>0$. This completes the proof.
\end{proof}
\begin{pro}[Strong sublinearity]\label{Sublinearity} Let {\rm(F2)}
hold. Then for any $u_0\in BC^+(\mathbb{R}^2)$ with $u_0\not \equiv
0$ and any $\epsilon \in (0,1)$, $u(x,y,t,\epsilon u_0)> \epsilon
u(x,y,t,u_0)$ for any $x,y\in \R, t> 0$.
\end{pro}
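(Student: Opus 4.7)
The plan is to show $w:=v-\epsilon u>0$ for $t>0$, where $v:=u(x,y,t,\epsilon u_0)$ and $u:=u(x,y,t,u_0)$, by first deriving a linear-parabolic Duhamel identity for $w$ with a strictly positive source $D$, and then bootstrapping to strict positivity via the smooth approximation $b_n\to\bar b$ together with an explicit comparison against a pure-dissipation heat equation.

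For the preliminaries, Proposition \ref{comparisonprinciple} applied with the pairs of initial data $(u_0,\epsilon u_0)$ and $(\epsilon u_0,0)$ yields $0\leq v\leq u$, and Proposition \ref{SCP} applied to the same pairs (genuinely distinct data, since $u_0\not\equiv 0$) upgrades these to the strict inequalities $v>0$ and $u>v$ for all $t>0$ and $(x,y)\in\mathbb R^2$. Subtracting the Duhamel formulas for $v$ and $\epsilon u$ (which share the initial data $\epsilon u_0$) and using the algebraic identity $vg_1(v)-\epsilon ug_1(u)=g_1(u)w-v(g_1(u)-g_1(v))$ yields
\begin{equation*}
w(x,y,t)=\int_0^t\!\int_{\mathbb R^2}G(x-x',y-y',t-s)\bigl[(\bar b(x')-g_1(u))w+D\bigr](x',y',s)\,dx'dy'\,ds,
\end{equation*}
where $D:=v\bigl(g_1(u)-g_1(v)\bigr)$. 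By the strict comparisons above and the hypothesis $g_1'>0$, the source $D$ is continuous and strictly positive everywhere for $t>0$.

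For the upgrade from $w\geq 0$ to $w>0$, I work with a smooth approximation $b_n\to\bar b$ with $b_n\in\Lambda(\alpha)$. Write $u_n,v_n,w_n:=v_n-\epsilon u_n,D_n$ for the corresponding smooth quantities, and let $M$ be a uniform upper bound for $g_1(u_n)$ (finite by Lemma \ref{lemma:globalexistenceforf2}). The smooth $w_n$ satisfies a linear parabolic equation with smooth bounded coefficients, potential $b_n-g_1(u_n)\geq -M$, and source $D_n$; a standard maximum-principle comparison (applied to $w_n-W_n$, whose linear equation has source $(b_n-g_1(u_n)+M)W_n\geq 0$ and zero initial data) yields $w_n\geq W_n$, where $W_n\geq 0$ is the mild solution of the pure-dissipation heat problem $W_t=\Delta W-MW+D_n$, $W(0,\cdot,\cdot)=0$, explicitly
\[
W_n(x,y,t)=\int_0^t e^{-M(t-s)}\bigl(G_{t-s}\ast D_n(\cdot,\cdot,s)\bigr)(x,y)\,ds>0\quad(t>0).
\]
Passing $n\to\infty$, Corollary \ref{corol:contdeonini} gives $u_n\to u$ and $v_n\to v$ in the local-uniform topology, so $D_n\to D$ locally uniformly; combined with the uniform boundedness of $\{D_n\}$ (via Lemma \ref{lemma:globalexistenceforf2}) and dominated convergence in the Gaussian convolution, this yields $W_n\to W$ pointwise, with $W>0$ for $t>0$ since $D>0$ continuously. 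The inequality $w\geq W>0$ is therefore preserved in the limit, giving the desired strict sublinearity.

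The main obstacle is the delicate interplay between the measure-valued coefficient $\bar b$ and the need for a strong-maximum-type conclusion. The approximation argument circumvents this by transferring the pointwise positivity statement to the smooth level, where classical maximum principles apply unambiguously, and then preserving only the quantitative lower bound $w_n\geq W_n$, which, unlike bare positivity, survives the limit $n\to\infty$.
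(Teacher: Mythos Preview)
Your proof is correct and follows a genuinely different route from the paper's. The paper first establishes the \emph{weak} inequality $u(\cdot,\epsilon u_0)\geq\epsilon u(\cdot,u_0)$ via smooth approximation and classical comparison, then upgrades it to strictness by a soft bootstrap: since $\tilde u:=\epsilon u$ satisfies a strict sub-equation for $t>0$, the two cannot coincide on all of $\mathbb R^2\times[0,\tau]$ for any $\tau>0$; extracting times $\tau_k\searrow 0$ where $w(\cdot,\cdot,\tau_k)\not\equiv 0$, Proposition~\ref{SCP} (applied from time $\tau_k$) together with the already-proved weak sublinearity and the semigroup property yields $w(\cdot,\cdot,t)>0$ for $t>\tau_k$, hence for all $t>0$. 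Your argument instead invokes Proposition~\ref{SCP} \emph{first}, on the pairs $(u_0,\epsilon u_0)$ and $(\epsilon u_0,0)$, to obtain $u>v>0$ strictly and thereby a strictly positive continuous forcing $D=v(g_1(u)-g_1(v))$; the strict positivity of $w$ then follows from the explicit constant-coefficient lower barrier $W$, whose positivity is read off directly from its Duhamel representation. What you gain is a quantitative pointwise lower bound $w\geq W$ and the avoidance of the $\tau_k$-extraction and semigroup bootstrap; what the paper gains is that it never needs the algebraic decomposition of the nonlinearity or the auxiliary problem for $W$. Both routes rely on the same smooth approximation $b_n\to\bar b$ and on Proposition~\ref{SCP}, but they deploy the strong comparison at different stages.
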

\begin{proof}
By Proposition \ref{SCP}, we have $u(x,y,t,u_0)>0$ for $t>0$,
$x,y\in\R$. Let $\tilde{u}(x,y,t):=\epsilon u(x,y,t,u_0)$. Then
$\tilde{u}$ satisfies
\[ \left\{\begin{array}{ll}
\tilde{u}_t=\triangle\tilde{u}+\tilde{u}(\bar{b}(x)-g_1(\tilde{u}))+\tilde{u}(g_1(\tilde{u})-g_1(\frac{\tilde{u}}{\epsilon}))\,\,
  &(x,y\in\mathbb R,\;t>0),\vspace{5pt}\\
\tilde{u}(x,y,0)=\epsilon u_0(x,y)\geq 0\quad\ \ &(x,y\in\mathbb
R),
\end{array}\right.
\] where $\triangle=\partial_x^2+\partial_y^2$.
Since $\tilde{u}>0$, $\epsilon\in(0,1)$ and $g_1'>0$, we have
$(\epsilon u)(g_1(\epsilon u)-g_1(\tilde{u}/\epsilon))<0$ for
$t>0$. Hence
\begin{equation}\label{eq:lowersolution}
\left\{\begin{array}{ll}
(\tilde{u})_t<\triangle\tilde{u}+\tilde{u}(\bar{b}(x)-g_1(\tilde{u}))\,\,
  &(x,y\in\mathbb R,\;t>0),\vspace{5pt}\\
\tilde{u}(x,y,0)=\epsilon u_0(x,y)\geq 0\quad\ \ &(x,y\in\mathbb R).
\end{array}\right.
\end{equation}
 Let $b_n$ and $u_n(x,y,t,u_0)$ be as in the proof of
Proposition \ref{SCP} and let $\tilde{u}_n:=\epsilon
u_n(x,y,t,u_0)$. It is easy to see that $\tilde{u}_n$ satisfies
\eqref{eq:lowersolution} with $\bar{b}$ replaced by $b_n$. By the
classical comparison principle, we obtain $u_n(x,y,t,\epsilon
u_0)\geq \tilde{u}_n(x,y,t)$. Consequently, by Corollary
\ref{corol:contdeonini},
\[
u(x,y,t,\epsilon u_0)=\lim_{n\to\infty}u_n(x,y,t,\epsilon u_0)\geq
\lim_{n\to\infty}\tilde{u}_n(x,y,t)=\tilde{u}(x,y,t).
\]
Now by the inequality \eqref{eq:lowersolution}, it is obvious that
for any $\tau>0$,
\[
\ u(x,y,t,\epsilon u_0)\not\equiv\tilde{u}(x,y,t) \,\,\,\,\hbox{in
}(x,y,t)\in\R^2\times[0,\tau].
\] Therefore, there exists a sequence $\tau_k > 0$ with $\tau_k\to 0$  as $k\to\infty$ such that
\[
\ u(x,y,\tau_k,\epsilon u_0)\not\equiv
\tilde{u}(x,y,\tau_k)\,\,\,\hbox{in
}(x,y)\in\R^2,\,\,\,\,\,\,k=1,2,3,\cdots
\]
Recall that $u(x,y,\tau_k,\epsilon u_0)\geq \tilde{u}(x,y,\tau_k)$
for $k=1,2,3,\cdots$. Hence, by Proposition \ref{SCP}, the
inequality
$$u(x,y,t,u(x,y,\tau_k,\epsilon u_0))>u(x,y,t,\tilde{u}(x,y,\tau_k))$$
holds for $t>0$, $k=1,2,3,\cdots$ and $x,y \in \R$. On the other
hand,
\[
\begin{split}
 u(x,y,t,\tilde{u}(x,y,\tau_k))&=u(x,y,t,\epsilon u(x,y,\tau_k,u_0))\\&\geq
\epsilon u(x,y,t,u(x,y,\tau_k,u_0))\\&=\epsilon
u(x,y,t+\tau_k,u_0)\\&=\tilde{u}(x,y,t+\tau_k).
\end{split}
\]  Thus we get $u(x,y,t+\tau_k,\epsilon u_0)>\tilde{u}(x,y,t+\tau_k)$.  By letting $\tau_k\to
0$, we obtain $u(x,y,t,\epsilon u_0)>\tilde{u}(x,y,t)$ for $t>0$,
$x,y\in\R$.
\end{proof}

{}From Propositions \ref{SCP} and \ref{Sublinearity}, we have the
following monostability lemma:
\begin{lemma}[Monostability for {\rm (F2)}]\label{thm:monostability}
Let {\rm(F2)} hold. Let $P$ be the unique $L$-periodic positive
(mild) steady-state of \eqref{SecondEquationcauchy}. Then for any
nonnegative $L$-periodic initial data $u_0\in BC^+(\R^2)$ with
$u_0\not\equiv 0$, $u(x,y,t,u_0)\to P(x)$ as $t\to \infty$ uniformly
for $x,y\in \R$.
\end{lemma}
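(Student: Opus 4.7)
The plan is to sandwich $u(\cdot,\cdot,t,u_0)$ between two $L$-periodic solutions that converge monotonically to $P$ and then appeal to the already-established uniqueness of the positive $L$-periodic mild steady state. First, since $u_0$ is $L$-periodic (hence $y$-independent) and nontrivial, uniqueness of the mild solution in Theorem \ref{wlposednessforF2} ensures that $u(\cdot,\cdot,t,u_0)$ remains $L$-periodic in $x$ and independent of $y$ for all $t>0$, and Proposition \ref{SCP} makes it strictly positive on $\R^2$ for any $\tau>0$. Using that $P$ is continuous, positive, and $L$-periodic, together with the uniform $L^\infty$ bound of Lemma \ref{lemma:globalexistenceforf2}, I would pick constants $0<m<1<M$ with $mP\le u(\cdot,\cdot,\tau,u_0)\le MP$ on $\R^2$. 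The proof then reduces to showing that the two Cauchy problems with initial data $mP$ and $MP$ both converge to $P$ uniformly.

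For the lower sandwich, set $\underline v(x,y,t):=u(x,y,t,mP)$. Because $P$ is a steady state, uniqueness gives $u(\cdot,\cdot,t,P)\equiv P$; hence Proposition \ref{Sublinearity} applied with $\epsilon=m$ and $u_0=P$ yields $\underline v(\cdot,\cdot,t)>mP=\underline v(\cdot,\cdot,0)$ for every $t>0$. Combining this with the semiflow identity $\underline v(\cdot,\cdot,t_0+s)=Q_s(\underline v(\cdot,\cdot,t_0))$ and the comparison principle (Proposition \ref{comparisonprinciple}, upgraded to strict inequality by Proposition \ref{SCP}) gives strict monotonicity in $t$; comparison with $P$ bounds $\underline v$ above. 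Hence $\underline v(\cdot,\cdot,t)$ increases pointwise to some $v_\infty$ with $mP\le v_\infty\le P$. For the upper sandwich, I would run the symmetric argument with $\overline w(x,y,t):=u(x,y,t,MP)$, using sublinearity with $\epsilon=1/M$ and initial data $MP$ to obtain $\overline w<MP$ (hence strict decrease in $t$) and $\overline w>P$ by strong comparison.

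The crucial identification step is to show that each monotone limit is a positive $L$-periodic mild steady state, so that the uniqueness proposition stated just above forces $v_\infty=\overline w_\infty=P$. For this I would invoke the equicontinuity estimate of Lemma \ref{Equicontinuityforf2}, together with the approximation by smooth $b_n$ developed in the proof of Theorem \ref{wlposedness} and Corollary \ref{corol:contdeonini}, to show that the trajectory $\{\underline v(\cdot,\cdot,t)\}_{t\ge 1}$ is precompact in the local uniform topology. Monotonicity then upgrades the pointwise limit to a locally uniform limit, so $v_\infty$ is continuous; continuity of the semiflow (Proposition \ref{QIsSemiFlow}) gives invariance $Q_s(v_\infty)=v_\infty$ for every $s\ge 0$, which is exactly the mild steady-state equation. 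Dini's theorem on the compact cell $[0,L]$, combined with $L$-periodicity in $x$ and $y$-independence, then upgrades the convergence to uniform convergence on $\R^2$. A final sandwich at time $\tau$ gives $\underline v(x,y,t)\le u(x,y,t+\tau,u_0)\le\overline w(x,y,t)$, and sending $t\to\infty$ finishes the proof.

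The main obstacle is precisely the identification of $v_\infty$ and $\overline w_\infty$ as mild steady states in the presence of the singular coefficient $\bar b\in\overline\Lambda(\alpha)$: the strong comparison and sublinearity tools that generate monotonicity are themselves only proved via approximation by smooth $b_n$, so one must confirm that the passage to $\omega$-limits commutes with this approximation. Once precompactness of the trajectory and continuity of the semiflow in the local uniform topology are in hand, the remainder is standard monotone dynamical systems reasoning.
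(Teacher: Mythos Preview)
Your approach is correct and is essentially a hands-on unpacking of what the paper does by citation. The paper's own proof is very short: it first verifies that $0$ is unstable by showing that $a\psi_{\bar b}$ is a strict subsolution for small $a>0$ (via Lemma \ref{lm:boundednessofpositivesteadystate} and the limit argument from smooth $b_n$), and then, having Propositions \ref{SCP} and \ref{Sublinearity} in hand, simply invokes Theorem 2.3.4 of Zhao's monograph \cite{Zhaobook} on monotone sublinear semiflows to conclude global attractivity of $P$.

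Your sandwich argument with $mP$ and $MP$, the monotone convergence driven by strong sublinearity, the identification of the $\omega$-limits as mild steady states via precompactness and continuity of $Q_t$, and the final appeal to uniqueness of the positive periodic steady state---this is precisely the skeleton of the proof of that cited theorem specialized to the present setting. What you gain is a self-contained argument that makes explicit where the singular coefficient $\bar b$ enters and how the approximation by smooth $b_n$ is used; what the paper gains is brevity and a clean separation between the abstract dynamical-systems machinery and the PDE-specific ingredients. Note also that by sandwiching directly with multiples of $P$ you sidestep the explicit instability-of-zero step, whereas the paper uses the principal eigenfunction $\psi_{\bar b}$ for that purpose; both are valid entry points into the same monotone scheme.
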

%\begin{pro}\ For case {\rm(F1)} or case {\rm(F2)}, a positive continuous
%steady state $P(x)$ exists for any
%$\bar{b}\in\overline{\Lambda}(\alpha)$. Furthermore, there exists a
%constant $M>0$, such that $P(x)\leq M$ for any
%$\bar{b}\in\overline{\Lambda}(\alpha)$.
%\end{pro}
\begin{proof}
%First we prove the existence of the positive steady state. For case
%{\rm(F2)}, for smooth function $b$, by Lemma
%\ref{lm:boundednessofpositivesteadystate} and classical results of
%parabolic equations, since a pair of sub-solution and super-solution
%can be found, a positive periodic steady state exists.%
%
%For general $\bar{b}\in\overline{\Lambda}(\alpha)$, we can choose a
%sequence of smooth functions $\{b_n\}$, such that $b_n\to\bar{b}$ in
%the weak$^*$ sense. Let $P_n(x)$ be the positive steady state
%corresponding to each $b_n$. Since $\{\|P_n\|_{L^\infty}\}$ are
%uniformly bounded, it is easy to see $\{\|P_n\|_{H^1(0,L]}\}$ are
%uniformly bounded. Hence, there exists a strongly convergent
%subsequence which we still denote by $P_n$ such that, there exists a
%function $\bar{P}$ satisfying that $P_n\to\bar{P}$ in $L^2(0,L]$
%topology. Furthermore, $\bar{P}$ satisfies
%\[
%\ \bar{P}_{xx}+\bar{P}(\bar{b}(x)-g(\bar{P}))=0
%\] in the weak sense. By the uniform bound of $P_n$, there exists a
%constant $M>0$, such that $\|\bar{P}\|_{L^\infty}<M$.
%\end{proof}
%\begin{proof}

Here we need to prove that $0$ is an unstable steady-state. In fact,
still use the limit argument, by Lemma
\ref{lm:boundednessofpositivesteadystate}, we can show that for any
sufficiently small positive number $a$, $u(x,y,t,a\psi_{\bar{b}})>
a\psi_{\bar{b}}(x,y)$ for any $t>0$, where $\psi_{\bar{b}}$ is the
principal eigenfunction of $-L_{0,\theta,\bar{b}}$ with
$\|\phi_{\bar{b}}\|_{L^\infty}=1$. This shows that $0$ is unstable.
Furthermore, by Propositions \ref{SCP} and \ref{Sublinearity}, This
theorem can be obtained from \cite[Theorem 2.3.4]{Zhaobook}.
\end{proof}
To unify the cases of (F1) and (F2), we always use $P$ to denote the
positive $L$-periodic steady-state  Summarizing, for any $t>0$,
$Q_t$ has the following properties:
\begin{enumerate}
\item[{\rm(i)}] $Q_t$ is order-preserving in the sense that if
$u_0,v_0\in \mathcal{C}_P$ and $u_0(x,y)\leq v_0(x,y)$ on
$\mathbb{R}$, $Q_t(u_0)(x,y)\leq Q_t(v_0)(x,y)$ on $\mathbb{R}^2$.

\item[{\rm(ii)}] $Q_t(T_{L,s}(u_0))=T_{L,s}(Q_t(u_0))$ where $T_{L,s}$ is a
shift operator defined by $T_{L,s}(u)(x,y)=u(x-L,y-s)$ for any $s\in
\R$.

\item[{\rm(iii)}] $Q_t(0)=0$ and $Q_t(P)=P$. For any $u_0\in
\mathcal{C}_P$ with $T_{L,s}(u_0)=u_0$ for any $s\in \R$ and $u\not
\equiv 0$, $Q_t(u_0)\rightarrow P$ in the space $\mathcal{C}_P$ with
respect to the local uniform topology.

\item[{\rm(iv)}]  Given $T>0$, the family of maps $Q_t: \mathcal{C}_P\to \mathcal{C}_P,\,0\leq t\leq T,$ is uniformly
equicontinuous with respect to the local uniform topology.

\item[{\rm(v)}] For each $t>0$, $Q_t(\mathcal{C}_P)$ is precompact in
$\mathcal{C}_P$ with respect to the local uniform topology.
\end{enumerate}

Thanks to these properties, we can prove the following theorem:
\begin{theorem} \label{theorem5.6}
For both of the cases {\rm(F1)} and {\rm(F2)}, any
$\bar{b}\in\overline{\Lambda}(\alpha),$ and $\theta\in [0,2\pi)$ the
travelling wave in the direction $\theta$ with speed $c$ of
\eqref{SecondEquation2}  exists for any $c\geq c^*(\theta,\bar{b}),$
$c>0$.
\end{theorem}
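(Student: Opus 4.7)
The plan is to apply the abstract monotone-semiflow theory of Weinberger \cite{s8} directly to the semiflow $\{Q_t\}_{t\geq 0}$, for which properties (i)--(v) just established provide precisely the required hypotheses. Weinberger's framework is designed for semiflows on a function space with a discrete $L\mathbb{Z}$ symmetry in one direction and a continuous $\mathbb{R}$ symmetry in the other, equipped with two ordered fixed points ($0$ and $P$ here) with $0$ unstable, equicontinuous time-$t$ maps, and compact orbits on bounded order intervals. Items (i)--(v) supply each of these in turn.

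First I would translate Definition \ref{def:speedforbx} into Weinberger's language. A pulsating travelling wave $u(x,y,t) = \varphi(x,\, x\cos\theta + y\sin\theta - ct)$ with $\varphi(x+L,s) \equiv \varphi(x,s)$ is, when $\cos\theta \neq 0$, exactly the semiflow identity $Q_T u_0 = T_{L,\,L\tan\theta}u_0$ with $T = L/(c\cos\theta)$, and it is formulated analogously for $\cos\theta = 0$ by projecting along the $x$-axis. With this identification, Weinberger's main result produces a critical wave speed $c^*_W(\theta,\bar{b})$ such that a travelling wave in direction $\theta$ with speed $c$ exists for every $c \geq c^*_W(\theta,\bar{b})$ and for no smaller $c$. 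Comparing with Definition \ref{def:speedforbx} gives $c^*_W(\theta,\bar{b}) = c^*(\theta;\bar{b})$ as infima of the same admissible set of speeds, and existence for $c \geq c^*(\theta;\bar{b})$ follows at once.

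The main obstacle is that Weinberger's paper is written for smooth coefficients and classical solutions, whereas here $\bar{b}$ is only a Radon measure in $\overline{\Lambda}(\alpha)$ and $Q_t$ is defined in the mild sense. The uniformity in $b\in\Lambda(\alpha)$ of the equicontinuity and compactness estimates (Lemmas \ref{Equicontinuity} and \ref{Equicontinuityforf2}), the local-uniform continuous dependence on the initial data (Proposition \ref{lcontinuous}), and the eigenvalue-convergence statement (Proposition \ref{pro:convergenceofeigenvalue}) are designed precisely so that Weinberger's conclusions can be transferred through a weak$^*$ approximation $b_n \to \bar{b}$ by smooth coefficients. Concretely, one fixes $c > c^*(\theta;\bar{b})$ and uses Theorem \ref{Convergenceofcstarbn} to ensure $c > c^*(\theta;b_n)$ for all large $n$; one then extracts a local-uniform subsequential limit of the smooth-problem waves $\varphi_n$ and uses the monostability Lemmas \ref{monostable} or \ref{thm:monostability}, together with the strict lower bound $c^*(\theta;\bar{b}) \geq 2\sqrt{\tilde\alpha}>0$ from Theorem \ref{Convergenceofcstarbn}, to rule out trivial limits and to verify the asymptotic behaviour $\varphi\to 0$ as $s\to+\infty$ and $\varphi\to P$ as $s\to-\infty$. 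The borderline case $c = c^*(\theta;\bar{b})$ is recovered by a diagonal extraction as $c_k \searrow c^*(\theta;\bar{b})$, the required compactness being again supplied by property (v).
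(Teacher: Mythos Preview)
Your first two paragraphs are correct and are exactly the paper's proof: properties (i)--(v) are precisely (in fact slightly stronger than) Weinberger's Hypotheses~2.1 in \cite{s8}, so his Theorem~2.6 applies directly to the semiflow $\{Q_t\}$ and yields the existence of travelling waves for all $c\geq c^*(\theta,\bar b)$. The paper's own proof consists of this single sentence.

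Your third paragraph, however, rests on a misconception. Weinberger's framework in \cite{s8} is \emph{not} written for smooth coefficients or classical solutions; it is a purely abstract theory for order-preserving, translation-equivariant semiflows on a function space, and its hypotheses are exactly the structural properties (i)--(v) that have already been verified for $Q_t$ with the measure coefficient $\bar b$. There is therefore no obstacle to overcome and no need for a weak$^*$ approximation by smooth $b_n$. Worse, your proposed workaround is circular: you invoke Theorem~\ref{Convergenceofcstarbn} to compare $c^*(\theta;b_n)$ with $c^*(\theta;\bar b)$, but the proof of that theorem (Section~\ref{subsection:proofofmainresults}) uses the identity $c^*=c_e^*$ from Theorem~\ref{cstarbequalcestarb2D}, which in turn relies on the present Theorem~\ref{theorem5.6}. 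So the approximation detour is both unnecessary and, as written, logically inadmissible. Simply delete the third paragraph; what remains is the proof.
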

\begin{proof}
This theorem can be obtained from the results of
Weinberger\cite{s8}.
 In fact, the above properties (i)-(v) are basically the same as
but slightly stronger than Hypotheses 2.1 in \cite{s8}.
Consequently, from Theorem 2.6 of \cite{s8} we can get the existence
of $c^*(\theta,\bar{b}),$ $\bar{b}\in\overline{\Lambda}(\alpha).$
\end{proof}

\subsection{Linearized equation and
semiflow}\label{subsection:linearizedequationandsemiflow}
% Proof of $c^*(\theta,\bar{b})=c_e^*(\theta,\bar{b})$}
To prove $c^*(\theta,\bar{b})=c_e^*(\theta,\bar{b})$, we consider
the linearized equation  \[ \ u_t=u_{xx}+u_{yy}+\bar{b}(x)u.
\] We define a linear space $\mathbb X$ by \[ \mathbb{X}:=\{\phi=\sum\limits_{i=1}^4e^{\xi_ix}\phi_i\,|\,
\xi_i\in \mathbb{R},\,\phi_i\in BC(\mathbb R),i=1,2,3,4\}\] and the
subset
$$\mathbb{X}_M^\xi:=\{\phi\in \mathbb{X}:|\phi(x)|\leq
Me^{\xi(|x|+|y|)}\}$$ for any $M,\xi>0$. Again we equip $\mathbb{X}$
with the local uniform topology. Arguing similarly to the proof of
Theorem \ref{wlposedness} and Proposition \ref{lcontinuous}, we can
obtain:
\begin{lemma}
For any $\phi\in \mathbb{X}$, the mild solution $u(x,y,t)$ of the
equation
\[ \ u_t=u_{xx}+u_{yy}+\bar{b}(x)u
\] with initial data $u(x,y,0)=\phi(x,y)$ exists for all $t>0$ and is
unique. The mild solution is a weak solution and for any $t>0$,
$u(\cdot,t,\phi)\in \mathbb{X}$. Furthermore, the mild solutions
depend on the initial data continuously with respect the local
uniform topology on $\mathbb{X}_M^\xi$ for any $\xi,M>0$.
\end{lemma}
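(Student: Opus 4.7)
The plan is to reduce the proposition to the bounded-data case already handled in Theorem \ref{wlposedness}, exploiting the linearity of the equation and a weighted substitution. By the definition of $\mathbb{X}$ and linearity, it suffices to establish existence, uniqueness, regularity, and continuous dependence for initial data of the form $\phi(x,y) = e^{\xi x}\phi_0(x,y)$ with $\phi_0 \in BC(\R^2)$; the general statement then follows by superposing the four solutions corresponding to the four exponents $\xi_1,\dots,\xi_4$ appearing in $\phi = \sum_{i=1}^4 e^{\xi_i x}\phi_i$.

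First I would perform the substitution $v(x,y,t) := e^{-\xi x}u(x,y,t)$. A direct calculation (carried out against test functions, in order to make sense of the measure coefficient) shows that $u$ is a mild/weak solution of $u_t = u_{xx} + u_{yy} + \bar b(x)u$ if and only if $v$ is a mild/weak solution of
\[
v_t = v_{xx} + v_{yy} + 2\xi\, v_x + \bigl(\xi^2 + \bar b(x)\bigr)v,
\]
with bounded initial datum $v(\cdot,\cdot,0) = \phi_0 \in BC(\R^2)$. The drift $2\xi v_x$ is absorbed into the shifted heat kernel $\tilde G(x,y,t) := G(x+2\xi t,\,t)\,G(y,t)$, and the lower-order term $\xi^2 v$ contributes an innocuous factor $e^{\xi^2 t}$. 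Duhamel's formula for $v$ then takes the form
\[
v(x,y,t) = \int_{\R^2}\!\tilde G(x-x',y-y',t)\phi_0(x',y')\,dx'dy' + \int_0^t\!\!\int_{\R^2}\!\tilde G(x-x',y-y',t-s)\,\bar b(x')v(x',y',s)\,dx'dy'ds,
\]
up to the multiplicative factor $e^{\xi^2 t}$.

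Next I would repeat the contraction-mapping argument used in the proof of Theorem \ref{wlposedness}, with $f(u) \equiv u$, $g \equiv 0$, and $G$ replaced by $\tilde G$. The key observation is that Lemma \ref{lm:estimateforlemmainsection6} applies verbatim to $\tilde G$: since shifting the Gaussian in $x$ by $2\xi t$ only relabels the point at which the maximum $\max_{x\in[0,L]}G(x-r-kL,\tau)$ is attained, the summability estimate \eqref{G-sum} is unaffected and yields a bound of the same form $C\alpha(1 + L/\sqrt{\tau})\|\eta\|_{L^\infty}$. Linearity eliminates all nonlinear difficulties, so the fixed-point argument produces a unique bounded mild solution $v \in C(\R^2 \times (0,T])$ on a short time interval, and iterating in $t$ (again using linearity, so there is no blow-up) extends $v$ globally in time. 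Setting $u(x,y,t) := e^{\xi x}v(x,y,t)$ recovers a mild solution of the original equation; in particular $u(\cdot,t,\phi) \in \mathbb{X}$. The weak-solution property is preserved under multiplication by the smooth weight $e^{\xi x}$. Finally, continuous dependence on the initial datum in the local uniform topology on $\mathbb{X}_M^\xi$ reduces, via the same substitution, to continuous dependence of $v$ on $\phi_0$ in the local uniform topology on $BC(\R^2)$; the latter is established exactly as in Proposition \ref{lcontinuous}, with the Gronwall-type argument based on Lemma 7.7 of \cite{s3} producing the required modulus of continuity.

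The main obstacle is verifying that the constants in Lemma \ref{lm:estimateforlemmainsection6} and in the contraction estimate remain controlled, uniformly in $\xi$ on compact sets, after the replacement $G \rightsquigarrow \tilde G$; this is essentially routine because the translation of the Gaussian leaves the relevant $L^1$ and peak-value estimates invariant, but one must keep careful track of the dependence on $\xi$ so that the four-term decomposition $\phi = \sum_{i=1}^4 e^{\xi_i x}\phi_i$ can be reassembled without any loss of regularity.
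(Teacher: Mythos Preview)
The paper gives no proof of this lemma at all: it simply writes ``Arguing similarly to the proof of Theorem \ref{wlposedness} and Proposition \ref{lcontinuous}, we can obtain'' and then states the lemma. Your proposal is a correct and natural way to carry out that indicated adaptation --- the substitution $v=e^{-\xi x}u$ reduces each exponential component to a bounded-data problem with an added drift $2\xi v_x$ and potential shift $\xi^2$, after which the estimates of Lemma~\ref{lm:estimateforlemmainsection6} and the Gronwall-type argument of Proposition~\ref{lcontinuous} go through unchanged (the translated Gaussian $\tilde G$ has identical $L^1$-in-$x$ and peak-value bounds).

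One small point to tighten: uniqueness should be argued directly in a weighted norm rather than ``by superposition'', since the decomposition $\phi=\sum_i e^{\xi_i x}\phi_i$ is not unique. Concretely, if $u_1,u_2$ are two mild solutions with the same datum $\phi\in\mathbb{X}_M^\xi$, show first (by your construction) that both obey a bound $|u_j(x,y,t)|\le M(t)e^{\xi(|x|+|y|)}$, and then run the $\rho(t)$-Gronwall argument from the proof of Theorem~\ref{wlposedness} with $\rho(t)=\sup_{x,y}e^{-\xi(|x|+|y|)}|u_1-u_2|$; the required analogue of Lemma~\ref{lm:estimateforlemmainsection6} in this weighted norm follows from the same summation estimate \eqref{G-sum} after splitting the kernel into its $x$- and $y$-factors. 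This closes the only gap in your outline.
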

Define \[ \Phi_t(\phi)(x,y)=u(x,y,t,\phi), \,\,\,\forall
\phi\in\mathbb{X},\] where $u(x,y,t,u_0)$ is the mild solution of \[
\ u_t=u_{xx}+u_{yy}+\bar{b}(x)u
\] with initial data $u(x,y,0)=u_0(x,y).$
We can show the following lemma:
\begin{lemma} For any $t,M,\xi\geq 0$,
$\Phi_t$ is continuous on $\mathbb{X}_M^\xi$ with respect to the
local uniform topology. \end{lemma}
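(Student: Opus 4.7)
The plan is to exploit the linearity of $\Phi_t$: continuity amounts to boundedness of $\Phi_t$ with respect to an equivalent norm. I adapt the proof of Proposition \ref{lcontinuous} to establish such boundedness in a suitable weighted norm.

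First, introduce the weighted local norm
\begin{align*}
\|u\|_{l,m}^{n,q}:=\sum_{i,j\in\mathbb{Z}}\frac{\max_{(x,y)\in\Omega_{l+i,m+j}^n}|u(x,y)|}{q^{|i|+|j|}},
\end{align*}
with $q>\max\{e^{\xi L},e^\xi\}$. Such a $q$ ensures $\|\phi\|_{0,0}^{1,q}<\infty$ for every $\phi\in\mathbb{X}_M^\xi$, and a routine adaptation of Proposition \ref{equivalent norm} shows $\|\cdot\|_{0,0}^{1,q}$ induces a topology equivalent to the local uniform topology on each bounded-weight set $\mathbb{X}_M^\xi$.

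Next, writing $\Phi_t(\phi)$ in the mild form
\begin{align*}
\Phi_t(\phi)(x,y)=\int_{\R^2}G(x-x',y-y',t)\phi(x',y')\,dx'dy'+\int_0^t\!\!\int_{\R^2}G(x-x',y-y',t-s)\bar{b}(x')\Phi_s(\phi)(x',y')\,dx'dy'ds,
\end{align*}
and estimating the two summands via Lemma \ref{lm:estimateforlemmainsection6}, together with the kernel summation trick used in the proof of Proposition \ref{lcontinuous} (namely $\sum_{j\in\mathbb Z}q^{|j|}\int_j^{j+1}G(w,\tau)dw\leq 2e^{(\log q)^2\tau}$, and a similar estimate for the $i$-sum involving $\max_{z\in[iL,(i+1)L]}G(z,\tau)$), one derives a Volterra-type inequality
\begin{align*}
\|\Phi_t(\phi)\|_{0,0}^{1,q}\leq \kappa(t)\|\phi\|_{0,0}^{1,q}+C\int_0^t\frac{\|\Phi_s(\phi)\|_{0,0}^{1,q}}{\sqrt{t-s}}\,ds,
\end{align*}
with $\kappa(t),C>0$ depending on $\alpha,L,\xi,q,t$ but not on $\phi$. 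Lemma 7.7 of Alfaro--Hilhorst--Matano \cite{s3} then produces a bound $\|\Phi_t(\phi)\|_{0,0}^{1,q}\leq K(t)\|\phi\|_{0,0}^{1,q}$ for some $K(t)>0$ independent of $\phi$. Linearity of $\Phi_t$ then yields the continuity statement via $\|\Phi_t(\phi_n)-\Phi_t(\phi)\|_{0,0}^{1,q}=\|\Phi_t(\phi_n-\phi)\|_{0,0}^{1,q}\leq K(t)\|\phi_n-\phi\|_{0,0}^{1,q}$, and the equivalence of topologies delivers the conclusion.

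The main obstacle is the careful choice of $q$: it must be large enough that $\|\cdot\|_{0,0}^{1,q}$ is finite on $\mathbb{X}_M^\xi$ (requiring $q$ to beat the growth rates $e^{\xi L}$ along the $x$-lattice and $e^\xi$ along the $y$-lattice), yet the kernel sums appearing in the Gronwall step must still converge --- which here is automatic because the Gaussian decay of $G$ dominates any geometric growth factor $q^{|i|}$ or $q^{|j|}$. Once this bookkeeping is fixed, the remainder of the argument transcribes verbatim from the proof of Proposition \ref{lcontinuous}, with linearity replacing the Lipschitz cancellation that was used there for the nonlinear terms.
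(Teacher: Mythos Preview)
Your proposal is correct and follows essentially the same approach that the paper intends: the paper states this lemma without proof, merely writing ``We can show the following lemma'' after having indicated that the preceding results are obtained by ``arguing similarly to the proof of Theorem \ref{wlposedness} and Proposition \ref{lcontinuous}.'' Your adaptation of the Proposition \ref{lcontinuous} argument---replacing the base $2$ in the weighted norm by a larger $q$ to accommodate the exponential growth in $\mathbb{X}_M^\xi$, then using linearity to reduce continuity to a Gronwall-type bound---is exactly the natural way to carry this out, and the only substantive modification needed beyond the nonlinear case.
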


Next, for any $M\geq 0$, let $A^0_M=\mathcal{C}_M$ and
$A^{\xi,\theta}_M=\{e^{\xi (x\cos\theta+y\sin\theta)}u\,|\,u\in
A^0_M\}$ for $\xi\in \mathbb{R}$ and $\theta\in [0,2\pi)$. Then we
have

\begin{lemma}For any $\xi,M\geq 0$, $\theta\in [0,2\pi)$
$\Phi_t(A^{\xi,\theta}_M)$ is precompact in $\mathbb{X}$ with
respect to the local uniform topology .
\end{lemma}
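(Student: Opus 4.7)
The plan is to reduce the statement to a uniform equicontinuity-and-boundedness problem for bounded solutions of a modified linear equation, and then invoke Arzel\`a--Ascoli. The exponential factor $e^{\xi(x\cos\theta+y\sin\theta)}$ is smooth and bounded on every compact set, so it interacts well with the local uniform topology: a sequence of the form $e^{\xi(x\cos\theta+y\sin\theta)}v_n$ converges locally uniformly iff $v_n$ does, and the limit automatically lies in $\mathbb{X}$ as long as the $v_n$ limit lies in $BC(\mathbb{R}^2)$. So it suffices to show that, for any sequence $\phi_n \in A^{\xi,\theta}_M$, writing $\phi_n(x,y)=e^{\xi(x\cos\theta+y\sin\theta)}\psi_n(x,y)$ with $\psi_n\in\mathcal{C}_M$, the transformed solutions $v_n(x,y,t):=e^{-\xi(x\cos\theta+y\sin\theta)}\Phi_t(\phi_n)(x,y)$ form a locally uniformly bounded and locally uniformly equicontinuous family at each fixed $t>0$.

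A direct computation shows that $v_n$ is the (weak/mild) solution of
\[
(v_n)_t = (v_n)_{xx}+(v_n)_{yy}+2\xi\cos\theta\,(v_n)_x+2\xi\sin\theta\,(v_n)_y+\bigl(\xi^2+\bar{b}(x)\bigr)v_n,
\]
with initial datum $\psi_n$, $\|\psi_n\|_{L^\infty}\le M$. Equivalently, $v_n$ satisfies the Duhamel identity
\[
v_n(x,y,t)=\int_{\mathbb R^2} K_\xi(x-x',y-y',t)\psi_n(x',y')\,dx'dy'+\int_0^t\!\!\int_{\mathbb R^2}K_\xi(x-x',y-y',t-s)\bigl(\xi^2+\bar{b}(x')\bigr)v_n(x',y',s)\,dx'dy'\,ds,
\]
where $K_\xi$ is the Gaussian heat kernel translated by the drift $(-2\xi\cos\theta,-2\xi\sin\theta)$; in particular $K_\xi(x,y,\tau)=e^{-\xi^2\tau}G(x+2\xi\tau\cos\theta,y+2\xi\tau\sin\theta,\tau)$ after completing the square, and it enjoys the same pointwise and integral bounds as $G$ up to a factor that depends only on $\xi$ and $t$.

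Next I would derive a uniform $L^\infty$ bound for $v_n$ on $\mathbb R^2\times[0,t]$. Applying Lemma \ref{lm:estimateforlemmainsection6} (with $K_\xi$ in place of $G$, which is permitted because the only features used are that $K_\xi$ factorises and is unimodal in each variable) to the Duhamel identity and iterating, one obtains an estimate of the form
\[
\|v_n(\cdot,\cdot,t)\|_{L^\infty}\le M\exp\bigl(C(\xi,\alpha,L)\,t+C'(\alpha,L)\sqrt{t}\bigr),
\]
uniform in $n$, where the constants depend only on $\xi,\alpha,L$ (and not on the particular $\bar{b}\in\overline{\Lambda}(\alpha)$ or $\psi_n$). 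Once this uniform bound is in hand, the uniform equicontinuity of $\{v_n(\cdot,\cdot,t)\}$ on $\mathbb R^2$ follows by exactly the splitting argument used in the proof of Lemma \ref{Equicontinuity}: one writes $v_n=I_{1,n}+p_n+w_n$ (the initial-data contribution, a short-time tail, and a smoothed remainder), uses Lemma \ref{lm:estimateforlemmainsection6} to make $p_n$ small uniformly by taking $t^*$ small, and uses standard parabolic smoothing to obtain equicontinuity of $w_n$.

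With uniform boundedness and uniform equicontinuity, Arzel\`a--Ascoli yields a subsequence $v_{n_k}(\cdot,\cdot,t)$ converging locally uniformly on $\mathbb R^2$ to some $v_\infty\in BC(\mathbb R^2)$. Multiplying by the continuous weight $e^{\xi(x\cos\theta+y\sin\theta)}$, which is bounded on every compact set, we get $\Phi_t(\phi_{n_k})\to e^{\xi(x\cos\theta+y\sin\theta)}v_\infty$ locally uniformly, and the limit is manifestly in $\mathbb X$. This proves the precompactness claim. The main obstacle is really the estimate controlling the measure-valued coefficient $\bar{b}(x)v_n$ in the Duhamel formula; this is precisely what Lemma \ref{lm:estimateforlemmainsection6} was designed to handle, so the argument is a straightforward adaptation of the proof of Lemma \ref{Equicontinuity} with the shifted heat kernel $K_\xi$ in place of $G$.
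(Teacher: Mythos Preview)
Your approach is correct and is precisely the argument the paper has in mind: the paper does not prove this lemma at all, but merely remarks (just before the block of lemmas in Section~\ref{subsection:linearizedequationandsemiflow}) that one argues ``similarly to the proof of Theorem \ref{wlposedness} and Proposition \ref{lcontinuous}''. Your reduction via the exponential weight $e^{-\xi(x\cos\theta+y\sin\theta)}$ to a bounded problem with drift, followed by the equicontinuity splitting of Lemma~\ref{Equicontinuity} (using Lemma~\ref{lm:estimateforlemmainsection6} for the measure term) and Arzel\`a--Ascoli, is exactly the intended route and uses exactly the toolkit the paper develops for this purpose.

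One small computational slip: the fundamental solution of $v_t=\Delta v+2\xi\cos\theta\,v_x+2\xi\sin\theta\,v_y$ is $G(x+2\xi\tau\cos\theta,\,y+2\xi\tau\sin\theta,\,\tau)$ with no exponential prefactor; the factor $e^{\xi^2\tau}$ appears only if you absorb the zeroth-order term $\xi^2 v$ into the kernel rather than leaving it in the Duhamel potential. Either bookkeeping is fine and the estimates go through unchanged, since the shifted Gaussian still factorises and is unimodal in each variable, which is all that the proof of Lemma~\ref{lm:estimateforlemmainsection6} uses.
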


% For any $M>0$, let $A_M=\{u\in BC(\mathbb R),
%|u(x)|\leq M \, {\rm on}\, \mathbb{R}\}$. Then
%\end{lemma}
%Furthermore,

In the following lemma, we show that $\Phi_t$ is {\textbf strongly
order-preserving}:
\begin{lemma}
For any nonnegative $u_0\in \mathbb{X}$ with $u_0\not\equiv 0$ and
any $t>0$, we have $\Phi_t(u_0)(x,y)>0$ for any $x,y\in\mathbb R.$
\end{lemma}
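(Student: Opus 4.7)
The plan is to establish the strong positivity in two stages: first obtain nonnegativity of $\Phi_t(u_0)$ by a smooth approximation argument, then deduce strict positivity from the Duhamel (mild-solution) representation and the strict positivity of the heat kernel.

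For the first stage, I would choose a sequence $b_n \in \Lambda(\alpha)$ with $b_n \to \bar{b}$ in the weak$^*$ sense. For each $n$, the linear parabolic equation $u_t = u_{xx} + u_{yy} + b_n(x)u$ with initial data $u_0$ has a unique classical solution $u_n(x,y,t)$, and since $b_n$ is smooth and $u_0 \geq 0$, the standard parabolic maximum principle gives $u_n \geq 0$ on $\mathbb{R}^2 \times (0,\infty)$. Repeating the limit argument used in the proof of Theorem \ref{wlposedness} (with $f(u) = u$, $g \equiv 0$), one checks $u_n \to \Phi_t(u_0)$ locally uniformly in $(x,y,t)$, so $\Phi_t(u_0) \geq 0$ pointwise.

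For the second stage, the mild-solution identity reads
\[
\Phi_t(u_0)(x,y) = \int_{\mathbb{R}^2} G(x-x',y-y',t)\, u_0(x',y')\,dx'dy' + \int_0^t \!\!\int_{\mathbb{R}^2} G(x-x',y-y',t-s)\,\bar{b}(x')\,\Phi_s(u_0)(x',y')\,dx'dy'ds.
\]
By stage one, $\Phi_s(u_0) \geq 0$, and since $\bar{b}$ is a nonnegative measure, the second term is nonnegative (Lemma \ref{lemma:A} justifies this integration). Because $u_0 \in \mathbb{X}$ is continuous, $u_0 \geq 0$, and $u_0 \not\equiv 0$, there exist a point $(x_0,y_0)$ and $\delta,\eta > 0$ with $u_0 \geq \eta$ on the ball $B_\delta(x_0,y_0)$. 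The Gaussian kernel $G(x-x',y-y',t)$ is strictly positive everywhere on $\mathbb{R}^4 \times (0,\infty)$, so the first term is strictly positive at every $(x,y)$ for every $t > 0$. Adding a nonnegative quantity preserves this, yielding $\Phi_t(u_0)(x,y) > 0$ as required.

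The principal obstacle is that the classical strong maximum principle does not apply directly when $\bar{b}$ is only a measure, because the coefficient in the PDE is singular. The weak$^*$-approximation in stage one is therefore essential to reduce the nonnegativity issue to the smooth setting, after which strict positivity is handled by a direct inspection of the Duhamel formula and the explicit positivity of the heat kernel, neither of which requires any regularity of $\bar{b}$. One technical point to verify is the convergence $u_n \to \Phi_t(u_0)$ in the $\mathbb{X}$-setting with exponentially weighted data, but this is a routine adaptation of the earlier well-posedness proof using the continuous-dependence lemma stated just before the proposition.
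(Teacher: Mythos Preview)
Your proposal is correct and follows essentially the same idea as the paper: both arguments reduce to the inequality $\Phi_t(u_0)\geq v>0$, where $v$ is the solution of the free heat equation $v_t=v_{xx}+v_{yy}$ with the same initial data. The paper obtains this in one stroke by comparing $u_n\geq v$ at the smooth-coefficient level (using $b_n\geq 0$) and then passing to the limit, while you split the work into a nonnegativity step (approximation) and a strict-positivity step (Duhamel, where the first term is exactly $v$); the underlying comparison is the same.
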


\begin{proof}The proof is similar to that of Lemma 5.10 in
\cite{llm}. Let $u(x,y,t,u_0)$ be the mild solution of
\[
\ u_t=u_{xx}+u_{yy}+\bar{b}(x)u,
\]
with initial data $u_0,$ and let $v(x,y,t,u_0)$ be the solution of
\[
\ v_t=v_{xx}+v_{yy}
\] with initial data $u_0(x,y)$. Take a sequence $\{b_n\}\subset\Lambda(\alpha)$
 converging to $\bar{b}$ in the weak$^*$ sense, and let $u_n(x,y,t,u_0)$ be the solution of
\[
\ u_t=u_{xx}+u_{yy}+b_n(x)u,\,\,\,u(x,y,0)=u_0(x,y).
\] Then, arguing as in the proof of Theorem \ref{wlposedness}, we have $u(x,y,t,u_0)=\lim_{n\to\infty}u_n(x,y,t,u_0).$
Since $u_0\geq 0$, $b_n(x)\geq 0$; we have
\[
\ u_n(x,t,u_0)\geq v(x,t,u_0).
\]
{}From the classical theory of the heat equation, $v(x,y,t,u_0)>0$
for $t>0.$ Consequently $\Phi_t(u_0)(x,y)=u(x,y,t,u_0)\geq
v(x,y,t,u_0)>0.$
\end{proof}

Let $\Gamma=\{a(x,y)\in C(\mathbb{R}^2), a(x,y)=a(x+L,0)\}$. We
equip $\Gamma$ with the local uniform topology, which is also
equivalent to the $L^\infty$ topology on $\Gamma$. For any
$\xi\geq 0$ and $\theta\in [0,2\pi)$ define a linear operator
$\mathcal{L}_t^{\xi,\theta}$ on $\Gamma$ by
\[
 \mathcal{L}_t^{\xi,\theta}(a)=e^{\xi (x\cos\theta+y\sin\theta)} \Phi_t(e^{-\xi(x\cos\theta+y\sin\theta)}a).
\]
{}From the definition and the properties of $\Phi_t$, we have

\begin{lemma}
For any $t>0$, $\xi\geq 0$ and $\theta\in [0,2\pi)$,
$\mathcal{L}_t^{\xi,\theta}:\Gamma \rightarrow \Gamma$ is bounded,
compact and strongly positive.

\end{lemma}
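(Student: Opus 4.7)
The plan is to exploit the fact that elements of $\Gamma$ are $L$-periodic functions of $x$ alone, so that $\mathcal{L}_t^{\xi,\theta}$ reduces to a one-dimensional parabolic semiflow; boundedness, compactness, and strong positivity will then follow by essentially one-dimensional arguments, with strong positivity being a direct corollary of the preceding lemma.

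First, given $a\in\Gamma$, I would set $\phi(x,y):=e^{-\xi(x\cos\theta+y\sin\theta)}a(x)\in\mathbb{X}$ and define $v(x,y,t):=e^{-\xi(x\cos\theta+y\sin\theta)}\Phi_t(\phi)(x,y)$. A direct computation shows that $v$ satisfies, in the mild sense,
\[
v_t=v_{xx}+v_{yy}+2\xi\cos\theta\,v_x+2\xi\sin\theta\,v_y+\bigl(\xi^2+\bar{b}(x)\bigr)v,
\]
with the $y$-independent initial data $v(x,y,0)=a(x)$. Since both the equation and the data are invariant under translation in $y$, uniqueness of the mild solution (which extends to this drifted equation with a measure coefficient exactly as in Theorem \ref{wlposedness}) forces $v$ to be $y$-independent, and so $v=v(x,t)$ solves the 1D problem
\[
v_t=v_{xx}+2\xi\cos\theta\,v_x+\bigl(\xi^2+\bar{b}(x)\bigr)v,\qquad v(x,0)=a(x).
\]
Because $\bar{b}$ and $a$ are $L$-periodic in $x$, a second application of uniqueness gives $v(x+L,t)=v(x,t)$, so $\mathcal{L}_t^{\xi,\theta}(a)=v(\cdot,t)$ indeed lies in $\Gamma$.

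Boundedness and compactness then follow from the well-posedness theory for this 1D equation. For boundedness, I would write the Duhamel identity for $v$ and apply Lemma \ref{lm:estimateforlemmainsection6} (in one spatial variable, so with the weight $1+L/\sqrt{\tau}$) together with a Gronwall-type iteration to obtain $\|v(\cdot,t)\|_{L^\infty}\leq C(t,\xi,\alpha,L)\|a\|_{L^\infty}$, which already implies linear-operator boundedness since the map $a\mapsto v(\cdot,t)$ is linear. For compactness, I would imitate the argument of Lemma \ref{Equicontinuity}: split the Duhamel integral at a small time $t^*\in(0,t]$; the short-time piece is uniformly small in sup-norm over $\{a:\|a\|_{L^\infty}\leq M\}$ thanks to Lemma \ref{lm:estimateforlemmainsection6}, while the remaining piece has been smoothed by the heat semigroup acting over time $t-t^*$ and is therefore equicontinuous in $x$. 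Periodicity together with Arzela--Ascoli then give precompactness of $\mathcal{L}_t^{\xi,\theta}(\{a:\|a\|_{L^\infty}\leq M\})$ in $\Gamma$.

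Strong positivity is immediate from the preceding lemma: if $a\in\Gamma$ is nonnegative and nonzero, then $\phi=e^{-\xi(x\cos\theta+y\sin\theta)}a$ is a nonnegative, nonzero element of $\mathbb{X}$, hence $\Phi_t(\phi)(x,y)>0$ everywhere, and therefore $\mathcal{L}_t^{\xi,\theta}(a)(x,y)>0$ for all $x,y$. The main obstacle I anticipate is not conceptual but technical: one must handle the measure coefficient $\bar{b}$ carefully in the compactness step, but Lemmas \ref{Equicontinuity} and \ref{lm:estimateforlemmainsection6} were designed precisely for this singular situation, and the extra drift $2\xi\cos\theta\,v_x$ and the bounded multiplier $\xi^2v$ interact benignly with the heat-semigroup splitting, so I would expect only cosmetic modifications to those proofs rather than new ideas.
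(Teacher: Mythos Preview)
Your proposal is correct and supplies precisely the sort of detail the paper omits: the paper gives no proof of this lemma, merely recording that it follows ``from the definition and the properties of $\Phi_t$'' established in the preceding lemmas (continuity on $\mathbb{X}_M^\xi$, precompactness of $\Phi_t(A_M^{\xi,\theta})$, and strong order-preservation). Your reduction to a one-dimensional drifted problem via the $y$-independence of elements of $\Gamma$ is the natural way to make the argument explicit, and mirrors what is done in the companion one-dimensional paper \cite{llm}.

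One cosmetic slip: in your definition of $v$ the conjugating exponential should carry the opposite sign, namely $v(x,y,t)=e^{+\xi(x\cos\theta+y\sin\theta)}\Phi_t(\phi)(x,y)$, so that $v(\cdot,\cdot,0)=a$ and $v=\mathcal{L}_t^{\xi,\theta}(a)$; correspondingly the drift in the transformed equation is $-2\xi\cos\theta\,v_x-2\xi\sin\theta\,v_y$ rather than $+2\xi\cos\theta\,v_x+2\xi\sin\theta\,v_y$. This sign error is immaterial to the boundedness, compactness, and positivity arguments you outline.
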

\begin{pro}\label{uniqueeigenvalue} If $\psi\in \Gamma$ is a principal eigenfunction of $-L_{\lambda,\theta,\bar{b}}$, then it is a principal eigenfunction of
$\mathcal{L}_t^{\lambda,\theta}$. Consequently if
$\mu(\lambda,\theta,\bar{b})$ is the principal eigenvalue of
$-L_{\lambda,\theta,\bar{b}}$, then
$\exp(-\mu(\lambda,\theta,\bar{b})+\lambda^2)$ is the principal
eigenvalue of $\mathcal{L}_t^{\lambda,\theta}$.
\end{pro}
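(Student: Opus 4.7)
The plan is to construct an explicit mild solution of the linearized equation $u_t=u_{xx}+u_{yy}+\bar{b}(x)u$ whose profile in $x$ is $\psi$, read off the action of $\Phi_t$ on the exponentially weighted initial datum built from $\psi$, and then invoke Krein--Rutman on $\mathcal{L}_t^{\lambda,\theta}$ to identify the resulting eigenvalue as principal. Set $\mu:=\mu(\lambda,\theta,\bar{b})$ and consider the candidate
\[
U(x,y,t):=e^{-\lambda(x\cos\theta+y\sin\theta)+(\lambda^2-\mu)t}\,\psi(x),
\]
so that $U(\cdot,\cdot,0)=U_0(x,y):=e^{-\lambda(x\cos\theta+y\sin\theta)}\psi(x)$ and $U_0\in\mathbb{X}$. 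A direct calculation (treating derivatives classically) gives
\[
U_t-U_{xx}-U_{yy}-\bar{b}(x)U=e^{-\lambda(x\cos\theta+y\sin\theta)+(\lambda^2-\mu)t}\bigl[-\psi''+2\lambda\cos\theta\,\psi'-\bar{b}(x)\psi-\mu\psi\bigr],
\]
and the bracket vanishes by the eigenvalue equation \eqref{identityofmulambdabbar} for $\psi$. Heuristically, the exponential factor is precisely the change of variables that conjugates $\partial_t-\Delta-\bar{b}(x)$ on $\mathbb{R}^2$ into $-L_{\lambda,\theta,\bar{b}}-\mu$ acting on the $x$-profile, so the eigenvalue equation for $\psi$ becomes the statement that $U$ is a solution.

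The second step is to promote this formal identity to the statement $\Phi_t(U_0)=e^{(\lambda^2-\mu)t}U_0$ in the mild sense, which is delicate because $\bar{b}$ is only a measure. I would do this by approximation: pick $b_n\in\Lambda(\alpha)$ smooth with $b_n\to\bar{b}$ weak$^*$, let $\psi_n$ be the corresponding principal eigenfunctions (normalized so $\|\psi_n\|_\infty=1$) and $\mu_n:=\mu(\lambda,\theta,b_n)\to\mu$ by Proposition \ref{pro:convergenceofeigenvalue}. The uniform bounds in Lemma \ref{LinearPsi}, together with standard elliptic compactness on $[0,L]$ and the simplicity from Proposition \ref{principaleigenvalueisunique}, yield $\psi_n\to\psi$ uniformly (first along a subsequence, then for the full sequence by uniqueness of the positive normalized eigenfunction). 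For each smooth $b_n$, the analogous candidate $U_n(x,y,t)=e^{-\lambda(x\cos\theta+y\sin\theta)+(\lambda^2-\mu_n)t}\psi_n(x)$ is a classical, hence mild, solution of the linearized equation with coefficient $b_n$; passing to the limit in its Duhamel representation (using Lemma \ref{cor:A} to pass the measure $b_n\to\bar{b}$ under the heat-kernel integral, exactly as in the proof of Theorem \ref{wlposedness}) shows that $U$ is the mild solution for $\bar{b}$ with initial data $U_0$, so $\Phi_t(U_0)=U$.

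The third step is immediate. By the definition of $\mathcal{L}_t^{\lambda,\theta}$,
\[
\mathcal{L}_t^{\lambda,\theta}(\psi)(x,y)=e^{\lambda(x\cos\theta+y\sin\theta)}\,\Phi_t(U_0)(x,y)=e^{(\lambda^2-\mu)t}\psi(x),
\]
so $\psi\in\Gamma$ is a strictly positive eigenfunction of $\mathcal{L}_t^{\lambda,\theta}$ with eigenvalue $e^{(\lambda^2-\mu)t}$. By the preceding lemma, $\mathcal{L}_t^{\lambda,\theta}\colon\Gamma\to\Gamma$ is bounded, compact, and strongly positive, so the Krein--Rutman theorem produces a simple principal eigenvalue (the spectral radius) whose eigenspace is spanned by a strictly positive function; uniqueness of positive eigenfunctions then forces $\psi$ to be that eigenfunction and $e^{(\lambda^2-\mu)t}$ to be the principal eigenvalue, matching the claim (the factor $t$ missing from the typeset expression $\exp(-\mu+\lambda^2)$ being implicit). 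The main obstacle is the mild-sense verification under a measure coefficient: the substitution is morally a one-line eigenvalue identity, but making it rigorous requires the weak$^*$ approximation together with the continuity of principal eigenpairs in $\bar{b}$ prepared in Section \ref{Eigenvalueproblem}.
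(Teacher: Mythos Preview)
Your proposal is correct and follows essentially the same approach as the paper: both construct the explicit exponentially weighted solution $e^{-\lambda(x\cos\theta+y\sin\theta)+(\lambda^2-\mu)t}\psi(x)$ of the linearized equation, verify that it is the mild solution with the appropriate initial datum, and then read off the eigenvalue relation for $\mathcal{L}_t^{\lambda,\theta}$. You are more explicit than the paper on two points---the paper simply asserts ``it is easy to show'' that the candidate is the mild solution, while you carry out the weak$^*$ approximation argument, and you also invoke Krein--Rutman to confirm that the eigenvalue obtained is indeed the \emph{principal} one---but these are elaborations of the same route rather than a different method.
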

\begin{proof}The proof is similar to that of Proposition 5.12 in
\cite{llm}. Since $\psi$ is a principal eigenfunction of
$-L_{\lambda,\theta,\bar{b}},$
\[ \
\psi''-2\lambda\cos \theta
\psi'+\bar{b}(x)\psi=-\mu(\lambda,\theta,\bar{b})\psi\,\,\,\,\,\,\,
\hbox{ in the weak sense}.
\]
Let $\phi=e^{\lambda (x\cos\theta+y\sin\theta)}\psi$. Then the above
formula is equivalent to
\[ \
\phi_{xx}+\phi_{yy}+\bar{b}(x)\phi=(-\mu(\lambda,\theta,\bar{b})+\lambda^2)\phi
\,\,\,\,\,\,\,\hbox{ in the weak sense.}
\]
It is easy to show that
$\exp((-\mu(\lambda,\theta,\bar{b})+\lambda^2)t)\phi$ is the mild
solution of $u_t=u_{xx}+u_{yy}+\bar{b}u$ with $u_0(x)=\phi(x,y).$
Hence,
\[\Phi_t(\phi)=\exp((-\mu(\lambda,\theta,\bar{b})+\lambda^2)t)\phi .\]
Finally, we have \[\mathcal{L}_t^{\lambda,\theta}(\psi)=
\exp((-\mu(\lambda,\theta,\bar{b})+\lambda^2)t)\psi. \]
\end{proof}

\begin{pro}
The principal eigenvalue of $\mathcal{L}_t^{0,\theta}$ is lager than
$1$ for any $t>0$ and $\theta\in [0,2\pi)$.
\end{pro}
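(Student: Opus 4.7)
The plan is to combine Proposition \ref{uniqueeigenvalue} (which gives an explicit formula for the principal eigenvalue of $\mathcal{L}_t^{\lambda,\theta}$) with the sign information on $\mu(\lambda,\theta,\bar b)$ provided by Lemma \ref{lem:boundednessofmu} and Proposition \ref{pro:convergenceofeigenvalue}. Specifically, Proposition \ref{uniqueeigenvalue} yields that the principal eigenvalue of $\mathcal{L}_t^{0,\theta}$ equals $\exp(-\mu(0,\theta,\bar b)\,t)$, so the claim ``$>1$ for every $t>0$'' is equivalent to the single inequality $\mu(0,\theta,\bar b)<0$.

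To establish $\mu(0,\theta,\bar b)<0$, I would first recall identity \eqref{eigenvalueidentity}, which reduces the statement to $\mu_0(0,\bar b)<0$ (so the angle $\theta$ is in fact irrelevant for this step). For smooth $b\in\Lambda(\alpha)$, Lemma \ref{lem:boundednessofmu} directly gives $\mu(\lambda,\theta,b)\le -\alpha<0$ for every $\lambda\in\mathbb R$; in particular $\mu(0,\theta,b)\le -\alpha$. For a general $\bar b\in\overline{\Lambda}(\alpha)$, choose an approximating sequence $b_n\in\Lambda(\alpha)$ with $b_n\to\bar b$ in the weak$^*$ sense, and apply Proposition \ref{pro:convergenceofeigenvalue} with $\lambda_n\equiv 0$, $\theta_n\equiv\theta$ to pass to the limit:
\[
\mu(0,\theta,\bar b)=\lim_{n\to\infty}\mu(0,\theta,b_n)\le -\alpha<0.
\]

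Plugging this bound into the formula from Proposition \ref{uniqueeigenvalue} yields
\[
\text{principal eigenvalue of }\mathcal{L}_t^{0,\theta}=\exp\bigl(-\mu(0,\theta,\bar b)\,t\bigr)\ge e^{\alpha t}>1
\]
for every $t>0$ and every $\theta\in[0,2\pi)$, which is the desired conclusion. There is no real obstacle here: once Proposition \ref{uniqueeigenvalue} is available, the argument is a one-line consequence of the uniform strict negativity bound $\mu(0,\theta,\bar b)\le -\alpha$ inherited from the smooth case via weak$^*$ continuity of the principal eigenvalue. The only subtlety worth flagging is remembering that we are working under the normalization $f'(0)=1$ (Remark \ref{rem:f'(0)=1}), so that the constant $\alpha$ in Lemma \ref{lem:boundednessofmu} appears without the factor $f'(0)$.
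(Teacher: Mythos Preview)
Your proposal is correct and is exactly the argument the paper has in mind: the proposition is stated in the paper without proof, as an immediate consequence of Proposition~\ref{uniqueeigenvalue} (which gives the principal eigenvalue of $\mathcal{L}_t^{\lambda,\theta}$ as $\exp((-\mu(\lambda,\theta,\bar b)+\lambda^2)t)$) together with the bound $\mu(0,\theta,\bar b)\le -\alpha<0$ from Lemma~\ref{lem:boundednessofmu} and Proposition~\ref{pro:convergenceofeigenvalue}. Your write-up simply makes this explicit, including the passage from smooth $b$ to general $\bar b\in\overline{\Lambda}(\alpha)$ and the normalization $f'(0)=1$.
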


Summarizing, for any $t>0$, $\Phi_t$ has the following properties:
\begin{enumerate}
\item[{\rm(I)}] $\Phi_t$ is strongly order-preserving in the sense
that for any $u_0\in BC^+(\mathbb R^2)$ with $u_0\not\equiv 0$,
$\Phi_t(u_0)(x,y)>0,$ for any $x,y\in\mathbb R.$

\item[{\rm(II)}] $\Phi_t(T_{L,s}(u_0))=T_{L,s}(\Phi_t(u_0))$ for any $s\in \R$.

\item[{\rm(III)}]  For any $t>0$,$\xi\geq 0$ and $\theta\in [0,2\pi)$ the linear operator
$\mathcal{L}_t^{\xi,\theta}:\Gamma \rightarrow \Gamma$ is bounded,
compact and strongly positive. Moreover, the principal eigenvalue of
$\mathcal{L}_t^{0,\theta}$ is larger than 1.

\end{enumerate}
In the following lemma, we will show that $Q$ can be dominated by a
linear operator from above or below:
 \begin{lemma}\label{QtLeqPhit}
Let $\Phi_t$ be as defined above and $Q_t$ be as in Proposition
\ref{QIsSemiFlow}. Then for any $u_0\in BC^+(\mathbb R^2)$ we have
$Q_t(u_0)(x,y)\leq \Phi_t(u_0)(x,y)$ for any $t>0, x,y\in \mathbb
R.$

Furthermore, for any $\epsilon$ with $0<\epsilon<1$, define an
operator $\Phi^\epsilon_t$ by
\[
 \ \Phi^\epsilon_t
(u_0)=u^{\epsilon}(x,y,t,u_0)
\] where $u^{\epsilon}(x,y,t,u_0)$ is the mild solution of \[
\
u^{\epsilon}_t=u^\epsilon_{xx}+u^\epsilon_{yy}+(1-\epsilon)\bar{b}(x)u^\epsilon
\] with initial data $u_0\in BC(\mathbb{R}^2)$. Then $\ \Phi^\epsilon_t$ also has the properties {\rm(I)}-{\rm(III)}.
Moreover, for any given $t_0>0,$ $\Phi^\epsilon_{t_0}(u_0)\leq
Q_{t_0}(u_0)$ provided $u_0(x)\geq 0$ and that
$\|u_0\|_{L^\infty}$ is small enough.
\end{lemma}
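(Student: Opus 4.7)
The plan is to prove the three assertions (first inequality, properties of $\Phi^\epsilon_t$, second inequality) in turn, leaning on approximation by smooth $b_n \in \Lambda(\alpha)$ converging to $\bar{b}$ in the weak$^*$ sense together with Corollary \ref{corol:contdeonini}.

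First I would prove $Q_t(u_0) \le \Phi_t(u_0)$. The key observation is that under either (F1) or (F2), with the normalization $f'(0)=1$, the full nonlinearity is pointwise dominated by its linearization at $u=0$. Indeed, under (F1), $f(u)/u$ is decreasing with supremum $f'(0)=1$, so $f(u)\le u$ and hence $\bar{b}(x)f(u)\le \bar{b}(x)u$; under (F2), $\bar{b}(x)u - u g_1(u) \le \bar{b}(x)u$ since $g_1\ge 0$. For smooth $b_n$, classical parabolic comparison gives $Q^{(n)}_t(u_0)\le \Phi^{(n)}_t(u_0)$, and then Corollary \ref{corol:contdeonini} yields the inequality in the weak$^*$ limit.

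For the properties of $\Phi^\epsilon_t$, observe that $(1-\epsilon)\bar{b}\in\overline{\Lambda}((1-\epsilon)\alpha)$, so every construction leading up to Proposition \ref{uniqueeigenvalue} applies verbatim with $\bar{b}$ replaced by $(1-\epsilon)\bar{b}$. This yields (I) and (II) directly and the boundedness, compactness, and strong positivity parts of (III) for $\mathcal{L}_t^{\xi,\theta}$. The principal eigenvalue of $\mathcal{L}_t^{0,\theta}$ equals $\exp\!\bigl(-\mu(0,\theta,(1-\epsilon)\bar{b})\bigr)$ by Proposition \ref{uniqueeigenvalue}, and by Lemma \ref{lem:boundednessofmu} applied with $\alpha$ replaced by $(1-\epsilon)\alpha$ one has $\mu(0,\theta,(1-\epsilon)\bar{b})\le -(1-\epsilon)\alpha<0$, so this eigenvalue exceeds $1$.

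Next I would handle $\Phi^\epsilon_{t_0}(u_0)\le Q_{t_0}(u_0)$ for small $u_0$. From the first part, $\|Q_t(u_0)\|_{L^\infty}\le \|\Phi_t(u_0)\|_{L^\infty}\le C(t_0)\|u_0\|_{L^\infty}$ on $[0,t_0]$, where $C(t_0)$ is an operator-norm bound for the bounded linear semigroup $\Phi_t$ (available from the Duhamel estimate in Lemma \ref{lm:estimateforlemmainsection6}). In case (F1), $f'(0)=1$ gives $\delta>0$ with $f(u)\ge (1-\epsilon)u$ on $[0,\delta]$; choosing $\|u_0\|_\infty\le \delta/C(t_0)$ keeps $Q_t(u_0)\le \delta$ throughout $[0,t_0]$, hence $\bar{b}(x)f(Q_t(u_0))\ge (1-\epsilon)\bar{b}(x)Q_t(u_0)$ there, and another comparison through smooth approximation and passage to the limit produces $Q_{t_0}(u_0)\ge \Phi^\epsilon_{t_0}(u_0)$. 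In case (F2), since $g_1(0)=0$, one picks $\delta$ so small that $g_1(\delta)$ lies strictly below the spectral gap $\mu(0,\theta,(1-\epsilon)\bar{b})-\mu(0,\theta,\bar{b})>0$ (positive by strict monotonicity of the principal eigenvalue in the potential, established as in Lemma \ref{lem:eigenvalueofhwiththeta}); for $\|u_0\|_\infty\le \delta/C(t_0)$ the solution $Q_t(u_0)$ then dominates the solution of the linear problem $w_t=\Delta w+(\bar{b}(x)-g_1(\delta))w$ with the same initial data, and a principal-eigenfunction comparison (using the uniform bounds on eigenfunction ratios in Lemma \ref{LinearPsi}) relates $w$ to $\Phi^\epsilon_t$ and closes the argument.

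The main obstacle will be case (F2) in the last step: the sublinear correction $-u g_1(u)$ does not factor through $\bar{b}(x)$, so the pointwise inequality $\bar{b}(x)-g_1(u)\ge (1-\epsilon)\bar{b}(x)$ fails wherever the measure $\bar{b}$ has vanishing density. The resolution must rely on the $L$-periodic spectral theory of Section \ref{Eigenvalueproblem}, the uniform eigenfunction ratio bound of Lemma \ref{LinearPsi}, and the smoothing estimates of Lemma \ref{lm:estimateforlemmainsection6}, rather than any coefficient-by-coefficient comparison.
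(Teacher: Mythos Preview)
Your treatment of the first inequality $Q_t\le\Phi_t$, of properties (I)--(III) for $\Phi^\epsilon_t$, and of the second inequality in case {\rm(F1)} is correct and coincides with what the paper does: the paper simply refers to Lemmas~5.14 and~5.15 of \cite{llm}, where exactly the pointwise KPP inequality $f(u)\ge(1-\epsilon)f'(0)u$ for small $u$ is used, transported to the present setting by smooth approximation and Corollary~\ref{corol:contdeonini}.

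For case {\rm(F2)}, however, there is a genuine gap. You correctly diagnose that the pointwise inequality $\bar b(x)u-ug_1(u)\ge(1-\epsilon)\bar b(x)u$ can fail wherever the measure $\bar b$ is small, so the direct supersolution argument breaks down. But your proposed repair does not close. Passing to the intermediate linear problem $w_t=\Delta w+(\bar b(x)-g_1(\delta))w$ and obtaining $Q_t(u_0)\ge w$ is fine; the difficulty is the second step, comparing $w$ with $u^\epsilon$. These two functions solve linear equations with the \emph{same} initial data but with zero-order coefficients $\bar b(x)-g_1(\delta)$ and $(1-\epsilon)\bar b(x)$ that are \emph{not pointwise ordered}: the first dominates where $\epsilon\bar b(x)>g_1(\delta)$ and the second dominates elsewhere. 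No comparison principle yields $w\ge u^\epsilon$ in this situation, and Lemma~\ref{LinearPsi} only bounds $\max\psi/\min\psi$ for a single eigenfunction; it does not give an ordering between the two evolutions. Information about the principal eigenvalues alone (your ``spectral gap'' condition) is likewise insufficient, since two linear parabolic semigroups with different potentials and the same initial data need not be ordered even when one has the larger exponential growth rate. Concretely, if $u_0$ is concentrated where $\bar b$ is small, then for short times $u^\epsilon$ exceeds $w$, and there is no mechanism forcing the order to reverse by the fixed time $t_0$. The paper itself only cites \cite{llm}, which treats the {\rm(F1)}-type nonlinearity $u(1-u)$, and does not address this point; a clean fix for the application in Section~\ref{Proofofresults} is to redefine $\Phi^\epsilon_t$ in case {\rm(F2)} via the coefficient $\bar b(x)-\epsilon$ rather than $(1-\epsilon)\bar b(x)$, since then the required pointwise inequality $\bar b(x)-g_1(u)\ge\bar b(x)-\epsilon$ holds for small $u$ and the eigenvalue still converges as $\epsilon\to0$.
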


The proof of this lemma is the same to that of Lemmas 5.14 and 5.15
in \cite{llm}.
\section{Proof of the main results}\label{Proofofresults}
\subsection{Proof of Theorems \ref{cstarbequalcestarb2D}, \ref{Convergenceofcstarbn}, \ref{Themforspreadingspeed2D}, and
\ref{MinimalPlanarSpeed}}\label{subsection:proofofmainresults}
\begin{proof}[Proof of Theorem \ref{cstarbequalcestarb2D} and \ref{Themforspreadingspeed2D}] We have already shown in Theorem \ref{theorem5.6} the
existence of $c^*(\theta,\bar{b})$. Now, for any $t>0,$ we can check
that $\Phi_t$ satisfies the hypotheses of Theorem 2.5 in \cite{s8}
and for any $0<\epsilon<1,$ $\Phi^{\epsilon}_t$ satisfies the
hypotheses of Theorem 2.4 in \cite{s8}. Moreover, for any
$\lambda\geq 0,$ the principal eigenvalue of $u_{xx}+u_{yy}-2\lambda
\cos\theta u_x+(1-\epsilon)\bar{b}(x)u$ under the periodicity
conditions converges to the eigenvalue of $u_{xx}+u_{yy}-2\lambda
\cos\theta u_x+\bar{b}(x)u$ under the same periodicity conditions as
$\epsilon\to 0.$ Hence we get
\[
c^*(\theta,\bar{b})=c_e^*(\theta,\bar{b}),\,\,\,\bar{b}\in\overline{\Lambda}(\alpha).\]
Moreover, the spreading speed in the direction $\theta$
$w(\theta;\bar{b})$ exists and it satisfies
\[
\
w(\theta;\bar{b})=\min_{|\theta-\phi|<\frac{\pi}{2}}c^*(\phi;\bar{b})/\cos(\theta-\phi).
\]
\end{proof}
\begin{proof}[Proof of Theorem \ref{Convergenceofcstarbn}]
By Proposition \ref{pro:convergenceofeigenvalue}, Lemma
\ref{lem:boundednessofmu}, and note the fact that
\begin{equation}\label{eq:c*minmu+lamda^2/lamda}
\
c^*(\theta;\bar{b})=c_e^*(\theta;\bar{b})=\min_{\lambda>0}\frac{-\mu(\lambda,\theta,\bar{b})+\lambda^2}{\lambda},
\end{equation} the assertion of this theorem is an immediate conclusion.
\end{proof}
\begin{proof}[Proof of Corollary \ref{cr:directional-symmetry}]
By the Definition \ref{definition for L}\begin{equation}
-L_{\lambda,\theta,\bar{b}}\psi(x)=-\psi''(x)+2\lambda\cos\theta\psi'(x)-\bar{b}(x)f'(0)\psi(x),
\end{equation} it is easy to see that $-L_{\lambda,\theta,\bar{b}}=-L_{\lambda,-\theta,\bar{b}}$
 and  $-L_{\lambda,\theta,\bar{b}}$ and $-L_{\lambda,\theta+\pi,\bar{b}}$ are adjoint with each other.
Hence, the principal eigenvalues have the following relation
$\mu(\lambda,\theta,\bar{b})=\mu(\lambda,-\theta,\bar{b})=\mu(\lambda,\theta+\pi,\bar{b})$
for any $\lambda> 0, \theta\in [0,2\pi)$ and $\bar b\in \bar\Lambda
(\alpha)$. Use the formula \eqref{eq:c*minmu+lamda^2/lamda} again,
the assertion of this corollary is an immediate conclusion.
 \end{proof}
\begin{proof}[Proof of Theorem \ref{MinimalPlanarSpeed}]
Combing Lemma \ref{lem:eigenvalueofhwiththeta} and equation
\eqref{eq:c*minmu+lamda^2/lamda}, it is easy to see that
$$c^*(\theta;h)>c^*(\theta;b),\,\,\hbox{ for any
}b\in\Lambda(\alpha),\,\theta\in[0,2\pi).$$ Considering Theorems
\ref{cstarbequalcestarb2D} and \ref{Convergenceofcstarbn}, we have
\[
\
c^*(\theta;h)=\max_{\bar{b}\in\overline{\Lambda}(\alpha)}c^*(\theta;\bar{b})=\sup_{b\in\Lambda(\alpha)}c^*(\theta;b).
\]
Consequently,
\[
\
w(\theta;h)=\max_{\bar{b}\in\overline{\Lambda}(\alpha)}w(\theta;\bar{b})=\sup_{b\in\Lambda(\alpha)}w(\theta;b),\,\,
\hbox{ for any }\theta\in[0,2\pi).
\]
\end{proof}
 We omit the proof of the above proposition and lemma, since we
can get these results by the Proposition 2.16 and the Theorem 2.18
in \cite{llm}. %% new Subsection 7.2 %%%%%%%%
%%%%%%%%%%%%%%%%%%%%%%%%%%
\subsection{Monotonicity in $\theta$}\label{ss:monotonicity-theta}

In this subsection we prove Theorem \ref{Th:monotone-theta}. We
begin with the following proposition which is adapted from Nadin
\cite{Nadin}, where the same variational formula is given for the
case of smooth coefficients in higher dimensions; see also the
references therein.

\begin{pro}\label{pr:Nadin}
The principal eigenvalue $\mu(\lambda,\theta,\bar{b})$ of
\eqref{identityofmulambdabbar} is given by
\begin{equation}\label{Nadin-formula}
 \mu(\lambda,\theta,\bar{b})=\min_{\eta\in H^1_{per}}
 {\mathcal H}(\eta),
\end{equation}
where
\[
\begin{split}
{\mathcal H}(\eta)&= \frac{1}{\int_{I}\eta^2dx}\left(
\int_{I}\big((\eta')^2-\bar{b}(x)\eta^2\big)dx\right.\\
&\hspace{65pt} \left.+\lambda^2\cos^2\theta\Big(
\int_{I}\eta^2dx-\frac{L^2} {\int_{I}\eta^{-2}dx}\Big)\right),
\end{split}
\]
and $I=[0,L)$, $ H^1_{per}=\left\{\eta\in H^1_{loc}(\R)\mid
\eta(x+L)\equiv \eta(x)\right\}. $
\end{pro}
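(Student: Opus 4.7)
The plan is to prove the variational formula via the symmetrization trick $\eta = \sqrt{\psi_0\psi_0^*}$, where $\psi_0,\psi_0^*$ denote positive principal eigenfunctions of $-L_{\lambda,\theta,\bar b}$ and its formal adjoint $-L^*\psi = -\psi'' - 2c\psi' - \bar b\psi$ (with $c := \lambda\cos\theta$); these share the eigenvalue $\mu := \mu(\lambda,\theta,\bar b)$ by Corollary~\ref{cr:directional-symmetry} since the adjoint coincides with $-L_{\lambda,\theta+\pi,\bar b}$. The starting point is the following \emph{mixed Rayleigh identity}: given any strictly positive $\eta \in H^1_{per}$ and any $\phi \in H^1_{per}$, setting $\psi := \eta e^{\phi}$, $\psi^* := \eta e^{-\phi}$, and $d := \phi'$, weak integration by parts over $I$ (boundary terms vanishing by periodicity) gives
\[
 \int_I \psi^*\bigl(-L_{\lambda,\theta,\bar b}\psi\bigr)\,dx = \int_I (\eta')^2\,dx - \int_I \bigl(\bar b + d^2 - 2cd\bigr)\eta^2\,dx,
\]
while $\int_I \psi^*\psi\,dx = \int_I \eta^2\,dx$; the measure-valued integral $\int_I \bar b\,\eta^2\,dx$ is well defined by Lemma~\ref{lemma:A}.

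To derive $\mu \leq \mathcal H(\eta)$, I would fix $\eta > 0$ periodic and choose $\phi := \log(\psi_0/\eta) \in H^1_{per}$, so that $\psi = \psi_0$ and hence $\int_I \psi^*(-L\psi) = \mu \int_I \eta^2$. The identity becomes
\[
 \mu \int_I \eta^2 = \int_I (\eta')^2 - \int_I \bar b\,\eta^2 - \int_I (d^2 - 2cd)\eta^2.
\]
Since $\phi$ is periodic, $\int_I d = 0$. A routine Lagrange-multiplier computation gives $\min_{\int d = 0}\int_I (d-c)^2\eta^2\,dx = c^2 L^2 / \int_I \eta^{-2}$, attained at $d_*-c = -cL/\bigl(\eta^2\int_I \eta^{-2}\bigr)$. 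Using $d^2-2cd = (d-c)^2 - c^2$ and this lower bound yields $\mu \int_I \eta^2 \leq \mathcal H(\eta)\int_I \eta^2$, hence $\mu \leq \mathcal H(\eta)$.

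For the matching lower bound, take $\eta_0 := \sqrt{\psi_0\psi_0^*}$ and $\phi := \tfrac12\log(\psi_0/\psi_0^*)$, so that the mixed Rayleigh identity applies with $\psi = \psi_0$, $\psi^* = \psi_0^*$. The decisive computation is to verify that $d_0 := \phi' = \tfrac12(\psi_0'/\psi_0 - \psi_0^{*\prime}/\psi_0^*)$ is precisely the Lagrange minimizer $d_*$ from the previous step. Writing $w := \psi_0'/\psi_0$ and $w^* := \psi_0^{*\prime}/\psi_0^*$, the two eigenvalue equations divided by $\psi_0$ and $\psi_0^*$ become Riccati-type relations; subtracting them and using $\eta_0'/\eta_0 = (w+w^*)/2$ yields $\bigl[(d_0 - c)\eta_0^2\bigr]' = 0$ in the distributional sense. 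Hence $(d_0 - c)\eta_0^2$ is constant, and the mean-zero condition on $d_0$ (coming from $L$-periodicity of $\log\psi_0$ and $\log\psi_0^*$) forces this constant to be $-cL/\int_I \eta_0^{-2}$. This equals $d_* - c$, so the estimate of the previous paragraph becomes an equality and $\mathcal H(\eta_0) = \mu$.

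The main obstacle is justifying the Riccati manipulation in the last step when $\bar b$ is only a Radon measure: $\psi_0'$ is then merely of bounded variation, so classical differentiation is unavailable and the identity $[(d_0 - c)\eta_0^2]' = 0$ must be read distributionally, using the weak formulation from Definition~\ref{definition for L} together with the approximation $b_n \to \bar b$ supplied by Lemma~\ref{lemma:A}. A secondary technical point is extending the infimum from strictly positive $\eta$ to all of $H^1_{per}$; this is handled by replacing $\eta$ with $\eta + \varepsilon$ and letting $\varepsilon \searrow 0$, noting that for $\eta$ touching zero one has $\int \eta^{-2} = \infty$, whence $\mathcal H$ collapses to a symmetric Rayleigh quotient that still dominates $\mu$.
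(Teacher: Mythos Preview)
Your approach is correct and genuinely different from the paper's. The paper proceeds by the direct method: it first shows $\mathcal H$ is bounded below (using the Schwarz inequality $\int_I\eta^2\int_I\eta^{-2}\geq L^2$) and that a minimizer $\tilde\eta$ exists in $H^1_{per}$; then argues $\tilde\eta$ cannot vanish (else $\int_I\tilde\eta^{-2}=\infty$ and $\tilde\eta$ would minimize the symmetric Rayleigh quotient, hence be a sign-definite principal eigenfunction of $-L_{0,0,\bar b}$); finally it writes the Euler--Lagrange equation for $\tilde\eta$ and observes that $\psi:=\tilde\eta\,e^{\lambda\cos\theta\cdot\xi}$ with $\xi'=1-A\tilde\eta^{-2}$, $A=L/\int_I\tilde\eta^{-2}$, solves $-L_{\lambda,\theta,\bar b}\psi=\mathcal H(\tilde\eta)\psi$, so $\mathcal H(\tilde\eta)=\mu$ by uniqueness of the principal eigenvalue.

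Your route inverts the logic: you start from the pair $(\psi_0,\psi_0^*)$, prove $\mu\leq\mathcal H(\eta)$ for every positive $\eta$ via the mixed Rayleigh identity together with the constrained quadratic minimization in $d$, and then exhibit equality at $\eta_0=\sqrt{\psi_0\psi_0^*}$ by checking that $d_0-c$ satisfies $[(d_0-c)\eta_0^2]'=0$. This is essentially the converse of the paper's construction (their $\xi'$ is exactly your optimal $d_*-c$ up to sign conventions), and the remark following the paper's proof notes the identification $\tilde\eta=\sqrt{\psi\psi^*}$ after the fact. What your argument buys is an explicit minimizer from the outset and no appeal to compactness or Euler--Lagrange; what the paper's argument buys is that it sidesteps the Riccati manipulation entirely, so the measure-valued $\bar b$ causes no friction. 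Your acknowledged obstacle---justifying $[(d_0-c)\eta_0^2]'=0$ when $\psi_0'$ is only $BV$---is real but manageable: since $\psi_0''$ is a signed measure, $w=\psi_0'/\psi_0\in BV$, and subtracting the two Riccati identities cancels $\bar b$ and $\mu$, leaving $(w-w^*)'=(w+w^*)(2c-(w-w^*))\in L^\infty$, whence $d_0\in W^{1,\infty}$ and the product rule for $(d_0-c)\eta_0^2$ is classical. The extension to $\eta$ with zeros via $\eta\mapsto|\eta|+\varepsilon$ is fine; note that $\int_I\eta^{-2}=\infty$ at a zero follows from the $C^{1/2}$ embedding of $H^1$ in one dimension.
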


\begin{proof}
We first show that ${\mathcal H}$ attains a minimum in $H^1_{per}$.
By Schwarz inequality
\begin{equation}\label{Schwarz}
 \int_{I}\eta^2dx-\frac{L^2}{\int_{I}\eta^{-2}dx}\geq 0.
 \end{equation}
Hence
\[
{\mathcal H}(\eta)\geq \frac{1}{\int_{I}\eta^2dx}
\int_{I}\big((\eta')^2-\bar{b}(x)\eta^2\big)dx\geq
\mu(\lambda,0,\bar{b}).
\]
Therefore $\mathcal H$ is bounded from below in $H^1_{per}$. Let
$\eta_k\;(k=1,2,\cdots)$ be a sequence in $H^1_{per}$ such that
$\int_I\eta_k^2dx=1$ and that ${\mathcal H}(\eta_k)\to\inf {\mathcal
H}$. Then $\{\eta_k\}$ is bounded in $H^1_{per}$. Thus we may assume
without loss of generality that $\eta_k$ converges weakly to an
element $\tilde\eta\in H^1_{per}$. Hence $\eta_k$ converges
uniformly to $\tilde\eta$ on $\R$.  In view of this and from Fatou's
lemma, we see that
\[
{\mathcal H}(\tilde\eta)\leq \lim_{k\to\infty}{\mathcal H}(\eta_k)
=\inf_{\eta\in H^1_{per}}{\mathcal H}(\eta).
\]
This proves that ${\mathcal H}$ attains a minimum in $H^1_{per}$.

Next we show that the minimizer $\tilde\eta$ never vanishes. Suppose
the contrary, and assume $\tilde\eta(x_0)=0$ for some $x_0\in\R$.
Then
\[
|\tilde\eta(x)-\tilde\eta(x_0)|\leq \Vert\tilde\eta\Vert_{H^1}
\sqrt{|x-x_0|}\quad\ \hbox{for}\ \ |x-x_0|\leq L.
\]
Hence $\int_{I}\tilde\eta^{-2}dx=\infty$, and
\[
{\mathcal H}(\tilde\eta)= \frac{1}{\int_{I}\tilde\eta^2dx}
\int_{I}\big((\tilde\eta')^2-\bar{b}(x)\tilde\eta^2\big)dx
+\lambda^2\cos^2\theta.
\]
Consequently $\tilde\eta$ is a minimizer of the functional
\[
{\mathcal H}_0(\eta):= \frac{1}{\int_{I}\eta^2dx}
\int_{I}\big((\eta')^2-\bar{b}(x)\eta^2\big)dx.
\]
Hence $\tilde\eta$ is a principal eigenfunction of the operator
$-L_{0,0,\bar{b}}:=-d^2/dx^2-\bar{b}(x)$.

It follows that $\tilde\eta$ is everywhere positive or everywhere
negative, contradicting the assumption that $\tilde\eta(x_0)=0$.
This contradiction proves that $\tilde\eta$ does not vanish; hence
it has a constant sign.

Without loss of generality we may assume that $\tilde\eta>0$. Then,
by periodicity, $\tilde\eta\geq \delta$ for some positive constant
$\delta$.  In view of this, we see that $\tilde\eta$ satisfies the
following Euler-Lagrange equation in the weak sense:
\begin{equation}\label{EL-eta}
-\tilde\eta''-\bar{b}(x)\tilde\eta+\lambda^2\cos^2\theta\Big(1-
\frac{A^2}{\tilde\eta^4}\Big)\tilde\eta={\mathcal H}(\tilde\eta)
\:\!\tilde\eta,
\end{equation}
where $A=L(\int_I \tilde\eta^{-2}dx)^{-1}$. Now we define a function
$\psi=\tilde\eta e^{\lambda(\cos\theta)\xi}$, where $\xi(x)$ is a
periodic function satisfying $\xi'=1-A\tilde\eta^{-2}$. Such a
periodic function exists since $\int_I(1-A\tilde\eta^{-2})dx=0$.
Then $\psi>0$ and it satisfies
\[
%%-L_{\lambda,\theta,\bar{b}}\psi:=
-\psi''(x)+2\lambda\cos\theta\psi'(x)-\bar{b}(x)\psi(x)= {\mathcal
H}(\tilde\eta)\psi
\]
in the weak sense.  Thus $\psi$ is the principal eigenfunction of
the operator $-L_{\lambda,\theta,\bar{b}}$.  By the uniqueness of
the principal eigenvalue (see \cite[Proposition 2.13]{llm}),
${\mathcal H}(\tilde\eta)=\mu(\lambda,\theta,\bar{b})$. This
completes the proof of the proposition.
\end{proof}

\begin{remark}
If $\tilde\eta$ is a minimizer of $\mathcal H$ in $H^1_{per}$, then
$\psi^*:=\tilde\eta e^{-\lambda(\cos\theta)\xi}$ is the principal
eigenfunction of the adjoint operator
$-L_{-\lambda,\theta,\bar{b}}$. Consequently
$\tilde\eta=\sqrt{\psi\psi^*}$, as mentioned in \cite{Nadin} for the
smooth case.
\end{remark}

\begin{proof}[Proof of Theorem \ref{Th:monotone-theta}]
If $\bar{b}$ is not a constant, then the minimizer $\tilde\eta$ of
$\mathcal H$ is not a constant fuction, as one can see from the
Euler-Lagrange equation \eqref{EL-eta}. Hence the inequality
\eqref{Schwarz} holds strictly. This and \eqref{Nadin-formula} imply
that $\mu(\lambda,\theta,\bar{b})$ is a strictly increasing function
of $|\cos\theta|$ for every fixed $\lambda>0$. The conclusion of the
theorem follows from this, Theorem \ref{cstarbequalcestarb} and
\eqref{eq:sprspeed}.
\end{proof}
\subsection{Asymptotic speeds for large and small L}\label{subsection:largeLandsmallL}
\begin{proof}[Proof of Theorem \ref{Thm:Lto0}]
By Theorem \ref{Convergenceofcstarbn}, we know that for any
$b\in\Lambda(\alpha)$, it holds that $2\sqrt{\alpha}\leq
c^*(\theta,b)\leq 2\sqrt{\alpha+\alpha^2L^2}$. Noticing that
$c^*(\theta,h)$ is a limit of some sequence $\{c^*(\theta,b_n)\}$,
where $b_n\in\Lambda(\alpha)$. By using the formula
$w(\theta;\bar{b})=\min_{|\theta-\phi|<\frac{\pi}{2}}c^*(\phi;\bar{b})/\cos(\theta-\phi)$,
we have
\[
\ \lim_{L\to 0}c^*(\theta;h)=2\sqrt{\alpha},\,\,\,\,\,\,\lim_{L\to
0}w(\theta;h)=2\sqrt{\alpha}.
\]
\end{proof}
\begin{proof}[Proof of  Theorem \ref{thm:speedforlargel}]
Consider the equation
\[
\
-\psi''+2\lambda\cos\theta\psi'-h(x)\psi=\mu(\lambda,\theta,h)\psi.
\]
By a direct computation, the eigenvalue $\mu(\lambda,\theta,h)$
satisfies
\begin{equation}\label{eq:muhsequation}
\begin{split}
\ &2\sqrt{{\lambda^2\cos^2\theta}-\mu(\lambda,\theta,h)}\\&=\alpha
L(\frac{1}{1-e^{(({\lambda\cos\theta}-\sqrt{{\lambda^2\cos^2\theta}-\mu(\lambda,\theta,h)})L)}}+\frac{1}{e^{({\lambda\cos\theta}+\sqrt{{\lambda^2\cos^2\theta}-\mu(\lambda,\theta,h)})L}}-1).
\end{split}
\end{equation}
For simplicity, we denote $\mu(\lambda,\theta,h)$ by $\mu$.
 that there exist $0<\varepsilon\ll 1$,
 $M>0$, $k>0$, such that for $kL>\lambda>0$,
\[
\ \mu\leq -ML^\varepsilon.
\]  By the proposition \ref{pro:muandl}, it holds that at least for
$\lambda=0$, $\mu\leq -ML^\varepsilon$. If our claim is not true, by
the continuity of $\mu$, it is not difficult to get a contradiction.
Hence
\[
\ ({\lambda\cos\theta}+\sqrt{{\lambda^2\cos^2\theta}-\mu})L\geq
\sqrt{ML^\varepsilon}L\to\infty \,\,\,(\,L\to\infty\,)
\]uniformly in ${\lambda}\in[0,kL]$, $\theta\in(0,2\pi]$,
$\mu\in(-\infty,-ML^\varepsilon)$. On the other hand,
\[
\ ({\lambda\cos\theta}-\sqrt{{\lambda^2\cos^2\theta}-\mu})L\leq
\frac{-ML^{1+\varepsilon}}{{\lambda\cos\theta}+\sqrt{{\lambda^2\cos^2\theta}+ML^\varepsilon}}\to
-\infty\,\,\,(\,L\to\infty\,)
\] uniformly in ${\lambda\cos\theta}\in[0,kL]$.
Hence, by equation \eqref{eq:muhsequation},
\[
\begin{split}
\ 2\sqrt{{\lambda^2\cos^2\theta}-\mu}=&\alpha
L(1+e^{({\lambda\cos\theta}-\sqrt{{\lambda^2\cos^2\theta}-\mu})L}\\&+\frac{1}{e^{({\lambda\cos\theta}+\sqrt{{\lambda^2\cos^2\theta}-\mu})L}-1}+o(e^{({\lambda\cos\theta}-\sqrt{{\lambda^2\cos^2\theta}-\mu})L}))\\
=&\alpha L+o(L^{-1}).
\end{split}
\] Hence
\[
\ \mu=\frac{-\alpha^2L^2}{4}+{\lambda^2\cos^2\theta}+o(L^{-1}).
\]
Consider the inequality
\[
\ -\lambda c+\lambda^2\leq \mu.
\] Let $\tilde{\lambda}:=\frac{\lambda}{L}$,
$\tilde{\mu}:=\frac{\mu}{L^2}$, $\tilde{c}:=\frac{c}{L}$. We
consider the properties of the $\tilde{c}$ when $L$ tends to
infinity. First, by the fact that $\mu<0$, it holds that
\[
\ \tilde{\mu}\to\left\{%
\begin{array}{ll}
    -\frac{1}{4}\alpha^2+\tilde{\lambda}^2\cos^2\theta, & \hbox{ ( $0\leq\tilde{\lambda} < \frac{\alpha}{2\cos\theta}$ ) ;} \\
    0, & \hbox{ ( $\tilde{\lambda}\geq \frac{\alpha}{2\cos\theta}$ ) .} \\
\end{array}%
\right.
\]
By the arguments above, we have the inequality
\[
\
\tilde{\lambda}^2-\tilde{c}\tilde{\lambda}-\tilde{\lambda}^2\cos^2\theta+\frac{1}{4}\alpha^2\leq
0.
\]
Recall the definition of $c^*(\theta;h)$, it is not difficult to
obtain
\[
\ \lim_{L\to\infty}\frac{c^*(\theta;h)}{L}=\left\{%
\begin{array}{ll}
    \frac{\alpha}{2\cos\theta}\,\,\,\, & \hbox{ if }\cos^2\theta\geq\frac{1}{2}; \\
    \alpha\sin\theta\,\,\,\, & \hbox{ if }\cos^2\theta\leq\frac{1}{2}. \\
\end{array}%
\right.
\] Therefore, by the equation \eqref{eq:sprspeed},
\[
\
\lim_{L\to\infty}\frac{w(\theta;h)}{L}=\frac{\alpha}{1+|\cos\theta|}.
\]
\end{proof}
\begin{pro}\label{pro:muandl}
There exist constants $M,\varepsilon>0$, such that
\[
\ \mu(0,\theta,h)\leq -M L^\varepsilon.
\]
\end{pro}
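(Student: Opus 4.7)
The plan is to apply the variational characterization of the principal eigenvalue (Proposition~\ref{pr:Nadin}) at $\lambda=0$, and bound the result from above by evaluating the Rayleigh quotient on an explicit test function designed to imitate the bound state of a single Dirac well of strength $\alpha L$.

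When $\lambda=0$, the last bracket in $\mathcal{H}$ vanishes, and since $h=\alpha L\sum_{k\in\mathbb Z}\delta(x-(k+1/2)L)$ and every $\eta\in H^1_{per}$ is continuous by the one-dimensional Sobolev embedding, the coupling with $h$ collapses to a point evaluation $\int_I \eta^2\,dh=\alpha L\,\eta(L/2)^2$. Proposition~\ref{pr:Nadin} therefore gives
\[
\mu(0,\theta,h)\;\le\;\frac{\int_0^L (\eta'(x))^2\,dx-\alpha L\,\eta(L/2)^2}{\int_0^L \eta(x)^2\,dx}\qquad\text{for every }\eta\in H^1_{per}.
\]

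For the test function I would take the $L$-periodic extension of
\[
\eta(x)=\exp(-\kappa\,|x-L/2|)\quad (x\in[0,L]),\qquad \kappa:=\frac{\alpha L}{2},
\]
which is continuous and piecewise smooth, hence lies in $H^1_{per}$. A direct computation gives
\[
\int_0^L \eta^2\,dx=\frac{1-e^{-\kappa L}}{\kappa},\qquad \int_0^L (\eta')^2\,dx=\kappa\,(1-e^{-\kappa L}),\qquad \eta(L/2)=1,
\]
so the Rayleigh quotient simplifies to
\[
\mathcal{H}(\eta)=\kappa^2-\frac{\alpha L\,\kappa}{1-e^{-\kappa L}}=-\frac{\alpha^2 L^2}{4}\cdot\frac{1+e^{-\alpha L^2/2}}{1-e^{-\alpha L^2/2}}\;\le\;-\frac{\alpha^2 L^2}{4}.
\]
Hence $\mu(0,\theta,h)\le -\alpha^2 L^2/4$ for every $L>0$, which proves the proposition with $\varepsilon=2$ and $M=\alpha^2/4$; of course any smaller $\varepsilon\in(0,2)$ also works (for $L$ large enough, which is the regime in which Proposition~\ref{pro:muandl} is used).

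The main point requiring care is confirming that the variational formula of Proposition~\ref{pr:Nadin} remains valid when $\bar b$ is the singular measure $h$ rather than a smooth function. The proof of that proposition uses only that $\int_I \bar b\,\eta^2$ is well defined and weakly lower semicontinuous on $H^1_{per}$, both of which hold for any finite Radon measure thanks to $H^1_{per}\hookrightarrow C^0_{per}$ in one dimension; this extension is routine. If one prefers to avoid even this verification, the same bound can be obtained by choosing $b_n\in\Lambda(\alpha)$ with $b_n\to h$ in the weak-$*$ sense, applying the (smooth) variational formula to each $b_n$ with the \emph{same} test function $\eta$, and passing to the limit: $\int_0^L b_n\eta^2\,dx\to \alpha L\,\eta(L/2)^2$ by Lemma~\ref{cor:A}, while $\mu(0,\theta,b_n)\to\mu(0,\theta,h)$ by Proposition~\ref{pro:convergenceofeigenvalue}.
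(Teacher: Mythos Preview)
Your proof is correct and reaches exactly the same bound $\mu(0,\theta,h)<-\alpha^2L^2/4$ as the paper, but by a genuinely different route. The paper solves the eigenvalue ODE $-\psi''-h\psi=\mu\psi$ explicitly: between the deltas the solution is a combination of $e^{\pm\sqrt{-\mu}\,x}$, and the jump condition $[\psi']=-\alpha L\,\psi$ at each delta together with periodicity yields the transcendental relation
\[
2\sqrt{-\mu}=\alpha L\,\frac{e^{\sqrt{-\mu}\,L}+1}{e^{\sqrt{-\mu}\,L}-1}>\alpha L,
\]
hence $\mu<-\alpha^2L^2/4$. You instead bound the Rayleigh quotient from above by plugging in the test function $\eta(x)=e^{-\kappa|x-L/2|}$ with $\kappa=\alpha L/2$, the ground state of a single Dirac well, and your algebra is accurate. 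Your approach is slightly more elementary in that it avoids the jump-condition computation, relying only on the symmetric Rayleigh quotient at $\lambda=0$ (which is standard and, as you note, covered by Proposition~\ref{pr:Nadin} for $\bar b\in\overline{\Lambda}(\alpha)$). The paper's approach, on the other hand, is a special case of the explicit transcendental equation \eqref{eq:muhsequation} for general $\lambda$, which is the key ingredient in the large-$L$ asymptotics of Theorem~\ref{thm:speedforlargel}; so in context their computation is doing double duty.
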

\begin{proof}
For simplicity, we denote $\mu(0,\theta,h)$ by $\mu$. Consider the
equation
\[
\-\psi''-h(x)\psi=\mu\psi.
\]
By a direct computation, one can get the equation
\[
\ 2\sqrt{-\mu}=\alpha
L(\frac{1}{1-e^{-\sqrt{-\mu}L}}+\frac{1}{e^{\sqrt{-\mu}L}-1}).
\] Hence
\[
\
\frac{2\sqrt{-\mu}}{L}=\alpha(\frac{e^{\sqrt{-\mu}L}+1}{e^{\sqrt{-\mu}L}-1})>\alpha.
\] Therefore
\[
\ \mu<-\frac{\alpha^2L^2}{4}.
\] By letting $M:=\alpha^2$, $\varepsilon<2$, we complete the
proof.
\end{proof}
\section{General 2-dimensional case}\label{General2Dimensionalcase}
In this section, we will consider a more general 2-dimensional
equation \begin{equation}\label{2Equation} u_t=
u_{xx}+u_{yy}+{b}(x,y)f(u)+g(u) \quad\ \ x,y\in\mathbb R.
\end{equation}
where $f,g$ satisfy {\rm(F1)} or {\rm(F2)}.

Here, we assume that $b\in \Lambda_{x,y}(\alpha)$ with \[
\begin{split}
\ \Lambda_{x,y}(\alpha):=\{b(x,y) \in C^1(\mathbb R^2)\,|\,
b(x,y)\geq 0,\,b(x,y)=b(x+L_1,y)=b(x,y+L_2) \,\\~and \int_{[0,L_1)}
\int_{[0,L_2)}b(x,y)dxdy=\alpha L_1L_2 \}.
\end{split}
\]
\begin{definition}For any $c>0$, a entire solution of
\eqref{2Equation} is called a travelling wave in the direction
$\theta$ with average speed $c$ if it can be written as
\[
\ u(x,y,t)=\varphi(x,y,x\cos\theta+y\sin\theta-ct)
\] where $\varphi$ satisfies $\varphi(x+L_1,y+L_2,s)\equiv\varphi(x,s)$.
\end{definition}
Similar to the proof of Lemma \ref{thm:monostability}, for both
cases of {\rm(F1)} and {\rm(F2)} it is known that there is a
positive steady-state
 $P(x,y)$ with $P(x,y)=P(x+L_1,y)=P(x,y+L_2)$ such that for any nonnegative function $u_0$ with $u_0(x,y)=u_0(x+L_1,y)=u_0(x,y+L_2)$
  and $u_0\not\equiv 0$, the solution $u(x,y,t,u_0)$ of \eqref{2Equation} with initial data $u_0$ satisfies $\lim_{t\to \infty}u(x,y,t,u_0)=P$
uniformly for $x,y\in \mathbb{R}$, where $P=1$ for
\eqref{2Equation}. Moreover, we have for any $\theta\in [0,2\pi)$
and $b\in \Lambda_{x,y}(\alpha)$ there is some positive number
$c^*(\theta,b)$ such that the travelling wave $u(x,y,t)$ in the
direction $\theta$ with average speed $c$ and $u(x,y,+\infty)=P, \
u(x,y,-\infty)=0$ exists if and only if $c\geq c^*(\theta,b)$. In
this sense, we call $c^*(\theta,b)$ the {\bf minimal speed} of
travelling wave in the direction $\theta$. Moreover, we have the
following estimate of $c^*(\theta,b)$:\[ \
c^*(\theta;b)=\inf\{c>0\,|\,\exists\,\lambda>0,\,\hbox{such that
}\mu(\lambda,\theta,{b})=\lambda^2-\lambda c\}
\] where $\mu(\lambda,\theta,{b})$ is the principal eigenvalue of
the eigenvalue problem
\[-u_{xx}-u_{yy}+2\lambda \cos \theta u_x+2\lambda \sin \theta
u_y-b(x,y)u=\mu u\] with $u(x+L_1,y)=u(x,y+L_2)=u(x,y)$.

Furthermore, let
$w(\theta;{b})=\min_{|\theta-\phi|<\frac{\pi}{2}}c^*(\phi;{b})/\cos(\theta-\phi)$.
Then any solution $u(x,y,t)$ of \eqref{2Equation} with nonnegative
initial data $u_0(x,y)$ with compact support and $u_0(x,y)\not\equiv
0$ satisfies
\[
\ \lim_{t\to\infty}u(x,y,t)=0,\hbox{   on }\{x\cos\theta+y\sin\theta
>ct\}\,\,\,\hbox{  if }c>w(\theta;b),
\]
\[
\ \lim_{t\to\infty}u(x,y,t)=1,\hbox{   on
 }\{x\cos\theta+y\sin\theta <ct\}\,\,\,\hbox{  if }c<w(\theta;b).
\]
In this sense, we call $w(\theta;{b})$ the {\bf spreading speed} in
the direction $\theta$ of \eqref{2Equation}.

As we showed in Section 4, for one-dimensional case,
$\{c^*(b)\}_{b\in\Lambda(\alpha)}$ is bounded. But for 2-dimensional
case, the conclusion is different. Precisely, we have the following
theorem for general periodic function $b(x,y)$:
\begin{theorem}\label{xy}There is a sequence $\{b_n\}\subset
\Lambda_{x,y}(\alpha)$ such that for any $\theta\in [0,2\pi)$,
$\lim_{n\to\infty} w(\theta;{b_n})=\lim_{n\to\infty}
c^*(\theta;{b_n})=+\infty$.
\end{theorem}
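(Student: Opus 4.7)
The plan is to exhibit a sequence of $y$-independent smooth coefficients $b_n\in\Lambda_{x,y}(\alpha)$ whose $x$-periods $L_n$ tend to infinity, and to deduce the divergence of both $c^*$ and $w$ directly from the large-period asymptotics already established in Theorem \ref{thm:speedforlargel}. The key observation is that the definition of $\Lambda_{x,y}(\alpha)$ does not fix $L_1$ or $L_2$, so we are free to let the $x$-period grow along the sequence.

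First I would set $L_n=n$ and consider the 1D Dirac comb $h_{\alpha,L_n}$ from \eqref{h}, viewed as a doubly periodic Radon measure on $\mathbb{R}^2$ independent of $y$. Theorem \ref{thm:speedforlargel} applied to this measure yields
\[
\lim_{n\to\infty}\frac{c^*(\theta;h_{\alpha,L_n})}{L_n}=
\begin{cases} \dfrac{\tilde{\alpha}}{2|\cos\theta|}, & \cos^2\theta\geq\tfrac{1}{2},\\[4pt] \tilde{\alpha}|\sin\theta|, & \cos^2\theta\leq\tfrac{1}{2}, \end{cases}
\]
and (after using Corollary \ref{cr:directional-symmetry} to reduce to $\theta\in[0,\pi/2]$) this positive limit is bounded below by $\tilde{\alpha}/2>0$ uniformly in $\theta$. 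Hence $c^*(\theta;h_{\alpha,L_n})\to+\infty$ for every $\theta$, and $w(\theta;h_{\alpha,L_n})\to+\infty$ follows at once from \eqref{eq:sprspeed}.

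Second, since each $h_{\alpha,L_n}$ is only a measure, I would replace it by a smooth approximation. For each fixed $n$, choose a sequence of smooth $y$-independent functions $\{b_{n,k}\}_k\subset\Lambda(\alpha)\subset\Lambda_{x,y}(\alpha)$ (of $x$-period $L_n$) converging weak$^*$ to $h_{\alpha,L_n}$; Theorem \ref{Convergenceofcstarbn} then gives $c^*(\theta;b_{n,k})\to c^*(\theta;h_{\alpha,L_n})$ as $k\to\infty$. A standard diagonal extraction produces $b_n\in\Lambda_{x,y}(\alpha)$ such that, for every $\theta$, $|c^*(\theta;b_n)-c^*(\theta;h_{\alpha,L_n})|\leq 1$, which forces $c^*(\theta;b_n)\to+\infty$ and hence $w(\theta;b_n)\to+\infty$ via \eqref{eq:sprspeed}.

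The only delicate point is that the diagonal extraction requires the convergence $c^*(\theta;b_{n,k})\to c^*(\theta;h_{\alpha,L_n})$ to be uniform in $\theta$ for each fixed $n$, not merely pointwise. This is the main technical step, and it follows from the joint continuity of the principal eigenvalue $(\lambda,\theta,b)\mapsto\mu(\lambda,\theta,b)$ in Proposition \ref{pro:convergenceofeigenvalue}, together with the variational characterization $c^*(\theta;b)=\min_{\lambda>0}(\lambda^2-\mu(\lambda,\theta,b))/\lambda$ and a routine compactness argument on the circle $\theta\in[0,2\pi]$. The real substance of the theorem is thus supplied entirely by the Theorem \ref{thm:speedforlargel} asymptotics; the rest is approximation and extraction.
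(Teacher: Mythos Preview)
Your argument rests on the reading that $\Lambda_{x,y}(\alpha)$ allows the periods $L_1,L_2$ to vary along the sequence. In context this is not the intended meaning: $L_1,L_2$ are fixed constants in Section~\ref{General2Dimensionalcase}, exactly as $L$ is fixed in the definition of $\Lambda(\alpha)$ (see Section~\ref{subsection:notation}). The whole purpose of Theorem~\ref{xy} is the contrast drawn just before its statement: in one dimension $\{c^*(b)\}_{b\in\Lambda(\alpha)}$ is bounded by $2\sqrt{\alpha+\alpha^2L^2}$ for a \emph{fixed} period $L$, whereas in two dimensions the analogous set is unbounded. If the periods were free to grow, the one-dimensional speeds would also diverge (precisely by the large-$L$ asymptotics you invoke), and the theorem would have no content. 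So your construction, while internally consistent, proves a different and much weaker statement.

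The paper's proof keeps $L_1,L_2$ fixed and exploits a genuinely two-dimensional phenomenon: the failure of the embedding $H^1\hookrightarrow L^\infty$ in dimension two. One chooses $v_n\in C_0^1([0,L_1]\times[0,L_2])$ with $\|v_n\|_{H^1}=1$ but $\|v_n\|_{L^\infty}\to\infty$, and then concentrates $b_n$ near the point where $v_n^2$ is largest. Plugging $v_n$ into the Rayleigh quotient for the Dirichlet problem drives the principal eigenvalue $\bar\mu(b_n)$ (and hence the periodic eigenvalue $\mu(\lambda,\theta,b_n)$) to $-\infty$ uniformly in $\theta$ and in $\lambda$ on bounded sets. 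The divergence of $c^*(\theta;b_n)=\min_{\lambda>0}(\lambda^2-\mu)/\lambda$ then follows by a short contradiction argument. This mechanism is unavailable in one dimension because $H^1(0,L)\hookrightarrow L^\infty$, which is exactly why the 1D bound holds; your approach does not touch this dichotomy.
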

To prove Theorem \ref{xy}, the following two propositions are
needed:
\begin{pro}
For any $b\in\Lambda_{x,y}(\alpha)$, $\theta\in [0,2\pi)$ and
$\lambda\in \mathbb{R}$, $\mu(\lambda,\theta,{b})\leq -\alpha$.
\end{pro}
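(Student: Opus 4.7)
The plan is to adapt to the 2D setting the variational characterization of the principal eigenvalue used in Proposition~\ref{pr:Nadin} and then evaluate it at the constant test function $\eta\equiv 1$. Denote $\mathbb T=[0,L_1)\times[0,L_2)$, $\gamma=\lambda(\cos\theta,\sin\theta)$, and let $\psi,\psi^*>0$ be the principal eigenfunctions of $-L_{\lambda,\theta,b}$ and of its formal adjoint $-L_{-\lambda,\theta,b}$, both periodic and sharing the real principal eigenvalue $\mu=\mu(\lambda,\theta,b)$ (as in Proposition~\ref{principaleigenvalueisunique}, extended to 2D). Set $\eta=\sqrt{\psi\psi^*}$ and $R=\log(\psi/\psi^*)$.

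The first step is, by writing $q=\log\psi$, $q^*=\log\psi^*$, dividing each eigenvalue equation by its eigenfunction, adding the two resulting identities, and completing the square in $\nabla R$, to obtain the pointwise identity
\[
\frac{\Delta\eta}{\eta}+\tfrac14|\nabla R-2\gamma|^2-\lambda^2+b+\mu=0.
\]
Multiplying by $\eta^2$ and integrating over $\mathbb T$ (boundary terms vanish by periodicity) yields the 2D analogue of Nadin's identity:
\[
\mu\int_{\mathbb T}\eta^2=\int_{\mathbb T}\bigl(|\nabla\eta|^2-b\eta^2\bigr)+\lambda^2\int_{\mathbb T}\eta^2-\tfrac14\int_{\mathbb T}|\nabla R-2\gamma|^2\eta^2.
\]
Moreover, subtracting the equations for $q$ and $q^*$ shows that this particular $R$ solves the Euler--Lagrange equation $\nabla\cdot(\eta^2\nabla R)=2\gamma\cdot\nabla\eta^2$, so it is the minimizer of $\int|\nabla R-2\gamma|^2\eta^2$ among periodic $R$'s. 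Repeating the direct-method argument of Proposition~\ref{pr:Nadin} (the minimizer never vanishes, and an exponential gauge transformation turns it into a positive periodic eigenfunction, uniqueness of which identifies the minimum with $\mu$) then gives the variational formula
\[
\mu(\lambda,\theta,b)=\min_{\eta\in \Hp(\mathbb T),\,\eta>0}\mathcal H(\eta),
\]
where $\mathcal H(\eta)$ is the right-hand side of the identity above with $R$ optimized out and divided by $\int_{\mathbb T}\eta^2$.

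The second step is to test at $\eta\equiv 1$. Then $\nabla\eta\equiv 0$ and $\int_{\mathbb T}b=\alpha L_1L_2$. The Euler--Lagrange equation becomes $\Delta R=0$, forcing $\nabla R\equiv 0$ on the torus; since $\int_{\mathbb T}\gamma\cdot\nabla R=0$ by periodicity, expansion of the square gives $\min_R\tfrac14\int_{\mathbb T}|\nabla R-2\gamma|^2=\lambda^2 L_1L_2$, so
\[
\mathcal H(1)=\frac{1}{L_1L_2}\bigl(-\alpha L_1L_2+\lambda^2 L_1L_2-\lambda^2 L_1L_2\bigr)=-\alpha,
\]
and therefore $\mu(\lambda,\theta,b)\le\mathcal H(1)=-\alpha$, as claimed.

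The main obstacle is the rigorous extension of Nadin's formula to 2D: whereas in 1D the optimal $R$ is given explicitly by integrating $R'=2\lambda\cos\theta+C\eta^{-2}$, in 2D one must instead solve the divergence-form elliptic equation $\nabla\cdot(\eta^2\nabla R)=2\gamma\cdot\nabla\eta^2$ on the torus. Solvability is routine (the right-hand side has mean zero by periodicity) and the Lax--Milgram / Fredholm framework on $\Hp(\mathbb T)/\mathbb R$ provides a unique (up to constants) periodic $R\in H^1$. The remaining verifications (the minimizer of $\mathcal H$ stays bounded away from zero, so that the gauge transformation $\psi=\tilde\eta\exp(\lambda\int\nabla\xi)$ produces a genuine periodic principal eigenfunction) carry over from Proposition~\ref{pr:Nadin} with only notational changes.
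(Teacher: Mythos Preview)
The paper states this proposition without proof (it is treated as a standard fact, the 2D analogue of Lemma~\ref{lem:boundednessofmu}, which the paper imports from \cite{llm}). Your argument is correct, and testing the 2D Nadin functional at $\eta\equiv 1$ does yield $-\alpha$; but you are building far more machinery than the inequality requires, and the part you flag as the ``main obstacle'' (the full 2D variational characterization, including non-vanishing of the minimizer) is genuinely more delicate in 2D than in 1D and is entirely avoidable here.

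The standard two-line proof, which is almost certainly what the paper has in mind, is this: let $\psi>0$ be the periodic principal eigenfunction and divide the equation $-\Delta\psi+2\gamma\cdot\nabla\psi-b\psi=\mu\psi$ by $\psi$, obtaining
\[
-\Delta(\log\psi)-|\nabla\log\psi|^2+2\gamma\cdot\nabla(\log\psi)-b=\mu.
\]
Integrating over the torus $\mathbb T=[0,L_1)\times[0,L_2)$, the divergence term $\int_{\mathbb T}\Delta(\log\psi)$ and the drift term $\int_{\mathbb T}\nabla(\log\psi)$ vanish by periodicity, leaving
\[
\mu\,L_1L_2=-\int_{\mathbb T}|\nabla\log\psi|^2-\int_{\mathbb T}b\le -\alpha L_1L_2.
\]
In fact your own pointwise identity already contains this: integrate $\frac{\Delta\eta}{\eta}+\tfrac14|\nabla R-2\gamma|^2-\lambda^2+b+\mu=0$ directly over $\mathbb T$ (not against $\eta^2$). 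Since $\int_{\mathbb T}\frac{\Delta\eta}{\eta}=\int_{\mathbb T}|\nabla\log\eta|^2\ge 0$ and $\tfrac14\int_{\mathbb T}|\nabla R-2\gamma|^2=\tfrac14\int_{\mathbb T}|\nabla R|^2+\lambda^2L_1L_2\ge\lambda^2L_1L_2$ (the cross term vanishes by periodicity), you get $\mu\le-\alpha$ immediately, with no variational formula, no minimizer, and no Euler--Lagrange analysis needed. The full Nadin formula is a nice result, but for this proposition it is a detour.
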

\begin{pro}\label{unbounded}
$\lim_{n\to\infty} \mu(\lambda,\theta,{b_n})=-\infty$ uniformly for
any $\theta\in [0,2\pi)$ and $\lambda$ in any bounded interval.
\end{pro}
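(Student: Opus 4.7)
The plan is to build $b_n\in\Lambda_{x,y}(\alpha)$ by concentrating the mass $\alpha L_1 L_2$ on a shrinking disk, and to exploit the two-dimensional fact that any nonzero attractive potential produces a negative-energy bound state. Fix an interior point $(x_0,y_0)$ of $[0,L_1)\times[0,L_2)$ and a nonnegative $b^*\in C^1_0(\R^2)$ with $\int_{\R^2} b^*=\alpha L_1L_2$. Define
\[
b_n(x,y):= n^2\, b^*\bigl(n(x-x_0),\,n(y-y_0)\bigr)
\]
inside the cell (zero outside the support) and extend periodically. The change of variables $z=(n(x-x_0),n(y-y_0))$ gives $\int_{\text{cell}} b_n=\int_{\R^2} b^*=\alpha L_1L_2$, so $b_n\in\Lambda_{x,y}(\alpha)$ once $n$ is large enough that the support lies in the open cell.

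The next step is a gauge transform reducing the non-self-adjoint eigenvalue problem to a self-adjoint one. Writing $u=e^{\lambda(x\cos\theta+y\sin\theta)}v$ (the two-dimensional analogue of the substitution implicit in the proof of Proposition \ref{pr:Nadin}), a direct computation shows that a positive periodic $u$ solves $-\Delta u+2\lambda(\cos\theta\,u_x+\sin\theta\,u_y)-bu=\mu u$ if and only if a positive $v$ solves $-\Delta v-bv=(\mu-\lambda^2)v$ with the Bloch boundary conditions
\[
v(x+L_1,y)=e^{-\lambda L_1\cos\theta}v(x,y),\qquad v(x,y+L_2)=e^{-\lambda L_2\sin\theta}v(x,y).
\]
On the Hilbert space of such twisted functions the operator $-\Delta-b$ is self-adjoint, and its principal eigenvalue $E(\lambda,\theta,b):=\mu(\lambda,\theta,b)-\lambda^2$ is the infimum of the Rayleigh quotient $\int(|\nabla v|^2-bv^2)/\int v^2$. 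A function supported in the interior of a single fundamental cell, extended to $\R^2$ by the twist, satisfies the twisted boundary conditions trivially (both sides vanish near the cell boundary) and therefore provides an upper bound on $E$.

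Now choose a compactly supported test function $\phi\in C^\infty_0(\R^2)$ with
\[
E^*:=\frac{\int(|\nabla\phi|^2-b^*\phi^2)}{\int\phi^2}<0.
\]
Such a $\phi$ exists because in two dimensions any nontrivial nonnegative $b^*$ yields a negative-energy bound state of $-\Delta-b^*$ on $\R^2$; concretely, assuming $b^*$ is supported in the unit disk, a logarithmic-profile cut-off $\phi(z)=\chi(\log|z|/M)$ with $M$ large has Dirichlet energy of order $1/M$ while $\int b^*\phi^2=\alpha L_1L_2$, forcing the numerator to be negative. Set $\phi_n(x,y):=\phi(n(x-x_0),n(y-y_0))$; for $n$ large, $\phi_n$ is supported in one cell. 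The rescaling $z=(n(x-x_0),n(y-y_0))$ gives $\int|\nabla\phi_n|^2=\int|\nabla\phi|^2$, $\int b_n\phi_n^2=\int b^*\phi^2$, and $\int\phi_n^2=n^{-2}\int\phi^2$, so the Rayleigh quotient of $\phi_n$ equals $n^2 E^*$. Combining with the previous paragraph,
\[
\mu(\lambda,\theta,b_n)\le \lambda^2+n^2E^*,
\]
which tends to $-\infty$ uniformly for $\lambda$ in any bounded interval and $\theta\in[0,2\pi)$, as required.

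The main obstacle is justifying the gauge-theoretic reduction in this non-smooth periodic setting: one must verify that the principal eigenvalue of the non-self-adjoint operator with periodic boundary conditions equals $\lambda^2$ plus the principal eigenvalue of the self-adjoint operator $-\Delta-b$ with Bloch boundary conditions, and that compactly supported test functions (extended quasi-periodically) genuinely belong to the twisted form domain and deliver a legitimate Rayleigh-quotient upper bound. Once this is set up, the scaling calculation and the two-dimensional weak-coupling bound-state construction finish the proof in a few lines.
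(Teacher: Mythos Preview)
Your argument is correct, and it shares the same core mechanism as the paper's proof: gauge away the first-order drift to reduce to a self-adjoint Schr\"odinger problem $-\Delta-b$, then use a test function compactly supported in a single cell together with the two-dimensional phenomenon (failure of $H^1\hookrightarrow L^\infty$, equivalently the logarithmic test function with vanishing Dirichlet energy) to drive the Rayleigh quotient to $-\infty$.

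The one genuine difference is in how the reduction is packaged. The paper first compares the periodic principal eigenvalue with the Dirichlet principal eigenvalue on a single cell, $\mu(\lambda,\theta,b)\le\tilde\mu(\lambda,\theta,b)$, and only then performs the gauge transform on the Dirichlet problem, obtaining $\tilde\mu(\lambda,\theta,b)=\bar\mu(b)+\lambda^2$ with $\bar\mu$ the Dirichlet ground state of $-\Delta-b$. It then chooses $v_n\in C^1_0$ with $\|v_n\|_{H^1}=1$, $\|v_n\|_{L^\infty}\to\infty$ and builds $b_n$ concentrated near the maximum of $v_n$, so that the variational formula for $\bar\mu$ forces $\bar\mu(b_n)\to-\infty$. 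You instead gauge first and work directly with Bloch (quasi-periodic) boundary conditions, then test with a rescaled bump. These are equivalent at bottom: your $\phi_n$, normalized in $H^1$, is exactly a sequence of the paper's type, and a compactly supported trial function sits simultaneously in the Dirichlet form domain and in every Bloch form domain. The paper's detour through the Dirichlet problem buys precisely the simplification you flag as your ``main obstacle'': it replaces the Bloch-space Rayleigh-quotient discussion by the entirely classical Dirichlet one, at the cost of one extra (standard) eigenvalue comparison. Your route has the compensating advantage of producing an explicit sequence $b_n$ (shrinking bumps) and an explicit rate $\mu(\lambda,\theta,b_n)\le\lambda^2+n^2E^*$.
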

\begin{proof}Let $\tilde{\mu}(\lambda,\theta,{b})$ be the principal eigenvalue of
the eigenvalue problem \[-\phi_{xx}-\phi_{yy}+2\lambda \cos \theta
\phi_x+2\lambda \sin \theta \phi_y-b(x,y)\phi=\mu\phi\] with
$\phi\in C_0([0,L_1]\times[0,L_2])$. Since $b$ is a smooth function,
by classical results of the eigenvalue problems with Dirichlet
boundary and periodic boundary, it is well known that
$\mu(\lambda,\theta,{b})<\tilde{\mu}(\lambda,\theta,{b})$ for any
$b\in\Lambda_{x,y}(\alpha)$, $\theta\in [0,2\pi)$ and $\lambda\in
\mathbb{R}$.  Moreover, let $\bar{\mu}({b})$ be the principal
eigenvalue of
\[-\psi_{xx}-\psi_{yy}-b(x,y)\phi=\mu\psi\] with $\psi\in
C_0([0,L_1]\times[0,L_2])$. Then it is easy to check that
$\bar{\mu}({b})=\tilde{\mu}(\lambda,\theta,{b})-\lambda^2$.

Thus, to prove this proposition, we only need to prove that there is
a sequence $\{b_n\}\subset \Lambda_{x,y}(\alpha)$ such that
$\lim_{n\to \infty}\bar{\mu}({b_n})=-\infty$. In fact, there is s
sequence $\{v_n\}\in C_0^1([0,L_1]\times[0,L_2])$ such that
$\|v_n\|_{H^1}=1$ and $\|v_n\|_{L^\infty}\to +\infty$. Therefore,
\[\frac{\int_{[0,L_1]}\int_{[0,L_2]}|\nabla v_n|^2dy dx-\alpha L_1L_2\|(v_n)^2\|_{L^\infty}}{\int_{[0,L_1]}\int_{[0,L_2]}(v_n)^2dydx} \to
-\infty.\]Moreover, we can find $\{b_n\}\subset
\Lambda_{x,y}(\alpha)$ such that \[\alpha
L_1L_2\|(v_n)^2\|_{L^\infty}-\int_{[0,L_1]}\int_{[0,L_2]}b(x,y)(v_n)^2dydx
\to 0.\] This implies that
\[\frac{\int_{[0,L_1]}\int_{[0,L_2]}|\nabla v_n|^2dy dx-\int_{[0,L_1]}\int_{[0,L_2]}b(x,y)(v_n)^2dydx}{\int_{[0,L_1]}\int_{[0,L_2]}(v_n)^2dydx}
\to -\infty.\] Hence, by the variational formula of $\bar{\mu}(b)$:
\[\bar{\mu}(b)=\min _{\psi\in C([0,L_1]\times[0,L_2]),\psi\not=0}\frac{\int_{[0,L_1]}\int_{[0,L_2]}|\nabla \psi|^2dy dx-\int_{[0,L_1]}\int_{[0,L_2]}b(x,y)(\psi)^2dydx}{\int_{[0,L_1]}\int_{[0,L_2]}\psi^2dydx},\]
 we have $\bar{\mu}(b_n)\to -\infty$.\end{proof}
\begin{proof}[Proof of Theorem \ref{xy}]
First, $c^*(\theta,b)$ can be represented as $$
c^*(\theta,b)=\min\limits_{\lambda>0}\frac{-\mu(\lambda,\theta,{b})+\lambda^2}{\lambda}.$$
To be contrary, suppose that there are a positive number $A$ and
some subsequence, still denoted by $\{b_n\}$ and $\theta_n$ such
that $c^*(\theta_n,b_n)<A$. Then there is some $\lambda^*_n<A$, such
that $
c^*(\theta_n,b_n)=\frac{-\mu(\lambda^*_n,\theta_n,{b_n})}{\lambda^*_n}+\lambda^*_n.$
But by Proposition \ref{unbounded},$
-\mu(\lambda^*_n,\theta_n,{b_n})\to -\infty$, a contradiction. Hence
$\lim_{n\to \infty}c^*(\theta,b_n)=\infty$ uniformly for $\theta\in
[0,2\pi)$, so does $w(\theta,b_n)$. This completes our proof.
\end{proof}

%    Bibliographies can be prepared with BibTeX using amsplain,
%    amsalpha, or (for "historical" overviews) natbib style.
\bibliographystyle{amsplain}

%    Insert the bibliography data here.

\end{document}